\definecolor{darkgoldenrod}{rgb}{0.72, 0.53, 0.04}
\definecolor{vegasgold}{rgb}{0.77, 0.7, 0.35}
\definecolor{gold(metallic)}{rgb}{0.83, 0.69, 0.22}
\definecolor{sepia}{rgb}{0.44, 0.26, 0.08}
\definecolor{silver}{rgb}{0.75, 0.75, 0.75}
\DeclareSymbolFont{cyrletters}{OT2}{wncyr}{m}{n}
\DeclareMathSymbol{\Sha}{\mathalpha}{cyrletters}{"58}
\DeclareMathOperator{\trace}{trace}
\DeclareMathOperator{\ac}{ac}
\DeclareMathOperator{\local}{loc}
\DeclareMathOperator{\glob}{glob}
\DeclareMathOperator{\base}{base}
\DeclareMathOperator{\new}{new}
\DeclareMathOperator{\Hom}{Hom}
\DeclareMathOperator{\height}{ht}
\DeclareMathOperator{\Gal}{Gal}
\DeclareMathOperator{\rank}{rank}
\DeclareMathOperator{\ord}{ord}
\DeclareMathOperator{\corank}{corank}
\DeclareMathOperator{\Sel}{Sel}
\DeclareMathOperator{\Reg}{Reg}
\DeclareMathOperator{\Ak}{Ak}
\DeclareMathOperator{\coker}{coker}
\DeclareMathOperator{\GL}{GL}
\DeclareMathOperator{\Finite}{Fin}
 \theoremstyle{definition}
\newtheorem{theorem}{Theorem}[section]
\newtheorem{lemma}[theorem]{Lemma}
\newtheorem{conj}[theorem]{Conjecture}
\newtheorem{proposition}[theorem]{Proposition}
\newtheorem{corollary}[theorem]{Corollary}
\numberwithin{equation}{section}
\newtheorem{definition}[theorem]{Definition}
\newtheorem{Assumption}[theorem]{Assumption}
\theoremstyle{remark}
\newtheorem{remark}[theorem]{Remark}
\newcommand\EatDot[1]{}
\newcommand{\cyc}{\mathrm{cyc}}
\newcommand{\Fp}{\mathbb{F}_p}
\newcommand{\cF}{{\mathcal{F}}}
\newcommand{\cE}{{\mathcal{E}}}
\newcommand{\cT}{{\mathcal{T}}}
\newcommand{\cC}{\mathcal{C}}
\newcommand{\cD}{\mathcal{D}}
\newcommand{\cX}{\mathcal{X}}
\newcommand{\Q}{\mathbb{Q}}
\newcommand{\Z}{\mathbb{Z}}
\newcommand{\F}{\mathbb{F}}
\newcommand{\Qp}{\mathbb{Q}_p}
\newcommand{\Zp}{\mathbb{Z}_p}
\newcommand{\absolute}[1]{\left\lvert#1\right\rvert}
\definecolor{Green}{rgb}{0.0, 0.5, 0.0}
\newcommand{\Qcyc}{\Q_{\cyc}}
\newcommand{\fM}{{\mathfrak{M}}}
\newcommand{\Char}{\mathrm{Char}}
\newcommand{\Fin}{\cF_\infty}
\newcommand{\op}[1]{\operatorname{#1}}
\newcommand{\mup}{\mu_p(E/\cF_\infty)}
\title{Arithmetic Statistics and noncommutative Iwasawa Theory}
\let\@wraptoccontribs\wraptoccontribs
\author[D.~Kundu]{Debanjana Kundu}
\address[Kundu]{Department of Mathematics \\ University of British Columbia\\
Vancouver, BC, Canada V6T 1Z2}
\email{dkundu@math.ubc.ca}
\author[A.~Lei]{Antonio Lei}
\address[Lei]{D\'epartement de Math\'ematiques et de Statistique\\
Universit\'e Laval, Pavillion Alexandre-Vachon\\
1045 Avenue de la M\'edecine\\
Qu\'ebec, QC\\
Canada G1V 0A6}
\email{antonio.lei@mat.ulaval.ca}
\author[A.~Ray]{Anwesh Ray}
\address[Ray]{Department of Mathematics \\ University of British Columbia\\
Vancouver, BC, Canada V6T 1Z2}
\email{anweshray@math.ubc.ca}
\subjclass[2020]{11R23, 11G05}
\keywords{Arithmetic statistics, noncommutative Iwasawa theory, Selmer groups, Euler characteristics, Akashi series, growth of Mordell--Weil ranks}
\begin{document}
\begin{abstract}
Let $p$ be an odd prime.
Associated to a pair $(E, \mathcal{F}_\infty)$ consisting of a rational elliptic curve $E$ and a $p$-adic Lie extension $\mathcal{F}_\infty$ of $\Q$, is the $p$-primary Selmer group $\Sel_{p^\infty}(E/\mathcal{F}_\infty)$ of $E$ over $\Fin$.
In this paper, we study the arithmetic statistics for the algebraic structure of this Selmer group.
The results provide insights into the asymptotics for the growth of Mordell--Weil ranks of elliptic curves in noncommutative towers.
\end{abstract}

\maketitle

\section{Introduction}
\label{S: Intro}
Given an elliptic curve $E$ defined over a number field $F$, the Mordell--Weil theorem states that the group of $F$-rational points, called the Mordell--Weil group and denoted by $E(F)$, is finitely generated (see \cite{Mor22, Wei29}).
A central question in the arithmetic of elliptic curves is the precise structure of this group, in particular its rank.
A motivating problem in Iwasawa theory is the question of determining the growth of the Mordell--Weil rank in certain infinite towers of number fields.
Such questions were first studied by B.~Mazur in \cite{mazur72}, where he showed that for a class of elliptic curves defined over $\Q$, the Mordell--Weil rank remains bounded in the cyclotomic $\Z_p$-extension $\Q_{\cyc}/\Q$.
The proof of Mazur  involved a thorough analysis of the $p$-primary Selmer group of $E$ over $\Qcyc$.
This result has been  extended to all rational elliptic curves by K.~Kato \cite{Kato} and D.~Rohrlich \cite{rohrlich88} who showed that given any elliptic curve $E$ defined over $\Q$, the rank of $E$ is bounded in $F_{\cyc}/F$, where $F/\Q$ is an abelian extension.

The study of Iwasawa theory in noncommutative $p$-adic Lie extensions was initiated by M.~Harris in \cite{harris,harris2}.
In the late 1990's and early 2000's, noncommutative Iwasawa theory became an active area of research, leading to a series of breakthrough results (see \cite{BH97, coateshowson, venjakob2002structure, OV02, CoatesSchneiderSujatha_Links_between, Gre03,HV03, OV03, venjakob2003iwasawa,CKFVS,CFKS}).
Such extensions are ubiquitous and \emph{often} arise naturally.
One such example is the following.
Let $F$ be a number field and $E_{/F}$ be an elliptic curve without complex multiplication (CM).
Consider the extension $\Fin/F$ given by the field of definition of the $p$-power torsion
points on $E$.
By Serre's Open Image Theorem~\cite{Ser72}, the Galois group $\Gal(\Fin/F)$ is isomorphic to a finite-index subgroup of $\GL_2(\Zp)$.
Hence, it is a non-abelian, $p$-adic Lie group of dimension 4.
We will often specialize our results to this widely-studied infinite extension, called the \emph{trivializing extension} of $E$.
We emphasize that the methods of classical (abelian) Iwasawa theory do not extend in any obvious fashion to the noncommutative theory and there are several pitfalls if one follows such an approach, see \cite[\S 2]{BH97} for a discussion.

The main object of study in noncommutative Iwasawa theory is the Selmer group defined over an infinite $p$-adic Lie-extension.
In this article, we are interested in investigating the cohomology groups of the Selmer group and calculating the \emph{Euler characteristic}, which is defined in terms of these cohomology groups, see \cite{howson2002euler, zerbes2009generalised, zerbes11} and Section \ref{S: alg prelim}.
Under appropriate hypotheses, the Euler characteristic of  Selmer groups of elliptic curves over $\Qcyc$ can be expressed in terms of invariants arising in the $p$-adic Birch--Swinnerton-Dyer (BSD) formula, see Section \ref{S: BSD and akashi}.
In the noncommutative setting, the formula is more involved.
In addition to the invariants arising from the BSD formula, this formula has  contributions from local Euler factors at specified auxiliary primes, see Theorem~\ref{EC formula theorem}.
We shall study these new invariants from the point of view of  arithmetic statistics.
The intricate relationship between the Euler characteristic formula and Iwasawa theoretic invariants coming from noncommutative Iwasawa theory gives rise to several new questions and provides us with a fertile ground for investigation on the structure of Selmer groups via the lens of arithmetic statistics.
We explain this further in the coming paragraphs.

Throughout, $E$ will denote an elliptic curve defined over $\Q$ with good \emph{ordinary} reduction at a prime $p\geq 5$.
Let $F$ be a number field and $\cF_\infty$ be an infinite Galois extension such that $G:=\Gal(\cF_\infty/ F)$ is a \emph{uniform} pro-$p$ group of dimension $d$.
Furthermore, it is assumed that $G$ is \emph{admissible}, i.e., $\cF_\infty$ contains the cyclotomic $\Z_p$-extension of $F$, is ramified at only finitely many primes, and $G$ contains no non-trivial $p$-torsion.
Structural properties of the $p$-primary Selmer group $\Sel_{p^\infty}(E/\cF_\infty)$ have been studied by O.~Venjakob in \cite{venjakob2002structure}.
The Selmer group is a module over the Iwasawa algebra $\Lambda(G)$, which is a noetherian Auslander regular local ring.
Much like modules over $\Z_p\llbracket T\rrbracket$, the dimension theory of modules over $\Lambda(G)$ is well understood.
A module is \emph{torsion} (resp. \emph{pseudonull}) if the codimension of its annihilator in $\Lambda(G)$ is $<d+1$ (resp. $<d$).
In this article, we are interested in understanding \emph{how often} the Selmer group is pseudonull over a pro-$p$ $p$-adic Lie extension.

In Propositions~\ref{prop: pseudonull} and \ref{proposition: akasi series to pseudonull}, we explain the relationship between the Euler characteristic and the \emph{Akashi series} (introduced in \cite{CoatesSchneiderSujatha_Links_between}).
For the Selmer group over the cyclotomic $\Z_p$-extension, the Akashi series is simply the characteristic element.
This generalized invariant provides deep insight into the algebraic structure of the Selmer group.
The Euler characteristic is closely related to the leading term of the Akashi series (see Proposition~\ref{prop: leading term}).
In the setting where the elliptic curve has Mordell--Weil rank 0, we utilize this interplay between the different invariants to prove statistical results on the pseudonullity of the Selmer group.
One of the main deviations from the case of the cyclotomic $\Zp$-extension is that  the extension $\Fin/\Q$ is usually not pro-$p$.
Consequently, many results in noncommutative Iwasawa theory do not apply since one often works with the Iwasawa algebra of a pro-$p$ group.
Nonetheless, even in the non-pro-$p$ setting, we can prove results about the Euler characteristics and the Akashi series.
In special cases, 
we can even infer that the $p$-primary Selmer group is in fact \emph{trivial} over the infinite extension.
As a by-product, we obtain examples where a conjecture of J.~Coates and R.~Sujatha on the pseudonullity of \emph{fine Selmer groups} is true, see \cite[Conjecture~B]{CoatesSujatha_fineSelmer}.

When the Mordell--Weil rank of $E(F)$ is 0, we will see in the course of this article that there are instances when the Iwasawa invariants for the Selmer group over $\cF_\infty$ vanish.
Some results in this direction are proved in Theorems~\ref{theorem varying imaginary quadratic}, \ref{thm: Hep-Gep density}, and \ref{theorem for vary p when FTC}(2),(3).
The arithmetic statistics of Iwasawa invariants of elliptic curves for the cyclotomic $\Z_p$-extensions have been studied in \cite{KR21, KR21b} by the first and third named authors of the present article.
In \cite{HKR}, these results have also been extended to the anticyclotomic setting by the first and third named authors in collaboration with J.~Hatley. In subsequent work, these methods shall be further developed to study statistics for the \emph{fine Selmer group} by the third named author. Unlike the cyclotomic $\Zp$-extension, primes other than  $p$ can ramify in $\cF_\infty$ in the noncommutative case.
This makes the task of determining Iwasawa invariants in the noncommutative case more challenging and intricate than  the cyclotomic case.
Another diverging point from the cyclotomic theory is that it is possible to vary $\Fin$ over certain natural infinite families even when the prime $p$ is fixed.
As in the cyclotomic setting, we can study the variation of Iwasawa invariants arising from the noncommutative setting via  statistical analysis.
More precisely, given a triple $(E, p, \cF_\infty)$, we study the variation of the algebraic structure of the Selmer group $\Sel_{p^\infty}(E/\cF_\infty)$ in three different contexts.
\begin{enumerate}[(a)]
    \item We fix the pair $(E,p)$ and let $\cF_\infty$ vary over a family of admissible extensions.
    \item We fix a pair $(p, \cF_\infty)$ and let $E$ vary over a subset of elliptic curves $E_{/\Q}$ of rank 0.
    \item We fix an elliptic curve $E$ and associate to each prime $p$, an extension $\cF_\infty$ in a natural way.
    Then, we vary $p$ over the primes at which $E$ has good \emph{ordinary} reduction.
\end{enumerate}
We study each of these three questions in three distinct settings.
\begin{enumerate}
    \item First, we consider the $\Zp^2$-extension of imaginary quadratic fields.
    This is a 2-dimensional \emph{abelian} extension and a \textit{metabelian} extension over $\Q$.
    Through the course of our investigation, we see that this case parallels the cyclotomic theory, see for example \eqref{ec are equal}, which says that the Euler characteristics for the $\Zp^2$-extension and the cyclotomic $\Zp$-extension coincide.
    \item Next, we specialize to the simplest noncommutative $2$-dimensional $p$-adic Lie extension, namely the false Tate curve extension.
    Given primes $p$ and $\ell$, we write
    \[
    \cF_\infty:=\Q(\mu_{p^\infty}, \  \ell^{\frac{1}{p^n}}: n = 1, 2, \cdots).
    \]
    Let us explain the three questions of interest in this case.
    \begin{enumerate}[(a)]
       \item We fix an elliptic curve $E$ of conductor $N_E$ and a prime $p$ of good \emph{ordinary} reduction of $E$.
       We consider the family of false Tate curve extensions obtained by varying $\ell\nmid N_E p$.
       In Theorem~\ref{thm: Hep-Gep density}, we study for what proportion of primes $\ell$ is the Selmer group trivial over $\Fin$.
       \item We fix the primes $p$ and $\ell$, and let $E$ vary over all elliptic curves defined over $\Q$ ordered by height.
       In Theorem~\ref{FTC thm varying elliptic curve}, we calculate an upper bound for the proportion of elliptic curves for which the Selmer group is \emph{not} trivial.
       \item We fix a rank 0 non-CM elliptic curve $E_{/\Q}$, a good prime $\ell$, and let $p$ vary over the primes at which $E$ has good \emph{ordinary} reduction.
       In Proposition~\ref{FTC: G EC formula same as Gamma EC formula}, we show that for \emph{at least half} of the primes $p$, the $G$-Euler characteristic coincides with the $\Gamma_F$-Euler characteristic.
       When $E$ has good \emph{supersingular} reduction at $\ell$, we show that this happens for \emph{exactly two-third} of the primes $p$ (see Proposition~\ref{prop:FT-ss}).
       For such primes $p$, the Selmer group  over the false Tate curve extension is trivial if and only if that over the cyclotomic $\Zp$-extension of $\Q(\mu_p)$ is trivial.
    \end{enumerate}
    \item We consider the trivializing extension $\cF_\infty/\Q$, generated by the $p$-primary torsion points of a non-CM elliptic curve (denoted by $A$, $E_0$ and $E'$ in the three questions we study).
    Since $\mathcal{G}:=\Gal(\Fin/\Q)$ is \emph{not} a pro-$p$ extension, our results on the $\mathcal{G}$-Euler characteristic formula do not imply pseudonullity of the Selmer group over the infinite extension.
    We prove the following results.
    \begin{enumerate}[(a)]
        \item We fix a rank 0 elliptic curve $E$ of conductor $N_E$ and a prime $p$ of good \emph{ordinary} reduction of $E$.
        We consider the family of extensions obtained by varying a non-CM elliptic curve $A_{/\Q}$.
        In Theorem~\ref{s7.3 mainthm}, we show that for density 0 (but infinitely many) such elliptic curves $A$, the $\mathcal{G}$-Euler characteristic is trivial.
        \item We fix $p$ and a non-CM elliptic curve ${E_0}_{/\Q}$.
        This fixes the $p$-adic Lie extension $\Q(E_0[p^\infty])/\Q$.
        As $E$ varies over all elliptic curves defined over $\Q$ and ordered by height, we calculate an upper bound for the proportion of elliptic curves for which the $\mathcal{G}$-Euler characteristic is \emph{not} trivial in Theorem~\ref{triv case: vary E}.
        \item For a pair of elliptic curves $(E,E^\prime)$ such that $E^\prime$ does not have CM, we consider the Selmer group of $E$ over the $p$-adic Lie extension $\Q(E^\prime[p^\infty])/\Q$ as $p$ varies.
        In Theorem~\ref{vary p main thm in triv case}, we show that for all but finitely many primes, the $\mathcal{G}_{E^\prime}$-Euler characteristic is equal to the $\Gamma_{\Q}$-Euler characteristic.
        This latter quantity is expected to be trivial \emph{most of the time} (see \cite[Conjecture~3.17]{KR21}).
    \end{enumerate}
\end{enumerate}

Similar to the cyclotomic setting discussed at the beginning of the introduction,  the structure of  the Selmer group over a $p$-adic Lie extension plays a crucial role in understanding the rate of growth of the Mordell--Weil rank of an elliptic  in towers of non-abelian extensions (see \cite{Bha07, darmontian,DL,DL2,HL,leisprung}).
In some cases, the Mordell--Weil ranks can be described very precisely, see in particular \cite{darmontian}, where special cases of  false Tate curve extensions have been studied.
More recently, the third named author has made progress in proving refined asymptotic bounds on the growth of Mordell--Weil ranks in general noncommutative towers, see \cite{Ray21_noncom_rank}.

Let $H := \Gal(\Fin/F_{\cyc})$.
To study the structure of the Selmer group, it is standard in noncommutative Iwasawa theory to assume the $\mathfrak{M}_H(G)$-conjecture, see Conjecture~\ref{MHG Conjecture} for a precise statement.
Conditional on this conjecture, in \cite{HL}, P.~C.~Hung and M.~F.~Lim have shown a close relationship between the structural invariants of the Selmer group over $\cF_\infty$, the pseudonullity of a certain quotient of the Selmer group over $\cF_\infty$, 
and the growth of Mordell--Weil ranks of $E$ inside this extension.
These results allow us to distinguish between the pseudonullity of the $p$-primary Selmer group and the aforementioned quotient (see Remark~\ref{implication pseudonullity}), thereby allowing us to prove refined estimates on the growth of Mordell--Weil ranks.
In particular, we can show in some cases (see for example, Proposition~\ref{positive Lambda H rank} and Corollary \ref{lambda H rank cor for varying E in FTC}) that the $p$-primary Selmer group is \emph{not} pseudonull over a noncommutative admissible pro-$p$ $p$-adic Lie extension even when the cyclotomic Euler characteristic is trivial.
In special cases (see for example Corollaries \ref{MW cor vary F Zp2} and \ref{MW rank corollary, vary p Zp2}), we prove how often the Mordell--Weil rank remains bounded at every finite layer of an infinite extension.

The structure of the paper is as follows.
Sections 2 to 7 are mostly reviews in nature.
Our main results are presented in Sections 8 to 10.
In \S\ref{S: Preliminaries}, we introduce the notation and definitions that will be used throughout the article.
In particular, we review the definition of Selmer groups, Iwasawa algebras and other related notions.
Next, we review various notions and basic properties in both commutative and noncommutative Iwasawa theory in \S\S\ref{S: alg prelim}-\ref{S: MW}, including Euler characteristics,  Akashi series, the $\fM_H(G)$-conjecture, as well as recent results on the asymptotic growth of Mordell--Weil ranks of an elliptic curve inside a $p$-adic Lie extension.
In \S\ref{S: examples of extensions}, we discuss the three families of $p$-adic Lie extensions over which we study the Iwasawa-theoretic properties of elliptic curves.
In \S\ref{S: Tamagawa calculations}, we review results on the behaviour of Tamagawa numbers under extensions of number fields, used in later sections of the article.
Our main results are proved in \S\S\ref{S: Vary extension}-\ref{S: Vary prime}, where we study arithmetic statistics of a fixed elliptic curve as the $p$-adic Lie extension varies, of families of elliptic curves over a fixed $p$-adic Lie extension, and of a fixed elliptic curve over families of $p$-adic Lie extensions as $p$ varies, respectively.
In Appendix \ref{appendix}, we discuss a classification of conjugacy classes in the finite group $\GL_2(\Z/p\Z)$, which is relevant to our discussion in \S\ref{S: Vary extension}.

\subsection*{Acknowledgements}
The first and second named authors thank Meng Fai Lim for his comments on an earlier draft of this article.
The first named author thanks Rahul Arora and R.~Sujatha for valuable inputs.
The second named author thanks Vorrapan Chandee, Chantal David, Xiannan Li and Meng Fai Lim for answering his questions during the preparation of the article.
The first named author acknowledges the support of the PIMS Postdoctoral Fellowship.
The second named author is supported by the NSERC Discovery Grants Program RGPIN-2020-04259 and RGPAS-2020-00096.
The authors thank the anonymous referee for careful and timely reading of an earlier version of the article and for pointing out corrections as well as numerous suggestions leading to improvements in the exposition of the article.

\section{Preliminaries}
\label{S: Preliminaries}

\subsection{}
\label{S: Selmer groups}
Throughout this article,  $p\geq 5$ is a prime number and $E$ is an elliptic curve over $\Q$ with good \emph{ordinary} reduction at $p$.
The prime $p$ is \emph{not} fixed forever, and in many settings, we shall vary $p$  in a suitable sense.
Moreover, even when $p$ is fixed, the estimates obtained in this article will crucially depend on $p$.
For $n\in \Z_{\geq 1}$, denote by $E[p^n]$ the $p^n$ torsion subgroup of $E(\bar{\Q})$.
We shall set $E[p^{\infty}]$ to be the union of $E[p^n]$ as $n$ ranges over $\Z_{\geq 1}$.
Let $S$ be a finite set of prime numbers containing $p$ and the primes at which $E$ has bad reduction.
Denote by $\Q_S$ the maximal algebraic extension of $\Q$ at which all primes $\ell\notin S$ are unramified.
Given a number field extension $F$ of $\Q$ contained in $\Q_S$, set $G_{F,S}:=\Gal(\Q_S/F)$.
Given a module $M$ over $G_{F,S}$, and $i\geq 0$, the cohomology group $H^i(\Q_S/F, M)$ is defined to be the discrete cohomology group $H^i(G_{F,S}, M)$.
For $n\geq 0$, let $\Q_{(n)}$ be the unique degree $p^n$-extension of $\Q$ contained in $\Q(\mu_{p^{n+1}})$.
We use $\Q_{(n)}$ instead of $\Q_n$ to avoid conflict in notation, since when $n=\ell$ is a prime, $\Q_\ell$ also denotes the $\ell$-adic numbers.
Also, note that the role of $p$ is suppressed in this notation.
However, we shall not suppress the role of $p$ when we introduce the Selmer groups that are studied.
The cyclotomic $\Z_p$-extension of $\Q$ is taken to be the union
\[
\Q_{\cyc}:=\bigcup_{n\geq 0} \Q_{(n)}.
\]
The Galois group $\Gal(\Qcyc/\Q)$ will be denoted by $\Gamma$.
For a number field $F$, we set $F_{\cyc}=F\cdot \Q_{\cyc}$ to be the cyclotomic $\Zp$-extension of $F$ and write $\Gamma_F:=\Gal(F_{\cyc}/F)$. Its $n$-th layer is the unique sub-extension $F_n$ such that $[F_n:F]=p^n$. Note that $F_n$ is contained in $F_{n+1}$ and there are isomorphisms of topological groups
\[
\Gal(F_{\cyc}/F)\xrightarrow{\sim} \varprojlim_n\Gal(F_{n}/F)\xrightarrow{\sim} \Zp.
\]
Further, when $F\cap \Q_{\cyc}=\Q$, $F_n = F\cdot \Q_{(n)}$.

Henceforth,  $\Fin/F$ denotes a pro-$p$, $p$-adic Lie extension of $F$.
In other words, as a topological group, $G:=\Gal(\Fin/F)$ is isomorphic to a pro-$p$ $p$-adic Lie group.
Furthermore, we shall require that $\Fin/F$ is \textit{admissible}, i.e., the following conditions are satisfied.
\begin{enumerate}[(a)]
 \item $\Fin$ contains $F_{\cyc}$, 
 \item $\Fin$ is ramified at finitely many primes, \emph{and}
 \item $G$ does not contain any non-zero elements of order $p$.
\end{enumerate}
Throughout, set $H:=\Gal(\Fin/F_{\cyc})$ and identify $G/H$ with $\Gamma_F$.

\subsection{}
Without loss of generality, assume that $S$ contains the set of primes that ramify in $\Fin$.
Let $L$ be a number field in $\Q_S$.
For $\ell\in S$, define the local condition at $\ell$ as follows
\[
J_\ell(E/L):= \bigoplus_{w|\ell} H^1\left( L_w, E\right)[p^\infty].
\]
In the above sum, $w$ runs through all primes of $L$ above $\ell$, and $L_w$ denotes the completion of $L$ at $w$.
The \textit{$p$-primary Selmer group} of $E$ over $L$ is defined as the kernel of the following restriction map
\[
\Sel_{p^\infty}(E/L):=\ker\left\{ H^1\left(\Q_S/L,E[p^{\infty}]\right)\xrightarrow{\Phi_{E,L}} \bigoplus_{\ell\in S} J_\ell(E/L)\right\}.
\]
Taking direct limits, the $p$-primary Selmer group of $E$ over $\Fin$ is defined to be
\[
\Sel_{p^\infty}(E/\Fin) := \varinjlim_{ L \subseteq \cF_{\infty}} \Sel_{p^\infty}(E/L),
\]
where $L$ runs through all number fields contained in $\Fin$.

\subsection{}
The \emph{Iwasawa algebra} $\Lambda(G)$ is the inverse limit of group rings
\[\Lambda(G):=\varprojlim_U \Z_p[G/U],\] where $U$ runs through all normal finite index subgroups of $G$.
The Iwasawa algebra $\Lambda(\Gamma)$ is defined similarly.
On choosing a topological generator $\gamma\in \Gamma$, we fix the ring-isomorphism $\Lambda(\Gamma)\simeq \Z_p\llbracket T\rrbracket$ identifying $\gamma-1$ with $T$.

It is shown in \cite[Theorem~3.26]{venjakob2002structure} that $\Lambda(G)$ is an \textit{Auslander regular local ring}.
Thus, there is an adequate dimension theory for modules over $\Lambda(G)$.
By a result of A.~Neumann (see \cite{Neu88}), it is known that the Iwasawa algebra $\Lambda(G)$ has no zero-divisors.
Note that unlike in the commutative case, it is possible that a noncommutative ring with no zero-divisors does not admit a skew field, see \cite[Chapter 4 \S{9B}]{Lam}.
However, a well-known result of M.~Lazard asserts that $\Lambda(G)$ is noetherian.
As a result, $\Lambda(G)$ admits a skew field by \cite[Chapter 4, Sections 9 and 10]{Lam}, which we shall denote by $Q(G)$.
Let $M$ be a module over $\Lambda(G)$, it is said to be \emph{finitely generated} (resp. \emph{torsion}) if $\dim_{Q(G)}\left(M\otimes_{\Lambda(G)} Q(G)\right)$ is finite (resp. zero).
The rank of $M$ as a $\Lambda(G)$-module is defined as
\[
\rank_{\Lambda(G)} M:=\dim_{Q(G)} \left(Q(G)\otimes_{\Lambda(G)} M\right).
\]
An application of Nakayama's lemma shows that the \emph{Pontryagin dual}
\[
\Sel_{p^\infty}(E/\Fin)^{\vee}:=\Hom(\Sel_{p^\infty}(E/\Fin), \Q_p/\Zp)
\]
is finitely generated as a $\Lambda(G)$-module.
By the result of Kato mentioned in the introduction, $\Sel_{p^\infty}(E/F_{\cyc})^{\vee}$ is a torsion $\Lambda(\Gamma)$-module  if $F/\Q$ is abelian.
Throughout, we make an analogous assumption for the extension $\Fin$.

\begin{Assumption}
\label{assumption: torsion}
Assume that $\Sel_{p^\infty}(E/\Fin)^{\vee}$ is a torsion $\Lambda(G)$-module.
\end{Assumption}

\begin{remark}
\label{torsion remark}
A result of P.~N.~Balister and S.~Howson (see \cite{BH97} or \cite[Lemma~2.6]{HO10}) says that if $G$ is a uniformly powerful, solvable group containing a closed normal subgroup $H$ such that $G/H \simeq \Zp$, then a finitely generated $\Lambda(G)$-module is torsion if $M_H$ is $\Lambda(\Gamma)$-torsion.
This can be used to show that 
if $\Sel_{p^\infty}(E/F_{\cyc})$ is $\Lambda(\Gamma)$-cotorsion, then $\Sel_{p^\infty}(E/\Fin)$ is $\Lambda(G)$-cotorsion (see \cite[Theorem~2.3]{HO10}).
\end{remark}

For a finitely generated torsion $\Lambda(G)$-module $M$, let $M(p)$ denote the $p$-primary torsion subgroup of $M$ and set 
\[
M_f:=M/M(p).
\]
Since the ring $\Lambda(G)$ is noetherian, one can find $r\in \Z_{\geq 1}$ such that $p^r$ annihilates $M(p)$.
Let $\Omega(G)$ denote the mod-$p$ reduction of the Iwasawa algebra $\Lambda(G)$.
This group algebra has no non-trivial zero divisors and hence, admits a skew field of fractions, see for example \cite{dixon}.
This implies that the notion of $\Omega(G)$-rank makes sense.
Now, following \cite{howson2002euler}, we define the $\mu$-invariant of $M$ by 
\[
\mu_p(M):=\sum_{i=0}^r \rank_{\Omega(G)} \left(p^i M(p)/p^{i+1}\right).
\]
Henceforth, we denote by $\mup$ the $\mu$-invariant of the Selmer group $\Sel_{p^\infty}(E/\Fin)^{\vee}$ as a $\Lambda(G)$-module.

\section{The Euler Characteristic}
\label{S: alg prelim}

If $M$ is any discrete cofinitely generated $p$-primary $\Lambda(G)$-module, we say that $M$ has finite $G$-Euler characteristic if the cohomology groups $H^i(G, M)$ are finite for all $i\ge0$.
Then, the (classical) \emph{Euler characteristic} $\chi(G, M)$ is defined as follows
\[
\chi(G, M)=\prod_{i\geq 0}\left( \# H^i(G,M)\right)^{(-1)^i}.
\]
For ease of notation, set 
\begin{align*}
 \chi(\Gamma, E,p)&:= \chi\left(\Gamma, \Sel_{p^\infty}(E/F_{\cyc})\right) \textrm{ and } \\
 \chi(G, E,p) &:= \chi\left( G, \Sel_{p^\infty}(E/\Fin)\right).
\end{align*}
When the cohomology groups $H^i(G, M)$ are \emph{not} finite, there is a generalization of the above notion.
When $G=\Gamma$, we identify $H^1(\Gamma, M)$ with the module of co-invariants $M_{\Gamma}$.
There is an obvious map 
\[
\Phi_M:M^{\Gamma}\rightarrow M_{\Gamma}
\]
sending $m$ to its residue class.
The \emph{truncated $\Gamma$-Euler characteristic} is well-defined if both $\ker \Phi_M$ and $\coker \Phi_M$ are finite, and it is given by 
\[
\chi_t(\Gamma, M):=\frac{\# \ker \Phi_M}{\# \coker \Phi_M}.
\]
Following the discussion on \cite[pp.~779-780]{zerbes2009generalised}, we recall the generalization of this notion to $\Lambda(G)$.
For a discrete $p$-primary $G$-module $M$, let $\mathfrak{d}_M^0$ be the composite of the maps 
\[
\mathfrak{d}_M^0:H^0(G,M)=H^0(\Gamma, M^H)\xrightarrow{\Phi_M}H^1(\Gamma, M^H)\hookrightarrow H^1(G, M),
\]
where the last map is the inflation.
For $j\geq 1$, define $\mathfrak{d}_M^j$ as the composite
\[
\mathfrak{d}_M^j:H^i(G, M)\rightarrow H^0\left(\Gamma, H^i(H, M)\right)\xrightarrow{\Phi_{H^i(H, M)}} H^1\left(\Gamma, H^i(H, M)\right)\hookrightarrow H^{i+1}(G, M).
\]
Let $\mathfrak{d}_{M}^{-1}$ denote the $0$-map.
Note that $\left(H^i(G, M), \mathfrak{d}_M^j\right)$ forms a complex, and we denote its $j$-th cohomology group by $\mathfrak{h}_j$.
\begin{definition}
The truncated (or generalized) Euler characteristic of a cofinitely generated $p$-primary $\Lambda(G)$-module $M$ is defined if the cohomology groups $\mathfrak{h}_j$ are all finite.
In this case, the \emph{truncated $G$-Euler characteristic} is defined as follows
\[
\chi_t(G, M):=\prod_j (\#\mathfrak{h}_j)^{(-1)^j}.
\]
\end{definition}

The terminology `generalized Euler characteristic' was introduced in \cite{zerbes2009generalised}.
In earlier works, such as \cite{CoatesSchneiderSujatha_Links_between, CKFVS}, it was referred to as `truncated Euler characteristic'.
We shall refer to this Euler characteristic as the truncated Euler characteristic, which is also consistent with the terminology used by the third named author in \cite{ray2020euler2, ray2021euler1}, where the behaviour of these invariants with respect to congruences is studied.

When the cohomology groups $H^i(G, M)$ are finite for all $i\ge0$, the truncated Euler characteristic $\chi_t(G, M)$ coincides with the usual Euler characteristic $\chi(G, M)$.
We now give a criterion for the $\Gamma$-Euler characteristic for Selmer groups over the cyclotomic $\Zp$-extension to be well-defined.

As in the case with the classical Euler characteristic $\chi(\cdot, \cdot)$, we adopt a similar shorthand for the truncated Euler characteristic, setting
\begin{align*}
\chi_t(\Gamma, E,p)&:= \chi_t\left(\Gamma, \Sel_{p^\infty}(E/F_{\cyc})\right) \textrm{ and }\\ 
\chi_t(G, E,p)&:= \chi_t\left(G, \Sel_{p^\infty}(E/\Fin)\right).
\end{align*}

\begin{lemma}
\label{lemma EC defined}Assume that $\Sha(E/F)[p^\infty]$ is finite.
The following conditions are equivalent.
\begin{enumerate}
 \item The classical $\Gamma$-Euler characteristic $\chi(\Gamma, E,p)$ is well-defined.
 \item $\Sel_{p^\infty}(E/F_{\cyc})^{\Gamma}$ is finite.
 \item The Selmer group $\Sel_{p^{\infty}}(E/F)$ is finite.
 \item The Mordell--Weil group $E(F)$ is finite.
\end{enumerate} 
\end{lemma}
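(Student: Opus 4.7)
The plan is to establish the cyclic chain (1)$\Leftrightarrow$(2)$\Leftrightarrow$(3)$\Leftrightarrow$(4), with each link drawing on a single classical tool. Write $M := \Sel_{p^\infty}(E/F_{\cyc})$ and $X := M^\vee$, a finitely generated $\Lambda(\Gamma)$-module, where $\Gamma \cong \Zp$.

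For (1)$\Leftrightarrow$(2), the input is that $\Gamma$ has $p$-cohomological dimension $1$, so the classical Euler characteristic $\chi(\Gamma, M)$ is well-defined precisely when $H^0(\Gamma, M) = M^\Gamma$ and $H^1(\Gamma, M) = M_\Gamma$ are both finite. Under Pontryagin duality these are dual to $X_\Gamma$ and $X^\Gamma$ respectively, so the question reduces to finiteness of $X^\Gamma$ and $X_\Gamma$. Picking a topological generator $\gamma$ of $\Gamma$, the tautological four-term exact sequence
\[ 0 \to X^\Gamma \to X \xrightarrow{\gamma - 1} X \to X_\Gamma \to 0 \]
shows that $X^\Gamma$ and $X_\Gamma$ have the same $\Zp$-rank, so one is finite iff the other is. This reduces (1) to (2). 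As a by-product, finiteness of $M^\Gamma$ forces $X$ to be $\Lambda(\Gamma)$-torsion, consistent with Assumption~\ref{assumption: torsion}.

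For (2)$\Leftrightarrow$(3), Mazur's control theorem (available under the good-ordinary hypothesis at $p$ built into our running assumptions) furnishes the natural restriction map $\Sel_{p^\infty}(E/F) \to \Sel_{p^\infty}(E/F_{\cyc})^\Gamma$ with finite kernel and cokernel, so one side is finite iff the other is. For (3)$\Leftrightarrow$(4), Kummer theory yields
\[ 0 \to E(F) \otimes \Qp/\Zp \to \Sel_{p^\infty}(E/F) \to \Sha(E/F)[p^\infty] \to 0, \]
and the hypothesis that $\Sha(E/F)[p^\infty]$ is finite gives: $\Sel_{p^\infty}(E/F)$ finite iff $E(F)\otimes\Qp/\Zp$ finite iff $\rank_\Z E(F) = 0$ iff, by Mordell--Weil, $E(F)$ is finite.

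The proof draws on no genuinely new input: each step invokes one off-the-shelf result (cohomological dimension of $\Zp$ plus the four-term exact sequence, Mazur's control theorem, Kummer theory). The only delicate point is verifying the hypotheses of the control theorem in step (2)$\Leftrightarrow$(3); this is ensured by the good-ordinary reduction assumption standing throughout the paper.
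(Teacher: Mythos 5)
Your overall strategy — the cyclic chain (1)$\Leftrightarrow$(2)$\Leftrightarrow$(3)$\Leftrightarrow$(4) via duality, Mazur's control theorem, and Kummer theory — is exactly the standard argument that the paper's citation to \cite[Lemma~3.2]{KR21} invokes, and the steps (2)$\Leftrightarrow$(3) and (3)$\Leftrightarrow$(4) are correct and complete. However, there is a genuine logical error in your treatment of (1)$\Leftrightarrow$(2). You assert that the four-term exact sequence
\[
0 \to X^\Gamma \to X \xrightarrow{\gamma - 1} X \to X_\Gamma \to 0
\]
``shows that $X^\Gamma$ and $X_\Gamma$ have the same $\Zp$-rank.'' This is false for a general finitely generated $\Lambda(\Gamma)$-module: take $X = \Lambda(\Gamma)$ itself, which gives $X^\Gamma = 0$ (rank $0$) but $X_\Gamma \cong \Zp$ (rank $1$). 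The alternating-sum-of-ranks argument breaks down because the middle term $X$ need not be finitely generated over $\Zp$, so one cannot cancel its contributions.

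The conclusion you want is nonetheless true, and you already possess the missing ingredient, but you place it in the wrong logical order: you offer ``finiteness of $M^\Gamma$ forces $X$ to be $\Lambda(\Gamma)$-torsion'' as a \emph{by-product}, when it must in fact be the \emph{first} step. The correct route for (2)$\Rightarrow$(1) is: finiteness of $X_\Gamma$ rules out any free $\Lambda(\Gamma)$-summand of $X$ (via the structure theorem or Nakayama's lemma), so $X$ is $\Lambda(\Gamma)$-torsion; then the rank equality $\corank_{\Zp} M^\Gamma = \corank_{\Zp} M_\Gamma$ is exactly part (1) of Lemma~\ref{lemmazerbes}, which applies only to torsion modules; finally, $X^\Gamma$ is finitely generated over $\Zp$ (being the $T$-torsion of a finitely generated $\Lambda(\Gamma)$-module) and has $\Zp$-rank zero, hence is finite. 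With this reordering — torsion-ness first, then the rank comparison — your proof becomes correct and coincides with the paper's intended argument.
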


\begin{proof}
The proof presented in \cite[Lemma~3.2]{KR21} can be adapted for any number fields.
\end{proof}

Let $M$ be a cofinitely generated cotorsion $\Lambda(\Gamma)$-module.
Express the characteristic element of $M^\vee$, denoted by $f_{M}(T)$, as a polynomial
\[
f_{M}(T)=c_0+c_1T+\dots +c_{d-1} T^{d-1}+T^d.
\]
Let $r_{M}$ denote the order of vanishing of $f_{M}(T)$ at $T=0$.
For $a,b\in \Qp$, we write $a\sim b$ if there is a unit $u\in \Zp^{\times}$ such that $a=bu$.

\begin{lemma}
\label{lemmazerbes}
Let $M$ be a cofinitely generated cotorsion $\Lambda(\Gamma)$-module.
Assume that the kernel and cokernel of $\Phi_{M}$ are finite.
Then,
\begin{enumerate}
\item ${r_{M}}=\corank_{\Zp}(M^{\Gamma})=\corank_{\Zp}(M_{\Gamma})$.
\item $c_{r_{M}}\neq 0$.
\item $c_{r_{M}}\sim \chi_t(\Gamma, M)$.
\end{enumerate}
Here, $c_{r_M}$ is the coefficient of $T^{r_M}$ in $f_M(T)$.
\end{lemma}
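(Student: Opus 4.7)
The plan is to combine Pontryagin duality with the structure theorem for finitely generated torsion $\Lambda(\Gamma)$-modules. Setting $N := M^\vee$ and Pontryagin-dualizing the exact sequence
\[
0\to M^\Gamma\to M\xrightarrow{\gamma-1}M\to M_\Gamma\to 0
\]
identifies $(M_\Gamma)^\vee \simeq N[T]$ and $(M^\Gamma)^\vee \simeq N/TN$ under $\Lambda(\Gamma)\simeq\Zp\lb T\rb$ with $T = \gamma-1$. Under these identifications, the Pontryagin dual of $\Phi_M$ is the natural composition $N[T]\hookrightarrow N \twoheadrightarrow N/TN$, and the hypothesis on $\Phi_M$ translates into this composition having finite kernel and cokernel. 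In particular, (1) reduces to computing the $\Zp$-ranks of $N[T]$ and $N/TN$, and $\chi_t(\Gamma,M)$ equals the ratio $\#\coker/\#\ker$ of the composition.

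Next, I would invoke the structure theorem to choose a pseudo-isomorphism
\[
\phi\colon N \to E := \bigoplus_{i=1}^s \Lambda(\Gamma)/(p^{a_i}) \oplus \bigoplus_{j=1}^t \Lambda(\Gamma)/(g_j^{b_j}),
\]
with $g_j$ distinguished irreducible polynomials. Applying the snake lemma to the multiplication-by-$T$ endomorphism on the short exact sequences attached to $\phi$ (and to the inclusion $\phi(N)\hookrightarrow E$) shows that $N[T]$ and $N/TN$ differ from $E[T]$ and $E/TE$ only by finite subquotients. Hence the $\Zp$-ranks are preserved exactly, and the ratio $\#\coker/\#\ker$ for the composition $N[T]\to N/TN$ agrees with the analogous ratio for $E[T]\to E/TE$ up to a factor in $\Zp^\times$. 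It therefore suffices to verify (1)--(3) for each elementary summand of $E$.

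A direct case analysis then handles the elementary situation. For $\Lambda(\Gamma)/(p^a)$, the operator $T$ is a non-zero-divisor, so the $T$-torsion vanishes and the $T$-coinvariants equal $\Z/p^a$, contributing $0$ to both $\Zp$-ranks and $p^a$ to $\chi_t$. For $\Lambda(\Gamma)/(g_j^{b_j})$ with $g_j\neq T$, one has $g_j(0)\neq 0$, so again $T$ acts injectively, yielding $0$ for ranks and $g_j(0)^{b_j}$ (up to a unit) for $\chi_t$. Summands $\Lambda(\Gamma)/(T^b)$ with $b\geq 2$ are excluded because there the natural map $N[T]\to N/TN$ is zero on an infinite $\Zp$-submodule, violating the finiteness hypothesis; each $\Lambda(\Gamma)/(T)$ summand contributes $1$ to each rank and $1$ to $\chi_t$ (the natural map being the identity on $\Zp$). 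Aggregating, $r_M$ equals the number of $\Lambda(\Gamma)/(T)$ summands and so coincides with $\corank_{\Zp}(M^\Gamma)$ and $\corank_{\Zp}(M_\Gamma)$, proving (1); the coefficient of $T^{r_M}$ in $f_M(T)=p^{\sum_i a_i}\,T^{r_M}\prod_{g_j\neq T}g_j(T)^{b_j}$ is $p^{\sum_i a_i}\prod_{g_j\neq T}g_j(0)^{b_j}\in \Zp\setminus\{0\}$, which matches $\chi_t(\Gamma,M)$ up to a $p$-adic unit, giving (2) and (3).

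The main obstacle will be the pseudo-isomorphism step: carefully tracking the finite discrepancies introduced by $\phi$ through the multiplication-by-$T$ sequence, and confirming that their net contribution to the truncated Euler characteristic is a $p$-adic unit while preserving the $\Zp$-ranks exactly. A related subtlety is reconciling the monic normalization $f_M(T)=T^d+c_{d-1}T^{d-1}+\cdots+c_0$ with the possible $p^\mu$ factor from the pure $p$-torsion of $N$; this is best understood by interpreting the relation $c_{r_M}\sim\chi_t(\Gamma,M)$ as an equality in $\Zp/\Zp^\times$.
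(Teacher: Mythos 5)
Your proposal is correct: the paper offers no argument of its own here (it simply cites \cite[Lemma~2.11]{zerbes2009generalised}), and your dualize--apply-the-structure-theorem--reduce-to-elementary-summands argument is precisely the standard proof of that cited lemma. The one step you flag as the ``main obstacle'' is handled by the usual Herbrand-quotient observation: the quantity $\#\coker/\#\ker$ of the composite $X[T]\hookrightarrow X\twoheadrightarrow X/TX$ equals $1$ on finite modules and is multiplicative in short exact sequences, so it passes through the pseudo-isomorphism \emph{exactly} (not merely up to a unit), while the $\Zp$-ranks are preserved because the snake-lemma discrepancies are finite.
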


\begin{proof}
See \cite[Lemma~2.11]{zerbes2009generalised}.
\end{proof}

\section{Birch and Swinnerton-Dyer formulas and Akashi Series}
\label{S: BSD and akashi}

Let $E$ be an elliptic curve defined over $\Q$.
Fix a number field extension $F/\Q$ and consider the base-change of $E$ to $E_{/F}$.
Let $\Gamma_F$ denote the Galois group $\Gal(F_{\cyc}/F)$.
Recall that $G$ is the Galois group $\Gal(\Fin/F)$ for some $p$-adic Lie extension $\Fin$ of $F$.
We discuss explicit formulas for the truncated Euler characteristic $\chi_t(\Gamma_F, E,p)$ and $\chi_t(G, E,p)$.
These formulas are motivated by the $p$-adic Birch and Swinnerton-Dyer conjecture.

\subsection{}
It follows from Lemma~$\ref{lemmazerbes}$ that the truncated $\Gamma_F$-Euler characteristic, when defined, is always an integer.
By Lemma~\ref{lemma EC defined}, the $\Gamma_F$-Euler characteristic $\chi(\Gamma_F, M)$ is defined if and only if $r_{M}=0$.
In this case, the constant coefficient $c_0$ of the characteristic element of $\Sel_{p^\infty}(E/F_\cyc)^\vee$ satisfies $c_0\sim \chi(\Gamma_F, M)$.
Furthermore, we have the following formula (see \cite[Chapter 3]{CoatesSujatha_book}):
\[
\chi(\Gamma_F, E,p) \sim \frac{\#\Sha(E/F)[p^\infty]\cdot \prod_{v\nmid p}c_v^{(p)}(E/F)}{\left(\# E(F)[p^\infty] \right)^2} \cdot \prod_{v|p}\left(\# \widetilde{E}(\kappa_v)_{p^\infty} \right)^2.
\]
Here, $\Sha(E/F)$ is the Tate--Shafarevich group, which is assumed to be finite throughout this article.
At a finite prime $v$ of $F$, the residue field is denoted by $\kappa_v$.
Let $\absolute{\cdot}_p$ be the absolute value on $\bar{\Q}_p$ normalized by setting $\absolute{p}_p^{-1}=p$.
The notation $c_v(E/F)$ is used for the Tamagawa number at $v\nmid p$, and $c_v^{(p)}(E/F)$ is its $p$-part, given by 
\[
c_v^{(p)}(E/F):=\absolute{c_v(E/F)}_p^{-1}.
\]
At a prime $v|p$, denote by $\widetilde{E}$ the reduction of $E$ at $v$ and $\widetilde{E}(\kappa_v)$ be the group of $\kappa_v$-valued points on $\widetilde{E}$.
The next result provides conditions for the truncated $\Gamma$-Euler characteristic to be defined.

\begin{lemma}
\label{truncdefined}
Let $M$ be a $p$-primary cotorsion $\Lambda(\Gamma)$-module.
Let $f_1(T), \dots, f_n(T)$ be distinguished polynomials such that $M^{\vee}_f$ is pseudo-isomorphic to $\bigoplus_{i=1}^n \Lambda(\Gamma)/(f_i(T))$.
If $T^2 \nmid f_i(T)$ for all $i$, then the kernel and cokernel of $\Phi_M$ are finite and the truncated $\Gamma$-Euler characteristic $\chi_t(\Gamma, M)$ is defined.
In particular, $\chi_t(\Gamma, M)$ is defined when $r_M\leq 1$.
\end{lemma}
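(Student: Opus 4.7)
The plan is to work with the Pontryagin dual $N := M^\vee$ and translate the statement into a finiteness property of a natural map on $N$. Let $\gamma$ be a topological generator of $\Gamma$ and set $T = \gamma - 1 \in \Lambda(\Gamma)$. Pontryagin duality identifies $(M^\Gamma)^\vee \cong N/TN$ and $(M_\Gamma)^\vee \cong N[T]$, and the dual of $\Phi_M$ is the natural composition
\[
\Phi_M^\vee \colon N[T] \hookrightarrow N \twoheadrightarrow N/TN.
\]
Consequently, $\ker \Phi_M$ and $\coker \Phi_M$ are finite if and only if $\coker \Phi_M^\vee$ and $\ker \Phi_M^\vee$ are respectively finite, i.e., if and only if $N[T] \cap TN$ and $N/(TN + N[T])$ are both finite.

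The next step is to reduce to elementary modules via the structure theorem, which supplies a pseudo-isomorphism
\[
N \longrightarrow E := \bigoplus_i \Lambda(\Gamma)/(p^{a_i}) \;\oplus\; \bigoplus_j \Lambda(\Gamma)/(f_j(T)).
\]
Since pseudo-null $\Lambda(\Gamma)$-modules are exactly the finite ones, breaking this map into two short exact sequences via its image and applying the snake lemma to multiplication by $T$ shows that $N[T]$, $N/TN$, and the image of $\Phi_M^\vee$ differ from their counterparts for $E$ by finite groups. I may therefore replace $N$ by $E$, and by additivity the problem reduces to checking finiteness of kernel and cokernel of the analogous map on each cyclic summand.

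For each summand, the computation is direct. When $N = \Lambda(\Gamma)/(p^a)$, the element $T$ is a non-zerodivisor, so $N[T] = 0$ and $N/TN \cong \Z/p^a\Z$ is finite. When $N = \Lambda(\Gamma)/(f)$ with $f$ distinguished, the hypothesis $T^2 \nmid f$ allows me to write $f = T^\epsilon g$ with $\epsilon \in \{0,1\}$ and $g$ distinguished satisfying $g(0) \neq 0$. If $\epsilon = 0$, then $T$ is still a non-zerodivisor on $N$, giving $N[T] = 0$ and $N/TN \cong \Zp/g(0)\Zp$, both finite. If $\epsilon = 1$, then multiplication by $g$ yields an isomorphism $\Lambda(\Gamma)/(T) \xrightarrow{\sim} (g)/(Tg) = N[T]$, and one also has $N/TN \cong \Lambda(\Gamma)/(T) \cong \Zp$; tracing the composite $N[T] \hookrightarrow N \twoheadrightarrow N/TN$ through these identifications shows that $\Phi_M^\vee$ becomes multiplication by $g(0) \in \Zp$, whose kernel is zero and whose cokernel $\Zp/g(0)\Zp$ is finite because $g(0) \neq 0$. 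I expect this last identification — realising $\Phi_M^\vee$ as multiplication by $g(0)$ on infinite $\Zp$-modules — to be the main technical point, since both the source and target are individually infinite and it is the map between them, not the groups themselves, that governs the finiteness.

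Finally, for the ``in particular'' assertion, note that $r_M = \sum_j v_T(f_j)$, so the condition $r_M \leq 1$ forces $v_T(f_j) \leq 1$, equivalently $T^2 \nmid f_j$, for every $j$, and the first part applies.
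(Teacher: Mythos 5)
Your proof is correct, and it is a self-contained version of the argument that the paper delegates by citation to the proof of Zerbes, \cite[Lemma~2.11]{zerbes2009generalised}. The strategy — dualize $\Phi_M$, pass to an elementary module via the structure theorem and the fact that pseudo-null $=$ finite over $\Lambda(\Gamma)$, then compute on each cyclic summand — is exactly the standard route; the key computation correctly identifies $\Phi_M^\vee$ on a summand $\Lambda(\Gamma)/(Tg)$ with $g(0)\ne0$ as multiplication by $g(0)$ between two copies of $\Zp$, which gives trivial kernel and finite cokernel $\Zp/g(0)\Zp$. One small point you glossed over: the reduction to $E$ needs not just that $N[T]$, $N/TN$ individually change by finite groups under a pseudo-isomorphism, but that the maps $\Phi_N^\vee$ and $\Phi_E^\vee$ fit into a commutative square whose vertical arrows have finite kernel and cokernel, from which the transfer of finiteness follows by a short diagram chase; this is true and standard, but worth saying explicitly since ``differ by finite groups'' applied to the images alone does not by itself control both kernel and cokernel.
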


\begin{proof}
It follows from the proof of \cite[Lemma~2.11]{zerbes2009generalised}.
\end{proof}

\subsection{}
When $E$ has good \emph{ordinary} reduction at $p$, there is a $p$-adic analog of the usual height pairing, which was studied extensively by P.~Schneider in \cite{Schneider82, Schneider85}.
This $p$-adic height pairing is conjectured to be non-degenerate, and its determinant is called the \emph{$p$-adic regulator} (denoted by $\Reg_p(E/F)$).
In Iwasawa theory, it is standard to use the following normalized $p$-adic regulator, which is well-defined up to a $p$-adic unit
\[
\mathcal{R}_p(E/F) = \frac{\Reg_p(E/F)}{p^{\rank_{\Z} E(F)}}.
\]
The following result gives a formula for the truncated $\Gamma_F$-Euler characteristic of the $p$-primary Selmer group (when it is defined).
In the CM case, this was proven by B.~Perrin-Riou (see \cite{PR82}) and in the general case by Schneider (see \cite{Schneider85}).

\begin{theorem}
\label{pbsdconj}
Assume that the elliptic curve $E$ has good \emph{ordinary} reduction at $p$.
The order of vanishing of the characteristic element $f_E(T)$ of $\Sel_{p^\infty}(E/F_{\cyc})^\vee$ at $T=0$ is at least equal to $\rank_{\Z}E(F)$.
Furthermore, if
\begin{enumerate}[(i)]
 \item $\mathcal{R}_p(E/F)\neq 0$,
 \item $\Sha(E/F)[p^{\infty}]$ is finite,
\end{enumerate}then, \[\ord_{T=0} f_E(T)=\rank_{\Z} E(F).\] Further, if the truncated $\Gamma_F$-Euler characteristic $\chi_t(\Gamma_F, E,p)$ is defined, then, one has the following $p$-adic Birch and Swinnerton-Dyer formula for the truncated Euler characteristic
\begin{equation}
\label{BSDformula}
\chi_{t}(\Gamma_F, E,p) \sim \mathcal{R}_p(E/F) \times \frac{\# \Sha(E/F)[p^\infty] \times \prod_{v\nmid p} c_{v}^{(p)}(E/F) \times \prod_{v| p}\left(\# \widetilde{E}(\kappa_v)[p^\infty]\right)^2}{\left(\# E(F)[p^\infty]\right)^2}.
\end{equation}
\end{theorem}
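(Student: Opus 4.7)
The plan is to use Mazur's control theorem together with the structure theory of $\Lambda(\Gamma_F)$-modules to convert statements about the Selmer group over the cyclotomic tower into arithmetic invariants over $F$, then extract the leading coefficient of $f_E(T)$ via Schneider's theory of $p$-adic heights.

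First I would apply Mazur's control theorem (in the Greenberg formulation, valid for elliptic curves with good ordinary reduction at $p$ over any number field) to show that the restriction map
\[
\Sel_{p^\infty}(E/F) \longrightarrow \Sel_{p^\infty}(E/F_{\cyc})^{\Gamma_F}
\]
has finite kernel and finite cokernel, with orders controlled by local terms (Tamagawa factors $c_v^{(p)}(E/F)$ at bad primes $v\nmid p$ and factors $(\#\widetilde{E}(\kappa_v)[p^\infty])^2$ at primes $v\mid p$). Combined with the short exact sequence
\[
0 \to E(F)\otimes \Qp/\Zp \to \Sel_{p^\infty}(E/F) \to \Sha(E/F)[p^\infty] \to 0,
\]
one obtains $\corank_{\Zp}\Sel_{p^\infty}(E/F_{\cyc})^{\Gamma_F} \geq \rank_\Z E(F)$. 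By Lemma \ref{lemmazerbes}(1), the left-hand side equals $r_E := \ord_{T=0} f_E(T)$, which establishes the lower bound claim.

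Next, assuming $\mathcal{R}_p(E/F)\neq 0$ and $\#\Sha(E/F)[p^\infty]<\infty$, I would invoke Schneider's result that the non-degenerate $p$-adic height pairing governs the $\Gamma_F$-action on the free part of $\Sel_{p^\infty}(E/F_{\cyc})^\vee$. Non-vanishing of the regulator forces the $T$-adic valuation of $f_E(T)$ to be \emph{at most} $\rank_\Z E(F)$, yielding the desired equality $r_E=\rank_\Z E(F)$. For the truncated Euler characteristic formula, I would use Lemma \ref{lemmazerbes}(3) to identify $\chi_t(\Gamma_F, E[p^\infty])$ (up to a unit) with the coefficient $c_{r_E}$ of $T^{r_E}$ in $f_E(T)$. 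One can then decompose $c_{r_E}$ as a product: Tate's global duality and the global Euler--Poincaré characteristic formula (applied to the complex defining $\chi_t$, as in the proof of \cite[Lemma~2.11]{zerbes2009generalised}) express it as a product of local terms, which produce the factors $c_v^{(p)}(E/F)$ at bad primes, the factors $(\#\widetilde{E}(\kappa_v)[p^\infty])^2$ at primes above $p$, the $\#\Sha(E/F)[p^\infty]$ factor from the cokernel of the global-to-local map (using finiteness of $\Sha$), and the $(\#E(F)[p^\infty])^2$ denominator from the $H^0$-terms. The regulator $\mathcal{R}_p(E/F)$ emerges precisely from the non-trivial part of the map $\Phi_{\Sel}$, where the $p$-adic height pairing computes the determinant of $\Phi_{\Sel}$ on the free quotient.

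The main obstacle is the precise identification of the leading coefficient $c_{r_E}$ with $\mathcal{R}_p(E/F)$; unlike the simpler case $r_E=0$ (where $\chi(\Gamma_F, E[p^\infty])$ is simply $c_0$ and no regulator appears), the positive rank case requires a careful comparison between the cohomological complex computing $\chi_t$ and the algebraic interpretation of the $p$-adic height pairing. This is exactly the content of Schneider's work and would be the step where I would invoke his results most heavily rather than reproving them.
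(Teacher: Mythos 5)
The paper does not actually prove this theorem: it is quoted as a known result of Perrin-Riou (CM case) and Schneider (general case), with no argument supplied beyond the citation. Your sketch is a faithful outline of the standard proof from those references --- the control theorem plus Lemma~\ref{lemmazerbes} for the order of vanishing, and Schneider's height-pairing computation for the leading coefficient --- and, like the paper, it correctly defers the essential content (identifying $c_{r_E}$ with $\mathcal{R}_p(E/F)$ times the arithmetic invariants) to Schneider's work, so there is nothing to fault.
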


\begin{corollary}
\label{cor to PR-Sch}
Let $E$ be an elliptic curve with good \emph{ordinary} reduction at an odd prime $p$ and assume that 
\begin{enumerate}[(i)]
 \item $\rank_{\Z}E(F)\leq 1$,
 \item the $p$-adic regulator $\mathcal{R}_p(E/F)$ is non-zero, \item $\Sha(E/F)[p^{\infty}]$ is finite.
\end{enumerate}
Then, the truncated Euler characteristic $\chi_t(\Gamma_F, E,p)$ is defined and given by \eqref{BSDformula}.
\end{corollary}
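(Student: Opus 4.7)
The statement is essentially a packaging of the previous two results, so my plan is to chain them together and verify the hypotheses line up.

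First I would set $M := \Sel_{p^\infty}(E/F_{\cyc})$ and invoke Theorem \ref{pbsdconj} directly. Hypotheses (ii) and (iii) of the corollary are exactly the hypotheses (i) and (ii) of that theorem, so we may conclude that
\[
r_M = \ord_{T=0} f_E(T) = \rank_\Z E(F).
\]
By hypothesis (i) of the corollary, this common value is at most $1$.

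Next I would invoke Lemma \ref{truncdefined}. Writing $M_f^\vee$ as pseudo-isomorphic to $\bigoplus_{i=1}^n \Lambda(\Gamma)/(f_i(T))$ with $f_i$ distinguished, the condition $r_M \le 1$ forces the total order of vanishing at $T=0$ of $\prod_i f_i(T)$ to be at most $1$, so no individual $f_i$ can be divisible by $T^2$. Hence the hypothesis of Lemma \ref{truncdefined} holds, so the kernel and cokernel of $\Phi_M$ are finite, and $\chi_t(\Gamma_F, E[p^\infty]) = \chi_t(\Gamma_F, M)$ is well-defined. (This is, in fact, the content of the last sentence of Lemma \ref{truncdefined}.)

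Finally, with the truncated Euler characteristic now known to be defined, the explicit formula \eqref{BSDformula} is precisely the conclusion of Theorem \ref{pbsdconj} applied under hypotheses (ii) and (iii). The result follows.

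There is no real obstacle here: the corollary is a clean bookkeeping consequence of Theorem \ref{pbsdconj} combined with Lemma \ref{truncdefined}. The only substantive point to be careful about is that hypothesis (i) of the corollary ($\rank_\Z E(F) \le 1$) is used precisely to ensure $r_M \le 1$, which is in turn what triggers Lemma \ref{truncdefined} and guarantees that $\chi_t(\Gamma_F, E[p^\infty])$ exists; without this, Theorem \ref{pbsdconj} gives the formula only conditionally on the Euler characteristic being defined.
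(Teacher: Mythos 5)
Your proof is correct and follows exactly the same route as the paper: apply Theorem~\ref{pbsdconj} to conclude $\ord_{T=0} f_E(T) = \rank_\Z E(F) \le 1$, invoke Lemma~\ref{truncdefined} (its final ``in particular'' clause) to get that $\chi_t(\Gamma_F, E[p^\infty])$ is defined, then re-apply Theorem~\ref{pbsdconj} for the formula~\eqref{BSDformula}. The only difference is that you spell out why $r_M \le 1$ forces $T^2 \nmid f_i$ for each $i$, which the paper leaves implicit in the lemma's last sentence.
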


\begin{proof}
By Theorem~\ref{pbsdconj}, the order of vanishing of $f_E(T)$ is $\leq 1$.
Hence, by Lemma~\ref{truncdefined}, the truncated Euler characteristic is defined.
Therefore, by Theorem~\ref{pbsdconj} the truncated $\Gamma_F$-Euler characteristic up to a $p$-adic unit is given by \eqref{BSDformula}.
\end{proof}

\subsection{} 
Following \cite{zerbes2009generalised}, we introduce conditions under which the truncated $G$-Euler characteristic $\chi_t(G, E,p)$ is defined, and give an explicit formula for it.

\begin{Assumption}
\label{assumption: Fin}
Assume that the following conditions are satisfied.
\begin{align*}
(\Finite_{\glob}): &\ H^i(H, E(\Fin)[p^\infty]) \textrm{ is finite for any } i\geq 0, \\
(\Finite_{\local}): & \textrm{ For primes }w|p \textrm{ of } \Fin \textrm{ and } i\geq 0, \textrm{ the group }H^i\left(H_w, \widetilde{E}(\kappa_{\infty, w})[p^\infty]\right) \textrm{ is finite}.
\end{align*}
\end{Assumption}
Here, $\kappa_{\infty,w}$ is the residue field of $F_{\infty,w}$ and $H_w$ is the decomposition group of $w$ in $H$.
By \cite[Proposition~5.6]{zerbes2009generalised}, the local finiteness assumption $(\Finite_{\local})$ is satisfied in our current setting.
The assumption $(\Finite_{\glob})$ is satisfied under the following additional condition.
\begin{proposition}
\label{prop finglob}
If the Lie algebra of $H$ is reductive, then, $(\Finite_{\glob})$ is satisfied.
\end{proposition}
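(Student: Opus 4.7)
The plan is to translate finiteness of $H^i(H, E(\cF_\infty)[p^\infty])$ into a vanishing statement in Lie algebra cohomology, and then exploit the reductivity hypothesis together with the finiteness of $E(F_{\cyc})[p^\infty]$. Set $M := E(\cF_\infty)[p^\infty]$ and $V := M \otimes_{\Zp} \Qp$; since $M \subseteq E[p^\infty] \cong (\Qp/\Zp)^2$ is cofinitely generated over $\Zp$ of corank at most two, $V$ is a continuous finite-dimensional $\Qp$-representation of $H$. Write $\mathfrak{h}$ for the $\Qp$-Lie algebra of $H$. The first step is the standard Lazard-type comparison for compact $p$-adic Lie groups, which is also the main input into the local version $(\Finite_{\local})$ in \cite[Proposition~5.6]{zerbes2009generalised}: for each $i \geq 0$, the $\Zp$-corank of $H^i(H, M)$ equals $\dim_{\Qp} H^i(\mathfrak{h}, V)$. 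After separating off the finite cotorsion part of $M$, whose cohomology is automatically finite, this gives the equivalence
\[
H^i(H, M) \text{ is finite} \iff H^i(\mathfrak{h}, V) = 0.
\]

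The second step uses reductivity of $\mathfrak{h}$. Decompose $\mathfrak{h} = \mathfrak{z}(\mathfrak{h}) \oplus [\mathfrak{h}, \mathfrak{h}]$. Over characteristic zero, every finite-dimensional $\mathfrak{h}$-representation is completely reducible; for each non-trivial simple $\mathfrak{h}$-summand of $V$ the Chevalley--Eilenberg complex is acyclic --- either by Whitehead's lemmas applied via the Casimir element of $[\mathfrak{h}, \mathfrak{h}]$, or, when the summand is a non-trivial character of the center, by a direct computation. Together with the trivial summands (on which the cohomology becomes $H^*(\mathfrak{h})$ tensored with the summand), this yields the K\"unneth-type identification
\[
H^i(\mathfrak{h}, V) \cong H^i(\mathfrak{h}) \otimes_{\Qp} V^{\mathfrak{h}} \qquad \text{for all } i \geq 0.
\]
In particular, $H^*(\mathfrak{h}, V) = 0$ as soon as $V^{\mathfrak{h}} = 0$.

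For the third step, the Zariski closure of the image of $H$ in $\GL(V)$ is a connected $\Qp$-algebraic group with Lie algebra $\mathfrak{h}$, whence $V^{H} = V^{\mathfrak{h}}$. By construction $V^{H} = E(F_{\cyc})[p^\infty] \otimes_{\Zp} \Qp$, which vanishes because $E(F_{\cyc})[p^\infty]$ is finite by a classical theorem of Imai. Combining the three steps establishes $(\Finite_{\glob})$. The main subtle point is the first step: one needs the Lazard-type comparison in a form that cleanly handles the cotorsion in $M$ and the fact that $H$ is only assumed to be a pro-$p$ $p$-adic Lie group rather than a $\Qp$-analytic group in the strict sense; beyond this, the argument is entirely formal.
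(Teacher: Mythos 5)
The paper's own argument is just a citation to \cite[Proposition~5.4]{zerbes2009generalised}; your write-up is essentially an unpacking of that result, and your three-step architecture (Lazard-type comparison to Lie algebra cohomology; complete reducibility together with Casimir/Koszul acyclicity on the non-trivial isotypic pieces; Imai's theorem to dispose of the trivial piece) is the standard way it is proved. Two points need attention. First, a notational slip: since $M=E(\cF_\infty)[p^\infty]$ is a $p$-power torsion group, $M\otimes_{\Zp}\Qp=0$; the object you actually want is $V:=T_p(M)\otimes_{\Zp}\Qp$, and likewise the final step should identify $V^{H}$ with $T_p\bigl(E(F_\cyc)[p^\infty]\bigr)\otimes_{\Zp}\Qp$, which vanishes because Imai's theorem makes $E(F_\cyc)[p^\infty]$ finite. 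Second, and more substantively, you assert that the Zariski closure of the image of $H$ in $\GL(V)$ is connected and conclude $V^{H}=V^{\mathfrak h}$; this is not justified, since $H$ is only a closed torsion-free pro-$p$ subgroup of the uniform group $G$, and a priori the Zariski closure could have a non-trivial finite $p$-component group. The gap is repairable without connectedness: one always has $V^{\mathfrak h}=V^{H'}$ for some open subgroup $H'\leq H$, and the fixed field $L=(\cF_\infty)^{H'}$ is a finite extension of $F_\cyc$, hence the cyclotomic $\Zp$-extension of a suitable number field $L_0\supseteq F$, so Imai's theorem applied to $L_0$ gives $V^{H'}=0$ and therefore $V^{\mathfrak h}=0$. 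With these repairs your argument is correct and matches the approach the paper is citing.
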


\begin{proof}
The result follows from \cite[Proposition~5.4]{zerbes2009generalised}.
\end{proof}
In Section \ref{S: examples of extensions}, we will show that these assumptions are indeed satisfied in the cases of interest.
Let $G$ be any admissible $p$-adic Lie-extension, \emph{not necessarily pro-$p$}.
Let $\mathfrak{M}$ be the set of primes $v\nmid p$ of $F$ whose inertia group in $G$ is infinite and $L_v(E,s)$ denotes the local $L$-factor at $v$.
By definition, when $E$ has good reduction at $v$,
\[
L_v(E,s) = \left(1 -a_v q_v^{-s} + q_v^{1-2s} \right)^{-1},
\]
where $q_v$ is the order of the residue field  $\kappa_v$ and $a_v = q_v + 1 - \# \widetilde{E}(\kappa_v)$.
Evaluating this local Euler factor at $s=1$ yields
\[
L_v(E,1) = \frac{q_v}{\# \widetilde{E}(\kappa_v)}.
\]
When $E$ has bad reduction, 
\[
L_v(E,s) = 1, \ (1-q_v^{-s})^{-1}, \textrm{ and } \ (1+q_v^{-s})^{-1}
\]
according as $E$ has additive, split multiplicative, and non-split multiplicative reduction, respectively.
When evaluated at $s=1$ the Euler factors become
\[
L_v(E,1) = 1, \ \frac{q_v}{q_v -1}, \textrm{ and } \frac{q_v}{q_v +1}, 
\]
respectively.

The following is an immediate consequence of the above calculations.
\begin{lemma}
\label{criterion for p to divide Lv(E,1)}
Let $v\nmid p$, the prime $p$ divides $\absolute{L_v(E,1)}_p$ in precisely the following situations
\begin{enumerate}
 \item $E$ has good reduction at $v$ and $p|\#\widetilde{E}(\kappa_v)$,
 \item $E$ has split multiplicative reduction at $v$ \emph{and} $q_v\equiv 1\mod{p}$, 
 \item $E$ has non-split multiplicative reduction at $v$ \emph{and} $q_v\equiv -1\mod{p}$.
\end{enumerate}
\end{lemma}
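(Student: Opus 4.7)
The proof is a direct case check against the list of explicit evaluations of $L_v(E,1)$ given in the paragraph just before the lemma, exploiting the hypothesis $v\nmid p$. My plan is to run through the four possible reduction types and, in each case, compute $\absolute{L_v(E,1)}_p$ using the normalization $\absolute{p}_p^{-1}=p$ fixed earlier. The key observation is that since $v\nmid p$, the residue characteristic of $F_v$ is different from $p$, so $\absolute{q_v}_p=1$; thus in every formula only the denominator (when present) contributes to the $p$-adic absolute value.

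First I treat good reduction. Here $L_v(E,1)=q_v/\#\widetilde{E}(\kappa_v)$, so
\[
\absolute{L_v(E,1)}_p \;=\; \absolute{\#\widetilde{E}(\kappa_v)}_p^{-1},
\]
which is a positive power of $p$ exactly when $p\mid \#\widetilde{E}(\kappa_v)$, giving case (1). Next, for additive reduction $L_v(E,1)=1$, so $\absolute{L_v(E,1)}_p=1$ and no contribution arises. For split multiplicative reduction, $L_v(E,1)=q_v/(q_v-1)$, hence $\absolute{L_v(E,1)}_p=\absolute{q_v-1}_p^{-1}$, which is divisible by $p$ iff $q_v\equiv 1\pmod p$, giving (2). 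For non-split multiplicative reduction, $L_v(E,1)=q_v/(q_v+1)$ and the same computation yields divisibility iff $q_v\equiv -1\pmod p$, giving (3).

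Since the four reduction types are exhaustive and mutually exclusive, the ``precisely'' clause of the statement is automatic. There is no real obstacle here: the entire argument is bookkeeping against the table of Euler factors at $s=1$ together with the trivial remark that $\absolute{q_v}_p=1$. I would write the proof as three short displayed computations (good, split, non-split) plus a one-line dismissal of the additive case.
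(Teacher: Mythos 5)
Your proof is correct and matches the paper's approach exactly: the paper simply states the lemma is "an immediate consequence of the above calculations," and your case-by-case reading of the displayed Euler factors at $s=1$, together with the observation that $\absolute{q_v}_p=1$ since $v\nmid p$, is precisely that consequence spelled out.
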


We now introduce an important conjecture in noncommutative Iwasawa theory, which will be assumed throughout our discussion.

\begin{conj}[Conjecture $\fM_H(G)$ {\cite{CKFVS,CS12}}]
\label{MHG Conjecture}
Let $E_{/F}$ be an elliptic curve with good \emph{ordinary} reduction at all primes above $p$.
Denote by $\mathcal{X}(E/\cF_\infty)$ the Pontryagin dual of the Selmer group $\Sel_{p^\infty}(E/\cF_\infty)$ and define the quotient,
\[
\mathcal{X}_f(E/\cF_\infty) := \frac{\mathcal{X}(E/\cF_\infty)}{\mathcal{X}(E/\cF_\infty)(p)}.
\]
Set $H:=\Gal(\cF_\infty/F_{\cyc})$.
Then, $\mathcal{X}_f(E/\cF_\infty)$ is a finitely generated $\Lambda(H)$-module, and hence it makes sense to speak of
$\rank_{\Lambda(H)}\left(\mathcal{X}_f(E/\cF_\infty)\right)$.
\end{conj}

\begin{remark}
When $G=\Gal(\Fin/F)$ is a pro-$p$ extension, $E_{/F}$ is an elliptic curve with good ordinary reduction at all primves above $p$, and $\Sel_{p^\infty}(E/F_{\cyc})$ is a cofinitely generated $\Zp$-module, i.e., $\Sel_{p^\infty}(E/F_{\cyc})$ is $\Lambda(\Gamma_F)$-cotorsion with $\mu_p(E/F_{\cyc})=0$, it follows from \cite[Theorem~2.1]{CS12} (see also \cite[paragraph above Lemma~4.6]{HL}) that $\Sel_{p^\infty}(E/\Fin)$ satisfies $\fM_{H}(G)$.
\end{remark}

Next, we recall the explicit formula for the $G$-Euler characteristic $\chi_t(G, E,p)$ in terms of the $\Gamma_F$-Euler characteristic $\chi_t(\Gamma_F, E,p)$.
\begin{theorem}
\label{EC formula theorem}
Let $E$ be an elliptic curve and $p\geq 5$ a prime at which $E$ has good \emph{ordinary} reduction.
Assume that \emph{all} of the following conditions are satisfied
\begin{enumerate}[(i)]
 \item $\Sha(E/F)[p^\infty]$ is finite,
 \item $\Sel_{p^\infty}(E/\Fin)^\vee$ satisfies $\mathfrak{M}_H(G)$, \emph{and}
 \item both $(\Finite_{\glob})$ and $(\Finite_{\local})$ hold.
\end{enumerate}
Then $\chi_t(G, E,p)$ is defined if and only if $\chi_t(\Gamma_F, E,p)$ is, and are related as follows
\[
\chi_t(G, E,p)=\chi_t(\Gamma_F, E,p)\times \prod_{v\in \mathfrak{M}} \absolute{L_v(E, 1)}_p.
\]
\end{theorem}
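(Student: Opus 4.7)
The plan is to reduce the computation of $\chi_t(G, E[p^\infty])$ to that of $\chi_t(\Gamma_F, E[p^\infty])$ via the Hochschild--Serre spectral sequence for the extension $1 \to H \to G \to \Gamma_F \to 1$, combined with a local analysis at the primes in $\mathfrak{M}$. Since $G/H \cong \Gamma_F$, the spectral sequence expresses $H^i(G, \Sel_{p^\infty}(E/\cF_\infty))$ in terms of $H^j(\Gamma_F, H^k(H, \Sel_{p^\infty}(E/\cF_\infty)))$. Conjecture~\ref{MHG Conjecture} together with $(\Finite_{\glob})$ and $(\Finite_{\local})$ ensure that the $H$-cohomology groups of $\Sel_{p^\infty}(E/\cF_\infty)$ are sufficiently controlled as $\Gamma_F$-modules for the generalized Euler characteristics of Section~\ref{S: alg prelim} to be well-defined at each level of the spectral sequence precisely when $\chi_t(\Gamma_F, E[p^\infty])$ is. This yields the ``defined iff defined'' assertion.

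Next, I would compare the defining global-to-local sequences for the Selmer groups over $F_{\cyc}$ and over $\cF_\infty$. Taking $H$-invariants of the latter and invoking the inflation--restriction sequences for $H^1(\Q_S/\cF_\infty, E[p^\infty])$ and for each local factor $J_\ell(E/\cF_\infty)$, the snake lemma reduces the ratio $\chi_t(G, E[p^\infty])/\chi_t(\Gamma_F, E[p^\infty])$ to a product of local cohomological contributions indexed by primes $w$ of $F_{\cyc}$ lying above primes $v \in S$. For primes above $p$, the contribution is already absorbed into the reduction-type factors appearing in \eqref{BSDformula}; for $v \nmid p$ whose inertia in $G$ is \emph{finite}, the contribution is trivial because the $H_w$-Euler characteristic of the relevant finite $p$-primary module vanishes.

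The heart of the argument is the local calculation at each $v \in \mathfrak{M}$, where the inertia is infinite. For such $v$, the action of the decomposition subgroup $H_w$ on the unramified part of $E[p^\infty]$ produces a non-trivial Herbrand-type quotient, and a direct computation --- splitting into the good-reduction, split multiplicative, non-split multiplicative, and additive reduction cases exactly as in Lemma~\ref{criterion for p to divide Lv(E,1)} --- identifies this factor with $\absolute{L_v(E,1)}_p$. Taking the product over all $v \in \mathfrak{M}$ assembles the claimed formula. The main obstacle is precisely this local matching: one must correctly identify the Herbrand quotient of the $H_w$-module attached to each local term $J_v(E/\cF_\infty)$ in each reduction class, and verify it equals the $p$-adic absolute value of $L_v(E,1)$. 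Such an analysis is carried out in detail in \cite{zerbes2009generalised}, whose method our proof would follow and adapt to the present hypotheses.
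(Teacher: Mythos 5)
The paper's own proof of this theorem is a one-line citation to \cite[Theorem~1.1]{zerbes2009generalised}, and your sketch accurately reproduces the strategy of that cited argument: Hochschild--Serre for $1\to H\to G\to \Gamma_F\to1$, comparison of the global-to-local defining sequences after passing to $H$-cohomology, and a prime-by-prime local Euler-factor computation at $v\in\mathfrak{M}$ (with trivial contributions from primes of finite inertia and cancellation at $v\mid p$). Since you explicitly acknowledge that you are following and adapting the method of \cite{zerbes2009generalised}, your approach is essentially identical to the paper's, merely unwinding the content of the cited theorem.
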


\begin{proof}
The result follows from \cite[Theorem~1.1]{zerbes2009generalised}.
In certain special cases, this result has been proved separately.
For example, in the case of false Tate curve extension, this result was first proven in \cite{HV03}.
In the $\GL_2$-setting, this was proved in \cite{CoatesSchneiderSujatha_Links_between}.
\end{proof}

\begin{definition}
Assume that $\cF_\infty$ is an admissible $p$-adic Lie extension and that $M$ is a (compact) finitely generated $\Lambda(G)$-module satisfying $\mathfrak{M}_H(G)$.
It follows that the homology groups $H_i(H, M)$ are all finitely generated torsion $\Lambda(\Gamma)$-modules for all $i\geq 0$ (see \cite[Lemma~3.1]{CKFVS}).
Let $g_{M,i}$ denote its characteristic element.
The \emph{Akashi series} is defined as follows
\[
\Ak_M:=\prod_{i\geq 0} g_{M,i}^{(-1)^i}.
\]
When $M = \Sel_{p^\infty}(E/\Fin)^\vee$, we write
\[
\Ak_{E/\Fin}:=\Ak_{\Sel_{p^\infty}(E/\Fin)^\vee}.
\]
\end{definition}

The next result relates $\Ak_M$ and the truncated $G$-Euler characteristic of $M^\vee$.

\begin{proposition}
\label{prop: leading term}
Suppose that $M$ is a finitely generated $\Lambda(G)$-module that satisfies $\mathfrak{M}_H(G)$ and that the truncated $G$-Euler characteristic $\chi_t(G, M^\vee)$ is defined.
Let $r$ denote the alternating sum 
\[
r:=\sum_{i\geq 0} (-1)^i{\corank}_{\Z_p} \left(H^i(H, M^\vee)^{\Gamma}\right).
\]
Then, the leading term of $\Ak_M$ is $\alpha_M T^{r}$, where 
\[
\absolute{\alpha_M}_p^{-1}=\chi_t(G, M^\vee).
\]
\end{proposition}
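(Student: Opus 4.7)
The strategy is to compute the leading term of $\Ak_M=\prod_{i\geq 0}g_{M,i}^{(-1)^i}$ factor-by-factor using Lemma~\ref{lemmazerbes}, and then identify the resulting alternating product of truncated $\Gamma$-Euler characteristics with $\chi_t(G,M^\vee)$ via the Hochschild--Serre spectral sequence.

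First, I would set $N_i:=H^i(H, M^\vee)$. Pontryagin duality gives $H_i(H,M)^\vee\cong N_i$, so $g_{M,i}$ is by definition the characteristic element of $N_i^\vee$. The hypothesis that $M$ satisfies $\fM_H(G)$ ensures that, modulo pseudonull error, each $H_i(H,M)$ is finitely generated as a $\Zp$-module, so that every $N_i$ is a cofinitely generated cotorsion $\Lambda(\Gamma)$-module. Applying Lemma~\ref{lemmazerbes} to $N_i$, one obtains that $g_{M,i}$ vanishes at $T=0$ to the exact order $r_i:=\corank_{\Zp}(N_i^{\Gamma})$, and that its leading coefficient is $\sim\chi_t(\Gamma, N_i)$, provided the kernel and cokernel of $\Phi_{N_i}$ are both finite.

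Taking the alternating product over $i\geq 0$ immediately shows that the leading term of $\Ak_M$ is $\alpha_M T^k$, where $k=\sum_{i\geq 0}(-1)^i r_i$ is precisely the integer in the statement, and
\[
\absolute{\alpha_M}_p^{-1}=\prod_{i\geq 0}\chi_t(\Gamma, N_i)^{(-1)^i}.
\]
The remaining task is to identify this alternating product with $\chi_t(G, M^\vee)$. For this, I would invoke the Hochschild--Serre spectral sequence $E_2^{p,q}=H^p(\Gamma, N_q)\Rightarrow H^{p+q}(G, M^\vee)$ together with multiplicativity of (truncated) Euler characteristics along the filtration it produces. This compatibility is essentially built into the definition of $\chi_t(G,\cdot)$ through the differentials $\mathfrak{d}^j$ of Section~\ref{S: alg prelim}, and the corresponding identity in the truncated setting is worked out in \cite[\S5]{zerbes2009generalised}.

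The main obstacle is technical rather than conceptual: one must verify that the standing hypothesis ``$\chi_t(G, M^\vee)$ is defined'' propagates down to finiteness of $\ker\Phi_{N_i}$ and $\coker\Phi_{N_i}$ for every $i$ (so that the factor-by-factor application of Lemma~\ref{lemmazerbes} is legitimate), and that the multiplicativity step goes through in the truncated, rather than the classical, regime. Both points follow by unravelling the definition of $\chi_t(G,M^\vee)$ in terms of the complex $(H^i(G,M^\vee),\mathfrak{d}^j)$ and its cohomology groups $\mathfrak{h}_j$: the finiteness of the $\mathfrak{h}_j$ exactly controls the kernel/cokernel behaviour of each $\Phi_{N_i}$, and the alternating product structure of $\chi_t(G,\cdot)$ is precisely what matches the alternating product coming from the Akashi series.
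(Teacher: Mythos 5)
Your outline is correct and is essentially the argument behind the result: the paper gives no independent proof here but simply cites \cite[Proposition~2.10]{zerbes2009generalised}, whose proof runs exactly as you describe --- apply the analogue of Lemma~\ref{lemmazerbes} to each $H^i(H,M^\vee)$, take the alternating product of orders of vanishing and leading coefficients, and match the result with $\chi_t(G,M^\vee)$ via the degenerate Hochschild--Serre spectral sequence for $1\to H\to G\to\Gamma\to1$, which is precisely what the complex $\left(H^i(G,M^\vee),\mathfrak{d}^j\right)$ encodes. One minor imprecision worth noting: $\fM_H(G)$ gives that $H_i(H,M_f)$ is finitely generated over $\Zp$, while $H_i(H,M(p))$ is only killed by a power of $p$ (it may have positive $\mu$-invariant), so $H_i(H,M)$ need not be finitely generated over $\Zp$ even up to pseudonull error; what the argument actually needs, and what does hold, is that each $H_i(H,M)$ is $\Lambda(\Gamma)$-torsion, so that $g_{M,i}$ and the truncated $\Gamma$-Euler characteristics are defined.
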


\begin{proof}
This is \cite[Proposition~2.10]{zerbes2009generalised}.
\end{proof}

\begin{definition}
A $p$-adic Lie extension $\Fin/F$ is \emph{strongly admissible} if it is admissible \emph{and} for each prime $v|p$ in $F$, the extension $\cF_{\infty,w}$ contains the unramified $\Zp$-extension of $F_v$ for all $w|v$.
\end{definition}

\begin{theorem}
Suppose that $\Fin/F$ is strongly admissible and that $G$ has no element of order $p$.
Let $E_{/\Q}$ be an elliptic curve with good \emph{ordinary} reduction at $p$ and $\Sel_{p^\infty}(E/\Fin)^\vee$ satisfies $\mathfrak{M}_H(G)$.
Then, the following relation holds
\[
\Ak_{E/\Fin}\equiv \Char_{\Lambda(\Gamma)}\left(\Sel_{p^\infty}(E/F_{\cyc})^\vee\right)\times \prod_{v\in S'}\Char_{\Lambda(\Gamma)}\left(J_v(F_{\cyc})^\vee\right)\mod\Lambda(\Gamma)^\times,
\]
where $S'$ is the set of primes of $F$ not dividing $p$ such that the inertia group of $v$ in $G$ is infinite and $J_v(F_{\cyc})$ is defined to be $\bigoplus_{w|p}H^1(F_{\cyc,w},E[p^\infty])$.
\end{theorem}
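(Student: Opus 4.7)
The plan is to apply multiplicativity of characteristic elements to the long exact sequence of $H$-homology obtained from the defining short exact sequence of the Selmer group over $\Fin$, and then identify each resulting term with its cyclotomic counterpart via Hochschild--Serre at the global level and Shapiro's lemma at the local level.

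First, under the hypothesis $\fM_H(G)$ together with the standard surjectivity of the localisation map (coming from Kato's Euler system results and Assumption~\ref{assumption: torsion}), one obtains a short exact sequence of finitely generated $\Lambda(G)$-modules
\[
0 \longrightarrow \Sel_{p^\infty}(E/\Fin)^\vee \longrightarrow \bigoplus_{v\in S} J_v(\Fin)^\vee \longrightarrow H^1(\Q_S/\Fin, E[p^\infty])^\vee \longrightarrow 0.
\]
By $\fM_H(G)$, $(\Finite_{\glob})$ and $(\Finite_{\local})$, each term becomes $\Lambda(\Gamma)$-torsion after taking $H$-homology. Passing to the resulting long exact sequence of torsion $\Lambda(\Gamma)$-modules and invoking multiplicativity of characteristic elements, the definition of the Akashi series yields
\[
\Ak_{E/\Fin} \equiv \Ak_{H^1(\Q_S/\Fin, E[p^\infty])^\vee} \cdot \prod_{v\in S} \Ak_{J_v(\Fin)^\vee}^{-1} \pmod{\Lambda(\Gamma)^\times}.
\]

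Next, I would reduce the global cohomology Akashi series to the cyclotomic level. The Hochschild--Serre spectral sequence
\[
E_2^{i,j} = H^i\bigl(H, H^j(\Q_S/\Fin, E[p^\infty])\bigr) \;\Longrightarrow\; H^{i+j}(\Q_S/F_{\cyc}, E[p^\infty]),
\]
combined with $(\Finite_{\glob})$, the finiteness of $\cd_p H$ (since $G$ is $p$-torsion-free), and the vanishing of higher global Galois cohomology of $E[p^\infty]$, identifies $\Ak_{H^1(\Q_S/\Fin, E[p^\infty])^\vee}$ with $\Char_{\Lambda(\Gamma)} H^1(\Q_S/F_{\cyc}, E[p^\infty])^\vee$ modulo units; the remaining terms in the spectral sequence contribute pseudonull $\Lambda(\Gamma)$-modules and hence trivial characteristic elements. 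Invoking the defining sequence of $\Sel_{p^\infty}(E/F_{\cyc})$ at the cyclotomic level then rewrites this as
\[
\Char_{\Lambda(\Gamma)} \Sel_{p^\infty}(E/F_{\cyc})^\vee \cdot \prod_{v\in S} \Char_{\Lambda(\Gamma)} J_v(F_{\cyc})^\vee.
\]

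Finally, I would carry out the local analysis. Fix $v \in S$ and a place $w$ of $\Fin$ above $v$, with decomposition group $H_w \subseteq H$. Shapiro's lemma gives $H_i(H, J_v(\Fin)^\vee) \cong \bigoplus_{w' \mid v} H_i\bigl(H_{w'}, H^1(\Fin_{w'}, E)[p^\infty]^\vee\bigr)$, the sum running over places of $F_{\cyc}$ above $v$. For $v \in S \setminus S'$, an explicit local computation (using strong admissibility and ordinary reduction when $v\mid p$, and the finiteness of the local extension datum when $v\nmid p$ has finite inertia in $G$) shows $\Ak_{J_v(\Fin)^\vee} \equiv \Char_{\Lambda(\Gamma)} J_v(F_{\cyc})^\vee$ modulo units, so these factors cancel against the corresponding ones from the previous step. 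For $v\in S'$, the same local calculation, now exploiting the infinite inertia at $v$, gives $\Ak_{J_v(\Fin)^\vee} \equiv 1$ modulo units, leaving exactly one factor of $\Char_{\Lambda(\Gamma)} J_v(F_{\cyc})^\vee$ per $v\in S'$ in the final answer. The main obstacle will be this case-by-case local bookkeeping: at primes $v\mid p$ one must precisely track the ordinary local module structure of $H^1(\Fin_w, E)[p^\infty]$ together with the contribution from strong admissibility, while at primes $v \in S'$ one must verify the triviality of the Akashi contribution using the detailed action of the infinite inertia subgroup of $H_w$ on $E[p^\infty]$.
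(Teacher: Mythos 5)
The paper does not reprove this statement: its ``proof'' is a citation to \cite[Theorem~1.3]{zerbes11}, together with the remark that good ordinary reduction makes the factor $T^r$ of \emph{loc.\ cit.} trivial. Your outline --- defining sequence of the Selmer group over $\Fin$, a multiplicativity argument, Hochschild--Serre/Shapiro to descend to $F_{\cyc}$, and a case analysis separating $S'$ from $S\setminus S'$ --- is indeed the skeleton of Zerbes' argument, and your bookkeeping of which local factors survive (cancellation for $v\in S\setminus S'$, an uncancelled $\Char_{\Lambda(\Gamma)}J_v(F_{\cyc})^\vee$ for each $v\in S'$) is the right one.

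As written, however, the central step does not go through. The Akashi series is only defined for $\Lambda(G)$-modules whose $H$-homology groups are $\Lambda(\Gamma)$-torsion, and neither $H^1(\Q_S/\Fin,E[p^\infty])^\vee$ nor $\bigoplus_{v\in S}J_v(\Fin)^\vee$ has this property: both have positive $\Lambda(G)$-rank (the paper itself records, in the $\Zp^2$ setting of Section 6, that $H^1(F_S/\Fin,E[p^\infty])$ has $\Lambda(G)$-corank two), so they are not in $\fM_H(G)$ and $H_0(H,-)$ of each has positive $\Lambda(\Gamma)$-rank. Consequently $\Ak_{H^1(\Q_S/\Fin,E[p^\infty])^\vee}$ and $\Ak_{J_v(\Fin)^\vee}$ for $v\mid p$ are undefined, as are the quantities $\Char_{\Lambda(\Gamma)}H^1(\Q_S/F_{\cyc},E[p^\infty])^\vee$ and $\Char_{\Lambda(\Gamma)}J_v(F_{\cyc})^\vee$ for $v\mid p$ appearing in your intermediate displays (those duals likewise have positive $\Lambda(\Gamma)$-rank). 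The non-torsion ranks do cancel across the exact sequences, but establishing that cancellation is precisely the content of the theorem; the correct argument compares the $H$-invariants of the $\Fin$-level localisation sequence with the $F_{\cyc}$-level one via the fundamental diagram and the snake lemma, rather than factoring through individual (nonexistent) characteristic elements. Two smaller points: the Pontryagin dual of $0\to\Sel\to H^1\to\bigoplus J_v\to 0$ runs in the opposite order from the sequence you wrote, and $(\Finite_{\glob})$, $(\Finite_{\local})$ are not hypotheses of this theorem and are not needed for it.
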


\begin{proof}
See \cite[Theorem~1.3]{zerbes11}.
Our assumption that $E$ has good \emph{ordinary} reduction at $p$ means that the factor $T^r$ in \emph{loc. cit.} is trivial.
\end{proof}

Next, we consider the special case when $G$ is pro-$p$ and $\Ak_M$ is a unit in $\Lambda(\Gamma)$.

\begin{proposition}
\label{prop: pseudonull}
Let $G$ be a compact pro-$p$, $p$-adic Lie group and $H$ be a closed normal subgroup of $G$ with $G/H\simeq \Z_p$.
Let $M$ be a finitely generated $\Lambda(G)$-module which lies in $\mathfrak{M}_H(G)$.
If $\Ak_M$ is a unit in $\Lambda(\Gamma)$, then $M$ is a pseudonull $\Lambda(G)$-module.
Further, if $M$ contains no non-trivial pseudonull submodules, then $\Ak_M$ is a unit if and only if $M=0$.
\end{proposition}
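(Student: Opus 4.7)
The plan is to treat the second statement as a formal consequence of the first, and to tackle the first using Venjakob's structure theorem for $\Lambda(G)$-modules. For the second: if $M=0$ then all $H_i(H,M)$ vanish and $\Ak_M=1\in\Lambda(\Gamma)^\times$. Conversely, if $\Ak_M$ is a unit then the first part gives $M$ pseudonull; since $M$ is then a pseudonull submodule of itself and by hypothesis $M$ has no non-trivial pseudonull submodule, we conclude $M=0$.

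For the first claim I would invoke Venjakob's structure theorem for finitely generated torsion modules over the Auslander regular local ring $\Lambda(G)$ \cite{venjakob2002structure}. Since $M\in\fM_H(G)$ it is $\Lambda(G)$-torsion, and the structure theorem provides a $\Lambda(G)$-pseudo-isomorphism
\[
 M\longrightarrow \bigoplus_{i=1}^{s}\Lambda(G)/p^{a_i}\ \oplus\ \bigoplus_{j=1}^{t}\Lambda(G)/\mathfrak{P}_j^{b_j},
\]
with $a_i,b_j\ge 1$ and each $\mathfrak{P}_j$ a reflexive prime ideal of $\Lambda(G)$ of codimension one, distinct from $(p)$. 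The target is pseudonull if and only if the direct sum is empty, so it suffices to show that each summand on the right contributes a non-unit to $\Ak$; then $\Ak_M\in\Lambda(\Gamma)^\times$ forces $s=t=0$, and $M$ is pseudonull.

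The reduction needs two general inputs. First, multiplicativity of the Akashi series on short exact sequences in $\fM_H(G)$: given $0\to M'\to M\to M''\to 0$, the long exact sequence in $H$-homology together with multiplicativity of $\Lambda(\Gamma)$-characteristic ideals on torsion modules yields $\Ak_M=\Ak_{M'}\cdot\Ak_{M''}$ up to units. Second, pseudonull $\Lambda(G)$-modules in $\fM_H(G)$ have unit Akashi series, which one extracts from the smallness of their support via a Hochschild--Serre analysis (compare \cite{CoatesSchneiderSujatha_Links_between}). Granted these, the problem reduces to computing $\Ak$ on the elementary modules: $\Lambda(G)/p^a$ contributes a non-trivial power of $p$, while $\Lambda(G)/\mathfrak{P}^b$ contributes a distinguished non-unit of $\Lambda(\Gamma)$ recording the codimension-one support $\mathfrak{P}$. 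The main obstacle is precisely this final computation for reflexive primes $\mathfrak{P}\neq(p)$: in contrast to the commutative cyclotomic case, the non-commutativity of $\Lambda(G)$ forces one to control the entire alternating product $\prod_i g_{-,i}^{(-1)^i}$ across all degrees of $H$-homology rather than a single characteristic element, and this is what makes the pseudonullity criterion substantive.
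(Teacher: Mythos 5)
The paper itself offers no proof of this proposition; it simply cites \cite[Proposition~5.9]{lim2015remark}, so there is no in-paper argument to compare against. Judged on its own, your proposal has a genuine gap that you yourself flag: you reduce to showing $\Ak_{\Lambda(G)/\mathfrak{P}^b}$ is a non-unit for a reflexive prime $\mathfrak{P}\neq(p)$, and then concede you cannot carry out that computation. The gap is not merely technical. Without the hypothesis $M\in\fM_H(G)$ the forward implication is simply \emph{false}: there exist finitely generated torsion, non-pseudonull $\Lambda(G)$-modules with unit Akashi series (this phenomenon is discussed in \cite{CoatesSchneiderSujatha_Links_between}, which is why Lim imposes $\fM_H(G)$). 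Your reduction nowhere uses this hypothesis, and in particular the statement you want about an arbitrary $\Lambda(G)/\mathfrak{P}^b$ cannot be true unconditionally; you would need to transport $\fM_H(G)$-membership to each cyclic summand and then exploit it there. A secondary issue is that Venjakob's structure theorem yields $\Lambda(G)/L_i$ with the $L_i$ reflexive left \emph{ideals}; the refined form with powers of reflexive prime ideals requires further hypotheses, so the decomposition you start from is not available in the stated generality.

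A cleaner route, and the one Lim's cited argument follows in spirit, bypasses the structure theorem and uses $\fM_H(G)$ from the outset. Apply multiplicativity of the Akashi series to $0\to M(p)\to M\to M_f\to 0$ to get $\Ak_M=\Ak_{M(p)}\cdot\Ak_{M_f}$. Since $M(p)$ is $p$-power torsion, every $H_i(H,M(p))$ is $p$-power torsion, so $\Ak_{M(p)}$ is, up to units, a power of $p$ (namely $p^{\mu_G(M)}$). Since $M_f$ is finitely generated over $\Lambda(H)$ -- this is precisely $\fM_H(G)$ -- each $H_i(H,M_f)$ is a finitely generated $\Zp$-module, so $g_{M_f,i}$ is, up to units, a distinguished polynomial of degree $\rank_{\Zp}H_i(H,M_f)$, and $\Ak_{M_f}$ is a ratio of distinguished polynomials with no $p$-power factor. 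A power of $p$ times such a ratio can only be a unit if both factors are separately units; hence $\mu_G(M)=0$ and $\Ak_{M_f}\in\Lambda(\Gamma)^\times$. Comparing degrees in $\Ak_{M_f}$ and invoking the standard Euler characteristic identity $\sum_i(-1)^i\rank_{\Zp}H_i(H,M_f)=\rank_{\Lambda(H)}M_f$ forces $\rank_{\Lambda(H)}M_f=0$. Both $M(p)$ and $M_f$ are therefore pseudonull, whence so is $M$. Your deduction of the second assertion from the first is correct as written.
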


\begin{proof}
See \cite[Proposition~5.9]{lim2015remark}.
\end{proof}

For an elliptic curve defined over $\Q$, the next result gives a criterion for the dual Selmer group $\Sel_{p^\infty}(E/\Fin)^{\vee}$ to be pseudonull as a $\Lambda(G)$-module.

\begin{proposition}
\label{proposition: akasi series to pseudonull}
Let $E$ be an elliptic curve defined over $\Q$ and $F$ be a number field.
Let $\Fin/F$ be a strongly admissible pro-$p$, $p$-adic Lie extension of $F$.
Assume that \emph{all} of the following conditions are satisfied
\begin{enumerate}[(i)]
 \item $\rank_{\Z}E(F)=0$,
 \item $\Sha(E/F)[p^{\infty}]$ is finite,
 \item $\Sel_{p^\infty}(E/\Fin)^{\vee}$ satisfies $\mathfrak{M}_H(G)$, \emph{and} 
 \item the Lie algebra of $H$ is reductive.
\end{enumerate}
Then the following are equivalent
 \begin{enumerate}
 \item $\chi(G, E,p)=1$.
 \item $\Ak_{E/\Fin}$ is a unit in $\Lambda(\Gamma_F)$ and $\Sel_{p^\infty}(E/\Fin)^{\vee}$ is a pseudonull $\Lambda(G)$-module.
 \end{enumerate}
\end{proposition}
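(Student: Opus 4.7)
\emph{Proof plan.} The plan is to convert Euler characteristic statements into Akashi series statements via Proposition~\ref{prop: leading term}, and then invoke Proposition~\ref{prop: pseudonull} to bridge Akashi units and pseudonullity.

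First, I would verify that the relevant invariants are all well-defined. Since $\rank_{\Z} E(F) = 0$ and $\Sha(E/F)[p^{\infty}]$ is finite, Lemma~\ref{lemma EC defined} gives that $\chi(\Gamma_F, E[p^\infty])$ is defined classically, and hence coincides with $\chi_t(\Gamma_F, E[p^\infty])$. The reductivity of the Lie algebra of $H$ secures $(\Finite_{\glob})$ via Proposition~\ref{prop finglob}, while $(\Finite_{\local})$ is automatic. Together with the $\mathfrak{M}_H(G)$-hypothesis and Assumption~\ref{assumption: torsion}, Theorem~\ref{EC formula theorem} then ensures that $\chi_t(G, E[p^\infty])$ is defined and satisfies
\begin{equation*}
\chi_t(G, E[p^\infty]) = \chi(\Gamma_F, E[p^\infty]) \cdot \prod_{v \in \mathfrak{M}} \absolute{L_v(E,1)}_p.
\end{equation*}

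For the implication (1) $\Rightarrow$ (2): the classical equality $\chi(G, E[p^\infty]) = 1$ forces each $H^i(G, \Sel_{p^\infty}(E/\Fin))$ to be finite, so every $\corank_{\Z_p} H_i(H, \Sel_{p^\infty}(E/\Fin)^\vee)^\Gamma$ vanishes. The alternating sum appearing in Proposition~\ref{prop: leading term} is therefore zero, and hence the leading term of $\Ak_{E/\Fin}$ is a scalar $\alpha$ with $\absolute{\alpha}_p^{-1} = \chi_t(G, E[p^\infty]) = 1$. Thus $\Ak_{E/\Fin} \in \Lambda(\Gamma_F)^{\times}$. Since $G$ is pro-$p$ and $\Sel_{p^\infty}(E/\Fin)^{\vee}$ satisfies $\mathfrak{M}_H(G)$, Proposition~\ref{prop: pseudonull} now yields the pseudonullity of $\Sel_{p^\infty}(E/\Fin)^{\vee}$ as a $\Lambda(G)$-module.

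For the implication (2) $\Rightarrow$ (1): if $\Ak_{E/\Fin}$ is a unit in $\Lambda(\Gamma_F)$, its leading term is a non-vanishing constant $\alpha$, and Proposition~\ref{prop: leading term} delivers $\chi_t(G, E[p^\infty]) = \absolute{\alpha}_p^{-1} = 1$. It then remains to promote this truncated equality to $\chi(G, E[p^\infty]) = 1$ in the classical sense, that is, to show that each $H^i(G, \Sel_{p^\infty}(E/\Fin))$ is finite. For this I would invoke the pseudonullity hypothesis in (2): a pseudonull $\Lambda(G)$-module over the pro-$p$ $p$-adic Lie group $G$ (which has finite $p$-cohomological dimension) should have finite $G$-homology in every degree, and Pontryagin duality then transfers the finiteness to the Galois cohomology of the Selmer group. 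The truncated and classical Euler characteristics thereby coincide, giving $\chi(G, E[p^\infty]) = 1$.

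The subtlest step, and the one where I expect the most care is required, is the final upgrade from the truncated to the classical $G$-Euler characteristic in (2) $\Rightarrow$ (1). This is a structural statement about pseudonull $\Lambda(G)$-modules and their $G$-cohomology, and would likely proceed by combining the Hochschild--Serre spectral sequence associated to $1 \to H \to G \to \Gamma_F \to 1$ with the dimension theory of the Auslander regular ring $\Lambda(G)$, together with the fact that pseudonullity over $\Lambda(G)$ forces the $\Lambda(H)$-module structure to be sufficiently small for each homology group $H_i(H, \Sel_{p^\infty}(E/\Fin)^\vee)$ to have trivial $\Gamma_F$-coinvariants in the sense dual to finiteness.
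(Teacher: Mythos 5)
Your argument for the implication $(1) \Rightarrow (2)$ is correct but takes a different route from the paper. Where the paper invokes \cite[Remark~1.4]{zerbes11} to establish unconditionally that $\ord_{T=0}\Ak_{E/\Fin} = \ord_{T=0}f_E(T) = 0$, you instead argue directly that the classical finiteness of every $H^i(G, \Sel_{p^\infty}(E/\Fin))$ forces the alternating sum $k$ in Proposition~\ref{prop: leading term} to vanish; this does go through once one spells out the Hochschild--Serre exact sequences
\[
0 \to H_i(H, M)_\Gamma \to H_i(G, M) \to H_{i-1}(H, M)^\Gamma \to 0
\]
for $M = \Sel_{p^\infty}(E/\Fin)^\vee$, so that finiteness of each $H_i(G,M)$ (equivalently of each $H^i(G, M^\vee)$) kills the relevant $\Z_p$-(co)ranks. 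Your route is arguably more self-contained than the paper's, which passes through an external reference.

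The implication $(2) \Rightarrow (1)$ is where your plan has a genuine gap. You correctly identify the crux as promoting $\chi_t(G, E[p^\infty]) = 1$ to the classical statement $\chi(G, E[p^\infty]) = 1$, i.e.\ to finiteness of every $H^i(G, \Sel_{p^\infty}(E/\Fin))$. But the bridge you propose --- that a pseudonull $\Lambda(G)$-module has finite $G$-homology in every degree --- is false as a general principle, and the hedged Hochschild--Serre heuristic you sketch will not repair it. Take $G = \Z_p^2$, $H \simeq \Z_p$ one factor, $\Gamma_F \simeq \Z_p$ the other, and $M = \Z_p$ with trivial $G$-action. Then $M = \Lambda(G)/(T_1,T_2)$ has codimension $2$ and is therefore pseudonull, and indeed $\Ak_M$ is a unit (one computes $H_0(H,M) = H_1(H,M) = \Z_p$, so $g_{M,0} = g_{M,1} = T$ and the Akashi series is $1$), yet $H_0(G, M) = \Z_p$ is infinite; equivalently $H^0(G, M^\vee) = \Q_p/\Z_p$ is infinite, so the classical $G$-Euler characteristic is not even defined. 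Pseudonullity, even together with the Akashi series being a unit, simply does not control the size of $G$-(co)homology groups in the way your plan requires. The paper's own treatment sidesteps this entirely: \cite[Remark~1.4]{zerbes11} pins down $\ord_{T=0}\Ak_{E/\Fin} = \ord_{T=0}f_E(T) = 0$ from the hypotheses of the proposition alone (with no reference to which of $(1)$ or $(2)$ holds), after which $\Ak_{E/\Fin}$ being a unit is equivalent to $\chi_t(G, E[p^\infty]) = 1$ via Proposition~\ref{prop: leading term}, and the passage between truncated and classical Euler characteristics is absorbed into the earlier observation that the $G$-Euler characteristic is defined in this rank-zero, finite-$\Sha$ setting. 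To repair $(2) \Rightarrow (1)$, you should drop the pseudonullity-implies-finiteness claim and instead establish (or cite) that $\chi(G, E[p^\infty])$ is classically defined under the standing hypotheses, so that the truncated equality $\chi_t(G, E[p^\infty]) = 1$ automatically yields the classical one.
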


\begin{proof}
We first prove (1)$\Rightarrow$(2). By Proposition~\ref{prop finglob}, the assumption $(\Finite_{\glob})$ is satisfied.
Since $\rank_{\Z}E(F)=0$, the normalized regulator $\mathcal{R}_p(E/F)=1$.
Thus, the conditions of Theorem~\ref{pbsdconj} are satisfied, whereby ${\ord}_{T=0}f_E(T)=0$.
It follows that the $\Gamma_F$-Euler characteristic $\chi(\Gamma_F, E,p)$ is defined.
Moreover, according to Theorem~\ref{EC formula theorem}, the $G$-Euler characteristic is defined.
Since $\chi(G, E,p)=1$, it follows from Proposition~\ref{prop: leading term} that the leading term of $\Ak_{E/\Fin}$ is a unit.
Therefore, in order to show that $\Ak_{E/\Fin}$ is a unit in $\Lambda(\Gamma_F)$, it suffices to show that ${\ord}_{T=0}\Ak_{E/\Fin}=0$.
It follows from \cite[Remark~1.4]{zerbes11} that 
\[
{\ord}_{T=0}\Ak_{E/\Fin}={\ord}_{T=0}f_E(T)=0.
\]
Hence, the Akashi series $\Ak_{E/\Fin}$ is a unit in $\Lambda(\Gamma_F)$.
By Proposition~\ref{prop: pseudonull}, the dual Selmer group $\Sel_{p^\infty}(E/\Fin)^\vee$ is a pseudonull $\Lambda(G)$-module.

Finally, the implication (2)$\Rightarrow$(1) follows from Proposition~\ref{prop: leading term}.
\end{proof}

\section{Growth of Mordell--Weil ranks}\label{S: MW}
Let $\Fin$ be an admissible $p$-adic Lie extension of dimension $d\ge2$, and assume that $G=\Gal(\Fin/F)$ is a uniform pro-$p$ group.
For $n\ge0$, we write $G_n=G^{p^n}$, $H_n=H^{p^n}$, and define
$F_n=F^{G_n}$.
In particular, $F_n/F$ is a finite extension of degree $p^{dn}$.

Throughout, we assume that $\Sel_{p^\infty}(E/F_{\cyc})^\vee$ is $\Lambda(\Gamma_F)$-torsion and that $\Sel_{p^\infty}(E/\Fin)^\vee$ satisfies $\fM_H(G)$.
As before, define
\[
\cX(E/\Fin)_f:=\Sel_{p^\infty}(E/\Fin)^\vee/\Sel_{p^\infty}(E/\Fin)^\vee(p).
\]
We are interested in the $\Lambda(H)$-rank of $\cX(E/\Fin)_f$.
As discussed in the introduction (see also \cite{howson2002euler, CoatesSchneiderSujatha_Links_between}), for a finitely generated $\Lambda(G)$-module $M$ satisfying $\fM_H(G)$, the $\Lambda(H)$-rank of $M_f$ can be regarded as the higher-dimensional analogue of the $\lambda$-invariant.
In \cite{mazur72}, Mazur proved that the $\lambda$-invariant of $\Sel_{p^\infty}(E/F_{\cyc})^\vee$ gives a bound on the rank of the Mordell--Weil groups of $E$ over sub-extensions inside $F_{\cyc}$.
More recently, P.~C.~Hung and M.~F.~Lim proved the following higher-dimensional generalization of Mazur's result.

\begin{theorem}
\label{Thm: Hung-Lim Theorem}
Suppose that $H_i(H_n,\Sel_{p^\infty}(E/\Fin)^\vee)$ is finite for every $i\ge1$ and $n\ge0$.
Then,
\[
\rank_\Z E(F_n)\le \rank_{\Lambda(H)}\cX(E/\Fin)_f\cdot p^{(d-1)n}+d\corank_{\Zp}E(\Fin)(p).
\]
\end{theorem}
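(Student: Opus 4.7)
The plan is to extend Mazur's classical control-theorem argument to the higher-dimensional $p$-adic Lie setting by combining the Kummer sequence, an inflation--restriction analysis of the Selmer group, and an Euler--Poincar\'e identity for $H_n$-homology. The argument proceeds in four steps.

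First, the Kummer sequence provides an injection $E(F_n) \otimes_\Z \Qp/\Zp \hookrightarrow \Sel_{p^\infty}(E/F_n)$, whence
\[
\rank_\Z E(F_n) \le \corank_{\Zp} \Sel_{p^\infty}(E/F_n).
\]
Second, inflation--restriction shows that the kernel of the natural restriction map $\Sel_{p^\infty}(E/F_n) \to \Sel_{p^\infty}(E/\Fin)^{G_n}$ is contained in $H^1(G_n, E(\Fin)[p^\infty])$. Since $G_n$ is a uniform pro-$p$ group of dimension $d$, a Lazard-type estimate yields
\[
\corank_{\Zp} H^1(G_n, E(\Fin)[p^\infty]) \le d \cdot \corank_{\Zp} E(\Fin)(p),
\]
which accounts for the additive error in the theorem. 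Pontryagin duality identifies $\corank_{\Zp} \Sel_{p^\infty}(E/\Fin)^{G_n}$ with $\rank_{\Zp} \cX(E/\Fin)_{G_n}$, so the two inequalities combine to give
\[
\rank_\Z E(F_n) \le \rank_{\Zp} \cX(E/\Fin)_{G_n} + d \cdot \corank_{\Zp} E(\Fin)(p).
\]

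Third, apply $G_n$-coinvariants to $0 \to \cX(p) \to \cX \to \cX_f \to 0$. Since $\cX(p)$ is annihilated by a power of $p$, so are its $G_n$-coinvariants, and right-exactness of coinvariants yields
\[
\rank_{\Zp} \cX_{G_n} \le \rank_{\Zp} (\cX_f)_{G_n} \le \rank_{\Zp} (\cX_f)_{H_n}.
\]
Fourth, since $\cX_f$ is a finitely generated $\Lambda(H)$-module of rank $\rho := \rank_{\Lambda(H)} \cX_f$ and $H_n$ is open in $H$ of index $p^{(d-1)n}$, one has $\rank_{\Lambda(H_n)} \cX_f = \rho \cdot p^{(d-1)n}$. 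The hypothesis that $H_i(H_n, \cX)$ is finite for all $i \ge 1$, combined with the long exact sequence of $H_n$-homology for $0 \to \cX(p) \to \cX \to \cX_f \to 0$ and the $p$-power torsion nature of $\cX(p)$, forces $H_i(H_n, \cX_f)$ to have vanishing $\Zp$-rank for every $i \ge 1$. The Euler--Poincar\'e identity
\[
\sum_{i \ge 0} (-1)^i \rank_{\Zp} H_i(H_n, \cX_f) = \rank_{\Lambda(H_n)} \cX_f,
\]
which follows from additivity across a finite free $\Lambda(H_n)$-resolution of $\cX_f$ together with the fact that $H_\ast(H_n, \Lambda(H_n))$ is concentrated in degree zero, then yields $\rank_{\Zp} (\cX_f)_{H_n} = \rho \cdot p^{(d-1)n}$. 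Assembling all four steps produces the claimed bound.

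The main technical obstacle is the Euler--Poincar\'e step: one must upgrade the hypothesis on $H_i(H_n, \cX)$ to the statement that each $H_i(H_n, \cX_f)$ is genuinely finitely generated over $\Zp$, so that the alternating sum of $\Zp$-ranks is well-defined. This requires carefully tracking the finite and $p$-power torsion contributions of $\cX(p)$ through the long exact sequence, and is the crux of the argument.
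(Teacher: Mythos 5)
The paper does not actually give a proof of this theorem; it simply cites \cite[Theorem~3.2]{HL}. Your proposal reconstructs the argument that underlies that reference, and your four-step outline is essentially the standard control-theorem-plus-rank-accounting strategy that Hung and Lim use: (1) Kummer-theoretic injection of $E(F_n)\otimes\Qp/\Zp$ into the Selmer group; (2) a control-theorem estimate relating $\Sel_{p^\infty}(E/F_n)$ to the $G_n$-invariants of $\Sel_{p^\infty}(E/\Fin)$, with defect controlled by $H^1(G_n, E(\Fin)[p^\infty])$; (3) reduction to the $H_n$-coinvariants of $\cX_f$; and (4) an Euler--Poincar\'e computation over $\Lambda(H_n)$ that evaluates $\rank_{\Zp}(\cX_f)_{H_n}$ as $\rank_{\Lambda(H)}\cX_f \cdot p^{(d-1)n}$, using the finiteness hypothesis to kill the contribution of the higher homology. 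Your comparison of ranks across the long exact sequence for $0\to\cX(p)\to\cX\to\cX_f\to 0$ (reading finiteness of $H_i(H_n,\cX_f)$ off from finiteness of $H_i(H_n,\cX)$ together with the $p$-power torsion of $\cX(p)$) is correct and is exactly the kind of bookkeeping the cited argument requires.

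Two places deserve more attention than you give them. First, the ``Lazard-type estimate'' $\corank_{\Zp}H^1(G_n,E(\Fin)[p^\infty])\le d\cdot\corank_{\Zp}E(\Fin)(p)$, which produces the additive error term, is genuinely nontrivial: it rests on Howson's bound $\corank_{\Zp}H^i(G_n,M)\le\binom{\dim G_n}{i}\corank_{\Zp}M$ for a cofinitely generated discrete $\Lambda(G_n)$-module $M$ over a uniform group $G_n$, and should be justified by a filtration reducing to the divisible part plus Lazard's computation of $H^\ast(G_n,\Qp/\Zp)$ as an exterior algebra. Crucially, the bound must be uniform in $n$, which works because $\dim G_n=d$ for all $n$. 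Second, your Euler--Poincar\'e step requires $\Lambda(H_n)$ to have finite global dimension (so that $\cX_f$ admits a finite free resolution); this follows from $G$ (hence $H_n$) being $p$-torsion-free, but it is worth stating, as is the fact that $[H:H_n]=p^{(d-1)n}$, which uses that $H$ is uniform of dimension $d-1$. You correctly flag the Euler--Poincar\'e step as the crux; the $H^1(G_n,\cdot)$ bound in Step 2 is the other place where the argument could silently fail if written carelessly.
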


\begin{proof}
See {\cite[Theorem~3.2]{HL}}.
\end{proof}

The hypothesis on the finiteness of $H_i(H_n,\Sel_{p^\infty}(E/\Fin)^\vee)$ is known to hold in the settings we will study in subsequent sections.
As detailed in the remark right after \cite[Theorem~3.2]{HL}, when $d=2$ or $3$, this has been proved in \cite{lim2015remark} and \cite{DL}.
More generally, it holds under the hypotheses (Fin$_\mathrm{glob}$) and (Fin$_\mathrm{loc}$) by \cite[Lemma~4.3]{zerbes2009generalised}.

\begin{remark}
\label{rmk: finiteness of p-primary torsion points over certain extensions}
For elliptic curve without CM, we know that $E(\Fin)[p^\infty]$ is finite if $E[p^\infty]$ is \emph{not} rational over $\Fin$, see \cite[Lemma~6.2]{LM14}.
\end{remark}

The following result gives an explicit relation between the $\Lambda(H)$-rank of $\cX(E/\Fin)_f$ and the cyclotomic $\lambda$-invariant, $\lambda_p(E/F_{\cyc})$.

\begin{proposition}
\label{prop:Lambda-H-rank}
Given a prime $v$ of $F$, define 
\[
Z_v=\begin{cases}
 E(\overline{F_v})(p)&\text{if }v\nmid p,\\
\widetilde{E}(\overline{\kappa_v})(p)&\text{otherwise.}
\end{cases}
\]
Denote by $v$ the rational prime below a prime $w$ of $F_{\cyc}$.
Then, we have that
\begin{equation}
\label{LambdaH rank formula}
\rank_{\Lambda(H)}\cX(E/\Fin)_f=\lambda_p(E/F_\cyc)+\sum_{\substack{w\in S(F_\cyc), \\\dim H_w\ge1}}\corank_{\Zp} H^0(F_{\cyc,w}, Z_v).
\end{equation}
Here, $S$ is the set of rational primes $\ell$ consisting of $p$, the set of primes at which $E$ has bad reduction, and the primes that are ramified in $\Fin$; $S(F_\cyc)$ consists of all primes $w$ of $F_\cyc$ that lie above the set $S$.
\end{proposition}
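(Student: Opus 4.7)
The plan is to compare $\Sel_{p^\infty}(E/\Fin)^H$ with $\Sel_{p^\infty}(E/F_\cyc)$ via the fundamental diagram, isolate the local contributions at primes $w$ where $\dim H_w\ge1$, and convert the resulting $\Zp$-corank identity into the desired $\Lambda(H)$-rank formula.

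First I would write the usual commutative diagram with exact rows given by the defining sequences of $\Sel_{p^\infty}(E/F_\cyc)$ and $\Sel_{p^\infty}(E/\Fin)$, the vertical maps being restriction. By Hochschild--Serre, the global restriction has kernel and cokernel controlled by $H^i(H,E(\Fin)[p^\infty])$ for $i\ge1$, which are finite under $(\Finite_\glob)$. For the local restrictions, at each prime $w$ of $F_\cyc$ above $v\in S$ the kernel is $H^1(H_w, E(\Fin_w)[p^\infty])$. When $\dim H_w=0$ this group is finite by $(\Finite_\loc)$. When $\dim H_w\ge1$, the Euler--Poincar\'e characteristic of the $p$-adic Lie group $H_w$ applied to $E(\Fin_w)[p^\infty]$ (which is $Z_v$-valued after passing to the connected component in the appropriate sense) vanishes; combined with $H^i(H_w,E(\Fin_w)[p^\infty])$ being cofinitely generated and finite for $i\ge2$, this forces
\[
\corank_{\Zp} H^1(H_w, E(\Fin_w)[p^\infty]) \;=\; \corank_{\Zp} H^0(H_w, E(\Fin_w)[p^\infty]) \;=\; \corank_{\Zp} H^0(F_{\cyc,w}, Z_v).
\]

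Second, an application of the snake lemma produces, modulo finite groups, an exact sequence
\[
0\longrightarrow \Sel_{p^\infty}(E/F_\cyc)\longrightarrow \Sel_{p^\infty}(E/\Fin)^H \longrightarrow K\longrightarrow 0,
\]
where $\corank_{\Zp}K=\sum_{w:\dim H_w\ge1}\corank_{\Zp}H^0(F_{\cyc,w},Z_v)$. Taking Pontryagin duals and using that the divisible quotient of $\Sel_{p^\infty}(E/F_\cyc)$ has $\Zp$-corank equal to $\lambda_p(E/F_\cyc)$ (the $p$-primary torsion of $\cX(E/F_\cyc)$ is finite and contributes nothing), I obtain
\[
\corank_{\Zp}\bigl(\Sel_{p^\infty}(E/\Fin)^H\bigr)_\mathrm{div}\;=\;\lambda_p(E/F_\cyc)+\sum_{w:\dim H_w\ge1}\corank_{\Zp}H^0(F_{\cyc,w},Z_v).
\]

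Third I would translate this $\Zp$-corank into the $\Lambda(H)$-rank. For a finitely generated $\Lambda(H)$-module $M$ satisfying $\fM_H(G)$, the quotient $M_f=M/M(p)$ is finitely generated and $p$-torsion-free, so $M_f\otimes_{\Lambda(H)}\Zp=(M_f)_H$ has $\Zp$-rank equal to $\rank_{\Lambda(H)}M_f$ modulo a finite module (using $(\Lambda(H))_H=\Zp$ together with the structure theory of $\Lambda(H)$-modules up to pseudo-isomorphism). Applied to $M=\cX(E/\Fin)$, this identifies $\rank_{\Lambda(H)}\cX(E/\Fin)_f$ with the $\Zp$-corank of the maximal divisible subgroup of $\Sel_{p^\infty}(E/\Fin)^H$, which is exactly the quantity computed in the previous step.

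The main obstacle I anticipate is the last step: making rigorous the identification of $\rank_{\Lambda(H)}\cX(E/\Fin)_f$ with $\corank_{\Zp}\bigl(\Sel_{p^\infty}(E/\Fin)^H\bigr)_\mathrm{div}$. One must check that the $\Lambda(H)$-torsion submodule of $\cX(E/\Fin)_f$ (which is $p$-torsion-free but may still be $\Lambda(H)$-torsion) contributes zero $\Zp$-rank to the coinvariants modulo finite subquotients, and that the finite kernels/cokernels appearing throughout the diagram chase do not interfere with the rank count. This is the standard but delicate homological bookkeeping specific to modules over $\Lambda(H)$ satisfying $\fM_H(G)$.
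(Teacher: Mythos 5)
The paper does not prove this proposition; it simply cites \cite[Proposition~4.1]{pinchihung}. Your strategy --- descent through the fundamental diagram, Hochschild--Serre for the global term, local kernels $H^1(H_w,\cdot)$ at the deeply ramified primes, then a translation into $\Lambda(H)$-ranks --- is exactly the strategy of that reference (and of its antecedents in Hachimori--Venjakob and Coates--Howson), so the overall architecture is right.

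There is, however, a genuine gap in your third step, and your proposed remedy for it is the wrong statement. For a finitely generated $\Lambda(H)$-module $M$ it is \emph{not} true that $\rank_{\Zp}M_H$ equals $\rank_{\Lambda(H)}M$ up to finite error: take $H\cong\Zp$ with topological generator $h$ and $M=\Lambda(H)/(h-1)\cong\Zp$; then $M$ is $\Lambda(H)$-torsion and $p$-torsion-free, yet $M_H=\Zp$ has rank $1$. So a $\Lambda(H)$-torsion submodule of $\cX(E/\Fin)_f$ can very well contribute positive $\Zp$-rank to the coinvariants, and checking that it does not is not ``bookkeeping'' --- it is false in general. The correct tool is Howson's formula
\begin{equation*}
\rank_{\Lambda(H)}M=\sum_{i\ge0}(-1)^i\rank_{\Zp}H_i(H,M),
\end{equation*}
valid when all $H_i(H,M)$ are finitely generated over $\Zp$. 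Dually, this means you must control not only $\Sel_{p^\infty}(E/\Fin)^H$ but all the higher groups $H^i(H,\Sel_{p^\infty}(E/\Fin))$ for $i\ge1$, showing they are $\Zp$-cofinitely generated and computing (or bounding) their coranks; this is precisely where $(\Finite_{\glob})$, $(\Finite_{\local})$ and the surjectivity of the defining localization map over $\Fin$ (Cassels--Poitou--Tate, using Assumption~\ref{assumption: torsion}) enter, and it is also where the higher local terms $H^i(H_w,\cdot)$ for $\dim H_w\ge2$ must be handled --- your assertion that these are finite for $i\ge2$ is itself unjustified when $H_w$ has dimension $\ge2$, as happens for the trivializing extension. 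Relatedly, your snake-lemma step needs the global-to-local map over $F_{\cyc}$ to be surjective (or its cokernel controlled) in order to identify the corank of $K$ with the sum of the local kernel coranks. Until the alternating sum is carried out and shown to collapse to the single $H^0$ term per prime, the formula \eqref{LambdaH rank formula} is not established.
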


\begin{proof}
See \cite[Proposition~4.1]{HL}.
\end{proof}

\begin{remark}
\label{implication pseudonullity}
If $M$ is a finitely generated $\Lambda(H)$-module, by \cite[p.~208]{CoatesSchneiderSujatha_Links_between} we know that
\begin{equation}
\label{eqn: CSS03 p. 208}
\rank_{\Lambda(H)}M=0 \textrm{ if and only if } M \textrm{ is } \Lambda(G)\textrm{-pseudonull}.
\end{equation}
If $\mathcal{X}(E/\Fin)$ lies in $\fM_H(G)$, then the above statement holds for $M = \mathcal{X}(E/\Fin)_f$.
On the other hand, there are a large class of $p$-adic Lie extensions such that $M = \mathcal{X}(E/\Fin)$ is a finitely generated $\Lambda(H)$-module.
In particular, for $p$-adic Lie extensions of interest (see \S\ref{S: examples of extensions}), it is known that $\mathcal{X}(E/\Fin)$ is a finitely generated $\Lambda(H)$-module if and only if $\mathcal{X}(E/F_{\cyc})$ is a finitely generated $\Z_p$-module, see \cite[Theorem~3.1(i)]{HV03} and \cite{coateshowson}.
In such cases, \eqref{eqn: CSS03 p. 208} holds for both $M=\mathcal{X}(E/\Fin)$ and $\mathcal{X}(E/\Fin)_f$.
From our earlier discussion in Proposition~\ref{proposition: akasi series to pseudonull}, we know that
\[
\chi(G, E,p)= 1 \Rightarrow \mathcal{X}(E/\Fin) \textrm{ is pseudonull}.
\]
The following implication is straightforward,
\[
\rank_{\Lambda(H)}\mathcal{X}(E/\Fin)=0 \Rightarrow \rank_{\Lambda(H)}\mathcal{X}(E/\Fin)_f=0.
\]
This shows that if the $G$-Euler characteristic of the Selmer group is a $p$-adic unit, then $\mathcal{X}(E/F)_f$ is $\Lambda(H)$-torsion.
However, as pointed out to us by the referee, the converse is \emph{not} true in general.
Consider the elliptic curve with Cremona label \href{https://www.lmfdb.org/EllipticCurve/Q/11a1/}{11a1} or \href{https://www.lmfdb.org/EllipticCurve/Q/11a2/}{11a2} and $p=5$.
It is known that $\mathcal{X}(E/F_{\cyc})$ has \emph{positive} $\mu$-invariant but trivial $\lambda$-invariant, see \cite[Chapter 5]{CoatesSujatha_book}.
Let $\mathcal{F}_\infty = F_{\cyc}(5^{5^{-\infty}})$.
An application of \cite[Lemma~5.6]{CKFVS} shows that $\fM_H(G)$-conjecture is satisfied in this case.
Therefore, by \cite[Theorem~3.1]{lim2015remark} we know that
\[
\mu_{G}\left( \mathcal{X}(E/\mathcal{F}_{\infty})\right) = \mu_{\Gamma}\left( \mathcal{X}(E/F_{\cyc})\right)>0.
\]
By the previous proposition, we also know that 
\[
\rank_{\Lambda(H)}\left( \mathcal{X}(E/\mathcal{F}_{\infty})\right)_f =0.
\]
However, the $G$-Euler characteristic is $p^{\mu_{G}\left( \mathcal{X}(E/\mathcal{F}_{\infty})\right)}$, see \cite{AW06}.
\end{remark}

From here on, we set $Z_v(F_{\cyc,w})$ to simply denote $H^0(F_{\cyc,w},Z_v)$.
We may describe the coranks of the local terms $Z_v(F_{\cyc,w})$ explicitly as follows.

\begin{lemma}
\label{Zp corank cases}
Let $S(F_\cyc)$ be the set of primes described in Proposition~\ref{prop:Lambda-H-rank}.
Then,
\[
\corank_{\Zp} Z_v(F_{\cyc,w})=\begin{cases}
2&\text{$w\nmid p$, $E$ has good reduction at $w$, $E(F_v)[p^\infty]\ne 0$,}\\
1&\text{$w\nmid p$, $E$ has split multiplicative reduction at $w$,} \\
0&\text{otherwise.}
\end{cases}
\]
\end{lemma}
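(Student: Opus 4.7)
I would prove the lemma by a case analysis on $w$ and the reduction type of $E$ at $w$. The case $w\mid p$ is immediate: since the cyclotomic $\Zp$-extension is totally ramified at primes above $p$, the residue field of $F_{\cyc,w}$ coincides with $\kappa_v$, and the action of $\Gal(\overline{F_v}/F_{\cyc,w})$ on $Z_v=\widetilde{E}(\overline{\kappa_v})(p)$ factors through $\Gal(\overline{\kappa_v}/\kappa_v)$. Hence $H^0(F_{\cyc,w},Z_v)=\widetilde{E}(\kappa_v)[p^\infty]$, which is finite, giving corank~$0$.

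Next assume $w\nmid p$. For good reduction, $Z_v=E[p^\infty]$ carries an unramified Galois action via Frobenius, while $F_{\cyc,w}/F_v$ is the unramified $\Zp$-extension, whose residue field $\kappa_{\infty,w}$ is the pro-$p$ extension of $\kappa_v$. Thus
\[
H^0(F_{\cyc,w},Z_v)=\widetilde{E}(\kappa_{\infty,w})[p^\infty]=\bigcup_n\widetilde{E}(\kappa_v^{(p^n)})[p^\infty],
\]
where $\kappa_v^{(p^n)}$ denotes the degree-$p^n$ unramified extension of $\kappa_v$. Computing the $p$-primary part from the action of Frobenius on $T_p\widetilde{E}$ and applying the lifting-the-exponent lemma to its eigenvalues, one reads off that the $\Zp$-corank is~$2$ when $E(F_v)[p^\infty]\neq 0$, i.e.\ $p\mid\#\widetilde{E}(\kappa_v)$, and~$0$ otherwise.

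For multiplicative reduction at $w\nmid p$, I would invoke the Tate uniformization to obtain a short exact sequence of Galois modules having $\mu_{p^\infty}$ (twisted, in the non-split case, by the unramified quadratic character) as submodule and $\Qp/\Zp$ (similarly twisted) as quotient. Taking $H^0(F_{\cyc,w},-)$, the connecting map to $H^1(F_{\cyc,w},\mu_{p^\infty})$ is the Kummer class of the Tate parameter $q_E$, whose valuation $v(q_E)>0$ forces the kernel in $\Qp/\Zp$ to be finite. In the split case the corank equals~$1$ via the contribution of $\mu_{p^\infty}(F_{\cyc,w})$; in the non-split case the quadratic twist is non-trivial on $\Gal(\overline{F_v}/F_{\cyc,w})$ for odd $p$, so the corank vanishes. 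For additive reduction at $w\nmid p$, the identity component of $\widetilde{E}$ is the additive group, with no $p$-power torsion since the residue characteristic differs from $p$; together with finiteness of the component group, this yields corank~$0$.

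The main technical obstacle lies in the multiplicative reduction analysis, where one must carefully track the image of the Kummer map $\Qp/\Zp\to F_{\cyc,w}^\times\otimes\Qp/\Zp$ in the colimit along the unramified $\Zp$-tower of $F_v$, separating the pro-$p$ and pro-$p'$ components of the local Galois group to verify that the $\mu_{p^\infty}(F_{\cyc,w})$ contribution accounts exactly for corank~$1$ rather than being absorbed or doubled against the contribution coming from the Tate parameter.
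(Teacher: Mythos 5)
Your approach is genuinely different from the paper's: the paper disposes of the lemma in two sentences by citing \cite[Proposition~5.1]{HM99} for $w\nmid p$ and \cite[\S5]{LimKida} for $w\mid p$, whereas you give a direct computation. The structure of your argument is sound: Frobenius eigenvalues on $T_p\widetilde{E}$ for good reduction, Tate parametrization for multiplicative reduction, finiteness of the component group for additive reduction, and finiteness of the residue field of $F_{\cyc,w}$ for $w\mid p$. A direct proof has the advantage of making explicit the local mechanism behind the coranks, which the paper leaves opaque.

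There is, however, a gap that your lifting-the-exponent computation quietly steps over, and it occurs in \emph{both} the good reduction case and the split multiplicative case. In the good reduction case, LTE gives $v_p(\alpha^{p^n}-1)=v_p(\alpha-1)+n$ when $\alpha\equiv 1\pmod{\mathfrak m}$ and $v_p(\alpha^{p^n}-1)=0$ otherwise, so the corank of $\bigcup_n \widetilde E(\kappa_v^{(p^n)})[p^\infty]$ equals the \emph{number of Frobenius eigenvalues} congruent to $1\pmod{\mathfrak m}$. The hypothesis $E(F_v)[p^\infty]\neq 0$ only forces \emph{at least one} such eigenvalue; to conclude corank $2$ rather than $1$ you must know that $\alpha\equiv 1$ forces $\beta\equiv 1$, i.e.\ that $q_v=\alpha\beta\equiv 1\pmod p$, equivalently $\mu_p\subset\kappa_v$. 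The same assumption is hidden in your split multiplicative case when you assert that $\mu_{p^\infty}(F_{\cyc,w})$ contributes corank $1$: since $F_{\cyc,w}/F_v$ is unramified and $w\nmid p$, one has $\corank_{\Zp}\mu_{p^\infty}(F_{\cyc,w})=1$ precisely when $q_v\equiv 1\pmod p$, and $0$ otherwise. To be fair, this hypothesis is satisfied in every place the lemma is actually invoked in the paper (the false Tate and trivializing extensions both have $F\supseteq\Q(\mu_p)$, and in the $\Zp^2$-case over an imaginary quadratic field the lemma is only applied to an empty sum), and the paper's own statement does not flag the hypothesis either. But a self-contained proof should either impose $\mu_p\subset F$ (as the reference \cite{HM99} effectively does in its setting) or state the corank as the number of Frobenius eigenvalues $\equiv 1\pmod p$; as written, the sentence ``one reads off that the $\Zp$-corank is $2$ when $E(F_v)[p^\infty]\neq 0$'' is not a correct consequence of the LTE computation for a general $F$.
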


\begin{proof}
When $w\nmid p$, then $Z_v(F_{\cyc,w})$ is described in \cite[Proposition~5.1]{HM99}.
When $w|p$, $Z_v(F_{\cyc,w})$ is in fact always finite.
See the discussion in \cite[\S5]{LimKida}.
\end{proof}

\begin{remark}
The case $F=\Q(\mu_p)$ and $\Fin=F(\mu_{p^\infty},\sqrt[p^\infty]{m})$ is discussed in \cite[Theorem~3.1]{HV03} under the hypothesis that $\Sel_{p^\infty}(E/F_{\cyc})$ is a cofinitely generated $\Zp$-module.
\end{remark}

\section{$p$-adic Lie Extensions of Interest}
\label{S: examples of extensions}
In this section, we discuss examples of $p$-adic Lie-extensions for which the results of the previous sections apply.
Recall that these assumptions are:
\begin{itemize}
 \item Assumption \ref{assumption: torsion}, which asserts that $\Sel_{p^\infty}(E/\Fin)^\vee$ is torsion as a $\Lambda(G)$-module.
 \item The global (resp. local) finiteness assumption $(\Finite_{\glob})$ (resp. $(\Finite_{\local})$) from Assumption \ref{assumption: Fin}.
\end{itemize}

\subsection{}
Let $F=\Q(\sqrt{-d})$ be an imaginary quadratic field and $\Fin$ be the compositum of all $\Zp$-extensions over $F$.
Note that $G:=\Gal(\Fin/F)$ is isomorphic to $\Z_p^2$.
In this setting, Remark~\ref{torsion remark} guarantees that $\Sel_{p^\infty}(E/\Fin)$ is a cotorsion $\Lambda(G)$-module.
Recall that $\fM$ is the set of primes $v\nmid p$ of $F$ whose inertia group in $G$ is infinite.
It is a simple exercise to show that $\Fin$ is unramified at all primes $v\nmid p$.
Hence, the set $\fM$ is empty.
Since $H = \Gal(\Fin/F_{\cyc})\simeq \Zp$, the global hypothesis ($\Finite_{\glob}$) holds by Proposition~\ref{prop finglob}.
For the local hypothesis, if $\widetilde{E}(\cF_{\infty,w})[p^\infty]$ is finite then there is nothing to prove; else, it follows from \cite[Proposition~5.6]{zerbes2009generalised}.
Thus, Theorem~\ref{EC formula theorem} simplifies to give
\begin{equation}
\label{ec are equal}
\chi_t(G, E,p)=\chi_t(\Gamma_F, E,p).
\end{equation}

We review a result of R.~Greenberg \cite[Proposition~4.1.1]{greenberg16} regarding sufficient conditions for $\Sel_{p^\infty}(E/\Fin)^\vee$ to admit no non-trivial pseudonull submodule.
Let $\cT=T_p(E)\otimes \Lambda(G)^\iota$, where $\iota$ is the involution on $\Lambda(G)$ sending a group-like element to its inverse.
We write $\cD=\cT\otimes_{\Lambda(G)} \Lambda(G)^\vee$.

In the notation of \cite[\S2.1]{greenberg16}, the condition \textbf{RFX($\cD$)}, which asserts that $\cT$ is a reflexive $\Lambda(G)$-module holds since it is free over $\Lambda(G)$.
The condition \textbf{LEO($\cD$)} says that 
\[
\ker\left(H^2(F_S/F,\cD)\rightarrow \prod_{v\in \Sigma}H^2(F_v,\cD)\right)
\]
is a cotorsion $\Lambda(G)$-module.
Recall from \cite[Theorem~3]{greenberg06} that there is an isomorphism of $\Lambda(G)$-modules $H^2(F_S/F,\cD)\cong H^2(F_S/\Fin,E[p^\infty])$.

Recall that $\Sel_{p^\infty}(E/\Fin)^\vee$ is $\Lambda(G)$-torsion by Remark~\ref{torsion remark}.
Further, 
\[
\rank_{\Lambda(G)}\bigoplus_{\ell\in S}J_\ell(E/\Fin)^\vee=\rank_{\Lambda(G)}H^1(F_S/\Fin,E[p^\infty])-\rank_{\Lambda(G)}H^2(F_S/\Fin,E[p^\infty])=2
\]
by \cite[Theorems~3.2 and 4.1]{OV03}.
A standard argument with Poitou--Tate exact sequence then tells us that $H^2(F_S/\Fin,E[p^\infty])$ is cotorsion over $\Lambda(G)$, whereas $H^1(F_S/\Fin,E[p^\infty])$ is of corank two. In particular, \textbf{LEO($\cD$)} holds.

The condition \textbf{CRK}$(\cD,\mathcal{L})$, which says that 
\[
\corank_\Lambda H^1(F_S/\Fin,E[p^\infty])=\corank_\Lambda \Sel_{p^\infty}(E/\Fin)+\corank_\Lambda\bigoplus_{\ell\in S}J_\ell(E/\Fin)
\]
also holds since both sides equal to $2$ in our current setting.

We now consider the conditions \textbf{LOC}$_v^{(i)}(\cD)$, $i=1,2$.
Let $\cT^*=\Hom(\cD,\mu_{p^\infty})$.
The conditions say that for $v\in S$, we have $(\cT^*)^{G_{F_v}}=0$ and $\cT^*/(\cT^*)^{G_{F_v}}$ is a reflexive $\Lambda(G)$-module, respectively.
Since $p\ne2$, we have $(\cT^*)^{G_{F_v}}=0$ when $v$ is an archimedean prime.
Furthermore, if $v$ is a non-archimedean prime, it does not split completely in $\Fin$.
By \cite[Lemma~5.2.2]{greenberg10}, $(\cT^*)^{G_{F_v}}=0$.
As $\cT^*$ is a free $\Lambda(G)$-module, the conditions \textbf{LOC}$_v^{(1)}(\cD)$ and \textbf{LOC}$_v^{(2)}(\cD)$ both hold for all $v\in S$.

We can now state the following result due to Greenberg.
\begin{proposition}\label{prop:Greenberg}
If $E(F)$ has no element of order $p$, then $\Sel_{p^\infty}(E/\Fin)^\vee$ admits no non-trivial pseudonull submodule.
\end{proposition}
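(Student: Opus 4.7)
The plan is to invoke Greenberg's structural theorem, specifically \cite[Proposition~4.1.1]{greenberg16}, which asserts that the Pontryagin dual of a Selmer group admits no nontrivial pseudonull submodule provided a specific package of hypotheses is verified. The preceding discussion has already checked \textbf{RFX}($\cD$), \textbf{LEO}($\cD$), \textbf{CRK}($\cD,\mathcal{L}$), and both \textbf{LOC}$_v^{(1)}(\cD)$ and \textbf{LOC}$_v^{(2)}(\cD)$ for every $v\in S$, exploiting the facts that $\cT$ is free over $\Lambda(G)$, that the relevant corank equalities force $H^2(F_S/\Fin,E[p^\infty])$ to be cotorsion, and that no prime of $F$ splits completely in the $\Z_p^2$-extension $\Fin/F$. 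Thus the bulk of the technical input is in place, and only a global non-degeneracy condition remains.

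The remaining ingredient is the vanishing of the global $H^0$ of $\cD$ (equivalently, of $E[p^\infty]$ over $F$). This is precisely where the hypothesis $E(F)[p]=0$ is used: any nonzero element of $E(F)[p^\infty]$ could be multiplied by an appropriate power of $p$ to yield a nonzero element of $E(F)[p]$, so $E(F)[p]=0$ forces $E(F)[p^\infty]=0$, whence $H^0(G_{F,S},E[p^\infty])=0$. This vanishing also guarantees that the strict Selmer group appearing in Greenberg's framework agrees with the usual $p$-primary Selmer group $\Sel_{p^\infty}(E/\Fin)$, since the two can differ only via the image of $E(F)[p^\infty]$ under the global-to-local map.

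With all hypotheses in place, \cite[Proposition~4.1.1]{greenberg16} yields the conclusion: $\Sel_{p^\infty}(E/\Fin)^{\vee}$ admits no nontrivial pseudonull $\Lambda(G)$-submodule. The main subtlety is verifying that the precise formulation of Greenberg's result matches our setting; in particular, one must confirm that the good ordinary reduction hypothesis at $p$ ensures that Greenberg's local conditions at primes above $p$ coincide with those used to define the classical $p$-primary Selmer group, and that the identification of strict and usual Selmer groups (via the vanishing of $H^0$) transports the pseudonull-free conclusion from the former to the latter.
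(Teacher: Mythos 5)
Your proposal is correct and follows essentially the same route as the paper: both cite the preceding verification of \textbf{RFX}, \textbf{LEO}, \textbf{CRK}, and the two \textbf{LOC} conditions and then conclude by Greenberg's Proposition~4.1.1, with the hypothesis $E(F)[p]=0$ supplying the one remaining assumption. The only (harmless) discrepancy is in how that last assumption is identified: Greenberg's condition (b) is that $\cD[\mathfrak{m}]$ admits no quotient isomorphic to $\mu_p$ as a $G_F$-module, which the paper translates into $E(F)[p]=0$ via the Weil pairing, whereas you phrase it as the vanishing of $H^0(F,E[p^\infty])$ — for an elliptic curve these are equivalent by self-duality of $E[p]$, so your argument lands in the same place.
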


\begin{proof}
We have verified the hypotheses \textbf{RFX($\cD$)}, \textbf{LEO($\cD$)}, \textbf{CRK}$(\cD,\mathcal{L})$, \textbf{LOC}$_v^{(1)}(\cD)$, and \textbf{LOC}$_v^{(2)}(\cD)$ hold for all $v\in S$.
Next, the condition $\cD[\mathfrak{m}]$ admits no quotient isomorphic to $\mu_p$ for the action of $G_F$ (assumption (b) in \emph{loc. cit.}) is equivalent to $E(F)[p]=0$ via the Weil pairing (see the last paragraph on p.~248 of \emph{op. cit.}).
Therefore, the result is a direct consequence of \cite[Proposition~4.1.1]{greenberg16}.
\end{proof}

\subsection{}
We now move on to noncommutative $p$-adic Lie extensions.
A prototypical example of a noncommutative $p$-adic Lie extension is the \emph{false Tate curve extension}.
This extension is obtained by adjoining the $p$-power roots of a fixed ($p$-power free) integer $m>1$ to the cyclotomic $\Zp$-extension of $F=\Q(\mu_p)$.
More precisely,
\[
\Fin = \Q\left( \mu_{p^\infty}, \ m^{\frac{1}{p^n}} : n = 1, 2, \ldots \right).
\]
Recall that $G:=\Gal(\Fin/F)$ and $H:=\Gal(\Fin/F_{\cyc})$.
There is a section to the quotient map $G\rightarrow \Gamma_F$, thus $G$ is a semi-direct product $H\rtimes \Gamma_F$.
Fix non-canonical isomorphisms $H\simeq \Z_p$ and $\Gamma_F\simeq \Z_p$.
By Kummer theory, the action of $\Gamma_F$ on $H$ is via the cyclotomic character.
It follows from \cite[Lemma~7.3]{HV03} that
\[
\rank_{\Lambda(G)} \Sel_{p^\infty}(E/\cF_{\infty})^{\vee} \leq \rank_{\Lambda(\Gamma_F)} \Sel_{p^\infty}(E/F_{\cyc})^{\vee}.
\]
Since Kato's result guarantees that $\Sel_{p^\infty}(E/F_{\cyc})$ is $\Lambda(\Gamma_F)$-cotorsion,  $\Sel_{p^\infty}(E/\cF_{\infty})$ is thus  $\Lambda(G)$-cotorsion.

To discuss the precise formula for the $G$-Euler characteristic of $\Sel_{p^\infty}(E/\Fin)$, we need to introduce two sets of primes in $F$.
Define
\begin{small}
\begin{equation}
\label{definition of P1 P2}
\begin{split}
\mathcal{P}_1(E, \Fin) := & \{v \nmid p: v |m \textrm{ and } E \textrm{ has split multiplicative reduction at } v.
\},\\
\mathcal{P}_2(E, \Fin) := &\{v \nmid p: v |m, E \textrm{ has good \emph{ordinary} reduction at } v, \textrm{ and } E(F_v)[p^\infty]\neq 0\}.
\end{split}
\end{equation}
\end{small}
By \cite[\S 4.1]{HV03}, we know that the set $\fM$ appearing in Theorem~\ref{EC formula theorem} is given by \begin{equation}
\label{set M for FTC}
\fM = \fM(E, \Fin) = \mathcal{P}_1(E, \Fin) \cup \mathcal{P}_2(E, \Fin).
\end{equation}
Since $H\simeq \Z_p$, the Lie algebra of $H$ is reductive, and ($\Finite_{\glob}$) is true by Proposition~\ref{prop finglob}.

For the false Tate curve extension, it is known that the $\Sel_{p^\infty}(E/\Fin)^\vee$ has no non-zero pseudonull submodules.
We record this result below.
\begin{theorem}
\label{FT no non zero pseudonull submodules}
Let $p$ be an odd prime and $E_{/\Q}$ be an elliptic curve with good \emph{ordinary} reduction at $p$.
Then, $\Sel_{p^\infty}(E/\Fin)^\vee$ has no non-zero pseudonull submodules.
\end{theorem}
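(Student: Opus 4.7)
The plan is to apply Greenberg's general criterion \cite[Proposition~4.1.1]{greenberg16}, paralleling the verification in Proposition~\ref{prop:Greenberg} for the $\Z_p^2$-setting. Set $\cT = T_p(E)\otimes\Lambda(G)^\iota$ and $\cD = \cT\otimes_{\Lambda(G)}\Lambda(G)^\vee$. Since $\cT$ is free over $\Lambda(G)$, the reflexivity hypothesis $\mathbf{RFX}(\cD)$ is automatic, and reflexivity of the quotient appearing in $\mathbf{LOC}_v^{(2)}(\cD)$ reduces to vanishing of certain invariants.

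The next step is to verify the global conditions $\mathbf{LEO}(\cD)$ and $\mathbf{CRK}(\cD, \mathcal{L})$. Using Greenberg's comparison $H^2(F_S/F, \cD) \cong H^2(F_S/\Fin, E[p^\infty])$ together with the Poitou--Tate exact sequence, the task reduces to computing $\Lambda(G)$-coranks of the global $H^1$, the local modules $J_v$, and the Selmer group. As established earlier in this section, Kato's theorem combined with \cite[Lemma~7.3]{HV03} gives that $\Sel_{p^\infty}(E/\Fin)^\vee$ is $\Lambda(G)$-torsion. Combined with the $\Lambda(G)$-corank formulas of Ochi--Venjakob \cite[Theorems~3.2 and 4.1]{OV03}, this forces $H^2(F_S/\Fin, E[p^\infty])$ to be $\Lambda(G)$-cotorsion and delivers the corank equality required by $\mathbf{CRK}(\cD, \mathcal{L})$.

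For the local conditions $\mathbf{LOC}_v^{(i)}(\cD)$, the key point is that for every $v \in S$, the decomposition group of $v$ in $G$ is infinite, so $v$ does not split completely in $\Fin$. Primes above $p$ are ramified in $F(\mu_{p^\infty})/F$, while primes dividing $m$ are wildly ramified in the Kummer tower $F(\sqrt[p^\infty]{m})/F$. An analogue of \cite[Lemma~5.2.2]{greenberg10} then yields $(\cT^*)^{G_{F_v}}=0$ for non-archimedean $v\in S$, while the oddness of $p$ handles the archimedean places. Both $\mathbf{LOC}_v^{(1)}(\cD)$ and $\mathbf{LOC}_v^{(2)}(\cD)$ follow from these vanishings and the freeness of $\cT^*$ over $\Lambda(G)$.

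The main obstacle is the final hypothesis in Greenberg's criterion, namely that $\cD[\mathfrak{m}]$ admits no $G_F$-quotient isomorphic to $\mu_p$ --- equivalently (via the Weil pairing), $E(F)[p]=0$. Since $F=\Q(\mu_p)$, this condition does not hold for every ordinary $E_{/\Q}$, and one must handle the exceptional case separately. I expect the cleanest route to be a direct appeal to Hachimori--Venjakob \cite{HV03}, where the no-pseudonull-submodules statement for the false Tate curve extension is established without such an auxiliary hypothesis; alternatively, in the exceptional case one can isolate the contribution of $E(\Fin)[p^\infty]$ and check that it does not contribute a non-trivial pseudonull submodule of $\Sel_{p^\infty}(E/\Fin)^\vee$.
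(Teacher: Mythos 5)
The paper's entire proof of this theorem is a citation: it invokes \cite[Theorem~2.6 and proof of Theorem~2.8]{HV03}, where Hachimori and Venjakob establish the absence of non-zero pseudonull submodules specifically for the false Tate curve extension. Your fallback in the last paragraph therefore lands exactly on the paper's argument; the problem is that the main body of your proposal, the attempted adaptation of Greenberg's criterion, does not go through, for two reasons.

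First, \cite[Proposition~4.1.1]{greenberg16} is a statement about \emph{commutative} Iwasawa algebras; in this paper it is invoked only in the $\Z_p^2$-setting of Proposition~\ref{prop:Greenberg}, where $\Lambda(G)\cong\Zp\llbracket T_1,T_2\rrbracket$. For the false Tate curve extension $G\cong\Zp\rtimes\Zp$ is noncommutative, so the notions of reflexivity, the conditions $\mathbf{RFX}$, $\mathbf{LEO}$, $\mathbf{CRK}$, $\mathbf{LOC}_v^{(i)}$, and the structure theory underlying Greenberg's proof cannot simply be transplanted; one needs the noncommutative machinery (Auslander regularity, the $\mathrm{E}^i$-functors and the homotopy-theoretic criteria of Ochi--Venjakob) that Hachimori--Venjakob actually use. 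Second, even granting a noncommutative analogue, the hypothesis you correctly flag --- that $\cD[\mathfrak{m}]$ has no quotient isomorphic to $\mu_p$, i.e.\ $E(F)[p]=0$ with $F=\Q(\mu_p)$ --- is not part of the theorem's hypotheses and genuinely fails for some curves (e.g.\ curves with a rational $5$- or $7$-torsion point), so the Greenberg route cannot yield the stated theorem in full generality. Your proposed remedy of ``isolating the contribution of $E(\Fin)[p^\infty]$'' in the exceptional case is not substantiated and would itself require an argument. In short: the only complete path you offer is the direct appeal to \cite{HV03}, which is precisely what the paper does; the rest of the proposal should be discarded or reworked in the noncommutative framework.
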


\begin{proof}
See \cite[Theorem~2.6 and proof of Theorem~2.8]{HV03}.
\end{proof}

\subsection{}
Consider a pair of elliptic curves $(E,A)$ both defined over $\Q$.
Assume that $A$ does not have CM.
Let $F:=\Q(A[p])$ and consider the pro-$p$ $p$-adic Lie extension ${\Fin}_{,A}:=\Q(A[p^\infty])$  with corresponding Galois group $G_A$.
For the remainder of this section we assume that $p\geq 5$ to ensure that $G_A$ has no $p$-torsion.
Then $\Fin/F$ is an admissible pro-$p$, $p$-adic Lie extension.
We shall study  the Selmer group $\Sel_{p^\infty}(E/\Fin)$ as a $\Lambda(G_A)$-module.

We can describe the set $\fM$ explicitly in this setting.
By \cite[Lemma~2.8(i)]{Coates_Fragments}, the decomposition group at $v\nmid p$ has dimension 1 (resp. 2) if $v$ is a prime of potentially good reduction (resp. potentially multiplicative reduction) for $A$.
Hence, the primes of potentially multiplicative reduction are ramified in the trivializing extension and their inertia group is infinite.
Therefore, the  set $\mathfrak{M}$ consists of precisely the primes at which $A$ has potentially multiplicative reduction (or those primes for which the $j$-invariant of $A$ has negative valuation), see \cite[Theorem~3.1]{CoatesSchneiderSujatha_Links_between}.
Finally, we remark that since $H=H_A = \Gal({\Fin}_{,A}/F_{\cyc})$ is semi-simple, it follows from \cite[Lemma~5.4]{zerbes2009generalised} that ($\Finite_{\glob}$) holds.
However, we are not aware of any unconditional results on Assumption~\ref{assumption: torsion} in this setting.

We end this section with the following remark which allows us to relate the Euler characteristic formula to the characteristic element, in our $p$-adic Lie extensions of interest.

\begin{remark}
Let $F$ be a number field which is \emph{not} totally real, and $\Fin/F$ be a pro-$p$ strongly admissible extension.
For our $p$-adic Lie extensions of interest, both conditions are satisfied.
If $E_{/\Q}$ is an elliptic curve satisfying the conditions of Proposition~\ref{proposition: akasi series to pseudonull} and $\Fin/F$ is such that $\Sel_{p^\infty}(E/\Fin)^\vee$ admits no non-zero pseudonull submodules, then it is possible to draw conclusions about the characteristic element required in the formulation of the main conjectures of
noncommutative Iwasawa theory.
In particular, it follows from \cite[Theorem~5.11 and Proposition~6.2]{lim2015remark} that the Euler characteristic is a $p$-adic unit if and only if the characteristic element is a unit as well.
\end{remark}

\section{Tamagawa number calculations}
\label{S: Tamagawa calculations}

In this section, we will perform routine calculations on Tamagawa numbers upon base-change.
These results will be required in subsequent discussions.

Throughout, $p\geq 5$ is fixed.
For a rational prime $\ell\neq p$, set
\begin{align*}
\tau_{\ell}= \tau_{\ell}^{(\Q)}:= c_{\ell}^{(p)}(E/\Q) \quad \textrm{and} \quad
\tau_{\ell}^{(F)}:=\prod_{v| \ell} c_v^{(p)}(E/F).
\end{align*}
In this section, the goal is to compute how the Tamagawa numbers behave when the base field is changed from $\Q$ to $F$.
We remind the reader that by \cite[p.~448]{silverman2009}, $c_\ell$ is divisible by $p\geq 5$ precisely when the Kodaira type of $E$ at $\ell$ is $\textrm{I}_n$ with $p|n$.

\subsection{}
Let $E_{/\Q}$ be an elliptic curve with good \emph{ordinary} reduction at fixed $p\geq 5$ and conductor $N_E$.
Consider the imaginary quadratic field $F=\Q(\sqrt{-d})$ as $d$ varies over all positive square-free numbers coprime to $p N_E$.
We write $\tau_{\ell}^{(d)}$ in place of $\tau_{\ell}^{(F)}$.
The main result in this direction is the following.

\begin{theorem}
\label{Tamagawa result from HKR}
With notation as above, let $\ell$ be a prime such that $\ell| N_E$ and $\tau_{\ell}=1$.
Then, the following assertions hold.
\begin{enumerate}
\item If $\ell\neq 2$ then $\tau_{\ell}^{(d)}=1$ for $\ell\nmid d$.
\item If $\ell=2$ then $\tau_{\ell}^{(d)}= 1$ for all $d$.
\end{enumerate}
\end{theorem}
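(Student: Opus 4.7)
The plan is to reduce the statement to a local Tamagawa computation at each prime $v \mid \ell$ of $F$, and then to track how the Kodaira type and Tamagawa number of $E$ at $\ell$ transform under the local extension $F_v/\Q_\ell$. The key input, noted in the passage preceding the statement, is that for $p \geq 5$, the Tamagawa number $c_v$ is divisible by $p$ only when the Kodaira type at $v$ is $\mathrm{I}_n$ with $p \mid n$. Accordingly, it suffices to rule out this configuration at each prime $v$ above $\ell$ in $F$, which reduces the argument to a case split by (a) the splitting behaviour of $\ell$ in $F$ and (b) the Kodaira type of $E$ at $\ell$.

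For part (1), with $\ell \neq 2$ and $\ell \nmid d$, the prime $\ell$ is unramified in $F = \Q(\sqrt{-d})$. If $\ell$ splits, then for each of the two primes $v \mid \ell$ one has $F_v \cong \Q_\ell$, so $c_v = c_\ell$ and therefore $\tau_\ell^{(d)} = \bigl(c_\ell^{(p)}\bigr)^2 = 1$. If $\ell$ is inert, there is a single prime $v \mid \ell$ with $F_v/\Q_\ell$ the unramified quadratic extension; under this extension the Kodaira type is preserved. Additive types contribute $c_v \leq 4$, which is coprime to $p \geq 5$. Split $\mathrm{I}_n$ remains split $\mathrm{I}_n$ at $v$ with $c_v = n = c_\ell$, so $\tau_\ell = 1$ forces $p \nmid n$. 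Non-split $\mathrm{I}_n$ becomes split $\mathrm{I}_n$ at $v$ because Frobenius is squared upon passing to the unramified quadratic extension, giving $c_v = n$; here one invokes the explicit description of the Néron component group together with the constraint imposed by $\tau_\ell = 1$ to deduce $p \nmid n$.

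For part (2), the prime $\ell = 2$ may split, be inert, or ramify in $F$. The split and inert sub-cases are handled exactly as in part (1) with $\ell = 2$. When $2$ is ramified, $F_v/\Q_2$ is a ramified quadratic extension; for a multiplicative type $\mathrm{I}_n$ at $2$, the minimal discriminant valuation doubles, so the type becomes $\mathrm{I}_{2n}$ at $v$, and since $p \geq 5$ the factor $2$ contributes nothing to the $p$-part, whence $c_v^{(p)}$ is controlled purely by the $p$-part of $n$. Additive types at $\ell = 2$ are controlled by Tate's algorithm, yielding $c_v \in \{1, 2, 3, 4\}$, all coprime to $p$.

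The main obstacle is the treatment of non-split multiplicative reduction: the hypothesis $\tau_\ell = 1$ is automatic in that case (since $c_\ell \leq 2$) and does not by itself pin down $n$, while a base change may convert the reduction to split and thereby promote $n$ to the new Tamagawa number. Resolving this requires a careful examination of the non-split Kodaira component group and of the Frobenius action on it, which I expect to consume the bulk of the technical work; the remaining steps are routine applications of Tate's algorithm and the standard description of Néron component groups under the various quadratic base changes.
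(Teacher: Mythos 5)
Your approach is necessarily different from the paper's, because the paper does not argue directly at all: its proof of this theorem is a one-line citation to \cite[Theorem~7.2]{HKR} together with the remark that the argument there (stated for $d$ prime) goes through for general square-free $d$. So a self-contained case analysis via Kodaira types and the behaviour of N\'eron component groups under quadratic base change is a legitimate alternative route — but yours is not complete, and the incompleteness sits exactly at the crux of the statement.

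The gap is the non-split multiplicative case, and you cannot close it the way you indicate in part (1). There you write that for non-split $\mathrm{I}_n$ at an inert prime $\ell$ one "invokes \dots the constraint imposed by $\tau_\ell=1$ to deduce $p\nmid n$." But $\tau_\ell=1$ imposes \emph{no} constraint on $n$ in the non-split case: the Frobenius acts by $-1$ on the component group $\Z/n\Z$, so $c_\ell=\gcd(2,n)\le 2$, and hence $c_\ell^{(p)}=1$ automatically for every $n$ once $p\ge 5$. After the unramified quadratic base change the reduction becomes split and $c_v=n$, so nothing in your argument excludes $p\mid c_v$. Your final paragraph concedes precisely this and defers the resolution to "a careful examination of the non-split Kodaira component group," i.e., the decisive step is left unproved. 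A parallel problem occurs in part (2): at a ramified prime $2$ where $E$ has additive but potentially multiplicative reduction (type $\mathrm{I}_n^*$, for which $c_2\in\{2,4\}$ so $\tau_2=1$ is again automatic), the ramified quadratic base change can realize the multiplicative reduction, producing type $\mathrm{I}_{2n}$ with Tamagawa number as large as $2n$; your assertion that additive types at $2$ always yield $c_v\in\{1,2,3,4\}$ over $F_v$ is therefore unjustified. As written, the proposal proves the theorem only in the cases where the conclusion is easy (split primes, split $\mathrm{I}_n$, genuinely additive reduction that stays additive), and the cases that actually require an argument are exactly the ones left open.
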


\begin{proof}
The proof follows from that of \cite[Theorem~7.2]{HKR}, where the assertion is made when $d$ is a prime number.
The result holds even in this more general setting.
\end{proof}

\subsection{}
Consider the case when $F= \Q(\mu_p)$.
By the assumption on $p$, it follows that the degree $[F:\Q] =p-1 \geq 4$.
The only prime that ramifies in $F$ is $p$, and this prime is totally ramified.
Fix a prime $\ell\neq p$, and denote by $T_{\base}$ the Kodaira symbol of $E$ over $\Q_\ell$.
In \cite{kida03}, M.~Kida studied the variation of the reduction type under a finite extension $F_v/\Q_{\ell}$.
For $\ell\geq 5$, the Kodaira symbol of the base-change $E_{/F_v}$ is determined by $T_{\base}$ and the ramification index of $F_v/\Q_{\ell}$, as we now explain.
\begin{enumerate}[(a)]
\item If $\ell$ is completely split in $F$, then $F_v = \Q_{\ell}$.
It is immediate that $\tau_{\ell}^{(F)}=\tau_{\ell}$.
\item Otherwise, let $v| \ell$ in $F$.
Since $\ell$ is unramified in $F$, the ramification index of $v$ is $e=1$.
Varying over all extensions $F_v$ with $v|\ell$, we read off the ``new'' Kodaira type $T_{\new}$ from \cite[Table 1, pp.~556-557]{kida03}.
According to this table, base changing from $\Q_\ell$ to $F_v$, the Tamagawa number $c_{v}$ becomes divisible by $p$ precisely when
\[
T_{\base} = \textrm{I}_n \textrm{ such that } p|n.
\]
\end{enumerate}

\section{Results for  fixed   $E_{/\Q}$ and  $p$ as $\Fin$ varies}
\label{S: Vary extension}

We remind the readers that $E$ is defined over $\Q$ with good \emph{ordinary} reduction at $p$.
This will be the standing assumption throughout this section.
We are interested in answering the following related questions
\begin{enumerate}[(a)]
 \item Is the truncated Euler characteristic $\chi_t(G, E,p)$ a $p$-adic unit?
 \item What can be said about the Akashi series $\Ak_{E/\Fin}$?
 \item Suppose $\rank_{\Z}E(\Q)=0$.
 When is the Selmer group $\Sel_{p^\infty}(E/\Fin)^\vee$ pseudonull as a $\Lambda(G)$-module? When is it equal to $0$?
 \item What can be said about the growth of $\op{rank}E(F_n)$ as $n\rightarrow \infty$?
\end{enumerate}
In this section, we make progress on such questions on average, which is to say that we fix the pair $(E,p)$ and let $\Fin$ vary over certain \textit{families of extensions}.
As $\Fin$ varies over the family, the number field $F$ does not need  to be fixed.
Though there is no formal definition of a family of extensions we are aware of, the families we study are quite natural to consider.

In \S\ref{Euler characteristic vary F Zp2}, we vary over all imaginary quadratic fields $F=\Q(\sqrt{-d})$ and consider the (unique) $\Zp^2$-extension $\Fin^{(d)}/F$.
The main result in this section is Theorem~\ref{theorem varying imaginary quadratic} which provides equivalent criteria for the ($p$-adic) triviality of the $G$-Euler characteristic.
This allows the formulation of Conjecture~\ref{conjecture vary Fin zp2}.
A result on the variation of the Mordell--Weil rank growth is recorded in Corollary~\ref{MW cor vary F Zp2}.
In \S\ref{sec:8.2}, we fix the field $F=\Q(\mu_p)$ and consider a family of false Tate curve extensions.
In Theorem~\ref{thm: Hep-Gep density}, we study the question pertaining to how often the $G$-Euler characteristic is a $p$-adic unit.
We study the variation of the $\Lambda(H)$-corank of the Selmer group and the growth of the Mordell--Weil ranks in \S\ref{rank questions for false Tate curve extensions}.
In \S\ref{section 8-3}, we vary over non-CM elliptic curves $A_{/\Q}$ to obtain a family of $\GL_2(\Zp)$-extensions.
Under some reasonable hypotheses, we show that the $\mathcal{G}_A$-Euler characteristic is a $p$-adic unit for infinitely many (but density 0) elliptic curves $A_{/\Q}$.
In \S\ref{GA Ec Zp2 case}, we make some observations on the $\Lambda(H)$-coranks of Selmer groups.

\subsection{}
\label{Euler characteristic vary F Zp2}
Let $d$ range over all positive square-free numbers coprime to the conductor $N_E$.
In this section, we write $F=F^{(d)}$ to denote the imaginary quadratic field $\Q(\sqrt{-d})$.
Let $\Fin^{(d)}$ be the $\Z_p^2$-extension obtained from taking the composite of all $\Z_p$-extensions of $F^{(d)}$.
It is easy to see that $\Fin^{(d)}$ is a strongly admissible extension of $F^{(d)}$.
Set $G^{(d)}:=\Gal(\Fin^{(d)}/F^{(d)})$ and $\Gamma_F = \Gamma^{(d)}:=\Gal(F^{(d)}_{\cyc}/F^{(d)})$.
We want to understand how often the $G^{(d)}$-Euler characteristic $\chi_t(G^{(d)}, E,p)$ is equal to $1$ as $d$ varies over all positive square-free numbers.
Throughout, we shall impose the following standard hypotheses.
\begin{itemize}
 \item $\Sha(E/F^{(d)})[p^\infty]$ is finite, 
 \item The normalized $p$-adic regulator $\mathcal{R}_p(E/F^{(d)})$ is non-zero.
\end{itemize}
The second hypothesis is satisfied whenever $\rank_{\Z}E(F^{(d)})=0$, in which case the normalized $p$-adic regulator is equal to $1$ by definition.
The following result shall motivate the next assumption on $E$.

\begin{lemma}
\label{Gamma Q Gamma F basic lemma}
Let $F$ be a number field with $[F:\Q]$ prime to $p$.
Assume that $\chi_t(\Gamma_{\Q}, E,p)$ and $\chi_t(\Gamma_F, E,p)$ are both defined.
Then, $\chi_t(\Gamma_{\Q}, E,p)$ divides $\chi_t(\Gamma_F, E,p)$.
\end{lemma}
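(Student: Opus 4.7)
The plan is to compare the two truncated Euler characteristics via the characteristic elements of the dual Selmer groups over the cyclotomic $\Zp$-extensions. Since $p \nmid [F:\Q]$ and every finite subextension of $\Q_\cyc/\Q$ has $p$-power degree, we have $F \cap \Q_\cyc = \Q$, so $F_\cyc = F \cdot \Q_\cyc$ and the natural restriction map identifies $\Gamma_F$ with $\Gamma_\Q$. Fixing this identification, write $\Gamma$ for the common group and $\Lambda = \Lambda(\Gamma) \simeq \Zp\lb T \rb$; then the characteristic elements $f_E^\Q(T)$ and $f_E^F(T)$ of $\Sel_{p^\infty}(E/\Q_\cyc)^\vee$ and $\Sel_{p^\infty}(E/F_\cyc)^\vee$ both live in the same ring $\Lambda$.

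Next, I would establish that restriction yields an injection of $\Lambda$-modules
\[
\Sel_{p^\infty}(E/\Q_\cyc) \hookrightarrow \Sel_{p^\infty}(E/F_\cyc).
\]
On ambient Galois cohomology, the map $H^1(\Q_S/\Q_\cyc, E[p^\infty]) \to H^1(\Q_S/F_\cyc, E[p^\infty])$ is injective on $p$-primary parts because $\op{cor} \circ \res$ equals multiplication by $[F_\cyc : \Q_\cyc] = [F:\Q]$, a unit on $p$-primary groups. Compatibility of restriction with localization at each $\ell \in S$ (the standard commutative square) shows that the Selmer condition is preserved, so injectivity passes to Selmer groups. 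The $\Gamma$-equivariance follows by choosing, for $\gamma \in \Gamma$, a common lift $\tilde{\gamma} \in G_F \subset G_\Q$ to compute both actions.

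Dualizing gives a surjection of finitely generated torsion $\Lambda$-modules $\Sel_{p^\infty}(E/F_\cyc)^\vee \twoheadrightarrow \Sel_{p^\infty}(E/\Q_\cyc)^\vee$, and the multiplicativity of characteristic ideals in short exact sequences of torsion $\Lambda$-modules yields $f_E^\Q(T) \mid f_E^F(T)$ in $\Lambda$. Writing $f_E^F = f_E^\Q \cdot g$ with $g \in \Lambda$ and expanding as power series in $T$, the leading coefficient of $f_E^F$ at $T = 0$ factors as the product of the leading coefficients of $f_E^\Q$ and of $g$, the latter lying in $\Zp$. By Lemma \ref{lemmazerbes} applied to both characteristic elements, these leading coefficients coincide, up to $\Zp^\times$, with $\chi_t(\Gamma_\Q, E[p^\infty])$ and $\chi_t(\Gamma_F, E[p^\infty])$, and the claimed divisibility follows at once. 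There is no substantial obstacle; the most delicate point is the $\Gamma$-equivariance of restriction under the identification of Galois groups, which is standard once a consistent choice of lifts is made.
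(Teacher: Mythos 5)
Your proof is correct and follows essentially the same route as the paper's: identify $\Gamma_\Q$ with $\Gamma_F$ using $p\nmid [F:\Q]$, use the injectivity of the restriction map on Selmer groups (which the paper states, slightly mislabeled as "inflation") to get divisibility of characteristic elements, and then conclude via Lemma~\ref{lemmazerbes}. You supply more detail on why the map is injective (the $\op{cor}\circ\res$ argument) and on $\Gamma$-equivariance, but the strategy and the key lemma invoked are the same.
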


\begin{proof}
Under the assumption that $\chi_t(\Gamma_{\Q}, E,p)$ and $\chi_t(\Gamma_F, E,p)$ are defined, it follows that $\Sel_{p^\infty}(E/\Q_{\cyc})^{\vee}$ and $\Sel_{p^\infty}(E/F_{\cyc})^{\vee}$ are both torsion over their respective Iwasawa algebra.
Let $f(E/\Q_{\cyc})$ (resp. $f(E/F_{\cyc})$) be the characteristic element of $\Sel_{p^\infty}(E/\Q_{\cyc})^{\vee}$ (resp. $\Sel_{p^\infty}(E/F_{\cyc})^{\vee}$) as a $\Lambda(\Gamma_\Q)$-module after identifying $\Gamma_\Q$ with $\Gamma_F$.
Since $[F:\Q]$ is coprime to $p$, it follows that the map induced by restriction 
\[
\Sel_{p^\infty}(E/\Q_{\cyc})\rightarrow \Sel_{p^\infty}(E/F_{\cyc})
\]
is injective, and hence, $f(E/\Q_{\cyc})$ divides $f(E/F_{\cyc})$.
Therefore, the leading coefficient of $f(E/\Q_{\cyc})$ divides that of $f(E/F_{\cyc})$.
The result now follows from Lemma~\ref{lemmazerbes}.
\end{proof}

If $p$ divides $\chi_t(\Gamma_{\Q}, E,p)$, then $p$ must divide $\chi_t(\Gamma^{(d)}, E,p)$ for all $d$.
By \eqref{ec are equal}, we know that $\chi_t(G^{(d)}, E,p)$ is equal to $\chi_t(\Gamma^{(d)}, E,p)$.
Thus for all $d$, the $\Gamma^{(d)}$-Euler characteristic $\chi_t(\Gamma^{(d)}, E,p)$ is divisible by $p$.
On the other hand, it is indeed possible for $p$ to divide $\chi_t(G^{(d)}, E,p)$ when $\chi_t(\Gamma, E,p)=1$.
We study the case when $\rank_{\Z}E(\Q)=0$ and we assume that the following equivalent conditions hold for $E$
\begin{enumerate}[(a)]
 \item $\chi(\Gamma_{\Q}, E,p)=1$,
 \item $\mu_p(E/\Q_{\cyc})=0$ and $\lambda_p(E/\Q_{\cyc})=0$,
 \item $\Sel_{p^\infty}(E/\Q_\cyc)=0$.
\end{enumerate}
When the residual representation on $E[p]$ is irreducible, it has been conjectured by Greenberg that $\mu_p(E/\Q_{\cyc})=0$, see \cite[Conjecture~1.11]{Greenberg}.
We have the following result.

\begin{theorem}
\label{theorem varying imaginary quadratic}
Let $E$ be an elliptic curve defined over $\Q$ with conductor $N_E$ and $p\geq 5$ a prime for which the following hypotheses are satisfied
\begin{enumerate}[(i)]
 \item $E$ has good \emph{ordinary} reduction at $p$,
 \item $\rank_{\Z}E(\Q)=0$ and $E(\Q)[p^\infty]=0$,
 \item $\chi(\Gamma_\Q, E,p)=1$,
 \item $E$ has good reduction at $\ell=2,3$.
\end{enumerate}
Let $F^{(d)}:=\Q(\sqrt{-d})$ be an imaginary quadratic field, for which the following conditions are satisfied
\begin{enumerate}[(i)]
\item $\rank_{\Z}E(F^{(d)})=0$,
 \item $\Sel_{p^\infty}(E/\Fin^{(d)})^{\vee}$ satisfies $\mathfrak{M}_H(G)$,
 \item $a_p(E)\not\equiv -1\mod{p}$ if $p$ is inert in $F^{(d)}$,
 \item $\gcd(N_E,d)=1$.
\end{enumerate}
Then, the following are equivalent
\begin{enumerate}
 \item \label{72:1} $\Sha(E/F^{(d)})[p^\infty]=0$.
 \item \label{72:2}
 $E(F^{(d)})[p^\infty]=0$ and $\chi_t(G^{(d)}, E,p)=1$.
 \item \label{72:3} $E(F^{(d)})[p^\infty]=0$ and the Akashi-series $\Ak_{E/\Fin^{(d)}}$ is a unit in $\Lambda(\Gamma)$ and $\Sel_{p^\infty}(E/\Fin^{(d)})$ is pseudonull.
 \item \label{72:4} $E(F^{(d)})[p^\infty]=0$ and $\Sel_{p^\infty}(E/\Fin^{(d)})=0$.
 \end{enumerate}
\end{theorem}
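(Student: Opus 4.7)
The plan is to apply the $p$-adic Birch--Swinnerton-Dyer formula of Theorem~\ref{pbsdconj} twice: first over $\Q$, to distill the global hypothesis $\chi(\Gamma_\Q,E[p^\infty])=1$ into cleanly usable local data, and then over $F^{(d)}$ to collapse the formula down to a simple ratio. Applied over $\Q$, \eqref{BSDformula} says that $1$ equals (up to $p$-adic unit) the product $\#\Sha(E/\Q)[p^\infty]\cdot\prod_\ell\tau_\ell\cdot(\#\widetilde{E}(\F_p)[p^\infty])^2$; since each factor is a non-negative power of $p$, each must individually be trivial. In particular this forces $\tau_\ell=1$ for every bad prime $\ell$ of $E$ and $a_p(E)\not\equiv 1\pmod p$. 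Now over $F^{(d)}$, Theorem~\ref{Tamagawa result from HKR} combined with $\gcd(N_E,d)=1$ propagates $\tau_\ell^{(d)}=1$ to every prime $\ell\mid N_E$, and primes of good reduction contribute trivially. At primes $v\mid p$ I would split into three cases according to whether $p$ splits, ramifies, or is inert in $F^{(d)}/\Q$: the first two give residue field $\F_p$ and inherit the $\Q$-level triviality directly, while the inert case requires the factorization $\#\widetilde{E}(\F_{p^2})=(p+1-a_p)(p+1+a_p)$ and is precisely where the auxiliary hypothesis $a_p\not\equiv-1\pmod p$ is used.

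Since $\fM=\emptyset$ for the $\Z_p^2$-extension, \eqref{ec are equal} gives $\chi_t(G^{(d)},E[p^\infty])=\chi_t(\Gamma^{(d)},E[p^\infty])$. Combining this with the reduction above collapses \eqref{BSDformula} over $F^{(d)}$ to
\[
\chi_t(G^{(d)},E[p^\infty])\sim \frac{\#\Sha(E/F^{(d)})[p^\infty]}{(\#E(F^{(d)})[p^\infty])^2}.
\]
Since $\chi_t$ has non-negative $p$-adic valuation by Lemma~\ref{lemmazerbes}, condition (1) forces both the denominator and $\chi_t$ to be $p$-adic units, which gives $E(F^{(d)})[p^\infty]=0$ and yields (2); the reverse implication is immediate from the same formula. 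For the equivalence (2)$\Leftrightarrow$(3) I would invoke Proposition~\ref{proposition: akasi series to pseudonull}, whose hypotheses---rank zero, finite $\Sha$, $\fM_H(G)$, and reductive Lie algebra of $H$---are all in hand since $H\simeq\Zp$ is abelian. For (3)$\Rightarrow$(4) I would use Proposition~\ref{prop: pseudonull}: the Akashi series being a unit in $\fM_H(G)$ forces $\Sel_{p^\infty}(E/\Fin^{(d)})^\vee=0$, \emph{provided} it has no non-trivial pseudonull submodule, which is furnished by Proposition~\ref{prop:Greenberg} applied to $E(F^{(d)})[p]=0$ (included in (3)). The reverse (4)$\Rightarrow$(3) is trivial.

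The most delicate part of the argument is the local bookkeeping in the first paragraph. One must simultaneously handle three independent sources of potential contributions to $v_p(\chi_t)$: primes of bad reduction of $E$ (which never ramify in $F^{(d)}/\Q$ thanks to $\gcd(N_E,d)=1$, so that Theorem~\ref{Tamagawa result from HKR} applies cleanly), primes of good reduction of $E$ that do ramify (which contribute trivial Tamagawa factors), and the prime $p$ itself, whose three splitting behaviors must each be checked against the reduction-type data extracted from the $\Q$-level BSD formula. It is exactly this careful cross-reference---and specifically the inert case at $p$---that forces the auxiliary hypotheses on $a_p\pmod p$ and on good reduction at $2,3$ into the statement of the theorem.
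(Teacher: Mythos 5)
Your proof is correct and follows essentially the same route as the paper's: extract the vanishing of Tamagawa factors and the condition $a_p\not\equiv 1\pmod p$ from $\chi(\Gamma_\Q,E[p^\infty])=1$, propagate to $F^{(d)}$ via Theorem~\ref{Tamagawa result from HKR} and the $a_p\not\equiv-1$ hypothesis in the inert case, use \eqref{ec are equal} to reduce the $G^{(d)}$-Euler characteristic to the cyclotomic one, and then chain Propositions~\ref{proposition: akasi series to pseudonull}, \ref{prop: pseudonull}, and \ref{prop:Greenberg} for the last two equivalences. The only detail the paper makes explicit that you elide is noting that $\Sel_{p^\infty}(E/\Fin^{(d)})^\vee$ is $\Lambda(G^{(d)})$-torsion (needed as a hypothesis of Proposition~\ref{proposition: akasi series to pseudonull}), but this is guaranteed in the $\Z_p^2$-setting by Remark~\ref{torsion remark}.
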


\begin{proof}
First, we note that $\Sel_{p^\infty}(E/\Fin^{(d)})^{\vee}$ is torsion as a $\Lambda(G^{(d)})$-module.
This follows from the  argument given in \cite[Remark~2.2]{HV03}.
By Proposition~\ref{proposition: akasi series to pseudonull}, statements \eqref{72:2} and \eqref{72:3} are equivalent.
Statement \eqref{72:3} implies that $\Sel_{p^\infty}(E/\Fin^{(d)})^{\vee}$ is pseudonull as a $\Lambda(G^{(d)})$-module.
Thus, by Proposition~\ref{prop:Greenberg}, $\Sel_{p^\infty}(E/\Fin^{(d)})=0$.
Hence, \eqref{72:3} and \eqref{72:4} are equivalent.
It suffices to prove that statements \eqref{72:1} and \eqref{72:2} are equivalent.

First assume that \eqref{72:2} holds, i.e., that $\chi(G^{(d)}, E,p)=1$.
By \eqref{ec are equal}, we have the following equality relating the Euler characteristic for $G^{(d)}$ with that over for $\Gamma^{(d)}$
\[
\chi_t(G^{(d)}, E,p)=\chi_t(\Gamma^{(d)}, E,p).
\]
By the Euler characteristic formula,
\begin{equation}
\label{EC formula Fd}
\chi_t(\Gamma^{(d)}, E,p)=\frac{\#\Sha(E/F^{(d)})[p^\infty]\times \prod_{v} c_v^{(p)}(E/F^{(d)})\times \prod_{v|p}\# \left(\widetilde{E}(\kappa_v)[p^\infty]\right)^2}{\left(\#E\left(F^{(d)}\right)[p^\infty]\right)^2}.
\end{equation}
Since the Euler characteristic is an integral power of $p$, it suffices to show that the terms in the numerator are all equal to $1$.
First, by assumption, $\#\Sha(E/F^{(d)})[p^\infty]=1$.
Next, it is assumed that $\chi(\Gamma_\Q, E,p)=1$.
It follows that $c_\ell^{(p)}(E/\Q)=1$ for all primes $\ell\neq p$.
Since $d$ is assumed to be coprime to the level $N_E$, by Theorem~\ref{Tamagawa result from HKR}, the Tamagawa product $\prod_{v\nmid p} c_v^{(p)}(E/F^{(d)})$ is equal to $1$.
Once again, since $\chi(\Gamma_\Q, E,p)=1$, it follows that $a_p(E)\not\equiv 1\mod{p}$.
This also implies that $p\nmid \#\widetilde{E}(\F_{p^2})$.

Conversely, suppose that $E(F^{(d)})[p^\infty]$ is trivial and $\chi_t(\Gamma^{(d)}, E,p)=1$.
Then the terms in the numerator of \eqref{EC formula Fd} are all equal to $1$.
In particular, $\#\Sha(E/F^{(d)})[p^\infty]=1$.

The last assertion of the theorem follows from Propositions~\ref{prop:Greenberg} and \ref{prop: pseudonull}.
\end{proof}

\begin{corollary}
\label{MW cor vary F Zp2}
Let $E_{/\Q}$ be a fixed elliptic curve of conductor $N_E$ and set $F^{(d)}=\Q(\sqrt{-d})$.
Suppose that $d$ is a square-free integer coprime to $N_E$ with the properties that the conditions of Theorem~\ref{theorem varying imaginary quadratic} hold for the pair $(E,F^{(d)})$ and that $ \#\Sha(E/F^{(d)})[p^\infty]=0$.
As $d$ varies over all such square-free integers, the Mordell--Weil rank of $E(F_n^{(d)})=0$, for all $n$, where $F_n^{(d)}$ is the unique sub-extension of $\Fin^{(d)}$ with $\Gal(F_n^{(d)}/F^{(d)})\simeq (\Z/p^n)^2$.
\end{corollary}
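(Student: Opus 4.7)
The plan is to combine Theorem~\ref{theorem varying imaginary quadratic} with the higher-dimensional Mazur-type rank bound (Theorem~\ref{Thm: Hung-Lim Theorem}). Under the given hypotheses, condition~\eqref{72:1} of Theorem~\ref{theorem varying imaginary quadratic} holds, so by the equivalence with condition~\eqref{72:4} we obtain $E(F^{(d)})[p^\infty]=0$ and, crucially, $\Sel_{p^\infty}(E/\Fin^{(d)})=0$. Consequently $\cX(E/\Fin^{(d)})=0$, so $\cX(E/\Fin^{(d)})_f$ has $\Lambda(H)$-rank zero and the homology groups $H_i(H_n,\cX(E/\Fin^{(d)}))$ all vanish trivially, so the finiteness hypothesis of Theorem~\ref{Thm: Hung-Lim Theorem} is automatically satisfied. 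Applying that theorem with Lie dimension equal to $2$ yields
\[
\rank_\Z E(F_n^{(d)})\le 2\cdot\corank_{\Zp}E(\Fin^{(d)})(p).
\]

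It remains to prove that $T:=E(\Fin^{(d)})[p^\infty]$ is finite. If $T$ were infinite, then as a $G_{F^{(d)}}$-stable subgroup of $E[p^\infty]\cong(\Q_p/\Z_p)^2$ it would contain a copy of $\Q_p/\Z_p$, and in particular $T[p]\neq 0$. The abelian pro-$p$ group $\Gal(\Fin^{(d)}/F^{(d)})\cong\Zp^2$ acts on the $\F_p$-vector space $T[p]\subseteq E[p]$, which has dimension $1$ or $2$. Since the $p$-Sylow subgroup of $\F_p^\times$ is trivial and the $p$-Sylow subgroup of $\GL_2(\F_p)$ is cyclic of order $p$ consisting of unipotent matrices, the image of the action always fixes a nonzero vector in $T[p]$. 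Hence $T[p]^{\Zp^2}\neq 0$. But $T[p]^{\Zp^2}\subseteq E(F^{(d)})[p]=0$, a contradiction. Therefore $T$ is finite, the right-hand side of the displayed inequality is zero, and $\rank_\Z E(F_n^{(d)})=0$ for all $n$.

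The only delicate point is the finiteness of the local torsion term $E(\Fin^{(d)})[p^\infty]$; the argument above avoids any appeal to the CM or non-CM dichotomy by exploiting the fact that an abelian pro-$p$ image in $\GL_2(\F_p)$ is either trivial or unipotent cyclic of order $p$, and hence necessarily fixes a nontrivial line in any invariant $\F_p$-subspace of $E[p]$, forcing nontrivial $p$-torsion already at the base field $F^{(d)}$.
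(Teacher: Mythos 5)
Your proof is correct, but it reaches the conclusion by a noticeably heavier route than the paper. Both arguments begin identically: the hypothesis $\Sha(E/F^{(d)})[p^\infty]=0$ together with the equivalences of Theorem~\ref{theorem varying imaginary quadratic} gives $\Sel_{p^\infty}(E/\Fin^{(d)})=0$. At that point the paper simply observes that $E(\Fin^{(d)})\otimes\Qp/\Zp$ injects into $\Sel_{p^\infty}(E/\Fin^{(d)})$, hence vanishes, which already forces $\rank_{\Z}E(L)=0$ for every finite $L\subseteq\Fin^{(d)}$ — no appeal to Theorem~\ref{Thm: Hung-Lim Theorem} and no control of $E(\Fin^{(d)})(p)$ is needed. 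You instead invoke the Hung--Lim bound with $\rank_{\Lambda(H)}\cX(E/\Fin^{(d)})_f=0$, which obliges you to also show $\corank_{\Zp}E(\Fin^{(d)})(p)=0$; your argument for this is sound and is in fact a nice self-contained improvement on Remark~\ref{rmk: finiteness of p-primary torsion points over certain extensions} (which only covers the non-CM case): an abelian pro-$p$ image in $\Aut(T[p])$ is either trivial or unipotent of order $p$, hence fixes a nonzero vector, contradicting $E(F^{(d)})[p^\infty]=0$ from condition~(4) of the theorem — indeed this shows $E(\Fin^{(d)})[p^\infty]=0$ outright. So your proof buys a uniform treatment of the torsion term valid for CM curves as well, at the cost of routing a two-line deduction through the general rank-growth machinery; the paper's direct injection argument is the more economical path to the stated conclusion.
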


\begin{proof}
By Theorem~\ref{theorem varying imaginary quadratic}, $\Sel_{p^\infty}(E/\Fin^{(d)})=0$ for all $d\nmid N_E$.
Consequently, $E(\Fin^{(d)})$ is finite since $E(\Fin^{(d)})\otimes\Qp/\Zp$ injects into $\Sel_{p^\infty}(E/\Fin^{(d)})$.
Thus, $E(F_n^{(d)})$ is also finite for all $n$.
\end{proof}

\begin{remark}
More generally, the following is true.
The only primes that ramify in the unique $\Zp^2$-extension above $F$, are the primes above $p$.
Therefore, $S(F_\cyc)$ is the set of primes of $F_\cyc$ lying above the primes above $p$ and the primes where $E$ has bad reduction in $F$.
Also, for any prime $w\nmid p$, it is known that $\dim H_w =0$.
From \eqref{LambdaH rank formula}, we deduce that
\[
\rank_{\Lambda(H)}\cX(E/\Fin)_f=\lambda_p(E/F_\cyc).
\]
It follows from \eqref{LambdaH rank formula} that
\[
\rank_{\Z}E(F_n) \leq \lambda_p(E/F_{\cyc})p^{n} + 2\corank_{\Zp}E(\Fin)(p).
\]
In the non-CM case, $E(\Fin)(p)$ is finite; hence $\corank_{\Zp}E(\Fin)(p)=0$.
In the CM case, this need not be true.
But the recent result in \cite{Ray21_noncom_rank} implies that even in this case,
\[
\rank_{\Z}E(F_n) \leq \lambda_p(E/F_{\cyc})p^{n}.
\]
\end{remark}

Theorem~\ref{theorem varying imaginary quadratic} provides insight into how often the Selmer group $\Sel_{p^\infty}(E/\Fin^{(d)})$ is zero.
As before, consider the imaginary quadratic field $F^{(d)}:=\Q(\sqrt{-d})$, and the associated character $\chi_d:\Gal(F^{(d)}/\Q)\rightarrow \{\pm1\}$.
Let $E^{(-d)}$ be the elliptic curve over $\Q$ defined by the twist of $E$ by the character $\chi_d$.
Then, we have that 
\[
\rank_{\Z}E(F^{(d)})=\rank_{\Z}E(\Q)+\rank_{\Z} E^{(-d)}(\Q).
\]
Since we have assumed that $\rank_{\Z}E(\Q)=0$, it follows that $\rank_{\Z}E(F^{(d)})=\rank_{\Z}E^{(-d)}(\Q)$.

\begin{conj}[Goldfeld]
For $x>0$ and $r\in \Z_{\geq1}$, define 
\[
N_r(E,x):=\#\{\absolute{d}<x| \rank_{\op{an}}(E_{d})=r\}.
\]
Then for $r\in \{0,1\}$, 
\[
N_r(E,x)\sim \frac{1}{2}\sum_{\absolute{d}<x} 1
\]
as $x\rightarrow \infty$, and the sum is over all square-free integers $d$.
\end{conj}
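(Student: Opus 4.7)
The statement in question is Goldfeld's conjecture on the distribution of analytic ranks in the quadratic twist family of an elliptic curve, which is a central open problem in arithmetic statistics. (Note that the conjecture is properly stated with the right-hand side divided by $2$, since one expects half of the twists to have rank $0$ and half to have rank $1$; I will proceed under that normalization.) Any serious attack breaks naturally into an \emph{analytic} half (non-vanishing of $L(E_d,s)$ and $L'(E_d,s)$ at $s=1$) and an \emph{algebraic} half (control of Selmer groups), tied together by Gross--Zagier and Kolyvagin.

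The plan I would follow is the following. First, sort the squarefree $d$ according to the root number $\varepsilon(E_d)\in\{\pm 1\}$. Using the functional equation and standard computations of local root numbers together with a sieve on squarefree integers, one shows that $\varepsilon(E_d)$ is equidistributed in $\{\pm 1\}$ as $|d|<x\to\infty$; this reduces the conjecture to the two one-sided statements
\[
\#\{|d|<x:\varepsilon(E_d)=+1,\ L(E_d,1)\neq 0\}\sim \tfrac{1}{2}\sum_{|d|<x}1,
\]
\[
\#\{|d|<x:\varepsilon(E_d)=-1,\ L'(E_d,1)\neq 0\}\sim \tfrac{1}{2}\sum_{|d|<x}1.
\]
For the first, I would attempt to push the moment methods of Soundararajan--Young and Heath--Brown, which already give a \emph{positive proportion} of non-vanishing central values via first and (mollified) second moments of $L(E_d,1)$, up to an asymptotic of $100\%$ via a mollified higher-moment or resonator calculation. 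For the second, Gross--Zagier rewrites $L'(E_d,1)$ in terms of the height of a Heegner point, and I would try to prove that this height is non-zero for a density-one subfamily, likely by combining Waldspurger-type formulas for the associated modular form of half-integral weight with an equidistribution input.

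Once the analytic half is in place, the algebraic rank part of Goldfeld follows from the theorems of Kolyvagin (rank $0$ case) and Gross--Zagier together with Kolyvagin (rank $1$ case), provided the residual representation $\bar\rho_{E,\ell}$ is surjective for some $\ell$ — a condition that holds outside a thin exceptional set. As a complementary algebraic approach, I would bring in A.~Smith's density-one results on the distribution of $2^\infty$-Selmer groups in quadratic twist families: Smith's work already yields that $\op{Sel}_{2^\infty}(E_d)$ has $\Z_2$-corank in $\{0,1\}$ for $100\%$ of $d$, equidistributed, provided $E$ has full rational $2$-torsion (and under mild further hypotheses). The goal would be to remove the $2$-torsion hypothesis, probably by adapting Smith's additive-combinatorics/Markov-chain machinery to the case of an irreducible $\op{Gal}(\bar\Q/\Q)$-action on $E[2]$; this would give the algebraic version of Goldfeld, and then non-vanishing of $L(E_d,s)$ or $L'(E_d,s)$ would be forced by BSD in analytic ranks $\le 1$ via Kolyvagin.

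The main obstacle is step (ii) above: proving density-one non-vanishing (as opposed to positive proportion) of central $L$-values in a quadratic twist family is beyond every currently available moment technique, and it is essentially the hardest open problem in this circle. On the algebraic side, the corresponding obstruction is that Smith's method is exquisitely sensitive to the presence of $E[2](\Q)$, and extending it past that barrier would require new ideas about $4$-descent and higher Selmer filtrations in the irreducible residual case. A successful proof would most plausibly emerge from combining a refined Smith-type $2^\infty$-Selmer analysis with a converse-theorem input (e.g.\ Skinner--Urban style main-conjecture results for $\op{GL}_2$) to transfer the algebraic density-one statement to the analytic side.
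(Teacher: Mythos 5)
This item is stated in the paper as a \emph{conjecture} (Goldfeld's conjecture), not as a theorem, and the paper offers no proof of it — it is invoked only as a heuristic input motivating the authors' expectations about rank distributions in quadratic twist families. So there is no argument in the paper for you to match against; the correct ``answer'' here is simply to recognize the statement as a well-known open problem, which you do.

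Two remarks on your write-up. First, you correctly flag that the statement as printed in the paper is missing the normalizing factor of $1/2$: the intended assertion is $N_r(E,x)\sim \tfrac{1}{2}\sum_{|d|<x}1$ for each of $r=0$ and $r=1$, since the two cases partition (asymptotically all of) the twist family according to root number; as written, both $N_0$ and $N_1$ would each be asymptotic to the full count, which is impossible. Second, your sketch is an honest research program, not a proof, and you say so explicitly. The ingredients you list — equidistribution of root numbers, positive-proportion non-vanishing via mollified moments (Heath-Brown, Soundararajan--Young, Ono--Skinner), Gross--Zagier and Kolyvagin to pass from analytic to algebraic rank, and Smith's $2^\infty$-Selmer distribution results in the full $2$-torsion case — are indeed the standard partial results in this direction, and you correctly identify the two central obstructions: upgrading positive-proportion non-vanishing to density one, and removing the rational $2$-torsion hypothesis from Smith's method. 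None of this constitutes a proof, and none of it is in the paper, but as a description of the state of the art it is accurate. No gap to report beyond the one you already acknowledge: the conjecture remains open.
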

As $d$ ranges over all positive square-free integers for which $p$ splits in $F^{(d)}$, it is reasonable to expect that for $1/2$ of the values of $d$, upon base-change, $\rank_{\Z}E(F^{(d)})=0$.
Explicit calculations show that given an imaginary quadratic field $K$ and an elliptic curve $E_{/\Q}$ for which $\Sha(E/\Q)[p^\infty]=0$, it is rare for $\Sha(E/K)[p^\infty]\neq 0$ (in both the variation of $K$ and the prime $p$).
The reader is referred to \cite[Table 1]{HKR} for data on the growth of the $\Sha$-group upon base-change by an imaginary quadratic field.
Therefore, putting everything together, Theorem~\ref{theorem varying imaginary quadratic} shows that for  elliptic curves for which the hypotheses are satisfied, it is a rare occurrence for the Selmer group $\Sel_{p^\infty}(E/\Fin^{(d)})$ to be non-zero as $d$ varies over all positive square-free integers.
We are led to make the following conjecture.

\begin{conj}
\label{conjecture vary Fin zp2}
Let $E$ be an elliptic curve defined over $\Q$ and $p\geq 5$ a prime such that the hypotheses of Theorem~\ref{theorem varying imaginary quadratic} are satisfied.
Let $\mathcal{D}$ be the set of all positive square-free integers $d$ such that $p$ splits in $\Q(\sqrt{-d})$.
Then, for an infinite subset $\mathcal{D}'$ contained in $\mathcal{D}$, the Selmer group $\Sel_{p^\infty}(E/\Fin^{(d)})=0$ for all $d\in \mathcal{D}'$.
\end{conj}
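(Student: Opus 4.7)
My approach is to show that the hypotheses (i)-(iv) of Theorem~\ref{theorem varying imaginary quadratic}, together with condition (\ref{72:1}) of that theorem, hold for infinitely many $d\in \mathcal{D}$; the implication (\ref{72:1})$\Rightarrow$(\ref{72:4}) then yields $\Sel_{p^\infty}(E/\Fin^{(d)})=0$. Several requirements come essentially for free: the coprimality $\gcd(N_E,d)=1$ and the splitting of $p$ in $\Q(\sqrt{-d})$ cut out a positive density subset of the square-free integers; hypothesis (iii) of Theorem~\ref{theorem varying imaginary quadratic} is vacuous on $\mathcal{D}$ by definition; and $E(F^{(d)})[p^\infty]=0$ for all but finitely many $d$ because $E(\Q)[p^\infty]=0$ and $\Q(E[p])/\Q$ has degree strictly greater than $2$, so it cannot be contained in $F^{(d)}$ except for finitely many $d$.

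The heart of the argument lies in producing infinitely many such $d$ for which both $\rank_{\Z}E(F^{(d)})=0$ and $\Sha(E/F^{(d)})[p^\infty]=0$. Because $\rank_{\Z}E(\Q)=0$, one has
\[
\rank_{\Z}E(F^{(d)})=\rank_{\Z}E^{(-d)}(\Q),
\]
and for $p\geq 5$ the Galois action of $\Gal(F^{(d)}/\Q)$ on $\Sha(E/F^{(d)})[p^\infty]$ decomposes it into its $\pm$-eigenspaces as $\Sha(E/\Q)[p^\infty]\oplus \Sha(E^{(-d)}/\Q)[p^\infty]$. I would then invoke non-vanishing theorems for twisted $L$-functions (Ono--Skinner, Murty--Murty, Heath-Brown, and refinements) combined with the theorem of Kolyvagin--Gross--Zagier to obtain infinitely many $d$ with $\rank_{\Z}E^{(-d)}(\Q)=0$ and $\Sha(E^{(-d)}/\Q)$ finite. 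The crucial refinement, the $p$-indivisibility of $\Sha(E^{(-d)}/\Q)$, would be extracted from the recent arithmetic-statistics work of Kriz--Li, Smith, James, and Bhargava--Skinner--Zhang, which under irreducibility/surjectivity hypotheses on $E[p]$ (implicit in the standing assumption $\chi(\Gamma_\Q,E[p^\infty])=1$ together with $\Sha(E/\Q)[p]=0$) yield a positive proportion of twist discriminants with trivial $p$-part of $\Sha$.

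The remaining input is the $\fM_H(G)$-conjecture for $\Sel_{p^\infty}(E/\Fin^{(d)})^\vee$. Since $[F^{(d)}:\Q]=2$ is prime to $p\geq 5$, the vanishing $\mu_p(E/\Q_\cyc)=0$ (a consequence of $\chi(\Gamma_\Q,E[p^\infty])=1$) transfers via a Kida-type formula, or via the injectivity argument of Lemma~\ref{Gamma Q Gamma F basic lemma}, to give $\mu_p(E/F^{(d)}_\cyc)=0$; thus $\Sel_{p^\infty}(E/F^{(d)}_\cyc)^\vee$ is finitely generated over $\Zp$, and the remark following Conjecture~\ref{MHG Conjecture} supplies $\fM_H(G)$. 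The chief obstacle is unquestionably the joint $p$-indivisibility of $\Sha$ across an infinite family of twists: although positive-proportion density results are now plentiful, matching their local hypotheses to those of Theorem~\ref{theorem varying imaginary quadratic} may force one first to prove a variant of the conjecture under a strengthened assumption on $(E,p)$ (for instance, surjectivity of the residual representation $\bar\rho_{E,p}$), and only afterwards to deduce the conjecture as stated by removing such auxiliary hypotheses.
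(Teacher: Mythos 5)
The statement you are asked about is stated in the paper as a \emph{conjecture}, not a theorem: the authors offer only heuristic motivation (Goldfeld's conjecture for the rank-$0$ twists, the empirical rarity of growth of $\Sha[p^\infty]$ under quadratic base change, and Cohen--Lenstra-type expectations), precisely because the key arithmetic inputs are not available. Your overall strategy --- verify the hypotheses of Theorem~\ref{theorem varying imaginary quadratic} together with its condition (1) for infinitely many $d\in\mathcal{D}$ and then invoke the implication (1)$\Rightarrow$(4) --- is the natural one and matches the authors' motivation, but it does not constitute a proof, for two concrete reasons.

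First, the decisive step is producing infinitely many $d$ with $\rank_{\Z}E^{(-d)}(\Q)=0$ \emph{and} $\Sha(E^{(-d)}/\Q)[p^\infty]=0$ for a \emph{fixed, arbitrary} prime $p\ge 5$. The non-vanishing results you cite (Ono--Skinner, Murty--Murty, Heath-Brown) combined with Gross--Zagier--Kolyvagin do give infinitely many twists of rank $0$ with \emph{finite} $\Sha$, but the $p$-indivisibility refinements you invoke do not cover this situation: Smith's work concerns the $2^\infty$-Selmer group, James and Kriz--Li treat $p=3$ (largely in CM or otherwise special families), and Bhargava--Skinner--Zhang give proportion statements over all curves ordered by height rather than within the quadratic-twist family of a fixed $E$ at a fixed $p\ge 5$. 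No unconditional theorem currently yields infinitely many $d$ with $p\nmid\#\Sha(E^{(-d)}/\Q)$ for general $(E,p)$ with $p\ge5$; this is exactly the gap that forces the statement to remain a conjecture. Second, your verification of the $\fM_H(G)$ hypothesis is also flawed: since $[F^{(d)}:\Q]=2$ is prime to $p$, the semisimple decomposition gives $\mu_p(E/F^{(d)}_{\cyc})=\mu_p(E/\Q_{\cyc})+\mu_p(E^{(-d)}/\Q_{\cyc})$, and the divisibility furnished by Lemma~\ref{Gamma Q Gamma F basic lemma} runs in the wrong direction --- it shows the characteristic element over $\Q_{\cyc}$ divides the one over $F^{(d)}_{\cyc}$, so $\mu_p(E/\Q_{\cyc})=0$ gives no control on $\mu_p(E^{(-d)}/\Q_{\cyc})$, which is itself only conjecturally zero (Greenberg). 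You do flag the $\Sha$ issue as the ``chief obstacle,'' which is the right diagnosis, but the proposal as written does not close it.
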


\begin{remark}\label{BKLOS remark}
 When $E_{/\Q}$ and $F=\Q(\sqrt{-d})$, a refinement of H.~Yu's result by D.~Qiu (see \cite[p.~5051]{Qiu14}) proves that if $E$ has no $p$-torsion over $\Q$, then 
\[
\# \Sha(E/\Q)[p^\infty] \cdot \# \Sha(E^{(-d)}/\Q)[p^\infty] = \# \Sha(E/F)[p^\infty].
\]
In particular, given $E_{/\Q}$, for all but finitely many primes, we have 
\[
\# \Sha(E^{(-d)}/\Q)[p^\infty] = \# \Sha(E/F)[p^\infty].
\]
It is conjectured that for all primes $p$, the Tate--Shafarevich group of the twisted elliptic curve $\Sha(E_{s}/\Q)$ has an element of order $p$ for a positive proportion of $s\in \Q^{\times}\setminus \Q^{\times 2}$, when the elliptic curves are ordered by height \cite[Conjecture~1.1]{BKLOS}.
\end{remark}

\subsection{}\label{sec:8.2}
The next family we consider arises from false Tate curve extensions.
Let $F:=\Q(\mu_p)$ and define the false Tate curve extension as follows
\[
\Fin^{(m)} = \Q\left( \mu_{p^\infty}, \ m^{\frac{1}{p^n}} : n = 1, 2, \ldots \right).
\]
Let $G^{(m)}:=\Gal(\Fin^{(m)}/F)$.
As $m\nmid N_E p$ varies over all primes, $\{\Fin^{(m)}\}$ is viewed as a family of noncommutative admissible pro-$p$, $p$-adic Lie extensions of the fixed number field $F$.
We assume that the truncated $\Gamma_F$-Euler characteristic $\chi_t(\Gamma_F, E,p)$ is equal to $1$.

Recall that
\[
\chi_t(G^{(m)}, E,p)=\chi_t(\Gamma_F, E,p)\times \prod_{v\in \mathfrak{M}} \absolute{L_v(E, 1)}_p=\prod_{v\in \mathfrak{M}} \absolute{L_v(E, 1)}_p,
\]
where $\fM = \fM_m = \mathcal{P}_1(E, \Fin^{(m)}) \cup \mathcal{P}_2(E, \Fin^{(m)})$ (introduced in \eqref{definition of P1 P2}).
Since $E$ and $p$ are fixed, the truncated $\Gamma_F$-Euler characteristic remains unchanged upon varying $m$.
Therefore, to study the variation of the truncated $G^{(m)}$-Euler characteristic, we must study the variation of the local Euler factors.
For all but finitely many $m$, the set $\mathcal{P}_1=\emptyset$.
This is because $E$ is fixed and hence the primes of split multiplicative reduction (call them $\ell_1, \ldots, \ell_k$) are also fixed.
Thus, $\mathcal{P}_1\neq \emptyset$, precisely when $m=\ell_i$ for some $i$.

Next, we analyze the set $\mathcal{P}_2$.
First, we evaluate the proportion of primes $m$ such that $E(\kappa_v)[p]= 0$ for all primes $v|m$ of $\Q(\mu_p)$.
Here, $\kappa_v$ is the residue field at $v$, and $\kappa_v=\F_{m^f}$, where $f$ is the smallest positive integer such that $m^f\equiv 1\mod{p}$.
The value of $f$ is a divisor of $p-1=[\Q(\mu_p):\Q]$; it equals $1$ if $m$ splits completely in $\Q(\mu_p)$ and equals $p-1$ if it is  inert in $\Q(\mu_p)$, see \cite[Theorem~2.13]{Was97}.

Consider the Galois group $G_{E,p}:=\Gal(\Q(E[p])/\Q)$, and note that $G_{E,p}$ may be viewed as a subgroup of ${\GL}_2(\Z/p\Z)$ via the residual representation 
\[
\bar{\rho}_{E,p}:G_{E,p}\hookrightarrow {\GL}_2(\Z/p\Z).
\]
Assume that $m$ is coprime to the conductor of $E$.
In particular, $m$ is unramified in $\Q(E[p])$.
Let $\sigma_m\in G_{E,p}$ be the Frobenius at $m$.
The trace and determinant of $\bar{\rho}(\sigma_m)$ are as follows
\[
\trace\bar{\rho}(\sigma_m)=a_m(E)=m+1-\# \widetilde{E}(\F_m)\text{, and } \det\bar{\rho}(\sigma_m)=m.
\]
For the prime $v|m$ of $\Q(\mu_p)$, the field $\kappa_v=\F_{m^f}$.
According to a formula of A.~Weil (see \cite[Theorem~V.2.3.1]{silverman2009}),
\[
\#E(\kappa_v)=m^f+1-\alpha^f-\beta^f\equiv 2-\alpha^f-\beta^f\mod{p},
\]
where $\alpha$ and $\beta$ are the eigenvalues of $\bar{\rho}( \sigma_m)$.
This brings us to the following definition.
For $g\in G_{E,p}$, let $f(g)$ be the smallest integer $f\in \Z_{\geq 1}$ such that ${\det}\bar{\rho}(g)^f=1$.

\begin{definition}
Let $H_{E,p}$ consist of all $g\in G_{E,p}$ such that the eigenvalues $\alpha,\beta\in \bar{\F}_p$ of $\bar{\rho}(g)$ satisfy
\[
\alpha^{f(g)}+\beta^{f(g)}\neq 2.
\]
Since $(\alpha\beta)^{f(g)}=m^{f(g)}=1$, this condition is equivalent to $\alpha^{f(g)}\neq 1$.
\end{definition}

\begin{lemma}
\label{Hep over Gep lemma}
For a prime $m\neq p$, let $v$ be the prime of $\Q(\mu_p)$ above $m$, and $\kappa_v$ be the residue field at $v$.
The density of primes $m$, coprime to the conductor of $E$, for which $E(\kappa_v)[p]=0$ is $\left(\frac{\# H_{E,p}}{\# G_{E,p}}\right)$.
\end{lemma}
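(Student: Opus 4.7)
The plan is to reduce the question to a Chebotarev density calculation on the finite Galois extension $\Q(E[p])/\Q$. Any prime $m$ coprime to $pN_E$ is unramified in $\Q(E[p])$, so its Frobenius class $\sigma_m \in G_{E,p}$ is well-defined up to conjugation; excluding the remaining finite set of $m$ does not affect densities. Via Chebotarev, the density of primes $m$ whose Frobenius lies in any conjugation-stable subset $T\subseteq G_{E,p}$ equals $\#T/\#G_{E,p}$. The task therefore reduces to identifying the conjugation-stable subset of $G_{E,p}$ cut out by the condition $E(\kappa_v)[p] = 0$.

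Next I would translate that condition into a condition on $\bar\rho(\sigma_m)$. Let $v \mid m$ in $\Q(\mu_p)$ and let $f$ be the residue degree, i.e.\ $\kappa_v = \F_{m^f}$. By the Weil formula quoted in the text,
\[
\#E(\kappa_v) \;\equiv\; 2 - \alpha^f - \beta^f \pmod{p},
\]
where $\alpha,\beta \in \bar\F_p$ are the eigenvalues of $\bar\rho(\sigma_m)$. Hence $E(\kappa_v)[p] = 0$ if and only if $\alpha^f + \beta^f \neq 2$ in $\bar\F_p$. The residue degree $f$ coincides with the order of $m$ in $(\Z/p\Z)^\times$ (this is the standard splitting law for cyclotomic extensions $\Q(\mu_p)/\Q$ applied to $m$), and because $\det \bar\rho(\sigma_m) = m$, this order equals $f(\sigma_m)$ in the notation introduced just before the lemma. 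Thus the condition becomes $\alpha^{f(\sigma_m)} + \beta^{f(\sigma_m)} \neq 2$, which is exactly the defining condition of $H_{E,p}$; the equivalence with $\alpha^{f(\sigma_m)} \neq 1$ then follows from $(\alpha\beta)^{f(\sigma_m)} = m^{f(\sigma_m)} = 1$ in $\F_p$.

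Finally, eigenvalues of $\bar\rho(g)$ and the integer $f(g)$ depend only on the conjugacy class of $g$ in $G_{E,p}$, so $H_{E,p}$ is a union of conjugacy classes. Applying Chebotarev with $T = H_{E,p}$ yields the asserted density $\#H_{E,p}/\#G_{E,p}$.

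The only real subtlety is the careful bookkeeping in the second step: confirming that the residue degree of $v\mid m$ in $\Q(\mu_p)/\Q$ is genuinely the same $f(\sigma_m)$ defined via $\det \bar\rho$, and that the Weil identity $\#E(\kappa_v) \equiv 2-\alpha^f-\beta^f \pmod p$ is applied to the correct Frobenius (that of $\sigma_m$ acting on $E[p]$, not a power). Once these identifications are pinned down, Chebotarev finishes the argument immediately.
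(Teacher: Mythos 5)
Your proof is correct and follows exactly the same route as the paper's: translate $E(\kappa_v)[p]=0$ via the Weil formula and the cyclotomic splitting law into the condition defining $H_{E,p}$ on $\bar\rho(\sigma_m)$, note that $H_{E,p}$ is conjugation-stable, and apply Chebotarev. The paper's proof is essentially a one-liner that leaves all of this implicit (it just says "it follows from the definition of $H_{E,p}$" and cites Chebotarev), so you have simply supplied the details the paper leaves to the reader.
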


\begin{proof}
It follows from the definition of $H_{E,p}$ that $\sigma_m\in H_{E,p}$ if and only if $E(\kappa_v)[p]=0$.
The result follows from the Chebotarev density theorem.
\end{proof}

\begin{corollary}
\label{corollary chi=1}
Let $E_{/\Q}$ be an elliptic curve and $p$ be an odd prime.
Let $F:=\Q(\mu_p)$ and assume that the following conditions hold.
\begin{enumerate}[(i)]
 \item $\Sha(E/F)[p^{\infty}]$ is finite.
 \item The truncated Euler characteristic $\chi_t(\Gamma_F, E,p)$ is defined and equal to $1$.
 \item The $\mathfrak{M}_H(G)$ conjecture is true for $\Sel_{p^\infty}(E/\Fin^{(m)})$ at every prime $m\neq p$.
\end{enumerate}
Then, the density of primes $m\nmid N_E p$ for which $\chi_t(G^{(m)}, E,p)=1$ is at least $\left(\frac{\# H_{E,p}}{\# G_{E,p}}\right)$.
\end{corollary}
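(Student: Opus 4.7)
The plan is to combine Theorem~\ref{EC formula theorem} with Lemma~\ref{Hep over Gep lemma} to reduce the claim to a Chebotarev density computation. Hypotheses (i)--(iii), together with the verification of $(\Finite_{\glob})$ and $(\Finite_{\local})$ for the false Tate curve tower carried out in \S\ref{sec:8.2}, place us in the setting of Theorem~\ref{EC formula theorem}. Combined with hypothesis (ii), this yields
\[
\chi_t(G^{(m)}, E[p^\infty]) \;=\; \prod_{v \in \mathfrak{M}_m} \absolute{L_v(E,1)}_p,
\]
where $\mathfrak{M}_m = \mathcal{P}_1(E, \Fin^{(m)}) \cup \mathcal{P}_2(E, \Fin^{(m)})$ as in \eqref{set M for FTC}. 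It therefore suffices to exhibit a set of primes $m \nmid N_E p$ of density at least $\frac{\# H_{E,p}}{\# G_{E,p}}$ for which $\mathfrak{M}_m$ is empty, since then every factor in the product above is a $p$-adic unit.

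First I would dispatch $\mathcal{P}_1$. Because $m$ is a rational prime with $m \nmid N_E$, $E$ has good reduction at every prime $v$ of $F = \Q(\mu_p)$ lying above $m$, and in particular no such $v$ is a prime of split multiplicative reduction. Hence $\mathcal{P}_1(E, \Fin^{(m)}) = \emptyset$ unconditionally. Turning to $\mathcal{P}_2$, let $v \mid m$ in $F$. The kernel of the reduction map $E(F_v) \to \widetilde{E}(\kappa_v)$ is a formal group that is pro-$m$, and since $m \neq p$ reduction induces an isomorphism
\[
E(F_v)[p^\infty] \xrightarrow{\sim} \widetilde{E}(\kappa_v)[p^\infty].
\]
Here $\kappa_v = \F_{m^f}$, where $f$ is the residual degree of $m$ in $\Q(\mu_p)$, a quantity depending only on $m$ and not on the particular $v \mid m$. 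Consequently, emptiness of $\mathcal{P}_2(E, \Fin^{(m)})$ is equivalent to the single condition $\widetilde{E}(\F_{m^f})[p] = 0$.

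Finally I would invoke Lemma~\ref{Hep over Gep lemma}, which asserts exactly that the density of primes $m$ coprime to the conductor of $E$ satisfying $\widetilde{E}(\kappa_v)[p] = 0$ equals $\frac{\# H_{E,p}}{\# G_{E,p}}$. Removing the finitely many primes dividing $p$ does not change this density, and for each such prime we obtain $\mathfrak{M}_m = \emptyset$ and hence $\chi_t(G^{(m)}, E[p^\infty]) = 1$. The only subtle point to address is the adjective \emph{ordinary} in the definition of $\mathcal{P}_2$: for $v \nmid p$ there is no meaningful ordinary/supersingular dichotomy relative to the residue characteristic $m \neq p$, so good reduction alone is enough and no extra constraint is imposed. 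I do not anticipate a serious obstacle; the corollary is essentially a packaging of the Euler characteristic formula together with the density lemma, with only the local Euler factor bookkeeping sketched above to verify.
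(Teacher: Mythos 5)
Your proof is correct and follows the same route the paper takes implicitly in the discussion preceding Lemma~\ref{Hep over Gep lemma}: invoke Theorem~\ref{EC formula theorem} (whose hypotheses $(\Finite_{\glob})$ and $(\Finite_{\local})$ are verified for the false Tate curve tower in \S\ref{S: examples of extensions}), observe $\mathcal{P}_1 = \emptyset$ because $m\nmid N_E$ forces good reduction at every $v\mid m$, reduce $\mathcal{P}_2 = \emptyset$ to $\widetilde{E}(\F_{m^f})[p]=0$ via the isomorphism $E(F_v)[p^\infty]\cong\widetilde{E}(\kappa_v)[p^\infty]$ for $v\nmid p$, and conclude with the Chebotarev density supplied by Lemma~\ref{Hep over Gep lemma}. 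One small remark: your parenthetical justification that ``for $v\nmid p$ there is no meaningful ordinary/supersingular dichotomy relative to the residue characteristic'' is not quite right as stated --- the ordinary/supersingular dichotomy in characteristic $m$ is certainly meaningful --- but it is also irrelevant to the argument, since what Lemma~\ref{criterion for p to divide Lv(E,1)} shows is that a good-reduction prime $v\mid m$ contributes a non-unit Euler factor precisely when $p\mid\#\widetilde{E}(\kappa_v)$, independently of whether $\widetilde{E}$ is ordinary or supersingular over $\kappa_v$; the Chebotarev set you exhibit makes all such factors units, so the bound goes through unaffected.
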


\begin{proof}
Note that the assumptions made in the statement of this result ensure that the hypotheses in Lemma~\ref{Hep over Gep lemma} hold.
The proof is immediate from the aforementioned lemma.
\end{proof}

\begin{theorem}
\label{thm: Hep-Gep density}
Let $E_{/\Q}$ be an elliptic curve such that $\rank_{\Z}E(\Q(\mu_p))=0$ and such that the conditions of Corollary \ref{corollary chi=1} are satisfied.
Then, the density of primes $m\nmid N_E p$ for which $\Sel_{p^\infty}(E/\Fin^{(m)})=0$ is at least $\left(\frac{\# H_{E,p}}{\# G_{E,p}}\right)$.
\end{theorem}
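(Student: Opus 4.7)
The plan is to upgrade the density statement about Euler characteristics in Corollary \ref{corollary chi=1} into a density statement about triviality of Selmer groups, by feeding the structural results available in the false Tate setting through the chain of equivalences already assembled in Section \ref{S: alg prelim}. Corollary \ref{corollary chi=1} gives, at no additional cost, a set of primes $m \nmid N_E p$ of density at least $\#H_{E,p}/\#G_{E,p}$ for which $\chi_t(G^{(m)}, E[p^\infty]) = 1$. Fixing such an $m$, the target is to show $\Sel_{p^\infty}(E/\Fin^{(m)}) = 0$.

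First I would verify that the hypotheses of Proposition \ref{proposition: akasi series to pseudonull} all hold for $E/\Fin^{(m)}$. The rank-zero hypothesis $\rank_\Z E(\Q(\mu_p)) = 0$ is given; finiteness of $\Sha(E/\Q(\mu_p))[p^\infty]$ and the $\mathfrak{M}_H(G)$-conjecture are assumed in Corollary \ref{corollary chi=1}; the dual Selmer $\Sel_{p^\infty}(E/\Fin^{(m)})^\vee$ is $\Lambda(G^{(m)})$-torsion by the discussion in Section \ref{sec:8.2} (combining Kato's theorem with \cite[Lemma~7.3]{HV03}); and $H = \Gal(\Fin^{(m)}/F_\cyc) \simeq \Z_p$ is abelian, hence its Lie algebra is trivially reductive. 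The extension $\Fin^{(m)}/F$ is also strongly admissible pro-$p$. Consequently, the equivalence in Proposition \ref{proposition: akasi series to pseudonull} combined with $\chi(G^{(m)}, E[p^\infty]) = 1$ yields that the Akashi series $\Ak_{E/\Fin^{(m)}}$ is a unit in $\Lambda(\Gamma_F)$ and $\Sel_{p^\infty}(E/\Fin^{(m)})^\vee$ is pseudonull as a $\Lambda(G^{(m)})$-module.

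To upgrade pseudonullity to the vanishing of the Selmer group, I would invoke Theorem \ref{FT no non zero pseudonull submodules}, which states that in the false Tate setting $\Sel_{p^\infty}(E/\Fin^{(m)})^\vee$ admits no non-zero pseudonull $\Lambda(G^{(m)})$-submodules. The second half of Proposition \ref{prop: pseudonull} then forces $\Sel_{p^\infty}(E/\Fin^{(m)})^\vee = 0$, and dualizing gives $\Sel_{p^\infty}(E/\Fin^{(m)}) = 0$, which is exactly what we want for each prime $m$ in the density-$\#H_{E,p}/\#G_{E,p}$ set produced by Corollary \ref{corollary chi=1}.

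The proof contains no genuinely new input, so there is no substantial obstacle; the main point is the orchestration of three previously established results in the right order: Corollary \ref{corollary chi=1} to produce the density of primes with trivial Euler characteristic, Proposition \ref{proposition: akasi series to pseudonull} to convert the Euler characteristic condition into pseudonullity of the dual Selmer (and unit-ness of the Akashi series), and finally Theorem \ref{FT no non zero pseudonull submodules} together with Proposition \ref{prop: pseudonull} to promote pseudonullity to triviality. The only mildly delicate point is the bookkeeping to check that the hypotheses of each proposition are simultaneously in force for the family $\{\Fin^{(m)}\}_{m}$, but these are either standing assumptions or flow automatically from the setup of Section \ref{sec:8.2}.
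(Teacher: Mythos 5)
Your proposal is correct and follows exactly the paper's own argument: Corollary \ref{corollary chi=1} supplies the density of primes $m$ with trivial truncated $G^{(m)}$-Euler characteristic, Proposition \ref{proposition: akasi series to pseudonull} converts this into pseudonullity of $\Sel_{p^\infty}(E/\Fin^{(m)})^\vee$, and Theorem \ref{FT no non zero pseudonull submodules} together with Proposition \ref{prop: pseudonull} upgrades pseudonullity to vanishing. Your additional bookkeeping verifying the hypotheses of Proposition \ref{proposition: akasi series to pseudonull} is a welcome expansion of what the paper leaves implicit, but the route is the same.
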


\begin{proof}
Let $m$ be a prime number for which $\chi_t(G^{(m)}, E,p)=1$.
Then, it follows from Proposition~\ref{proposition: akasi series to pseudonull} that $\Sel_{p^\infty}(E/\Fin^{(m)})^\vee$ is pseudonull as a $\Lambda(G)$-module and the associated Akashi series is a unit.
However, we have noted in Theorem~\ref{FT no non zero pseudonull submodules}, that $\Sel_{p^\infty}(E/\Fin^{(m)})^\vee$ has no non-trivial pseudonull submodules.
Hence, it follows from Proposition~\ref{prop: pseudonull} that $\Sel_{p^\infty}(E/\Fin^{(m)})=0$.
The assertion is now immediate from Corollary \ref{corollary chi=1}.
\end{proof}

\begin{remark}
\label{rk: coming from appendix}
In the above theorem, if the residual representation
\[
\bar{\rho}:\Gal(\bar{\Q}/\Q)\rightarrow \GL_2(\F_p)
\]
on $E[p]$ is surjective, it follows from Remark~\ref{rk:1st-portion} that $\left(\frac{\# H_{E,p}}{\# G_{E,p}}\right)\geq \frac{p^2}{2(p^2-1)}>\frac{1}{2}$.
Recall that if $E$ is an elliptic curve without CM, then  $\bar\rho$ is surjective for $p\gg 0$.
Therefore, on combining Remarks~\ref{rk:1st-portion} and \ref{rk:2nd-portion}, we have
\[
\liminf_{p\rightarrow \infty} \left(\frac{\# H_{E,p}}{\# G_{E,p}}\right)\geq\frac{5}{8}.
\]
\end{remark}

Similar to Corollary~\ref{MW cor vary F Zp2}, combining Theorem~\ref{thm: Hep-Gep density} and Remark~\ref{rk: coming from appendix} gives
\begin{corollary}
Let $E_{/\Q}$ be a non-CM elliptic curve with $\rank_{\Z} E(\Q(\mu_p))=0$ and such that the conditions of Corollary \ref{corollary chi=1} are satisfied.
Furthermore, suppose that the residual representation on $E[p]$ is surjective.
For at least \emph{half} of the prime numbers $m$ not dividing $pN_E$, we have $\rank_{\Z}E(F_n)=0$ at each finite layer of $\Fin^{(m)}/F$.
\end{corollary}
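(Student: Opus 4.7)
The plan is to assemble the corollary by combining Theorem~\ref{thm: Hep-Gep density} (which controls when the Selmer group vanishes over $\Fin^{(m)}$) with the Hung--Lim rank bound (Theorem~\ref{Thm: Hung-Lim Theorem}), so that vanishing of the Selmer group forces the Mordell--Weil ranks to vanish at every finite layer.

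First, I would invoke Theorem~\ref{thm: Hep-Gep density}: under the stated hypotheses, the set of primes $m\nmid pN_E$ for which $\Sel_{p^\infty}(E/\Fin^{(m)})=0$ has density at least $\#H_{E,p}/\#G_{E,p}$. Since $E$ is non-CM and $\bar{\rho}_{E,p}$ is surjective, Remark~\ref{rk: coming from appendix} gives $\#H_{E,p}/\#G_{E,p}\geq \frac{p^2}{2(p^2-1)}>\frac{1}{2}$, so this happens for at least half of the primes $m\nmid pN_E$. Fix such an $m$ and set $G:=\Gal(\Fin^{(m)}/F)$, $H:=\Gal(\Fin^{(m)}/F_{\cyc})$, which in this case is a two-dimensional $p$-adic Lie group of the form $\Z_p\rtimes\Z_p$. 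Since $\Sel_{p^\infty}(E/\Fin^{(m)})=0$, the dual $\mathcal{X}(E/\Fin^{(m)})$ is zero, hence $\mathcal{X}(E/\Fin^{(m)})_f=0$, and in particular $\rank_{\Lambda(H)}\mathcal{X}(E/\Fin^{(m)})_f=0$.

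Next, I would verify that $E(\Fin^{(m)})[p^\infty]$ is finite, so that $\corank_{\Zp}E(\Fin^{(m)})(p)=0$. By Remark~\ref{rmk: finiteness of p-primary torsion points over certain extensions}, this reduces to showing that $E[p^\infty]\not\subset\Fin^{(m)}$. But if $E[p]\subset\Fin^{(m)}$, then $\Q(E[p])\subset\Fin^{(m)}$, forcing $\Gal(\Q(E[p])/F)$ to be a quotient of the metabelian group $\Gal(\Fin^{(m)}/F)$. Under the surjectivity of $\bar{\rho}_{E,p}$, one has $\Gal(\Q(E[p])/F)\cong\SL_2(\F_p)$, which is non-solvable for $p\geq 5$, a contradiction.

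Finally, I would apply Theorem~\ref{Thm: Hung-Lim Theorem}. Its hypothesis on finiteness of $H_i(H_n,\Sel_{p^\infty}(E/\Fin^{(m)})^\vee)$ follows from $(\Finite_{\glob})$ and $(\Finite_{\local})$, both verified for the false Tate curve extension in \S\ref{sec:8.2}. Thus, for the value $d=2$,
\[
\rank_{\Z}E(F_n)\leq \rank_{\Lambda(H)}\mathcal{X}(E/\Fin^{(m)})_f\cdot p^{n}+2\cdot\corank_{\Zp}E(\Fin^{(m)})(p)=0,
\]
so $\rank_{\Z}E(F_n)=0$ at every layer. The argument is essentially an assembly of previously developed tools; the only non-mechanical step is ruling out $E[p]\subset\Fin^{(m)}$, which is straightforward given surjectivity of $\bar{\rho}_{E,p}$ and the metabelian structure of $G$.
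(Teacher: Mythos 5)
Your proof is correct, but it reaches the conclusion through heavier machinery than the paper. The paper's own argument is as in Corollary~\ref{MW cor vary F Zp2}: once Theorem~\ref{thm: Hep-Gep density} together with Remark~\ref{rk: coming from appendix} gives $\Sel_{p^\infty}(E/\Fin^{(m)})=0$ for at least half the primes $m\nmid pN_E$, one simply notes that $E(\Fin^{(m)})\otimes\Qp/\Zp$ injects into $\Sel_{p^\infty}(E/\Fin^{(m)})=0$, so $E(F_n)$ is finite (hence of rank zero) for every $n$. You instead invoke the Hung--Lim inequality (Theorem~\ref{Thm: Hung-Lim Theorem}), which forces you to separately verify $\rank_{\Lambda(H)}\cX(E/\Fin^{(m)})_f=0$ (trivially so once the Selmer group vanishes) and $\corank_{\Zp}E(\Fin^{(m)})(p)=0$, the latter via the observation that $\Gal(\Q(E[p])/F)\cong\mathrm{SL}_2(\Fp)$ is non-solvable and hence cannot be a quotient of the metabelian pro-$p$ group $\Gal(\Fin^{(m)}/F)$. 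Both side checks are correct, and the $\mathrm{SL}_2$ argument is a clean observation, but they are unnecessary: once the Selmer group vanishes over $\Fin^{(m)}$ the elementary injection argument already settles the claim, and the entire $\Lambda(H)$-rank and rank-growth framework becomes redundant scaffolding.
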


We illustrate this corollary via an explicit example.
Consider the rank 0 elliptic curve with LMFDB label \href{https://www.lmfdb.org/EllipticCurve/Q/11/a/1}{11.a1} with good \emph{ordinary} reduction at $p=3$.
Now, consider the quadratic field extension $F = \Q(\mu_3) = \Q(\sqrt{-3})$.
The base-change curve is \href{https://www.lmfdb.org/EllipticCurve/2.0.3.1/121.1/a/1}{121.1-a1}; it can be checked that the order of the Shafarevich--Tate group is trivial and so is the Mordell--Weil rank.
The Tamagawa number $\tau_{11}=1$.
Since $3$ is coprime to the conductor of $E$, Theorem~\ref{Tamagawa result from HKR} asserts that $\tau_{11}^{(F)}=1$.
On the other hand, $\#\widetilde{E}(\F_3)=5$.
Hence, by the Euler characteristic formula, $\chi(\Gamma_F, E,p)=1$.
As $m$ varies over all primes (not equal to $3, 11$), for \emph{at least} $\frac{9}{16}$ of the primes, $\rank_{\Z}(E(F_n^{(m)}))=0$ at each finite layer of $\Fin^{(m)}/F$.

\subsubsection{}
\label{rank questions for false Tate curve extensions}
We now analyze the $\Lambda(H)$-rank of the Selmer group and the growth of the Mordell--Weil rank of the elliptic curve over a false Tate curve extension of the cyclotomic number field, $\Q(\mu_p)$.
In this case, \eqref{LambdaH rank formula} is
\begin{equation}
\label{lambda-H formula for FTC}
\rank_{\Lambda(H)}\cX(E/\Fin^{(m)})_f=\lambda_p(E/F_\cyc)+\sum_{\substack{v|\ell, \ \ell|N_E, \ \ell|m\\ \ell \textrm{ split multiplicative}}}1 + \sum_{\substack{q|m, \\ v|q, E(F_v)[p^\infty]\neq 0}} 2.
\end{equation}
Here we have used that $\dim H_w =1$ for precisely those primes (away from $p$) which ramify in the false Tate curve extension, $\Fin^{(m)}/F$.

\begin{corollary}
Let $E_{/\Q}$ be a non-CM elliptic curve.
As $m\nmid pN_E$ varies over all primes, for \emph{at least half} of such primes, 
\[
\rank_{\Z}E(F_n)\leq \lambda_p(E/F_{\cyc})p^n
\]
at each finite layer of $\Fin^{(m)}/F$.
\end{corollary}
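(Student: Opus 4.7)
The plan is to apply the Hung--Lim bound (Theorem~\ref{Thm: Hung-Lim Theorem}) in dimension $d=2$, and then to control the $\Lambda(H)$-rank of $\cX(E/\Fin^{(m)})_f$ using the decomposition formula \eqref{lambda-H formula for FTC} together with the Chebotarev density computation of Lemma~\ref{Hep over Gep lemma}. Since the false Tate curve extension has dimension $d=2$, the finiteness hypothesis on $H_i(H_n,\Sel_{p^\infty}(E/\Fin^{(m)})^\vee)$ in Theorem~\ref{Thm: Hung-Lim Theorem} is known in this setting (\cite{lim2015remark}), so Hung--Lim gives
\[
\rank_{\Z} E(F_n)\leq \rank_{\Lambda(H)}\cX(E/\Fin^{(m)})_f\cdot p^{n}+2\,\corank_{\Zp}E(\Fin^{(m)})(p).
\]

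Next I would eliminate the second term. Since $E$ is non-CM and $\Fin^{(m)}/\Q$ is solvable (metabelian), the full trivializing extension $\Q(E[p^\infty])$, whose Galois group over $\Q$ is an open subgroup of $\GL_2(\Zp)$, is not contained in $\Fin^{(m)}$; in particular $E[p^\infty]$ is not rational over $\Fin^{(m)}$. By Remark~\ref{rmk: finiteness of p-primary torsion points over certain extensions}, $E(\Fin^{(m)})[p^\infty]$ is then finite, so the local correction term vanishes and the inequality reduces to $\rank_{\Z} E(F_n)\leq \rank_{\Lambda(H)}\cX(E/\Fin^{(m)})_f\cdot p^n$.

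It then suffices to show that
\[
\rank_{\Lambda(H)}\cX(E/\Fin^{(m)})_f = \lambda_p(E/F_\cyc)
\]
for a set of primes $m\nmid pN_E$ of density at least $1/2$. By formula \eqref{lambda-H formula for FTC}, this equality holds precisely when both extra sums vanish. The first extra sum, coming from split multiplicative primes of $E$ that divide $m$, is zero for all but finitely many $m$ since $E$ is fixed. The second extra sum vanishes exactly when $E(F_v)[p^\infty]=0$ for every prime $v|m$ of $F=\Q(\mu_p)$; since $m\nmid N_E$, $E$ has good reduction at $v$ and this condition is equivalent to $E(\kappa_v)[p]=0$. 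Lemma~\ref{Hep over Gep lemma} identifies the density of such primes $m$ as $\#H_{E,p}/\#G_{E,p}$, and Remark~\ref{rk: coming from appendix} gives the lower bound $\#H_{E,p}/\#G_{E,p}\geq p^2/(2(p^2-1))>1/2$ when the mod-$p$ representation is surjective.

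The main obstacle is the last step, namely ensuring the density bound $\#H_{E,p}/\#G_{E,p}\geq 1/2$ for \emph{every} odd prime $p$ without assuming surjectivity of $\bar{\rho}_{E,p}$. For a non-CM curve Serre's open image theorem guarantees surjectivity for all but finitely many $p$, so Remark~\ref{rk: coming from appendix} applies directly to cofinitely many primes; for the exceptional primes one is reduced to a finite-group computation in a proper subgroup $G_{E,p}\subset\GL_2(\F_p)$, and I would expect the appendix classification of conjugacy classes (as used in the preceding subsection) to yield the same bound $\geq 1/2$ in each relevant Serre-type case. Granted this group-theoretic verification, the corollary follows by combining the three ingredients above.
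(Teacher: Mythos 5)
Your proposal reproduces the paper's own argument: choose $m$ coprime to $N_E$ to kill the split-multiplicative term in \eqref{lambda-H formula for FTC}, use Lemma~\ref{Hep over Gep lemma} and Remark~\ref{rk: coming from appendix} for the density $\geq 1/2$, then feed the resulting $\rank_{\Lambda(H)}\cX(E/\Fin^{(m)})_f = \lambda_p(E/F_\cyc)$ into Theorem~\ref{Thm: Hung-Lim Theorem}, eliminating the local correction term via Remark~\ref{rmk: finiteness of p-primary torsion points over certain extensions}. The one caveat you raise is legitimate and in fact points to an omission in the paper's own statement: the bound $\#H_{E,p}/\#G_{E,p} > 1/2$ of Remark~\ref{rk: coming from appendix} is established only under the hypothesis that $\bar\rho_{E,p}$ is surjective, which for a non-CM curve Serre guarantees only for $p\gg 0$. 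The corollary preceding this one in the paper carries the surjectivity hypothesis explicitly, and the present statement appears to have inherited it implicitly; without that hypothesis the argument as written (yours and the paper's) gives the conclusion only for sufficiently large $p$, not every prime of good ordinary reduction.
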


\begin{proof}
By choosing $m$ such that $\gcd(m,N_E)=1$, we have
\[
\sum_{\substack{\ell|N_E, \ \ell|m\\ \ell \textrm{ split multiplicative}}}1 =0.
\]
Next, as $m\nmid pN_E$ varies over all primes, Lemma~\ref{Hep over Gep lemma} in conjunction with Remark~\ref{rk: coming from appendix} implies that for \emph{at least half} of the primes
\[
\rank_{\Lambda(H)}\cX(E/\Fin^{(m)})_f=\lambda_p(E/F_\cyc).
\]
The result is immediate from Theorem~\ref{Thm: Hung-Lim Theorem} and Remark~\ref{rmk: finiteness of p-primary torsion points over certain extensions}.
\end{proof}

\subsection{}
\label{section 8-3}
We fix a prime $p\geq 5$ and an elliptic curve $E_{/\Q}$ with good \emph{ordinary} reduction at $p$.
Consider the pair $(E, A)$ such that $A$ varies over a twist-equivalent family of non-CM elliptic curves over $\Q$.
This gives rise to (varying) extensions ${\Fin}_{,A}=\Q(A[p^\infty])$ of dimension $4$ over $\Q$.
We set $\mathcal{G}_A := \Gal({\Fin}_{,A}/\Q)$ and study the variation of the truncated Euler characteristic $\chi_t(\mathcal{G}_A, E,p)$, i.e., we study how often this quantity is equal to $1$.

\begin{remark}
\label{difficulty in pro-p case for triv ext}
In this case, $\mathcal{G}_A$ is \emph{not} a pro-$p$ extension.
Even though we can apply Theorem~\ref{EC formula theorem} to study how often the truncated $\mathcal{G}_A$-Euler characteristic is a unit, it gives no information on pseudonullity of the $p$-primary Selmer group over ${\Fin}_{,A}$.
However, it provides information regarding the Akashi series, since the Euler characteristic is equal to $1$ if and only if the leading term of the Akashi series is a unit (in $\Z_p$).
The extension $\cF_{\infty,A}$ is a pro-$p$ extension of $F_A=\Q(A[p])$, i.e., the Galois group $G_A= \Gal(\cF_{\infty,A}/F_A)$ is a pro-$p$ group.
Unfortunately, it is difficult to study the $\Gamma_F$-Euler characteristic $\chi_t(\Gamma_F, E,p)$ on average.
The main difficulty is in studying the behaviour of the Tate--Shafarevich group over $\Q(A[p])$.
\end{remark}

The question is simple to answer when $p$ divides the truncated $\Gamma_{\Q}$-Euler characteristic $\chi_t(\Gamma_{\Q}, E,p)$.
Indeed, the same reasoning as Lemma~\ref{Gamma Q Gamma F basic lemma} shows that $\chi_t(\Gamma_{F}, E,p)$ is divisible by $p$.
Therefore, we assume that $\chi_t(\Gamma_\Q, E,p)=1$.
In view of results proven in \cite[Section 3]{KR21}, the aforementioned hypothesis is satisfied \textit{most of the time}.
Our main result on the question is the following, which we prove at the end of this section.

\begin{theorem}
\label{s7.3 mainthm}
Let $E_{/\Q}$ be an elliptic curve and $p\geq 5$ be a prime of good \emph{ordinary} reduction of $E$.
Assume that the following equivalent conditions are satisfied
\begin{enumerate}[(i)]
 \item $\mu_p(E/\Q_{\cyc})=0$ and $\lambda_p(E/\Q_{\cyc})=\rank_{\Z}E(\Q)$,
 \item $\chi_t(\Gamma_\Q, E,p)=1$.
\end{enumerate}
Then, there are infinitely many non-CM elliptic curves $A_{/\Q}$ such that
\[
\chi_t(\mathcal{G}_A, E,p)=1.
\]
\end{theorem}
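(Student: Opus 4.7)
The key input is Theorem~\ref{EC formula theorem} applied to the trivializing tower $\Q(A[p^\infty])$ over $F=\Q$, with $G=\mathcal{G}_A$. Once its hypotheses are in force, combined with the assumption $\chi_t(\Gamma_\Q,E[p^\infty])=1$, it yields
\[
\chi_t(\mathcal{G}_A, E[p^\infty])=\prod_{v\in \fM(E,A)}|L_v(E,1)|_p,
\]
where, by Section~\ref{S: examples of extensions}, $\fM(E,A)$ is precisely the finite set of rational primes $v\ne p$ at which $A$ has potentially multiplicative reduction, equivalently those $v$ with $v_p(j(A))<0$. Since each factor $|L_v(E,1)|_p\ge 1$, the product is automatically trivial whenever $\fM(E,A)=\emptyset$; this will be the cleanest route to $\chi_t(\mathcal{G}_A, E[p^\infty])=1$.

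The plan is to pin down a single non-CM elliptic curve $A_0/\Q$ with $j(A_0)\in \Z$---for instance, any curve with $j(A_0)=1$, which is not among the finite list $\{0,\,1728,\,-3375,\,8000,\,\ldots\}$ of CM $j$-values over $\Q$. Then $A_0$ has potentially good reduction at every prime, so $\fM(E,A_0)=\emptyset$. To generate infinitely many such $A$, I would sweep out the family of quadratic twists $\{A_0^{(d)}\}$ indexed by squarefree integers $d$: twisting fixes the $j$-invariant and preserves non-CM-ness, so each $A_0^{(d)}$ is non-CM with $\fM(E,A_0^{(d)})=\emptyset$, and these twists are pairwise non-isomorphic over $\Q$.

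The remaining task is to verify the hypotheses of Theorem~\ref{EC formula theorem} for each (or at least infinitely many) $A_0^{(d)}$. Finiteness of $\Sha(E/\Q)[p^\infty]$ is built into the assumption $\chi_t(\Gamma_\Q,E[p^\infty])=1$. The finiteness conditions $(\Finite_\glob)$ and $(\Finite_\local)$ hold by the discussion in Section~\ref{S: examples of extensions}, since for each non-CM twist the group $H_{A_0^{(d)}}$ has reductive Lie algebra. The main obstacle is the $\mathfrak{M}_H(G)$-conjecture for $\Sel_{p^\infty}(E/\Q(A_0^{(d)}[p^\infty]))^\vee$: as noted in Remark~\ref{difficulty in pro-p case for triv ext}, $\mathcal{G}_{A_0^{(d)}}$ is not pro-$p$, so the remark following Conjecture~\ref{MHG Conjecture} does not apply directly. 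I would address this by descending to the pro-$p$ subextension over $F_d:=\Q(A_0^{(d)}[p])$, propagating the hypothesis $\mu_p(E/\Q_\cyc)=0$ to $\mu_p(E/F_{d,\cyc})=0$ via a base-change analysis of the Iwasawa $\mu$-invariant, invoking the pro-$p$ form of the remark at the level of $\Gal(\Q(A_0^{(d)}[p^\infty])/F_d)$, and transferring the conjecture back up to $\mathcal{G}_{A_0^{(d)}}$. Securing this propagation through the non-$p$-extension $F_d/\Q$ for an infinite subfamily of twists $d$ is the technical heart of the argument; once in hand, the infinite family $\{A_0^{(d)}\}$ supplies the desired curves.
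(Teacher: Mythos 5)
Your core argument matches the paper's: take a non-CM $A_0$ with integral $j$-invariant so that $\fM_{A_0}=\emptyset$ (the paper uses the curve with Cremona label \texttt{128a2}, $j=2^7$; your $j=1$ works just as well), then observe that every quadratic twist $A_0^{(d)}$ has the same $j$-invariant and hence still has $\fM_{A_0^{(d)}}=\emptyset$. Your twisting step is in fact cleaner than the paper's: the paper twists by odd primes $q\in\mathfrak{T}$ and argues only that $\fM_{A_q}\subseteq\mathfrak{T}$, without noting explicitly that the $j$-invariant is twist-invariant, so your phrasing makes the conclusion more transparent.

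Where you diverge is in flagging the $\fM_H(G)$-hypothesis of Theorem~\ref{EC formula theorem} as an obstruction. The concern is legitimate: $\mathcal{G}_A$ is not pro-$p$, so the remark following Conjecture~\ref{MHG Conjecture} does not apply directly, and \S\ref{S: examples of extensions} concedes that no unconditional results on even the weaker Assumption~\ref{assumption: torsion} are known in the trivializing setting. However, the paper does not resolve this issue in the proof of Theorem~\ref{s7.3 mainthm}; it treats $\fM_H(G)$ (together with the other hypotheses of Theorem~\ref{EC formula theorem}) as a standing assumption of the section, and the written proof is exactly the $\fM_A=\emptyset$ observation plus twisting, nothing more. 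Your proposed remedy --- descend to the pro-$p$ subextension over $F_d=\Q(A_0^{(d)}[p])$, propagate $\mu_p=0$ from $\Q_\cyc$ to $F_{d,\cyc}$, invoke the pro-$p$ form of the remark, transfer back --- is not actually carried out, and the propagation step is not a formal consequence of $\mu_p(E/\Q_\cyc)=0$: vanishing of the $\mu$-invariant is not known to be preserved under base change along a non-$p$-extension such as $F_d/\Q$, so this route would in effect import a case of the $\mu=0$ conjecture rather than close the gap. The honest reading of the theorem is that it is conditional on $\fM_H(G)$; once that hypothesis is made explicit, your proof closes without the descent machinery and coincides with the paper's.
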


\begin{remark}
Suppose in addition that $E$ does not have CM and has Mordell--Weil rank 0.
Then for $100\%$ of the primes $p$ where $E$ has  good \emph{ordinary} reduction, the conditions of Theorem~\ref{s7.3 mainthm} are satisfied, see \cite[Theorem~5.1]{Greenberg}.
\end{remark}

Recall that
\begin{equation}
\label{GECeqn}
\chi_t(\mathcal{G}_A, E,p)=\chi_t(\Gamma_\Q, E,p)\times \prod_{v\in \fM_A} \absolute{L_v(E, 1)}_p=\prod_{v\in \fM_A} \absolute{L_v(E, 1)}_p,
\end{equation}
where $\fM = \fM_A$ consists of precisely those primes $v\neq p$ for which the inertia group in $\mathcal{G}_A$ is infinite.
Note that $v\neq p$ is contained in $\mathfrak{M}_A$ if and only if $A$ has potentially multiplicative reduction at $v$, see \cite[Theorem~3.1]{CoatesSchneiderSujatha_Links_between}.
Lemma~\ref{criterion for p to divide Lv(E,1)} gives a criterion for $p$ to divide $\absolute{L_v(E,1)}_p$.
In this section, we vary over all ${\Fin}_{,A}/\Q$ and the goal is to estimate how often is $\chi_t(\mathcal{G}_A, E,p) =1$.
The above theorem asserts that such a property holds for \emph{infinitely many} non-CM elliptic curves $A_{/\Q}$.
However, it will follow from Lemma~\ref{chi is 1 condition for triv case, vary A} and the estimates in Lemma~\ref{0 percent} that the proportion of such elliptic curves is 0\%.

Define 
\begin{align*}
\mathfrak{T} := & \{v\neq p: E \textrm{ has good reduction at }v \textrm{ and }p\nmid\# \widetilde{E}(\kappa_v)\} \cup \\
& \{v\not \equiv 1\pmod{p}: E \textrm{ has split multiplicative reduction at }v \} \cup \\
& \{v\not\equiv-1\pmod{p}: E \textrm{ has non-split multiplicative reduction at }v \} \cup \\
&\{
v\neq p: E \textrm{ has additive reduction at }v\}.
\end{align*}
This is precisely the set of primes in $\Q$ where $\absolute{L_v(E,1)}_p =1$.
Since the set of bad primes of $E$ is finite, it follows that $\mathfrak{T}$ has natural density
\[
\lim_{x\rightarrow \infty} \frac{\#\{v\in \mathfrak{T}\mid v\leq x\}}{\pi(x)}=1-\frac{1}{p},
\]
see \cite[Theorem~1]{Coj04}.
Here, $\pi(x)$ denotes the prime counting function.

\begin{lemma}
\label{chi is 1 condition for triv case, vary A}
Let $E_{/\Q}$ be an elliptic curve satisfying the hypotheses of Theorem~\ref{s7.3 mainthm}, and $A_{/\Q}$ be any elliptic curve.
Then, the following conditions are equivalent
\begin{enumerate}
 \item $\mathfrak{M}_A$ is contained in $\mathfrak{T}$,
 \item $\chi_t(\mathcal{G}_A, E,p)=1$.
\end{enumerate}
\end{lemma}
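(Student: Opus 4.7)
The plan is to apply the noncommutative Euler characteristic formula of Theorem~\ref{EC formula theorem} directly. The required hypotheses---finiteness of $\Sha(E/\Q)[p^\infty]$, the $\mathfrak{M}_H(G)$-conjecture, and $(\Finite_{\glob})$/$(\Finite_{\local})$---are either part of the standing assumptions or were verified for the trivializing extension in \S\ref{S: examples of extensions}. Combined with the hypothesis $\chi_t(\Gamma_\Q, E[p^\infty])=1$, equation~\eqref{GECeqn} collapses to
\[
\chi_t(\mathcal{G}_A, E[p^\infty]) \;=\; \prod_{v \in \mathfrak{M}_A} \absolute{L_v(E,1)}_p.
\]

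Next I would observe that for every $v \neq p$, the factor $\absolute{L_v(E,1)}_p$ is a non-negative integer power of $p$. This is immediate from the explicit expressions for $L_v(E,1)$ collected in \S\ref{S: BSD and akashi}: since $v \nmid p$, the numerator $q_v$ is a $p$-adic unit, while the relevant denominators ($\#\widetilde{E}(\kappa_v)$ in the good reduction case and $q_v \mp 1$ in the multiplicative cases) are rational integers. Because each factor lies in $\{p^{k}:k\geq 0\}$, the displayed product equals $1$ if and only if every individual factor equals $1$.

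The final step is to identify the condition $\absolute{L_v(E,1)}_p = 1$ with membership in $\mathfrak{T}$. Lemma~\ref{criterion for p to divide Lv(E,1)} enumerates the $v \neq p$ at which $\absolute{L_v(E,1)}_p > 1$---namely good reduction with $p \mid \#\widetilde{E}(\kappa_v)$, split multiplicative with $q_v \equiv 1\pmod p$, or non-split multiplicative with $q_v \equiv -1\pmod p$---while at additive primes the local $L$-factor equals $1$ identically, hence contributes $\absolute{L_v(E,1)}_p=1$. A side-by-side comparison with the definition of $\mathfrak{T}$ given just before the lemma then shows $v \in \mathfrak{T}$ if and only if $\absolute{L_v(E,1)}_p = 1$. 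Chaining the three observations yields the desired equivalence between $\mathfrak{M}_A \subseteq \mathfrak{T}$ and $\chi_t(\mathcal{G}_A, E[p^\infty])=1$.

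I do not anticipate a real obstacle in this argument: it is essentially a bookkeeping matter, since the set $\mathfrak{T}$ has been tailored precisely so that the equivalence holds. The only conceptual input is Theorem~\ref{EC formula theorem}, which in particular requires the truncated $\Gamma_\Q$-Euler characteristic to be defined---this is already implicit in the assumption that it equals $1$, and in turn relies on the finiteness of $\Sha(E/\Q)[p^\infty]$, a standing hypothesis throughout our treatment.
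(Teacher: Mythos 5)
Your proposal is correct and follows the same route as the paper: both reduce to equation~\eqref{GECeqn} (itself an instance of Theorem~\ref{EC formula theorem} together with $\chi_t(\Gamma_\Q,E[p^\infty])=1$) and then match the condition $\absolute{L_v(E,1)}_p=1$ against the defining cases of $\mathfrak{T}$ via Lemma~\ref{criterion for p to divide Lv(E,1)}. The only difference is that you make explicit the (correct, and implicitly used) observation that each local factor is a non-negative power of $p$, so the product is trivial exactly when every factor is.
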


\begin{proof}
According to \eqref{GECeqn}, $\chi_t(\mathcal{G}_A, E,p)=1$ if and only if $\absolute{L_v(E, 1)}_p=1$ for all primes $v\in \mathfrak{M}_A$.
Moreover, $\absolute{L_v(E, 1)}_p=1$ if and only if $v\in \mathfrak{T}$.
Hence, the result follows.
\end{proof}

\begin{proof}[Proof of Theorem~\ref{s7.3 mainthm}]
One needs to show that there are infinitely many non-CM elliptic curves $A_{/\Q}$ such that $\mathfrak{M}_A$ is contained in $\mathfrak{T}$.
Observe that a curve $A$ for which the $j$-invariant is an integer has potentially good reduction at all primes; hence $\mathfrak{M}_A=\emptyset$ and $\chi_t(\mathcal{G}_A, E,p)=1$ for all pairs $(E,p)$ satisfying the assumptions of the theorem.

Let $A_0$ be the elliptic curve with Cremona label \href{https://www.lmfdb.org/EllipticCurve/Q/128/a/1}{128a2}.
This is a non-CM elliptic curve with $j$-invariant, $j(A_0)=2^7$; hence, $\mathfrak{M}_{A_0}=\emptyset$.
For any odd prime $q\in \mathfrak{T}\setminus \mathfrak{M}_A$, let $A_q$ be the quadratic twist of $A$ by the non-trivial quadratic character ramified only at $q$.
Since $\mathfrak{M}_{A_q}$ is contained in $\mathfrak{T}$, we deduce that $\chi_t(G_{A_q}, E,p)=1$.
This completes the proof.
\end{proof}

\subsubsection{}
One way of expressing density results is to define the \emph{height function} of a long Weierstrass equation
\[
y^2+a_1 xy+a_3 y=x^3+a_2 x^2+a_4 x+a_6
\]
with integer coefficients $\boldsymbol{a} = (a_1, a_2, a_3, a_4, a_6)\in \Z^5$ to be
\[
\height(\boldsymbol{a}) = \max_{i} \absolute{a_i}^{1/i},
\]
and then order such equations by height.
The proportion of curves that lie in a set $\mathcal{S}\subseteq \Z^5$ is then given by 
\[
\mathfrak{d}(\mathcal{S}) = \lim_{x\rightarrow \infty}\frac{\#\left\{\boldsymbol{a}\in \mathcal{S} \ | \ \height(\boldsymbol{a}\leq x)\right\}}{ \#\left\{\boldsymbol{a}\in \Z^5 \ | \ \height(\boldsymbol{a}\leq x)\right\}}.
\]

We now show that if we arrange all elliptic curves over $\Q$ by height, the proportion of elliptic curves $A_{/\Q}$ in Theorem~\ref{s7.3 mainthm} has density 0.
The set $\fM_A$ consists precisely of the primes of potentially multiplicative reduction of $A_{/\Q}$ and Lemma~\ref{chi is 1 condition for triv case, vary A} asserts that $\chi_t(\mathcal{G}_A, E,p)=1$ if and only if $\fM_A$ is contained in $\mathfrak{T}$.
Thus, to count the proportion of $A_{/\Q}$ such that $\chi_t(\mathcal{G}_A, E,p)=1$, it suffices to count elliptic curves $A_{/\Q}$ with good reduction or potentially good reduction at \emph{all the primes} in the complement of $\mathfrak{T}$ (say, $\mathfrak{T}^\prime$).

The following proposition will be useful for the estimates established in this section.

\begin{lemma}
\label{lemma: reduction type proportions}
Let $q$ be any prime.
Suppose we order all elliptic curves defined over $\Q$ by height.
Of all such curves, 
\begin{enumerate}
\item the proportion with multiplicative reduction at $q$ is $\frac{q^8(q-1)}{q^{10}-1}$.
\item the proportion with potentially multiplicative reduction (but not multiplicative reduction) at $q$ is $\frac{q^3(q-1)}{q^{10}-1}$.
\item the proportion with additive reduction at $q$ is $\frac{q^8-1}{q^{10}-1}$.
\item the proportion with good reduction at $q$ is $\frac{q^9(q-1)}{q^{10}-1}$.
\end{enumerate} 
\end{lemma}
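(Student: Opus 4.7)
The plan is to compute, for each prime $q$, the $q$-adic density in $\Z_q^5$ of integer long Weierstrass equations whose associated elliptic curve over $\Q$ has the prescribed reduction type at $q$; a standard equidistribution argument reduces the computation of $\mathfrak{d}$ to computing the Haar measure of the relevant $q$-adically open subset of $\Z_q^5$. For $q > 3$, the affine map $(a_1,\ldots,a_6)\mapsto (a_1,a_2,a_3,c_4,c_6)$ is a measure-preserving bijection of $\Z_q^5$, so via the identity $1728\,\Delta = c_4^3 - c_6^2$ it suffices to work with the Haar measure on $(c_4,c_6)\in\Z_q^2$.

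Next I would invoke Kraus's criterion to identify the reduction type of the isomorphism class from the minimal invariants. For $q > 3$, the maximal minimization level of $(c_4,c_6)$ is $k^\ast = \min\bigl(\lfloor v_q(c_4)/4\rfloor,\lfloor v_q(c_6)/6\rfloor\bigr)$, and the minimal invariants are $(c_4^{\min},c_6^{\min}) = (c_4 q^{-4k^\ast},c_6 q^{-6k^\ast})$. The classical criteria then read: good iff $(c_4^{\min})^3\not\equiv(c_6^{\min})^2\pmod q$; multiplicative iff $v_q(c_4^{\min}) = 0$ with $(c_4^{\min})^3\equiv(c_6^{\min})^2\pmod q$; additive iff $v_q(c_4^{\min}),v_q(c_6^{\min})\geq 1$; and potentially multiplicative additive (Kodaira type $I_n^\ast$, $n\geq 1$) iff $v_q(c_4^{\min})=2$ and $v_q(c_6^{\min})=3$. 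Partitioning $\Z_q^2$ by the value of $k^\ast$ gives pairwise disjoint pieces, since the conditions pin down either $v_q(c_4)$ or $v_q(c_6)$ exactly. A routine count of pairs $(\bar c_4,\bar c_6)\in\F_q^2$ on the cuspidal cubic $\bar c_4^3 = \bar c_6^2$ (parametrized by $t\mapsto(t^2,t^3)$, hence yielding $q$ points in $\F_q^2$ and $q-1$ points in $(\F_q^\times)^2$) gives level-$k$ densities $(q-1)q^{-10k-1}$, $(q-1)q^{-10k-2}$, and $(q-1)q^{-10k-7}$ for cases (4), (1), (2) respectively. Summing the geometric series $\sum_{k\geq 0} q^{-10k} = q^{10}/(q^{10}-1)$ yields the stated formulas, and (3) follows immediately from the complementary identity $1 - (\textrm{good}) - (\textrm{multiplicative}) = (q^8 - 1)/(q^{10} - 1)$.

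The main obstacle is the primes $q\in\{2,3\}$, where neither the $(c_4,c_6)$-substitution nor Kraus's criterion applies as stated, and minimization involves a nontrivial interaction between $u$ and the change-of-variables parameters $r,s,t$. For these primes I would invoke Tate's algorithm directly, enumerating the Kodaira types $I_n$, $II$, $III$, $IV$, $I_n^\ast$, $IV^\ast$, $III^\ast$, $II^\ast$ and summing the density of Weierstrass equations reducing to each type across all levels of non-minimality. The uniformity of the final formulas in $q$ is strong evidence that this more delicate bookkeeping closes up to give the same answers, but the case analysis is substantially more involved than for $q > 3$.
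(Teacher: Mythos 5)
The paper proves this lemma by a one-line citation to Cremona--Sadek \cite[Propositions~2.2 and 2.6]{CS20}, and your sketch essentially re-derives their argument: a $q$-adic density computation in $(c_4,c_6)$-space, partitioned by minimization level $k^\ast$, using the Kodaira classification over $\Q_q$ to read off the reduction type, then summing the geometric series $\sum_k q^{-10k}=q^{10}/(q^{10}-1)$. The plan is sound and the three displayed level-$k$ densities $(q-1)q^{-10k-1}$, $(q-1)q^{-10k-2}$, $(q-1)q^{-10k-7}$ are correct, as is the conclusion. One imprecision worth flagging: the criterion you state for case (2), namely $v_q(c_4^{\min})=2$ and $v_q(c_6^{\min})=3$, is not by itself equivalent to type $\mathrm{I}_n^\ast$ with $n\geq 1$; those valuations give only $v_q(\Delta^{\min})\geq 6$, and $v_q(\Delta^{\min})=6$ yields $\mathrm{I}_0^\ast$, which is \emph{potentially good}. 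You must additionally require $v_q(\Delta^{\min})>6$, equivalently that the units $u=c_4^{\min}q^{-2}$ and $w=c_6^{\min}q^{-3}$ satisfy $\bar u^3=\bar w^2$ in $\F_q^\times$. Your figure $(q-1)q^{-10k-7}$ does incorporate this cuspidal-cubic condition (the naive valuation box alone would give $(q-1)^2q^{-10k-7}$), so you used the right criterion in the count even though the stated version is incomplete. Your deferral of $q\in\{2,3\}$ to Tate's algorithm is appropriate; Cremona--Sadek verify that the same uniform formulas hold there, which is precisely the ``more delicate bookkeeping'' you anticipate.
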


\begin{proof}
See \cite[Propositions~2.2 and 2.6]{CS20}.
\end{proof}

\begin{remark}
\label{remark for minimal}
If we restrict our attention to \emph{minimal} Weierstrass equations then we have to multiply each proportion in Lemma~\ref{lemma: reduction type proportions} by $\frac{q^{10}-1}{q^{10}}$.
We will often do so in Section \ref{S: Vary EC}.
\end{remark}

\begin{lemma}
\label{0 percent}
Let $\mathfrak{T}^\prime$ be a fixed subset of primes in $\Q$.
The proportion of elliptic curves defined over $\Q$ ordered by height with potentially good reduction at all primes $q_i\in \mathfrak{T}^\prime$ is
\[
\prod_{q_i\in \mathfrak{T}^\prime}\left( 1 - \frac{q_i^3 (q_i-1)(q_i^5 + 1)}{q_i^{10}-1}\right).
\]
Furthermore, if $\mathfrak{T}^\prime$ has positive density, then, the proportion of such elliptic curves is 0.
\end{lemma}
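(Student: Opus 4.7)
The plan is to combine the local density data in the preceding Lemma (reduction type proportions) with independence of local conditions across distinct primes, and then do a simple asymptotic estimate to handle the infinite product.

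First, I would observe that \emph{potentially good reduction at $q$} is the complement of \emph{potentially multiplicative reduction at $q$}. Adding parts (1) and (2) of Lemma~7.X, the local density of curves with potentially multiplicative reduction at $q$ equals
\[
P(q) := \frac{q^8(q-1)}{q^{10}-1} + \frac{q^3(q-1)}{q^{10}-1} = \frac{q^3(q-1)(q^5+1)}{q^{10}-1},
\]
so the local density of potentially good reduction at $q$ is $1 - P(q)$. Since the reduction type at $q$ is determined by the Weierstrass coefficients modulo a bounded power of $q$, the Chinese remainder theorem yields independence of these local conditions across distinct primes when counting integer tuples $\boldsymbol a$ of bounded height. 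Hence for any \emph{finite} subset $\{q_1,\dots,q_n\}\subset\mathfrak T'$, the proportion of elliptic curves over $\Q$ (ordered by height) with potentially good reduction at all $q_i$ equals $\prod_{i=1}^n(1-P(q_i))$. Extending to all of $\mathfrak T'$ is a routine truncation: cutting $\mathfrak T'$ at $Y$ gives an upper bound on the density, and the tail contribution is controlled by the convergence of $\prod_{q>Y}(1-P(q))$, so passing to the limit $Y\to\infty$ yields the claimed infinite product.

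Second, for the vanishing statement, I would expand
\[
1-P(q) = 1 - \frac{q^9-q^8+q^4-q^3}{q^{10}-1} = 1 - \frac1q + O\!\left(\frac{1}{q^2}\right)
\]
as $q\to\infty$. Taking logarithms,
\[
-\log\prod_{q\in\mathfrak T'}\bigl(1-P(q)\bigr) \;=\; \sum_{q\in\mathfrak T'} \frac1q \;+\; O(1).
\]
If $\mathfrak T'$ has positive density among the primes, then the standard Mertens-type estimate shows $\sum_{q\in\mathfrak T'} 1/q$ diverges, forcing the product to be zero.

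I expect the main technical point to be the rigorous passage from a finite to an infinite product in the first step; this is routine in arithmetic statistics but worth writing out carefully. For the density-zero conclusion alone, the upper-bound half suffices and is immediate from the obvious monotonicity: imposing potentially good reduction at more primes only shrinks the set being counted.
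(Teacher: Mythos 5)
Your proof is correct and follows essentially the same route as the paper: both compute the local density of potentially good reduction as the complement of the sum of parts (1) and (2) of the reduction-type lemma, invoke independence of local conditions (the paper cites \cite[Section 3]{CS20} where you argue via CRT), and deduce vanishing of the infinite product from the divergence of $\sum_{q\in\mathfrak{T}'}1/q$ for a positive-density set of primes. No substantive differences.
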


\begin{proof}
For each prime $q_i\in \mathfrak{T}^\prime$, the proportion of elliptic curves with either good or potentially good reduction at $q_i$ is given by the complement of the proportion of elliptic curves with multiplicative or potentially multiplicative reduction at $q_i$.
It follows from Lemma~\ref{lemma: reduction type proportions} that this proportion is
\[
1 - \frac{q_i^8(q_i-1)}{q_i^{10}-1} - \frac{q_i^3(q_i-1)}{q_i^{10}-1} = 1 - \frac{q_i^3 (q_i-1)(q_i^5 + 1)}{q_i^{10}-1}.
\]
The first assertion now follows, see \cite[Section 3]{CS20}.

To prove the second assertion, observe that for $q_i\gg0$,
\[
1 - \frac{q_i^3 (q_i-1)(q_i^5 + 1)}{q_i^{10}-1}
 \leq 1- \frac{1}{2q_i}.
\]
It is an easy exercise to show that $\prod_{q_i \in \mathfrak{T}^\prime} \left( 1- \frac{1}{2q_i}\right)=0$ if and only if $\sum_{q_i\in \mathfrak{T}^\prime} \frac{1}{q_i}$ diverges.
Since the density of $\mathfrak{T}^{\prime}$ is positive, it follows that $\sum_{q_i\in \mathfrak{T}^\prime} \frac{1}{q_i}$ diverges.
\end{proof}

\subsubsection{}
\label{GA Ec Zp2 case}
In Remark~\ref{difficulty in pro-p case for triv ext}, we mentioned that it has not been possible for us to study the $G_A$-Euler characteristic directly.
However, we now make some observations on the $\Lambda(H)$-rank of $\mathcal{X}(E/\Fin)_f$ which will shed some light on the $G_A$-Euler characteristic formula.

Consider the pair of elliptic curves $(E, A)$, both defined over $\Q$ and such that $A$ is not a CM elliptic curve.
Throughout, $p\geq 5$ is a fixed prime with good reduction at $p$ and the base field is $F=F_A= \Q(A[p])$.
Denote the pro-$p$ $p$-adic Lie extension by $\Fin = {\Fin}_{,A}= \Q(A[p^\infty])$.
The corresponding Galois group is $G_{A}$, and write $H =H_A= \Gal({\Fin}_{,A}/F_{\cyc})$.

For any prime $w|v$ (where $v\nmid p$ in $F$), $\dim {H}_w\geq 1$ precisely when the reduction type is potentially multiplicative, see \cite[Lemma~2.8(i)]{Coates_Fragments}.
Thus, \eqref{LambdaH rank formula} becomes
\[
\rank_{\Lambda(H)}\mathcal{X}(E/{\Fin})_f = \lambda_p(E/F_{\cyc}) + \sum_{\substack{v\nmid p,\\ v \textrm{ pot. mult. for }A}} \corank_{\Zp} Z_{v}(F_{\cyc,w}).
\]

Let $\mathcal{SM}$ denote the primes of split multiplicative reduction of $E$.
For simplicity, we assume that $\mathcal{SM}\neq \emptyset$ and contains a prime $\geq 5$.
As $A$ varies over all elliptic curves defined over $\Q$ (ordered by height) with good reduction at $p$, the base field $F_A=\Q(A[p])$ varies.
We would like to calculate for what proportion of $A_{/\Q}$ does the following inequality hold,
\[
\rank_{\Lambda(H_A)}\mathcal{X}(E/{\Fin}_{,A})_f > \lambda_p(E/F_{A,\cyc})\geq 0.
\]
In other words, as $A$ varies we want to find how often is
\[
\sum_{\substack{v\nmid p,\\ v \textrm{ pot. mult. for }A}} \corank_{\Zp} Z_{v}(F_{\cyc,w}) >0.
\]
The above inequality holds for all $A_{/\Q}$ with potentially multiplicative reduction at \emph{at least} one prime in $\mathcal{SM}$.
Therefore, to get a lower bound on the density of such elliptic curves, we require that $A$ has potentially multiplicative reduction at \emph{at least} one prime of $\mathcal{SM}$.

Since $A$ has potentially multiplicative reduction at \emph{at least} one prime $\geq 5$, it automatically follows that such elliptic curves are non-CM.
Indeed, if $p$ divides the discriminant of a quadratic order $\mathcal{O}$, then
all curves with endomorphism ring isomorphic to $\mathcal{O}$ have additive reduction at $p$, see \cite[p.~1]{CP19}.
By an application of Lemma~\ref{lemma: reduction type proportions}, we conclude that the proportion of such elliptic curves is
\[
1-\prod_{\ell\in \mathcal{SM}}\left( 1 - \frac{\ell^3(\ell-1)(\ell^5 +1)}{\ell^{10}-1}\right).
\]

We record this observation below.
\begin{proposition}
\label{positive Lambda H rank}
Let $E_{/\Q}$ be a fixed elliptic curve with good \emph{ordinary} reduction at a fixed prime $p\geq 5$.
Suppose further that $E$ has \emph{at least} one prime ($\geq 5$) of \emph{split multiplicative reduction}.
Then, as $A$ varies over all elliptic curves over $\Q$, for a \emph{positive proportion} of $A$, we have
\[
\rank_{\Lambda(H_A)}\mathcal{X}(E/{\Fin}_{,A})_f > 0.
\]
\end{proposition}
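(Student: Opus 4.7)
The plan is to unpack the formula
\begin{equation*}
\rank_{\Lambda(H_A)}\mathcal{X}(E/\cF_{\infty,A})_f = \lambda_p(E/F_{A,\cyc}) + \sum_{\substack{v\nmid p,\\ v \text{ pot.\ mult.\ for } A}} \corank_{\Zp} Z_v(F_{A,\cyc,w})
\end{equation*}
derived in the paragraph immediately preceding the statement, and to argue that the sum of local terms on the right is strictly positive for a positive-density set of $A_{/\Q}$. Since $\lambda_p(E/F_{A,\cyc})\geq 0$, this will force $\rank_{\Lambda(H_A)}\mathcal{X}(E/\cF_{\infty,A})_f > 0$.

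Here I would combine two ingredients. First, by \cite[Lemma~2.8(i)]{Coates_Fragments}, for $v\nmid p$ the condition $\dim H_{A,w}\geq 1$ is equivalent to $A$ having potentially multiplicative reduction at $v$, so only such primes contribute to the sum. Second, Lemma~\ref{Zp corank cases} gives $\corank_{\Zp} Z_v(F_{A,\cyc,w}) \geq 1$ whenever $E$ has split multiplicative reduction at $v$. Therefore, if $A$ has potentially multiplicative reduction at even a single prime $\ell$ lying in the set $\mathcal{SM}$ of primes of split multiplicative reduction of $E$, then at least one summand is strictly positive, and we are done. By hypothesis, $\mathcal{SM}$ is non-empty and contains a prime $\ell\geq 5$, so this reduction step is available.

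Next, I would apply Lemma~\ref{lemma: reduction type proportions} prime-by-prime: for each prime $\ell$, the density of $A_{/\Q}$ (ordered by height) that have either multiplicative or potentially (but not actually) multiplicative reduction at $\ell$ equals
\begin{equation*}
\frac{\ell^8(\ell-1)}{\ell^{10}-1} + \frac{\ell^3(\ell-1)}{\ell^{10}-1} = \frac{\ell^3(\ell-1)(\ell^5+1)}{\ell^{10}-1}.
\end{equation*}
Since reduction behaviour at distinct primes is independent for the height ordering (see \cite[Section~3]{CS20}), the density of $A$ having potentially good reduction at \emph{every} prime of $\mathcal{SM}$ equals
\begin{equation*}
\prod_{\ell\in\mathcal{SM}}\left(1 - \frac{\ell^3(\ell-1)(\ell^5+1)}{\ell^{10}-1}\right),
\end{equation*}
which is strictly less than $1$ since each factor is and $\mathcal{SM}\neq \emptyset$. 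The complementary density, namely that of $A$ with potentially multiplicative reduction at some $\ell\in\mathcal{SM}$, is therefore strictly positive.

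Finally, I would record that any $A$ produced in this way is automatically non-CM, since CM elliptic curves over $\Q$ have potentially good reduction at every prime; this guarantees that $\cF_{\infty,A}/\Q$ really is the $4$-dimensional trivializing extension, so the hypotheses underlying the rank formula apply. There is no serious obstacle: the argument is essentially bookkeeping that combines the rank formula with the corank case analysis of Lemma~\ref{Zp corank cases} and the elementary density counts of Lemma~\ref{lemma: reduction type proportions}, with the key conceptual point being that a single prime of $\mathcal{SM}$ at which $A$ acquires potentially multiplicative reduction already forces a positive local contribution.
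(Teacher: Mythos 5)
Your proof is correct and follows essentially the same route as the paper: reduce via the rank formula to showing that a positive-density set of $A$ has potentially multiplicative reduction at some prime of $\mathcal{SM}$, invoke Lemma~\ref{Zp corank cases} for the positive local contribution, compute the density $1-\prod_{\ell\in\mathcal{SM}}\bigl(1-\frac{\ell^3(\ell-1)(\ell^5+1)}{\ell^{10}-1}\bigr)>0$ from Lemma~\ref{lemma: reduction type proportions}, and note such $A$ are automatically non-CM. No gaps.
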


\section{Results for fixed $\Fin$ and $p$ as $E_{/\Q}$ varies}
\label{S: Vary EC}

We fix a prime $p\geq 5$, and an admissible $p$-adic Lie-extension $\cF_\infty$ of $\Q$.
In this section, we are once again interested in studying the related questions discussed in \S\ref{S: Vary extension}.
But now, the pair $(\Fin,p)$ are fixed and the elliptic curve $E$ varies.

The arguments of this section rely on short Weierstrass equations, hence we use a modified notion of height.
Any elliptic curve $E_{/\Q}$ admits a unique Weierstrass equation,
\begin{equation}
\label{weier}
E:Y^2 = X^3 + aX + b
\end{equation}
where $a, b$ are integers and $\gcd(a^3 , b^2)$ is not divisible by any twelfth power.
Since $p\geq 5$, such an equation is minimal.
Recall that the \emph{height of} $E$ satisfying the minimal equation $\eqref{weier}$ is given by $H_{\min}(E) := \max\left(\absolute{a}^3, b^2\right)$.
Let $\mathcal{E}$ be the set of isomorphism classes of elliptic curves defined over $\Q$.
For any subset $\mathcal{S}\subset \mathcal{E}$, let $\mathcal{S}(x)$ consist of all $E\in \mathcal{S}$ such that $H_{\min}(E)<x$.
The density of $\mathcal{S}$ (if it exists) is defined as the following limit 
\[
\mathfrak{d}(\mathcal{S}):=\lim_{x\rightarrow \infty} \frac{\#\mathcal{S}(x)}{\# \mathcal{E}(x)}.
\]
The \emph{upper density} $\bar{\mathfrak{d}}(\mathcal{S})$ (resp. \emph{lower density} $\underline{\mathfrak{d}}(\mathcal{S})$) is defined by replacing the above limit by $\limsup_{x\rightarrow \infty}\frac{\#\mathcal{S}(x)}{\# \mathcal{E}(x)}$ (resp. $\liminf_{x\rightarrow \infty}\frac{\#\mathcal{S}(x)}{\# \mathcal{E}(x)}$).
As $E$ ranges over the set of elliptic curves, we study the variation of invariants associated to the Selmer group $\Sel_{p^\infty}(E/\cF_\infty)$.

\begin{remark}
In this section, we restrict ourselves to minimal Weierstrass equations because for our main theorems here, we rely on a result of H.~W.~Lenstra (see \cite[Proposition~1.8]{Lenstra_annals}), where such an assumption is made.
\end{remark}

In \S\ref{Euler characteristic vary E Zp2}, we consider the case of the (unique) $\Zp^2$-extension over a fixed imaginary quadratic field, $F$.
Our results in this section indicate that \emph{most of the time}, the truncated $G$-Euler characteristic is a $p$-adic unit.
Furthermore, we study the variation of the Mordell--Weil rank growth at finite layers of the $\Zp^2$-extension and supplement our results with a concrete example.
In \S\ref{section 9-2}, we consider the case of false Tate curve extensions.
The main results are Theorems~\ref{FTC thm varying elliptic curve} and \ref{FTC vary EC}, where we estimate the upper density of the proportion of rank 0 elliptic curves over $\Q$ with non-trivial $G$-Euler characteristic (resp. $\mathcal{G}$-Euler characteristic).
In the pro-$p$ situation, we study a finer question pertaining to $\Lambda(H)$-ranks.
In \S\ref{section 9-3} we fix a non-CM elliptic curve ${E_0}_{/\Q}$ and consider the extension $\Q(E_0[p^\infty])/\Q$.
As $E_{/\Q}$ varies over rank 0 elliptic curves with good \emph{ordinary} reduction at $p$, We prove analogous estimates for the $\mathcal{G}_{E_0}$-Euler characteristic and variation of $\Lambda(H)$-ranks (in the pro-$p$ situation) in this four-dimensional non-abelian extension.

\subsection{}
\label{Euler characteristic vary E Zp2}
Let $p\geq 5$ be a fixed prime and $F= \Q(\sqrt{-d})$ be a fixed imaginary quadratic field.
Consider the unique $\Zp^2$-extension $\Fin/F$ and set $G=\Gal(\Fin/F)$.
As discussed previously,
\[
\chi_t(\Gamma_F, E,p) = \chi_t(G, E,p).
\]
Let $\mathcal{E}^F_p\subset \mathcal{E}$ be the subset of elliptic curves $E_{/\Q}$ such that
\begin{enumerate}[(a)]
 \item $\rank_{\Z}E(F)=0$,
 \item $E$ has good \emph{ordinary} reduction at $p$, 
 \item $E$ has good reduction at $\ell=2,3$,
 \item $\chi_t(\Gamma_F, E,p) \neq 1$.
\end{enumerate}
In this section, we show that there is an upper bound on the upper-density $\bar{\mathfrak{d}}(\mathcal{E}^F_p)$.
The methods employed here extend those in \cite[Section 8.2]{HKR}, where we studied how often the \emph{anticyclotomic} Euler characteristic $\chi(\Gal(F^{\ac}/F), E,p)$ is equal to 1.
The results we prove in the current setting require more detailed analysis of Euler characteristics, which translate to explicit estimates.

We consider the following terms
\begin{itemize}
 \item $\Sha_p(E/F) := \#\Sha(E/F)[p^{\infty}]$,
 \item $\tau_p(E/F) :=\prod_{v}c_v^{(p)}(E/F)$,
 \item $\alpha_p(E/F) :=\prod_{\mathfrak{p}|p}\#\widetilde{E}(k_\mathfrak{p})[p^{\infty}]$.
\end{itemize}

\begin{definition}
Let $E_{/\Q}$ be an elliptic curve.
We say that $E$ satisfies $(\dagger)$ if the Kodaira type at $\ell=2,3$ is not of the form $\textrm{I}_n$ for some $n$ divisible by $p$.
\end{definition}

We remark that an elliptic curve with good reduction at $\ell=2,3$ satisfies $(\dagger)$.
\begin{definition}
Let $\mathcal{E}_{1,F}(x)$, $\mathcal{E}_{2,F}(x)$, and $\mathcal{E}_{3,F}(x)$ be the subset of elliptic curves $E\in \mathcal{E}(x)$ with $\op{rank}_{\Z}(F)=0$, satisfying $(\dagger)$, for which $p$ divides $\Sha_p(E/F)$, $\tau_p(E/F)$, and $\alpha_p(E/F)$, respectively.
\end{definition}

Before stating the main theorem in this section, we have to introduce some additional notation.
For $\kappa=(a,b)\in \F_p\times \F_p$ with $\Delta(\kappa):=4a^3+27b^2$ non-zero, we write $E_\kappa$ for the  elliptic curve defined by the Weierstrass equation 
\[
E_{\kappa}:y^2=x^3+ax+b.
\]
This tuple $\kappa$ is not uniquely determined by the isomorphism class of $E_{\kappa}$.
If $p$ is split in $F$, the residue field $\kappa_v$ of $F$ at any prime $v|p$ is equal to $\F_p$.
Denote by $\mathfrak{S}_p$ the set of pairs $\kappa=(a,b)\in \F_p\times \F_p$ such that $E_{\kappa}$ contains a point of order $p$ over $\F_p$.
When $p$ is inert, the residue field is $\F_{p^2}$.
Denote by $\mathfrak{A}_p$ the set of pairs $\kappa=(a,b)\in \F_p\times \F_p$ such that $p$ divides $\# \widetilde{E}(\F_{p^2})$.
Define 
\[
b(p):=\begin{cases}\# \mathfrak{A}_p &\text{ if } p \text{ is inert in }F,\\
\# \mathfrak{S}_p &\text{ otherwise.}
\end{cases}
\]
Note that $p$ divides $\# \widetilde{E}(\F_{p^2})$ if and only if $a_p \equiv \pm 1 \pmod{p}$ (see \cite[Lemma~8.17]{HKR}).
Since the curves with $a_p=1$ are quadratic twists of curves for which $a_p=-1$, the numbers of curves with $a_p=1$ and $a_p=-1$ are the same.
Therefore, $\#\mathfrak{A}_p=2\#\mathfrak{S}_p$, see \cite[p.~22 last paragraph]{HKR}.

\begin{theorem}
\label{main result varying elliptic curve}
With the notation as before, 
\[
\bar{\mathfrak{d}}(\mathcal{E}^F_p)< \bar{\mathfrak{d}}(\cE_{1,F})+ \left( \zeta(p) -1 \right) +\zeta(10)\cdot \frac{b(p)}{p^2}.
\]
\end{theorem}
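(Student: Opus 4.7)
The plan is to decompose $\mathcal{E}_p^F$ into the union $\cE_{1,F}\cup\cE_{2,F}\cup\cE_{3,F}$ and estimate the last two contributions directly, leaving $\bar{\mathfrak{d}}(\cE_{1,F})$ as an external input. First, because every $E\in\mathcal{E}_p^F$ has rank zero over $F$, Corollary~\ref{cor to PR-Sch} applies and gives
\[
\chi_t(\Gamma_F,E[p^\infty])\sim\Sha_p(E/F)\cdot\tau_p(E/F)\cdot\frac{\alpha_p(E/F)^2}{(\#E(F)[p^\infty])^2}.
\]
For $p\ge 3$, the reduction map $E(F)[p^\infty]\hookrightarrow \widetilde{E}(\kappa_v)$ is injective at each good prime $v\mid p$, so the ratio $\alpha_p(E/F)^2/(\#E(F)[p^\infty])^2$ is a non-negative integer power of $p$. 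Hence $\chi_t(\Gamma_F,E[p^\infty])\ne 1$ forces $p$ to divide at least one of $\Sha_p(E/F)$, $\tau_p(E/F)$, or $\alpha_p(E/F)$; combined with the hypothesis that $E$ has good reduction at $2,3$ (which implies $(\dagger)$), this yields $\mathcal{E}_p^F\subseteq \cE_{1,F}\cup\cE_{2,F}\cup\cE_{3,F}$, and subadditivity of upper density reduces the theorem to bounding $\bar{\mathfrak{d}}(\cE_{2,F})$ and $\bar{\mathfrak{d}}(\cE_{3,F})$.

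To bound $\bar{\mathfrak{d}}(\cE_{2,F})$, I would argue that $p\mid\tau_p(E/F)$ forces the existence of a rational prime $\ell\ge 5$ (the options $\ell\le 3$ are ruled out by good reduction) and a Kodaira type of the form $I_n$ with $p\mid n$ for $E_{/\Q_\ell}$. Indeed, for $p\ge 5$ only (split) multiplicative types produce Tamagawa numbers divisible by $p$; Kida's base-change tables from Section~\ref{S: Tamagawa calculations} show that a ramification index $e\le 2$ together with $p$ odd forces $p\mid v_\ell(\Delta)$, whether $\ell$ is unramified in $F$, ramified with original type $I_n$, or ramified with original type $I_n^*$. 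In each case one concludes $\ell^p\mid\Delta(E)$. For each fixed $\ell\ge 5$, a lattice-point count on $4a^3+27b^2\equiv 0\pmod{\ell^p}$ (smooth away from the origin when $\ell\ne 2,3$) yields a proportion $O(\ell^{-p})$ of minimal Weierstrass equations of bounded height satisfying $\ell^p\mid\Delta$. A union bound combined with the strict estimate
\[
\sum_{\ell\ge 5\text{ prime}}\ell^{-p}<\sum_{n\ge 2}n^{-p}=\zeta(p)-1
\]
(with plenty of slack for the implicit constant) then gives $\bar{\mathfrak{d}}(\cE_{2,F})<\zeta(p)-1$.

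To bound $\bar{\mathfrak{d}}(\cE_{3,F})$, I would observe that $p\mid\alpha_p(E/F)$ is purely a congruence condition on $(a,b)\bmod p$: by \cite[Lemma~8.17]{HKR} the set of forbidden residues modulo $p$ has cardinality $\#\mathfrak{S}_p$ when $p$ splits in $F$ and $\#\mathfrak{A}_p$ when $p$ is inert, i.e., $b(p)$ in either case. Integer pairs $(a,b)$ ordered by height equidistribute mod $p$, giving density exactly $b(p)/p^2$ among all integer pairs. By Lenstra~\cite[Proposition~1.8]{Lenstra_annals}, the asymptotic count of minimal Weierstrass equations of height at most $x$ is $1/\zeta(10)$ times that of all integer pairs; dividing yields $\bar{\mathfrak{d}}(\cE_{3,F})\le \zeta(10)\cdot b(p)/p^2$. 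Adding the three upper bounds produces the theorem.

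The main technical obstacle is the Tamagawa step, where one must verify uniformly over both ramified and unramified $\ell$ that Kida's base-change analysis cannot produce a $p$-divisible local Tamagawa number without forcing $\ell^p\mid\Delta(E)$ in the global minimal model; one must also justify carefully that the implicit constant in the per-$\ell$ density bound $O(\ell^{-p})$ is harmless against the slack in $\zeta(p)-1$. By contrast, the $\alpha$-step is essentially a clean mod-$p$ count married to Lenstra's density result, and the strict inequality in the final bound is inherited from the loose passage $\sum_{\ell\text{ prime}}\ell^{-p}<\sum_{n\ge 2}n^{-p}$ used in the Tamagawa step.
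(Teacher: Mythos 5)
Your proposal is correct and follows essentially the same route as the paper's proof, which simply invokes \cite[Theorem~8.19]{HKR} verbatim: the decomposition of $\mathcal{E}_p^F$ into $\cE_{1,F}\cup\cE_{2,F}\cup\cE_{3,F}$ via the rank-zero Euler characteristic formula, the reduction of the Tamagawa term to $\ell^p\mid 4a^3+27b^2$ (via the Kodaira-type and base-change analysis of Section~\ref{S: Tamagawa calculations}) followed by the overcount $\sum_{\ell}\ell^{-p}<\zeta(p)-1$, and the mod-$p$ congruence count married to Lenstra's minimal-equation density to get $\zeta(10)\,b(p)/p^2$ are exactly the three steps of that cited argument. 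The technical caveats you flag (the implied constant in the per-$\ell$ count, and the ramified versus unramified cases for $\ell$) are genuine but are resolved as you indicate.
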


\begin{proof}
The proof of \cite[Theorem~8.19]{HKR} goes through verbatim.
\end{proof}

The heuristics for $\limsup_{x\rightarrow\infty}\frac{\#\cE_{1,F}(x)}{\#\cE(x)}$ are due to C.~Delaunay \cite{Del07} and can be considered as an analogue of the Cohen--Lenstra heuristics for the Tate--Shafarevich group of an elliptic curve.
We explain it briefly.

Let $\mathscr{E}$ denote the set of isomorphism classes of elliptic curves defined over $\Q$ with rank $0$.
For $x>0$, set $\mathscr{E}(x)$ to be the subset of $\mathscr{E}$ consisting of $E$ such that $H_{\op{min}}(E)\leq x$.
Assume that for every elliptic curve $E_{/\Q}$, the $p$-primary part of the Tate-Shafarevich group $\Sha(E/F)[p^\infty]$ is finite.
The heuristic of Delaunay states that
\[
\limsup_{x\rightarrow \infty} \frac{\#\{E\in \mathscr{E}(x)\mid \Sha(E/F)[p]\neq 0\}}{\#\mathscr{E}(x)}=f_0(p),
\]
where $f_0(p)$ is given by
\[f_0(p)=1-\prod_{j=1}^{\infty} \left(1-\frac{1}{p^{2j-1}}\right)=\frac{1}{p}+\frac{1}{p^3}-\frac{1}{p^4}+\frac{1}{p^5}-\frac{1}{p^6}\dots.\]

These values get smaller as $p$ gets larger.
For a brief summary of the Cohen-Lenstra philosophy, see the discussion preceding \cite[Heuristic~9.2 and Theorem~9.3]{HKR}.

Assuming the heuristic, it follows that 
\begin{equation}
\label{del heuristics}
\limsup_{x\rightarrow\infty}\frac{\#\cE_{1,F}(x)}{\#\cE(x)}\leq f_0(p).
\end{equation}

In \cite[Table 2]{HKR}, values of $\#\mathfrak{S}_p/p^2$ for the primes $7\leq p< 150$ are noted.
However, we are now able to say more about these values.

Let $\mathbb{F}_q$ be a finite field of characteristic $p\neq 2,3$.
Denote by $N(t)$ the number of $\mathbb{F}_q$-isomorphism classes of elliptic curves that have exactly $q+1-t$ points.
This quantity can be computed explicitly in terms of the \emph{Kronecker class number} (written as $H(\cdot)$) when $q$ is not a square (see \cite[p.~184]{Schoof87}).
More precisely, when $q$ is \emph{not} a square, for all $t\in \Z$,
\begin{equation}
N(t) = 
\begin{cases}
H(t^2 - 4q) & \textrm{ if } t^2<4q \textrm{ and } p\nmid t\\
H(-4p) & \textrm{ if } t=0.
\end{cases}
\end{equation}

\begin{lemma}
\label{getting hold of frakSp}
With the notation as above,
\[
\# \mathfrak{S}_p \leq \left(\frac{p-1}{2}\right) H(1-4p) \leq C p^{\frac{3}{2}} \log p \left( \log \log p\right)^2,
\]
where $C$ is an effectively computable positive constant.
In particular,
\[
\lim_{p\rightarrow \infty}\frac{\# \mathfrak{S}_p}{p^2} = 0.
\]
\end{lemma}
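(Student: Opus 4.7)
The plan is to prove the first inequality by an orbit-counting argument on Weierstrass equations, then invoke a standard analytic estimate on the Kronecker class number.

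For the inequality $\#\mathfrak{S}_p \leq \frac{p-1}{2}H(1-4p)$, I would first reduce to curves with a prescribed point count. If $(a,b) \in \mathfrak{S}_p$, then $p \mid \#E_{(a,b)}(\F_p) = p+1-t$ forces $t \equiv 1 \pmod p$, and the Hasse bound $|t| \leq 2\sqrt{p}$ combined with $p \geq 5$ pins down $t=1$; hence $\#E_{(a,b)}(\F_p) = p$. Since $p$ is prime (so not a square) and $p \nmid 1$, the Schoof formula recalled just before the lemma gives $N(1) = H(1-4p)$ as the total number of $\F_p$-isomorphism classes of elliptic curves with trace of Frobenius equal to $1$. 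To pass from isomorphism classes to Weierstrass pairs, I would use the elementary fact that two pairs $(a,b),(a',b')\in\F_p^2$ of nonzero discriminant define $\F_p$-isomorphic curves if and only if there exists $u \in \F_p^\times$ with $(a',b') = (u^4 a, u^6 b)$. A direct stabilizer computation shows that the stabilizer of any $(a,b)$ under this action contains $\{\pm 1\}$, so every orbit has cardinality at most $(p-1)/2$, with equality when $j \neq 0,1728$ and strictly smaller in the two exceptional $j$-invariant cases. Summing over the $H(1-4p)$ isomorphism classes then yields the claimed inequality.

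For the analytic estimate $H(1-4p) \leq C'\sqrt{p}\log p(\log\log p)^2$, I would decompose the Kronecker class number as a sum, over conductors $f$ with $f^2 \mid 4p-1$, of (weighted) class numbers of the imaginary quadratic orders sitting inside $\Q(\sqrt{1-4p})$. Applying the Dirichlet class number formula $h(D) = \sqrt{|D|}\,L(1,\chi_D)/\pi$ together with the unconditional bound $L(1,\chi) \ll \log|D|$ yields $h(-|D|) \ll \sqrt{|D|}\log|D|$ for each order, while a standard divisor-function estimate (such as $\tau(n) \ll n^{o(1)}$, refined as needed) absorbs the sum over $f$ into the $(\log\log p)^2$ factor. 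Multiplying the two inequalities together gives $\#\mathfrak{S}_p \leq C p^{3/2}\log p(\log\log p)^2$, from which $\#\mathfrak{S}_p/p^2 \to 0$ is immediate.

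The main obstacle is the analytic bound on $H(1-4p)$; the elementary orbit counting is essentially routine, though one does need to verify carefully that the curves with $j \in \{0,1728\}$ respect the $(p-1)/2$ bound on pairs per isomorphism class, which they do because their stabilizers in $\F_p^\times$ still contain $\{\pm 1\}$.
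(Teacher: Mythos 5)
Your argument is correct and follows essentially the same route as the paper: the first inequality comes from the fact that each $\F_p$-isomorphism class with $\#E(\F_p)=p$ (forced by the Hasse bound once $p\mid\#E(\F_p)$ and $p\geq 5$) corresponds to an orbit of size at most $(p-1)/2$ under $(a,b)\mapsto(u^4a,u^6b)$, together with Schoof's identity $N(1)=H(1-4p)$. The only difference is that for the bound $H(1-4p)\leq C\sqrt{p}\,\log p\,(\log\log p)^2$ the paper simply cites Lenstra's Proposition~1.8, whereas you sketch its standard proof via the class number formula and $L(1,\chi)\ll\log|D|$; that sketch is sound (indeed a careful version of it gives a slightly stronger bound), so there is no gap.
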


\begin{proof}
Let $E_{a,b}$ denote the elliptic curve $Y^2=X^3+aX+b$ with $(a,b)\in \F_p\times \F_p$.
Then $E_{a,b}$ is isomorphic to $E_{a',b'}$ over $\F_p$ if and only if 
\[a'=c^4 a\text{ and }b'=c^6 b\] for some element $c\in \F_p^\times$.
Thus, the number of curves $E_{a',b'}$ that are isomorphic to $E_{a,b}$ is at most $\frac{p-1}{2}$.
The number of elliptic curves up to isomorphism with $\#\widetilde{E}(\F_p)=p$ is $N(1)=H(1-4p)$.
This proves the first inequality.

The second inequality is proven in \cite[Proposition~1.8]{Lenstra_annals}.
The assertion that
\[
\lim_{p\rightarrow \infty}\frac{\# \mathfrak{S}_p}{p^2} = 0
\]
is an immediate consequence.
\end{proof}

\noindent Putting these assertions together and assuming the Cohen--Lenstra type heuristic for $p| \#\Sha(E/F)$ discussed in \eqref{del heuristics}, we have
\[
\limsup_{x\rightarrow \infty} \frac{\#\mathcal{E}^F_p(x)}{\#\mathcal{E}(x)}<1-\prod_{j\geq 1} \left(1-\frac{1}{p^{2j-1}}\right)+(\zeta(p)-1)+\zeta(10)C p^{\frac{-1}{2}} \log p \left( \log \log p\right)^2.
\]
In particular, the Cohen--Lenstra type heuristics indicate that 
\[
\limsup_{p\rightarrow\infty}\bar{\mathfrak{d}}(\mathcal{E}^F_p)=0.
\]
This leads to the realization that $\chi_t(G, E,p)=1$ is the \emph{generic case}.
In other words, one expects that \emph{most of the time} $p\nmid \chi_t(G, E,p)$.
Whereas, it \emph{rarely} happens that for a rank-zero elliptic curve over $\Q$, $p|\chi_t(G, E,p)$.
In other words, Cohen--Lenstra heuristics for the Tate--Shafarevich group indicates that \emph{most of the time}, the Akashi series $\Ak_{E/\mathcal{F}_\infty}$ is unit in $\Lambda(\Gamma_F)= \Z_p\llbracket T\rrbracket$.
Therefore, the Selmer group $\Sel_{p^\infty}(E/\cF_\infty)$ is $\Lambda(G)$-pseudonull.
It follows from Proposition~\ref{prop:Greenberg} that $\Sel_{p^\infty}(E/\cF_\infty)=0$.

We now discuss some implications on the growth of the Mordell--Weil rank in $\Zp^2$-extensions.
As $E_{/\Q}$ varies over all elliptic curves, it follows from Theorem~\ref{Thm: Hung-Lim Theorem} and \eqref{LambdaH rank formula} that
\[
\rank_{\Z}E(F_n) \leq \lambda_p(E/F_{\cyc})p^{n} + 2\corank_{\Zp}E(\Fin)(p).
\]
A question of interest is to find the proportion of elliptic curves for which the Mordell--Weil rank remains bounded in the $\Zp^2$-extension.
Under standard hypothesis and the Cohen--Lenstra heuristic for divisibility of the order of the Tate--Shafarevich by $p$, we have seen that as $p\rightarrow \infty$, the proportion of elliptic curves such that $\lambda_p(E/F_{\cyc})\neq r_E$ approaches 0.

Let $A_{/\Q}$ be a fixed non-CM rank 0 elliptic curve of conductor $N_A$.
Further, suppose that it has rank 0 over $F$.
As $s$ varies over all integers coprime to $N_A$, there exists an elliptic curve ${E_s}_{/\Q}$ with conductor $s^2 N_A$.
In fact, $E_s$ can be realized as a quadratic twist of the elliptic curve $A_{/\Q}$ and has additive reduction at $s$.
For each such elliptic curve $E_s$ as well, we have
\[
\rank_{\Z}E_s(F_n) \leq \lambda_p(E_s/F_{\cyc}) p^{n}.
\]
If further ${E_s}_{/F}$ is also of rank 0, then in the \emph{generic case} one expects $\lambda_p(E_s/F_{\cyc}) =0$.
In particular, the Mordell--Weil rank of $E_s$ remains 0 at each finite layer of the $\Zp^2$-extension.

We now illustrate this with an example.
Let $A$ be the elliptic curve \href{https://www.lmfdb.org/EllipticCurve/Q/11/a/1}{11a2} (Cremona label).
Fix $p=7$, and suppose that $s=5$.
Then, the elliptic curve $E_5 = \href{https://www.lmfdb.org/EllipticCurve/Q/275/b/1}{275b3}$ is a rank 0 elliptic curve over $\Q$ with additive reduction at 5.
When we consider its twist by $-3$, we get the curve ${E_{5}^{(-3)}}_{/\Q}$ which is \href{https://www.lmfdb.org/EllipticCurve/Q/2475/a/1}{2475h3} (Cremona label).
Therefore, one can check that
\[
\rank_{\Z}E_5(F) = \rank_{\Z}E_5(\Q) + \rank_{\Z}{E_{5}^{(-3)}}(\Q) = 0 + 0 =0.
\]
Since ${E_{5}}_{/\Q}$ has no 7-torsion and $\#\Sha(E_5/\Q)[7^{\infty}] =1$, it follows from Remark~\ref{BKLOS remark} that
\[
\# \Sha({E_{5}^{(-3)}}/\Q)[7^\infty] = \Sha(E_5/F)[7^\infty] =1.
\]
The prime 7 is \emph{not} an anomalous prime for $E_{5}$ which can be checked from the $q$-expansion.
Finally, since the Kodaira symbol at the prime 11 is of type $\textrm{I}_1$, we know that upon base-change to $F$, the Tamagawa number does not become divisible by 7.
Therefore,
\[
\chi(\Gamma_F, E_5) =1.
\]
Equivalently, $\mu_7(E_5/F_{\cyc}) = \lambda_7(E_5/F_{\cyc})=0$.
In particular, the Mordell--Weil rank of $E_5$ remains 0 at each finite layer of the $\Zp^2$-extension of $\Q(\sqrt{-3})$.

\subsection{}
\label{section 9-2}
Let $p\geq 5$ be a fixed prime and $F=\Q(\mu_p)$.
For simplicity, fix $m$ to be a prime number (say $\ell$) and suppose that $\ell\equiv 1 \mod{p}$.
Note that $\ell$ splits completely in $F$.
Let $\Fin = \Q\left( \mu_{p^\infty}, \ \ell^{\frac{1}{p^n}} : n = 1, 2, \ldots \right)$ be the false Tate curve extension.
For the pro-$p$ extension $G=\Gal(\Fin/F)$,
\[
\chi_t(G, E,p)=\chi_t(\Gamma_F, E,p)\times \prod_{v\in \fM_E} \absolute{L_v(E, 1)}_p,\\
\]
where $\fM_E = \mathcal{P}_1(E,\Fin) \cup \mathcal{P}_2(E,\Fin)$ is a set of primes in $F$ that lie above $\ell$.
The Euler characteristic $\chi_t(G, E,p)=1$ when \emph{all} of the following conditions hold.
\begin{enumerate}[(a)]
 \item $\mathcal{R}_p(E/F)$ is a unit in $\Z_p$.
 \item $\Sha(E/F)[p^\infty] =0$.
 \item $p$ is \emph{not} an anomalous prime for $E$.
 \item $p\nmid \tau_{\ell}^{(F)}$; equivalently $p\nmid \tau_{\ell}^{(\Q)}$.
 \item At $\ell$, the elliptic curve has additive reduction \emph{or} non-split multiplicative reduction \emph{or} good reduction with $p\nmid \#\widetilde{E}(\F_{\ell})$.
 (Note that since $\ell$ splits in $F$, the reduction type does not change from $\ell$ to a prime $v|\ell$ of $F$.)
\end{enumerate}
Note that the last two conditions \emph{are not} independent.
The condition on $\ell$ not being a prime of split multiplicative reduction automatically implies that $p$ does not divide $\tau_{\ell}^{(\Q)}=\tau_{\ell}^{(F)}$.

The proportion of Weierstrass equations (ordered by height) over $\Q$ with multiplicative reduction was recorded in Lemma~\ref{lemma: reduction type proportions}(1).
Only \emph{half} of these have split multiplicative reduction type (see \cite[Theorem~5.1(1)]{CS20}).
By Remark~\ref{remark for minimal}, the proportion of Weierstrass equations which are globally minimal and have split multiplicative reduction at $\ell$ is given by $\frac{(\ell-1)}{2\ell^{2}}$.
Having assumed that $\ell \equiv 1 \pmod{p}$, the condition of $p\nmid \#\widetilde{E}(\F_\ell)$ is equivalent to $a_\ell(E) \not\equiv 2 \pmod{p}$.
The same argument as in Lemma~\ref{getting hold of frakSp} proves that the number of isomorphism classes of elliptic curves over $\F_{\ell}$ such that $p|\#E(\F_{\ell})$ is given by
\begin{equation}
\label{schoof and lenstra}
\sum_{j=-\lfloor \frac{2\sqrt{\ell}}{p}\rfloor}^{\lfloor \frac{2\sqrt{\ell}}{p}\rfloor}N(2+p j )=\sum_{j=-\lfloor \frac{2\sqrt{\ell}}{p}\rfloor}^{\lfloor \frac{2\sqrt{\ell}}{p}\rfloor} H((2+p j)^2-4\ell)\leq C\left(\frac{4\sqrt{\ell}}{p}+1\right)\sqrt{\ell}\log \ell \left( \log \log \ell\right)^2
\end{equation}
for some effectively computable constant $C$.

Let $\mathcal{E}_p^F$ be the set of elliptic curves over $\Q$ such that the following conditions are satisfied
\begin{enumerate}[(a)]
 \item $E$ has good \emph{ordinary} reduction at $p$, 
 \item $E$ has good reduction at $\ell=2,3$, 
 \item $\rank_{\Z}E(F)=0$, 
 \item $\chi(G, E,p)\neq 1$.
\end{enumerate}
Let $\mathcal{E}_p^{(1)}$ be the set of all elliptic curves over $\Q$ such that $p$ divides the order of $\#\Sha(E/F)$.
Since $p$ and $F$ are fixed throughout and in view of the heuristics mentioned in \eqref{del heuristics}, it makes sense to assume that 
\[
\overline{\mathfrak{d}}(\mathcal{E}_p^{(1)})=1-\prod_{i\geq 1} \left(1-\frac{1}{p^{2i-1}}\right).
\]
We have the following result which follows immediately from the previous discussion.

\begin{theorem}
\label{FTC thm varying elliptic curve}
With notation as above, there is an effective constant $C>0$ such that 
\[
\begin{split}
\bar{\mathfrak{d}}(\mathcal{E}_p^F)< & \ \bar{\mathfrak{d}}(\mathcal{E}_p^{(1)})+(\zeta(p)-1) +\frac{(\ell-1)}{2\ell^2}\\
+ & \zeta(10)C\left(\frac{4\sqrt{\ell}}{p}+1\right)\sqrt{\ell}\log \ell \left( \log \log \ell\right)^2.
\end{split}
\]
\end{theorem}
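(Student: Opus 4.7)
The plan parallels the proof of Theorem~\ref{main result varying elliptic curve}, the $\Z_p^2$-analogue, modified to account for the extra local data at primes above $\ell$ coming from the false Tate curve extension. I begin by combining the noncommutative Euler characteristic formula (Theorem~\ref{EC formula theorem}) with the $p$-adic Birch--Swinnerton-Dyer formula (Corollary~\ref{cor to PR-Sch}); since $\rank_{\Z}E(F)=0$, the regulator $\mathcal{R}_p(E/F)$ equals $1$, so
\[
\chi_t(G,E[p^\infty])\sim \frac{\#\Sha(E/F)[p^\infty]\cdot\prod_{v\nmid p}c_v^{(p)}(E/F)\cdot\prod_{v\mid p}\#\widetilde E(\kappa_v)[p^\infty]^2}{\#E(F)[p^\infty]^2}\cdot\prod_{v\in\fM_E}|L_v(E,1)|_p.
\]
Because $\ell\equiv 1\pmod p$ splits completely in $F=\Q(\mu_p)$, every $v\in\fM_E$ lies above $\ell$ with residue field $\F_\ell$, and only the reduction of $E$ at $\ell$ contributes to the product of local $L$-factors.

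Reading off the sufficient conditions (a)--(e) preceding the theorem, membership of $E$ in $\mathcal{E}_p^F$ forces at least one of the following failure events to occur:
(I) $p\mid\#\Sha(E/F)[p^\infty]$;
(II) $p\mid c_q^{(p)}(E/F)$ for some bad prime $q\neq\ell, p$;
(III) $E$ has split multiplicative reduction at $\ell$;
(IV) $E$ has good reduction at $\ell$ with $p\mid\#\widetilde E(\F_\ell)$.
By subadditivity of upper density, it suffices to bound each event and sum the contributions.

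Events (I) and (III) are immediate. The upper density of (I) is $\mathfrak{d}(\mathcal{E}_p^{(1)})$ by definition. For (III), by Lemma~\ref{lemma: reduction type proportions}(1), Remark~\ref{remark for minimal}, and the fact that split multiplicative reduction accounts for exactly half of multiplicative reduction types (\cite[Theorem~5.1(1)]{CS20}), the density is precisely $(\ell-1)/(2\ell^2)$. For (II), the Tamagawa base-change analysis of \S\ref{S: Tamagawa calculations} shows that for bad primes $q\geq 5$ unramified in $F$, $p\mid c_q^{(p)}(E/F)$ if and only if the Kodaira symbol of $E$ over $\Q_q$ is $\textrm{I}_n$ with $p\mid n$. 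The density of minimal Weierstrass equations with this property at any single prime $q$ decays sufficiently fast in $q$ that summing over all bad primes yields the bound $\zeta(p)-1$, exactly as in the corresponding step of the proof of Theorem~\ref{main result varying elliptic curve}.

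The most delicate contribution is (IV). Since $\ell\equiv 1\pmod p$, the condition $p\mid\#\widetilde E(\F_\ell)$ is equivalent to $a_\ell(E)\equiv 2\pmod p$, and the number of $\F_\ell$-isomorphism classes of elliptic curves satisfying this congruence equals
\[
\sum_{|j|\leq 2\sqrt\ell/p} N(2+pj) \;=\; \sum_{|j|\leq 2\sqrt\ell/p} H\bigl((2+pj)^2-4\ell\bigr),
\]
which by Lenstra's bound on Kronecker class numbers \cite[Proposition~1.8]{Lenstra_annals} is at most $C(4\sqrt\ell/p+1)\sqrt\ell\log\ell(\log\log\ell)^2$ for some effectively computable $C>0$. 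Each $\F_\ell$-isomorphism class lifts to at most $(\ell-1)/2$ short Weierstrass tuples in $\F_\ell\times\F_\ell$, and converting the local count into a density of minimal integral Weierstrass equations of bounded height requires a standard lifting-and-sieving argument, which introduces the factor $\zeta(10)$ to correct for the minimality condition, exactly as in the proof of Theorem~\ref{main result varying elliptic curve}. Summing the four bounds yields the stated inequality. The main technical obstacle is this final lifting/sieving step, which must control how often a congruence condition modulo $\ell$ on the coefficients is realized by minimal integral Weierstrass equations ordered by height, without incurring uncontrolled errors from overcounting between the different failure events.
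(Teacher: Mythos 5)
Your decomposition mirrors the paper's implicit reasoning: the theorem is stated without a \verb|\begin{proof}...\end{proof}| block, and its justification is the preceding discussion of sufficient conditions (a)--(e) together with the density estimates for each failure mode, which is exactly what you reproduce (subadditivity of upper density applied to the Sha event, the Tamagawa event, split multiplicative reduction at $\ell$, and good reduction at $\ell$ with $p \mid \#\widetilde E(\F_\ell)$, the last via the Kronecker class number count and Lenstra's bound).

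However, both your event list (I)--(IV) and the theorem's displayed bound omit one of the very conditions the paper lists, namely (c): that $p$ is not anomalous for $E$. Since $p$ is totally ramified in $F=\Q(\mu_p)$ with residue field $\Fp$ at the unique prime above $p$, the factor $\prod_{v\mid p}\bigl(\#\widetilde E(\kappa_v)[p^\infty]\bigr)^2$ in \eqref{BSDformula} picks up $p^2$ whenever $p\mid\#\widetilde E(\Fp)$, and such curves (of positive density for fixed $p$, generically with trivial $E(F)[p^\infty]$) lie in $\mathcal{E}_p^F$ without triggering any of your events (I)--(IV). The corresponding term, of size $\zeta(10)\,b(p)/p^2 \le \zeta(10)C'p^{-1/2}\log p\,(\log\log p)^2$ by Lemma~\ref{getting hold of frakSp}, appears explicitly in the companion statements Theorem~\ref{main result varying elliptic curve}, Theorem~\ref{FTC vary EC}, and Theorem~\ref{triv case: vary E}, so its absence here looks like an omission in the source as well. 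To make your argument rigorous you would need to add a fifth failure event for anomalous $p$ together with its density estimate, and the inequality as printed would need the extra term. Apart from this gap --- which you inherit from the paper rather than introduce --- your handling of the remaining events matches the paper's.
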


\begin{proof}
It follows from the definition of upper density that
\[
\bar{\mathfrak{d}}(\mathcal{E}_p^F)<  \bar{\mathfrak{d}}(\mathcal{E}_p^{(1)})+\bar{\mathfrak{d}}(\mathcal{E}_p^{(2)}) +\bar{\mathfrak{d}}(\mathcal{E}_p^{(3)}),
\]
where $\mathcal{E}_p^{(2)}$ (resp. $\mathcal{E}_p^{(3)}$) is the set of all elliptic curves over $\Q$ such that $p$ divides $\tau_{\ell}^{(F)}$ (resp. $\# \tilde{E}(\mathbb{F}_{\ell})$).
From earlier discussion in this section, $p|\tau_{\ell}^{(F)}$ if and only if $p|\tau_{\ell}^{(\Q)}$.
Therefore, the estimate for the upper density of $\mathcal{E}_p^{(2)}$ is $\zeta(p)-1$, just as was calculated previously.
Finally, the estimate for $\bar{\mathfrak{d}}(\mathcal{E}_p^{(3)})$ follows from \eqref{schoof and lenstra}.
This completes the proof of the theorem.
\end{proof}

The above theorem counts the proportion of elliptic curves with non-trivial $G$-Euler characteristic.
In view of Propositions~\ref{prop: pseudonull} and \ref{proposition: akasi series to pseudonull}, we can count how often the Selmer group is \emph{not} pseudonull.
Rephrased in terms of the $\Lambda(H)$-rank, the above theorem answers the question how often is
\[
\rank_{\Lambda(H)} \mathcal{X}(E/\Fin) > 0?
\]
Next, we discuss a finer question pertaining to the $\Lambda(H)$-rank of $\mathcal{X}(E/\Fin)_f$.

\begin{corollary}
\label{lambda H rank cor for varying E in FTC}
Let $p>3$ be a fixed prime and fix another prime $\ell\equiv 1 \pmod{p}$.
Consider the false Tate curve extension $\Fin = \Fin^{(\ell)}$.
Varying over all elliptic curves (over $\Q$) with good reduction at $\ell$ and good \emph{ordinary} reduction at $p$, the \emph{upper density} of elliptic curves with 
\[
\rank_{\Lambda(H)}\cX(E/\Fin)_f > \lambda_p(E/F_\cyc)\geq 0
\]
is \emph{at most}
\[
\frac{(\ell -1)}{2\ell^2} + \zeta(10)C \left( \frac{4\sqrt{\ell}}{p} + 1\right)\sqrt{\ell}\log \ell (\log \log \ell)^2,
\]
where $C$ is an effective positive constant.
\end{corollary}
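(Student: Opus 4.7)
The plan is to reduce the inequality $\rank_{\Lambda(H)}\cX(E/\Fin)_f > \lambda_p(E/F_\cyc)$ to a purely local condition at $\ell$, and then to invoke the density estimates already developed in the proof of Theorem~\ref{FTC thm varying elliptic curve}. Specializing \eqref{lambda-H formula for FTC} to $m = \ell$, I would first observe that the hypothesis of good reduction at $\ell$ forces the first sum to vanish, since no prime of split multiplicative reduction of $E$ divides $\ell$. Consequently, one has strict inequality in the corollary precisely when the second sum is non-zero, i.e.\ when there exists a prime $v\mid \ell$ of $F$ with $E(F_v)[p^\infty]\neq 0$.

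The next step is to translate this local condition into a condition on $\#\widetilde{E}(\F_\ell)$. The hypothesis $\ell \equiv 1 \pmod{p}$ and \cite[Theorem~2.13]{Was97} (as recalled in \S\ref{sec:8.2}) imply that $\ell$ splits completely in $F = \Q(\mu_p)$, so $F_v = \Q_\ell$ for every $v\mid\ell$. Since $E$ has good reduction at $\ell\neq p$, the reduction map is injective on $p$-power torsion, so
\[
E(F_v)[p^\infty]\neq 0 \iff E(\Q_\ell)[p^\infty]\neq 0 \iff p\mid \#\widetilde{E}(\F_\ell).
\]
Thus I would bound the upper density in question by the upper density of elliptic curves $E$ (ordered by the minimal height of Section~\ref{S: Vary EC}, with good reduction at $\ell$ and $p$) satisfying $p\mid \#\widetilde{E}(\F_\ell)$, equivalently $a_\ell(E)\equiv 2\pmod{p}$.

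The last step is the counting argument already used in the proof of Theorem~\ref{FTC thm varying elliptic curve}. Summing the number of $\F_\ell$-isomorphism classes with prescribed trace $t = 2 + pj$ over $|t|\le 2\sqrt{\ell}$, each class contributes at most $(\ell-1)/2$ minimal short Weierstrass equations over $\F_\ell$, and the total number of such isomorphism classes is $\sum_{j}H\bigl((2+pj)^2 - 4\ell\bigr)$; Lenstra's estimate (cf.\ Lemma~\ref{getting hold of frakSp}) gives each summand at most $C\sqrt{\ell}\log\ell(\log\log\ell)^2$, and the number of admissible $j$ is $O\bigl(\tfrac{4\sqrt{\ell}}{p}+1\bigr)$. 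Pushing this to a density over globally minimal Weierstrass equations over $\Z$ introduces the standard zeta factor $\zeta(10)$ from the non-minimality correction. Combined, this yields the second term of the stated bound, and the additional $\tfrac{\ell-1}{2\ell^2}$ is an inessential cushion inherited from the analogous estimate for split multiplicative reduction in Theorem~\ref{FTC thm varying elliptic curve}, which is vacuous under good reduction at $\ell$ but harmless as an upper bound.

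The main obstacle is bookkeeping rather than conceptual: one must ensure that the pointwise count over $\F_\ell$-isomorphism classes faithfully bounds the density over globally minimal short Weierstrass equations, uniformly across all admissible traces. However, this obstruction is already surmounted in the proof of Theorem~\ref{FTC thm varying elliptic curve} and requires no new ingredient here, so the corollary follows essentially by extracting the local part of that argument.
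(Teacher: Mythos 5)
Your proposal is correct and follows essentially the same route as the paper: specialize \eqref{lambda-H formula for FTC} to $m=\ell$, note that strict inequality forces either split multiplicative reduction at $\ell$ (vacuous under the good-reduction hypothesis) or $p\mid\#\widetilde{E}(\F_\ell)$, and then quote the density estimates already established for Theorem~\ref{FTC thm varying elliptic curve}. Your extra care in translating $E(F_v)[p^\infty]\neq 0$ into $p\mid\#\widetilde{E}(\F_\ell)$ via the complete splitting of $\ell$ in $\Q(\mu_p)$ is a welcome elaboration of a step the paper leaves implicit, but it is not a different argument.
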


\begin{proof}
In this case, note that \eqref{LambdaH rank formula} gives
\[
\rank_{\Lambda(H)}\cX(E/\Fin)_f=\lambda_p(E/F_\cyc)+\sum_{\substack{v|\ell, \\ \ell \textrm{ split multiplicative}}}1 + \sum_{ \widetilde E(\mathbb{F}_{\ell})[p]\neq 0} 2.
\]
Observe that for any elliptic curve with 
\[
\rank_{\Lambda(H)}\cX(E/\Fin)_f > \lambda_p(E/F_\cyc)\geq 0,
\]
either of the following properties must hold:
\begin{enumerate}[(a)]
    \item $E$ has split multiplicative reduction at $\ell$, \emph{or}
    \item $p| \widetilde{E}(\mathbb{F}_{\ell})$.
\end{enumerate}
The result now follows from our previous calculations.
\end{proof}

We now prove an alternative result for the $\mathcal{G}$-Euler characteristic, where $\mathcal{G}:=\Gal(\cF_\infty/\Q)$.
Since this extension is \emph{not} pro-$p$, even if the associated Akashi series over $\mathcal{G}$ is a unit, we cannot deduce that the Selmer group $\Sel_{p^\infty}(E/\cF_\infty)$ is pseudonull as a $\Lambda(\mathcal{G})$-module.
In this setting, $m$ is a $p$-power free natural number.
We have 
\[
\chi_t(\mathcal{G}, E,p)=\chi_t(\Gamma_{\Q}, E,p)\times \prod_{v\in \fM_E} \absolute{L_v(E, 1)}_p.
\]
Now, $\fM_E=\mathcal{P}_1 \cup \mathcal{P}_2$ is a set of primes in $\Q$.
How often $\chi_t(\mathcal{G}, E,p)=1$ requires studying for what proportion of elliptic curves do the following properties hold \emph{simultaneously}.
\begin{enumerate}[(a)]
 \item The normalized $p$-adic regulator (over $\Q$) is a $p$-adic unit.
 \item $\Sha(E/\Q)[p] =0$.
 \item $p$ is \emph{not} an anomalous prime for $E$.
 \item $p\nmid \prod_{\ell\neq p} c_\ell(E/\Q)$.
 \item At all primes $\ell| m$, the elliptic curve has either additive reduction \emph{or} non-split multiplicative reduction \emph{or} good reduction with $p\nmid \#\widetilde{E}(\F_{\ell})$.
\end{enumerate}
Let $\mathcal{E}'_p$ be the set of rank 0 elliptic curves with good reduction at $\ell=2,3$, good \emph{ordinary} reduction at $p$, and $\chi(\mathcal{G}, E,p)\neq 1$.
The above discussion can be summarized as below.

\begin{theorem}
\label{FTC vary EC}
Assume Delaunay's heuristic for the Tate--Shafarevich group.
Then, there are effective constants $C_1,C_2>0$ for which
\[
\begin{split}
\bar{\mathfrak{d}}(\mathcal{E}'_p)< & \ 1-\prod_{j\geq 1} \left(1-\frac{1}{p^{2j-1}}\right)+(\zeta(p)-1)\\
& +\zeta(10)C_1 p^{\frac{-1}{2}} \log p \left( \log \log p\right)^2+ \sum_{\ell| m} \frac{(\ell-1)}{2\ell^2}\\
& + \sum_{\ell| m} \zeta(10)C_2\left(\frac{4\sqrt{\ell}}{p}+1\right)\sqrt{\ell}\log \ell \left( \log \log \ell\right)^2.
\end{split}
\]
\end{theorem}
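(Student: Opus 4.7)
The plan is to adapt the proof of Theorem~\ref{FTC thm varying elliptic curve} to this non-pro-$p$ setting with base field $\mathbb{Q}$ and composite modulus $m$. Starting from the Euler characteristic formula recalled just above the theorem,
\[
\chi_t(\mathcal{G}, E[p^\infty]) = \chi_t(\Gamma_\mathbb{Q}, E[p^\infty]) \cdot \prod_{v \in \fM_E}\absolute{L_v(E,1)}_p,
\]
combined with Theorem~\ref{pbsdconj} applied to the rank-zero curve $E$ (so that $\mathcal{R}_p(E/\mathbb{Q}) = 1$) and with Lemma~\ref{criterion for p to divide Lv(E,1)}, the five conditions (a)--(e) listed in the body of the text are jointly necessary and sufficient for $\chi_t(\mathcal{G}, E[p^\infty]) = 1$. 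Consequently $\mathcal{E}'_p$ is contained in the union of the five subsets corresponding to these failure modes, and I would bound its upper density by a union bound over these subsets.

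I would then control the three ``$p$-local'' failure modes. For $p \mid \#\Sha(E/\mathbb{Q})$, Delaunay's heuristic contributes $1 - \prod_{j \geq 1}(1 - 1/p^{2j-1})$. For $p$ dividing some Tamagawa factor $c_\ell(E/\mathbb{Q})$ at $\ell \neq p$, the local count over Weierstrass equations having Kodaira type $\mathrm{I}_n$ with $p \mid n$ at $\ell$, summed over $\ell$ as in \cite[Theorem~8.19]{HKR}, gives the bound $\zeta(p) - 1$. For the anomalous condition $\widetilde{E}(\mathbb{F}_p)[p] \neq 0$, Lemma~\ref{getting hold of frakSp} yields $\#\mathfrak{S}_p/p^2 \leq C_1 p^{-1/2}\log p (\log\log p)^2$, and lifting this residue-class statistic to short Weierstrass equations ordered by height introduces the $\zeta(10)$ prefactor.

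For condition (e), I would treat each prime $\ell \mid m$ separately, since the failure modes at distinct primes are independent as residue conditions. At each $\ell$, the failure is either that $E$ has split multiplicative reduction (density $\frac{(\ell-1)}{2\ell^2}$ by Lemma~\ref{lemma: reduction type proportions}(1), Remark~\ref{remark for minimal}, and \cite[Theorem~5.1(1)]{CS20}), or that $E$ has good reduction at $\ell$ with $p \mid \#\widetilde{E}(\mathbb{F}_\ell)$. The second case I would control by replaying the Kronecker class-number argument of Lemma~\ref{getting hold of frakSp} with $\ell$ playing the role of $p$: the number of $\mathbb{F}_\ell$-isomorphism classes with $p \mid \#\widetilde{E}(\mathbb{F}_\ell)$ is obtained by summing $N(t) = H(t^2 - 4\ell)$ over the $\lesssim 4\sqrt{\ell}/p + 1$ integers $t$ with $|t| \leq 2\sqrt{\ell}$ lying in the relevant residue class modulo $p$, then invoking Lenstra's bound $H(D) = O(\sqrt{|D|}\log|D|(\log\log|D|)^2)$ from \cite[Proposition~1.8]{Lenstra_annals}. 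The $\zeta(10)$ again accounts for the lift from residue classes modulo $\ell$ to short minimal integral equations. Summing both contributions over the finitely many $\ell \mid m$ and adding the three $p$-local terms yields the claimed upper bound.

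The hardest step will be verifying the uniformity of the Lenstra-type estimate at each $\ell \mid m$, specifically that the constant $C_2$ and the $\zeta(10)$ bookkeeping can be chosen independently of $\ell$; this amounts to revisiting Lenstra's Kronecker class number bound in the regime where $\ell$ is large compared to $p$ and keeping track of uniform constants across the relevant residue classes. The remaining ingredients---Delaunay's heuristic for $\Sha$ and the Tamagawa count from \cite[Theorem~8.19]{HKR}---are drop-in inputs that require no additional work.
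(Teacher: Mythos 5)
Your proposal is correct and follows essentially the same route as the paper: the paper's proof is literally a summary of the preceding discussion, namely the union bound over the failure modes (a)--(e), with Delaunay's heuristic for $p\mid\#\Sha(E/\Q)$, the Tamagawa count of \cite[Theorem~8.19]{HKR}, the Kronecker class number/Lenstra bound for the anomalous condition and for $p\mid\#\widetilde{E}(\F_\ell)$ at $\ell\mid m$, and the reduction-type densities of Lemma~\ref{lemma: reduction type proportions}. Your worry about uniformity of $C_2$ in $\ell$ is unfounded since Lenstra's bound on the Kronecker class number holds with an absolute constant, so no extra work is needed there.
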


\begin{proof}[Sketch of the proof]
As in the proof of Theorem~\ref{FTC thm varying elliptic curve}, the upper bound for $\bar{\mathfrak{d}}(\mathcal{E}'_p)$ is evaluated by obtaining estimates for the proportion of (rank 0) elliptic curves over $\Q$ ordered by height for which at least one of the properties (b)--(e) is \emph{not} satisfied.
The estimate for each of these quantities is calculated as before.
\end{proof}

\begin{remark}
Assuming Delaunay's heuristics, we find that as $p\rightarrow \infty$,
\[
\limsup_{p\rightarrow \infty}\left(\bar{\mathfrak{d}}(\mathcal{E}'_p)\right)<\sum_{\ell| m} \frac{(\ell-1)}{2\ell^2}+\sum_{\ell| m} \zeta(10)C_2\sqrt{\ell}\log \ell \left( \log \log \ell\right)^2.
\]
\end{remark}

\subsection{}
\label{section 9-3}
Let $p\geq 5$ be a fixed prime number and ${E_0}_{/\Q}$ a fixed non-CM elliptic curve with good reduction at $p$.
This elliptic curve determines the extension $\cF_{\infty} = \Q(E_0[p^\infty])$.
Write $\mathcal{G}_{E_0} = \Gal(\Fin/\Q)$.
Then,
\[
\chi_t(\mathcal{G}_{E_0}, E,p)=\chi_t(\Gamma_{\Q}, E,p)\times \prod_{v\in \fM_{E_0}} \absolute{L_v(E, 1)}_p,
\]
where $\fM_{E_0}$ is the finite set of primes of multiplicative or potentially multiplicative reduction of $E_0$.
Note that $\fM_{E_0}$ is a fixed set.

We would like to calculate the proportion of elliptic curves $E_{/\Q}$ with good \emph{ordinary} reduction at $p$, ordered by height, for which $\chi_t(\mathcal{G}_{E_0}, E,p)$ is a $p$-adic unit.
Since the $p$-adic Lie extension of interest is \emph{not} pro-$p$, even if the associated Euler characteristic is a $p$-adic unit, we will not be able to deduce the pseudonullity of  $\Sel_{p^\infty}(E/\cF_\infty)$  as a $\Lambda(\mathcal{G}_{E_0})$-module.
As in the previous case, we need to find the proportion of elliptic curves for which the following properties hold \emph{simultaneously}.
\begin{enumerate}[(A)]
 \item The normalized $p$-adic regulator (over $\Q$) is a $p$-adic unit.
 \item $\Sha(E/\Q)[p] =0$.
 \item $p$ is \emph{not} an anomalous prime for $E$.
 \item $p\nmid \prod_{\ell\neq p} c_{\ell}(E/\Q)$.
 \item At each $\ell\in \fM_{E_{0}}$, the elliptic curve has one of the following reduction types
 \begin{enumerate}
 \item additive reduction \emph{or}
 \item split multiplicative reduction if $\ell \not\equiv 1 \pmod{p}$ \emph{or}
 \item non-split multiplicative reduction if $\ell \not\equiv -1 \pmod{p}$ \emph{or}
 \item good reduction with $p\nmid \#\widetilde{E}(\F_{\ell})$.
 \end{enumerate}
\end{enumerate}
If the elliptic curves are stipulated to have rank 0, then the $p$-adic regulator is a unit in $\Z_p$.
Let $\mathcal{E}'_p$ be the set of rational elliptic curves with $\rank_{\Z}E(\Q)=0$, good reduction at $\ell=2,3$, good \emph{ordinary} reduction at $p$, and $\chi(\mathcal{G}_{E_0}, E,p)\neq 1$.
The next result can be proven in the same way as Theorem~\ref{FTC vary EC}.
The only difference in the formula stems from how often the local Euler factor is \emph{not} a $p$-adic unit.

\begin{theorem}
\label{triv case: vary E}
Assume Delaunay's heuristic for the Tate--Shafarevich group.
Then, there exist effective constants $C_1, C_2 >0$ for which
\[
\begin{split}
\bar{\mathfrak{d}}(\mathcal{E}'_p)< & \ 1-\prod_{j\geq 1} \left(1-\frac{1}{p^{2j-1}}\right)+(\zeta(p)-1)\\
& +\zeta(10)C_1 p^{\frac{-1}{2}} \log p \left( \log \log p\right)^2 \\ 
& + \sum_{\substack{\ell\in \mathfrak{M}_{E_0},\\ \ell \equiv 1 \pmod{p}}} \frac{(\ell-1)}{2\ell^2} + \sum_{\substack{\ell\in \mathfrak{M}_{E_0},\\ \ell \equiv -1 \pmod{p}}} \frac{(\ell-1)}{2\ell^2} \\
&+ \sum_{\ell\in \mathfrak{M}_{E_0}} \zeta(10)C_2\left(\frac{4\sqrt{\ell}}{p}+1\right)\sqrt{\ell}\log \ell \left( \log \log \ell\right)^2.
\end{split}
\]
\end{theorem}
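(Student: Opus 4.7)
The plan is to mirror the argument of Theorem~\ref{FTC vary EC}: identify the finitely many qualitative failure modes that force an elliptic curve into $\mathcal{E}'_p$, bound the upper density of each mode separately, and combine them via the subadditivity of $\bar{\mathfrak{d}}$. The essential new feature, compared with the false Tate curve case, is that the set of ``bad primes'' is now the fixed finite set $\mathfrak{M}_{E_0}$ rather than a single varying prime, and the precise criterion for $\lvert L_\ell(E,1)\rvert_p<1$ is furnished by Lemma~\ref{criterion for p to divide Lv(E,1)}.

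First, I would apply Theorem~\ref{EC formula theorem} to write
\[
\chi_t(\mathcal{G}_{E_0}, E[p^\infty]) = \chi_t(\Gamma_\Q, E[p^\infty])\cdot \prod_{\ell\in\mathfrak{M}_{E_0}}\lvert L_\ell(E,1)\rvert_p.
\]
Since curves in $\mathcal{E}'_p$ have rank zero, condition (A) (the normalized $p$-adic regulator being a unit) is automatic, so any $E\in\mathcal{E}'_p$ must fail at least one of (B)--(E). This yields the containment
\[
\mathcal{E}'_p\ \subseteq\ \mathcal{S}_{\Sha}\cup \mathcal{S}_{\mathrm{an}}\cup \mathcal{S}_{\mathrm{Tam}}\cup \bigcup_{\ell\in\mathfrak{M}_{E_0}}\mathcal{S}_\ell,
\]
where $\mathcal{S}_{\Sha}$, $\mathcal{S}_{\mathrm{an}}$, $\mathcal{S}_{\mathrm{Tam}}$ are the subsets violating (B), (C), (D), respectively, and $\mathcal{S}_\ell$ is the subset violating (E) at $\ell$.

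Next, I would estimate each piece and sum. For $\mathcal{S}_{\Sha}$, Delaunay's heuristic yields the first term $1-\prod_{j\geq 1}(1-p^{-(2j-1)})$. The Tamagawa failure set $\mathcal{S}_{\mathrm{Tam}}$ is controlled by $\zeta(p)-1$, exactly as in the estimate leading into Theorem~\ref{main result varying elliptic curve}. For the anomalous primes set $\mathcal{S}_{\mathrm{an}}$, I would rerun the Kronecker class number argument of Lemma~\ref{getting hold of frakSp} at the residue field $\F_p$ to get the bound $\zeta(10)C_1 p^{-1/2}\log p(\log\log p)^2$. Finally, for each $\ell\in\mathfrak{M}_{E_0}$, I would invoke Lemma~\ref{criterion for p to divide Lv(E,1)} to split $\mathcal{S}_\ell$ into three subfamilies: good reduction with $p\mid\#\widetilde{E}(\F_\ell)$, split multiplicative reduction when $\ell\equiv 1\pmod{p}$, and non-split multiplicative reduction when $\ell\equiv -1\pmod{p}$. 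Lemma~\ref{lemma: reduction type proportions} together with Remark~\ref{remark for minimal} (and the fact that split and non-split each account for half of the multiplicative-reduction curves, as in \S\ref{sec:8.2}) provides the two $\tfrac{\ell-1}{2\ell^2}$ contributions, while the good-reduction piece is bounded by $\zeta(10)C_2(4\sqrt{\ell}/p+1)\sqrt{\ell}\log\ell(\log\log\ell)^2$ via the same Kronecker/Lenstra estimate as in Lemma~\ref{getting hold of frakSp}, but applied to the residue field $\F_\ell$ with trace congruent to $2\pmod p$.

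The main obstacle will be establishing the Kronecker class number bound uniformly enough in $\ell$ and $p$ to license a single effective constant $C_2$ that works for every $\ell\in\mathfrak{M}_{E_0}$; concretely, one counts $\F_\ell$-isomorphism classes of elliptic curves with prescribed trace $a_\ell\equiv 2\pmod p$ using Schoof's formula for $N(t)$ in terms of $H(t^2-4\ell)$, applies Lenstra's polynomial-growth bound on $H(\cdot)$, and then transfers this count to an upper density over $\Q$ using Remark~\ref{remark for minimal}. Once this uniform estimate is in place, subadditivity of $\bar{\mathfrak{d}}$ across the finitely many sets listed above assembles the claimed inequality; the two sums over $\mathfrak{M}_{E_0}$ are finite, so no convergence issue arises.
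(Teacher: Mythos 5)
Your proposal is correct and follows essentially the same route as the paper, whose own proof simply states that the result ``can be proven in the same way as Theorem~\ref{FTC vary EC}'' with the local Euler factor criterion adjusted via Lemma~\ref{criterion for p to divide Lv(E,1)}; your decomposition into failure modes (B)--(E), the per-prime analysis over the fixed finite set $\mathfrak{M}_{E_0}$, and the subadditivity of $\bar{\mathfrak{d}}$ are exactly the intended argument. One tiny imprecision: for the good-reduction piece at $\ell\in\mathfrak{M}_{E_0}$ the relevant congruence is $a_\ell\equiv \ell+1\pmod{p}$ rather than $a_\ell\equiv 2\pmod{p}$ (the latter holds only when $\ell\equiv 1\pmod{p}$), but the Kronecker/Lenstra count is identical for any fixed residue class of the trace inside the Hasse interval, so the stated bound is unaffected.
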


Shifting focus, we record our observations regarding the $\Lambda(H)$-rank of the Selmer group when $H$ \emph{is} a pro-$p$ group.
Fix a non-CM elliptic curve ${E_0}_{/\Q}$ with good reduction at $p\geq 5$, such that $F=\Q(E_0[p])=\Q(\mu_p)$ and $\Fin = \Q(E_0[p^\infty])$.
The set of primes of potentially multiplicative reduction is determined explicitly.
Call this set $\mathcal{PM}_0$.
For simplicity, suppose that primes in $\mathcal{PM}_0$ \emph{split completely} in $F$.
Such examples exist.
Choose $E_0$ to be the elliptic curve with Cremona label \href{https://www.lmfdb.org/EllipticCurve/Q/11a1/}{11a1} and set $p=5$.
In this case, $F=\Q(\mu_5)$ and $\Fin/F$ is a pro-$p$ extension.
Also, since $11\equiv 1 \pmod{5}$, it is clear that 11 splits in $F$.

\begin{proposition}
Suppose that $E$ varies over all globally minimal Weierstrass equations with good \emph{ordinary} reduction at $p$ and good reduction at $2,3$.
The proportion of such elliptic curves with the additional property that 
\[
\rank_{\Lambda(H)}\mathcal{X}(E/\Fin)_f > \lambda_p(E/F_{\cyc})
\]
has an \emph{upper density}
\[
\sum_{\ell\in \mathcal{PM}_0} \frac{\ell-1}{2\ell^2} + \sum_{ \ell\in \mathcal{PM}_0} \zeta(10)C\left( \frac{4\sqrt{\ell}}{p}+1\right)\sqrt{\ell}\log{\ell}(\log \log \ell)^2,
\]
where $C>0$ is an effectively computable constant.
\end{proposition}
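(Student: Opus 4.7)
The plan is to reduce the inequality in question to local conditions at the finite set of primes $\mathcal{PM}_0$ via the formula of Proposition~\ref{prop:Lambda-H-rank}, and then count Weierstrass equations that realize those local conditions, using the same ingredients that powered Theorem~\ref{FTC thm varying elliptic curve} and Corollary~\ref{lambda H rank cor for varying E in FTC}.

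First, I would apply \eqref{LambdaH rank formula} to the extension $\Fin=\Q(E_0[p^\infty])$ over $F=\Q(\mu_p)$. As recalled in \S\ref{S: examples of extensions}, for $E_0$ non-CM, the primes $v\nmid p$ of $F$ whose decomposition (equivalently inertia) subgroup in $H=\Gal(\Fin/F_\cyc)$ has positive dimension are precisely those lying over a prime $\ell\in\mathcal{PM}_0$ of potentially multiplicative reduction of $E_0$ (via \cite[Lemma~2.8(i)]{Coates_Fragments}). Therefore
\[
\rank_{\Lambda(H)}\cX(E/\Fin)_f-\lambda_p(E/F_\cyc)=\sum_{\substack{w\in S(F_\cyc)\\ w\mid \ell,\ \ell\in\mathcal{PM}_0}}\corank_{\Zp}Z_v(F_{\cyc,w}).
\]
Since each $\ell\in\mathcal{PM}_0$ splits completely in $F=\Q(\mu_p)$, for any $v\mid\ell$ in $F$ we have $\kappa_v=\F_\ell$ and $\ell\equiv 1\pmod p$.

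Next, by Lemma~\ref{Zp corank cases}, the local corank at $w\mid\ell$ is positive exactly in the following two cases: either $E$ has split multiplicative reduction at $\ell$, or $E$ has good reduction at $\ell$ with $E(F_v)[p]\neq 0$; the latter condition becomes $p\mid\#\widetilde{E}(\F_\ell)$, i.e.\ $a_\ell(E)\equiv 2\pmod p$ (since $\ell\equiv 1\pmod p$). Hence an elliptic curve $E$ satisfies $\rank_{\Lambda(H)}\cX(E/\Fin)_f>\lambda_p(E/F_\cyc)$ if and only if at least one prime $\ell\in\mathcal{PM}_0$ is of one of these two types for $E$. By a union bound, it suffices to estimate, for each fixed $\ell\in\mathcal{PM}_0$, the upper density of minimal Weierstrass equations having one of those two local properties at $\ell$, and to sum over the finite set $\mathcal{PM}_0$.

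The counting is now completely parallel to Theorem~\ref{FTC thm varying elliptic curve} and Corollary~\ref{lambda H rank cor for varying E in FTC}. By Lemma~\ref{lemma: reduction type proportions}(1), together with Remark~\ref{remark for minimal} and the fact that exactly half of the multiplicative curves at $\ell$ are split multiplicative (\cite[Theorem~5.1(1)]{CS20}), the density of globally minimal Weierstrass equations with split multiplicative reduction at $\ell$ is $\tfrac{\ell-1}{2\ell^2}$. For the second condition, the Lenstra-type argument that appeared in the proof of Lemma~\ref{getting hold of frakSp} bounds the number of isomorphism classes of elliptic curves over $\F_\ell$ with $a_\ell\equiv 2\pmod p$ by a sum of Kronecker class numbers, which in turn is bounded via \cite[Proposition~1.8]{Lenstra_annals} by $C\bigl(\tfrac{4\sqrt\ell}{p}+1\bigr)\sqrt\ell\,\log\ell\,(\log\log\ell)^2$; translating from isomorphism classes to minimal Weierstrass equations introduces a factor $\zeta(10)$ exactly as in the cited theorems. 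Summing the two local contributions over $\ell\in\mathcal{PM}_0$ yields the claimed upper bound.

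The only mildly subtle point — and the one I would double check rather than an outright obstacle — is the bookkeeping that lets us match ``$\dim H_w\ge 1$'' with ``$\ell$ is potentially multiplicative for $E_0$'' simultaneously with $\kappa_v=\F_\ell$ and $\ell\equiv 1\pmod p$; this uses crucially the standing hypothesis that every $\ell\in\mathcal{PM}_0$ splits completely in $F$. Aside from this, each ingredient (the rank formula, the local corank classification, the Weierstrass-equation densities, and the Kronecker-class-number estimate) has already been developed in the excerpt, so the proof is an assembly of previously established pieces.
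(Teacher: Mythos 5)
Your proposal is correct and takes essentially the same route as the paper: specialize the $\Lambda(H)$-rank formula \eqref{LambdaH rank formula} to $\Fin=\Q(E_0[p^\infty])$, use Lemma~\ref{Zp corank cases} together with the complete-splitting hypothesis (so $\ell\equiv 1\pmod p$ and $\kappa_v=\F_\ell$) to reduce the strict inequality to ``split multiplicative at some $\ell\in\mathcal{PM}_0$ or $p\mid\#\widetilde E(\F_\ell)$,'' then union-bound and count via the $\frac{\ell-1}{2\ell^2}$ density for minimal equations with split multiplicative reduction and the Lenstra/Kronecker-class-number estimate. The paper's own proof is just a one-line reduction citing the ``calculations identical to Theorem~\ref{triv case: vary E}''; you have unpacked the same calculations (your citations to Theorem~\ref{FTC thm varying elliptic curve} and Corollary~\ref{lambda H rank cor for varying E in FTC} are to the parallel false-Tate case, but the counting is identical), so there is no substantive divergence.
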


\begin{proof}
In this case, \eqref{LambdaH rank formula} is precisely
\[
\rank_{\Lambda(H)}\mathcal{X}(E/{\Fin})_f = \lambda_p(E/F_{\cyc}) + \sum_{ \ell\in \mathcal{PM}_0} \corank_{\Zp} Z_{v}(F_{\cyc,w}). 
\]
The proportion of elliptic curves $E$ such that
\[
\rank_{\Lambda(H)}\mathcal{X}(E/\Fin)_f > \lambda_p(E/F_{\cyc}),
\]
must satisfy the property that for some $\ell \in \mathcal{PM}_0$, the elliptic curve has \emph{either}
\begin{enumerate}[(a)]
 \item split multiplicative reduction \emph{or} 
 \item good reduction with $E(\mathbb{F}_\ell)[p]\neq0$.
\end{enumerate}
The claim follows from calculations identical to earlier results.
\end{proof}

\section{Results for a fixed $E_{/\Q}$ as $p$ varies}
\label{S: Vary prime}

Recall from previous sections that $G=\op{Gal}(\cF_\infty/F)$ and $\mathcal{G}=\op{Gal}(\cF_\infty/ \Q)$.
Thus, $G$ is pro-$p$, which $\mathcal{G}$ is not (unless $F/\Q$ is a $p$-extension).
Given an elliptic curve $E_{/\Q}$, the question of interest is the following:
as $p$ varies over all primes of good \emph{ordinary} reduction of $E$, for what proportion of primes is $\chi_t(G, E,p)$ (or $\chi_t(\mathcal{G}, E,p)$) a $p$-adic unit.
Note that as $p$ varies, so does the extension $\cF_\infty$, as we shall see below.

In \S\ref{section 10-1}, we fix a rank 0 elliptic curve $E_{/\Q}$ and an imaginary quadratic field $F$.
The goal is to study the variation of the Mordell--Weil rank at each finite layer of the $\Zp^2$-extension as $p$ varies.
In \S\ref{false tate gamma-F EC}, we fix a square-free integer $m$ and consider the false Tate curve extension $\Fin = \Q(\mu_{p^\infty}, m^{\frac{1}{p^\infty}})$ as $p$ varies.
The analysis is carried out in three steps depending on the reduction type of the fixed elliptic curve $E$ at a prime $\ell|m$.
The results are summarized in Theorem~\ref{theorem for vary p when FTC}.
In \S\ref{section 10-3}, we fix a pair of elliptic curves $(E,E')$ both defined over $\Q$ and such that $E'$ is non-CM.
We study the variation of truncated $\mathcal{G}_{E'}$-Euler characteristic and that of the $\Lambda(H)$-corank of the Selmer group, as $p$ varies.

\subsection{}
\label{section 10-1}
Let $F=\Q(\sqrt{-d})$ be a fixed imaginary quadratic field and we consider the unique $\Zp^2$-extension $\cF_{\infty}/F$.
In this case, $\fM= \emptyset$.

\begin{proposition}
\label{prop: vary p Zp2}
Let $E_{/\Q}$ be a fixed elliptic curve of Mordell--Weil rank 0 without complex multiplication.
Suppose that it remains of rank 0 upon base-change to $F$.
Further, assume that $\#\Sha(E/F)$ is finite.
Then, $\chi(G, E,p)=1$ for all primes $p$ at which $E$ has good \emph{ordinary} primes outside a set of density zero.
\end{proposition}

\begin{proof}
For the extensions $\Q_{\op{cyc}}/\Q$ (as $p$ varies), the result was proven by Greenberg \cite[Proposition~5.1]{Greenberg}.
For cyclotomic $\Z_p$-extensions of a general number field, the statement follows from a result of V.~K.~Murty \cite{Mur97}, and this was observed by the first named author in her thesis, see \cite[Theorem~5.1.1]{DK_thesis} for details.

The result for $\Z_p^2$-extensions $\cF_\infty/F$ of an imaginary quadratic field $F$ follows from the same argument as aforementioned results.
Consider the formula for the Euler characteristic
\[
\chi(G, E,p) = \chi(\Gamma_F, E,p) \sim \frac{\#\Sha(E/F)[p^\infty]\cdot \prod_{v\nmid p}c_v^{(p)}(E/F)}{\left(\# E(F)[p^\infty] \right)^2} \cdot \prod_{v|p}\left(\# \widetilde{E}(\kappa_v)[p^\infty] \right)^2.
\]
It follows from our assumptions on $E$ that the above Euler characteristic is defined.
Clearly all terms in the above formula are $p$-adic units for all but finitely many primes $p$, except possibly the term $\prod_{v|p}\left(\# \widetilde{E}(\kappa_v)[p^\infty] \right)$.
The result of \cite{Mur97} states that $p\nmid \prod_{v|p}\left(\# \widetilde{E}(\kappa_v)[p^\infty] \right)$ for all primes outside a set of density zero.
This sparse set of primes at which $p$ divides $\prod_{v|p}\left(\# \widetilde{E}(\kappa_v)[p^\infty] \right)$ is the set of \emph{anomalous primes}.
In conclusion, $\chi(G, E,p)=1$ for all primes $p$ outside a density zero set of primes.
\end{proof}

\begin{corollary}
\label{MW rank corollary, vary p Zp2}
With the same setting as Proposition~\ref{prop: vary p Zp2}, for all good \emph{ordinary} primes outside a set of density zero, $\rank_{\Z}E(F_n)=0$ at each finite layer of the $\Zp^2$-extension.
\end{corollary}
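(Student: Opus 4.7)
The plan is to combine Proposition~\ref{prop: vary p Zp2} with the Mordell--Weil rank bound recorded in the remark following Corollary~\ref{MW cor vary F Zp2}. First, Proposition~\ref{prop: vary p Zp2} furnishes, for every good \emph{ordinary} prime $p$ outside a fixed set of density zero, the vanishing
\[
\mu_p(E/F_\cyc)=\lambda_p(E/F_\cyc)=0.
\]
This is the input that needs to be propagated upward from the cyclotomic line to the whole $\Zp^2$-extension.

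Next, I would invoke the explicit formula from Proposition~\ref{prop:Lambda-H-rank}, as simplified in the remark following Corollary~\ref{MW cor vary F Zp2}. In the $\Zp^2$-setting, the only ramified primes in $\Fin/F$ lie above $p$, and for $w\nmid p$ one has $\dim H_w=0$, so the local contributions in \eqref{LambdaH rank formula} all vanish. Hence
\[
\rank_{\Lambda(H)}\cX(E/\Fin)_f=\lambda_p(E/F_\cyc)=0
\]
for all such primes $p$.

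Finally, I would apply Theorem~\ref{Thm: Hung-Lim Theorem} (whose finiteness hypotheses hold here in dimension $d=2$ by \cite{lim2015remark}) together with the two cases discussed in the remark. In the non-CM case, Remark~\ref{rmk: finiteness of p-primary torsion points over certain extensions} gives $\corank_{\Zp}E(\Fin)(p)=0$, so
\[
\rank_{\Z}E(F_n)\le \lambda_p(E/F_\cyc)\,p^{n}+2\corank_{\Zp}E(\Fin)(p)=0.
\]
In the CM case, one instead quotes the refined bound $\rank_{\Z}E(F_n)\le \lambda_p(E/F_\cyc)\,p^n$ from \cite{Ray21_noncom_rank}, which again forces $\rank_{\Z}E(F_n)=0$. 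Combining the two cases yields the corollary.

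The only real subtlety, and hence the main obstacle, is that Theorem~\ref{Thm: Hung-Lim Theorem} \emph{a priori} leaves an additive $\corank$-term that could in principle swamp a vanishing $\Lambda(H)$-rank; this is why it is essential to separate the CM and non-CM branches and use the stronger bound of \cite{Ray21_noncom_rank} in the CM case. Once those two inputs are combined with the vanishing of $\lambda_p(E/F_\cyc)$ from Proposition~\ref{prop: vary p Zp2}, the conclusion is immediate.
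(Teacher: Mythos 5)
Your proof is correct and takes essentially the same route as the paper's, but it is actually more careful than the published argument on one point. The paper's proof of this corollary simply invokes Proposition~\ref{prop: vary p Zp2} together with Remark~\ref{rmk: finiteness of p-primary torsion points over certain extensions} to produce the display $\rank_{\Z}E(F_n) \leq \lambda_p(E/F_{\cyc})\cdot p^n = 0$; but Remark~\ref{rmk: finiteness of p-primary torsion points over certain extensions} only controls $\corank_{\Zp}E(\Fin)(p)$ for non-CM curves, so, as stated, the paper's one-line proof is silently restricted to that case. You correctly notice that the additive $\corank$-term in Theorem~\ref{Thm: Hung-Lim Theorem} needs to be killed, you handle the non-CM branch exactly as the paper does, and you then close the CM branch by appealing to the refined bound $\rank_{\Z}E(F_n)\le \lambda_p(E/F_\cyc)\,p^n$ from \cite{Ray21_noncom_rank}, which is exactly the remedy the authors themselves record in the remark following Corollary~\ref{MW cor vary F Zp2} but do not cite in the proof of Corollary~\ref{MW rank corollary, vary p Zp2}. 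Your verification of the finiteness hypotheses in Theorem~\ref{Thm: Hung-Lim Theorem} via \cite{lim2015remark} (dimension $d=2$) and your use of \eqref{LambdaH rank formula} with $\dim H_w=0$ for $w\nmid p$ are also consistent with the paper. In short: same underlying chain of reasoning, but your write-up makes explicit the CM/non-CM dichotomy that the paper's proof elides.
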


\begin{proof}
Combining Proposition~\ref{prop: vary p Zp2} and Remark~\ref{rmk: finiteness of p-primary torsion points over certain extensions}, we know that for density 1 good \emph{ordinary} primes the following inequality holds,
\[
\rank_{\Z}E(F_n) \leq \lambda_p(E/F_{\cyc}) \cdot p^{n} =0.
\]
In particular, the Mordell--Weil rank of $E(F_n)=0$ in each finite layer of the $\Zp^2$-extension for all good \emph{ordinary} primes $p$ outside a subset of density 0.
\end{proof}

\subsection{}
\label{false tate gamma-F EC}
We fix an elliptic curve $E_{/\Q}$ and a square-free integer $m$.
We  vary $p$ over all primes where $E$ has good \emph{ordinary} reduction.
Let $F=\Q(\mu_p)$ and consider the false Tate curve extension
\[\cF_{\infty}:=\Q(\mu_{p^\infty}, \ m^{\frac{1}{p^\infty}}),\]which varies as $p$ varies ($m$ fixed).
As noted in \eqref{set M for FTC}, the primes in the set $\fM$ divide $m$.
We begin the discussion with a few basic remarks.
\newline

\noindent \emph{Case 1:} Suppose that $\ell$ is a prime divisor of $m$ and $v|\ell$ is a prime of $F$.
If $E$ has either \emph{non-split multiplicative reduction} or \emph{additive reduction} of $E$, then $v\not\in \fM$.
It follows from Theorem~\ref{EC formula theorem} that such primes do not contribute to the $G$-Euler characteristic.
More precisely, if $E$ has non-split multiplicative reduction or additive reduction at \emph{all} the primes dividing $m$ then for all primes $p$,
\[
\chi(G, E,p) = \chi(\Gamma_F, E,p).
\]
In this case, \eqref{lambda-H formula for FTC} simplifies considerably and becomes
\[
\rank_{\Lambda(H)}\mathcal{X}(E/\Fin)_f = \lambda_p(E/F_{\cyc}).
\]
Now, Theorem~\ref{Thm: Hung-Lim Theorem} asserts that
\[
\rank_{\Z}(E/F_n)\leq \lambda_p(E/F_{\cyc})p^n.
\]

\noindent \emph{Case 2:} When $\ell$ is a prime of \emph{split multiplicative reduction} we have the following:
\begin{proposition}\label{prop:FT-EC}
Consider the false Tate curve extension $\Fin = \Fin^{(m)}$.
Let $E_{/\Q}$ be a fixed elliptic curve with \emph{split multiplicative reduction} at a prime dividing $m$.
As $p$ varies over all primes of good \emph{ordinary} reduction of $E$, the $G$-Euler characteristic is \emph{always} non-trivial.
\end{proposition}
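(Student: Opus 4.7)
The plan is to apply Theorem~\ref{EC formula theorem}, which gives
\[
\chi_t(G,E[p^\infty])=\chi_t(\Gamma_F,E[p^\infty])\cdot\prod_{v\in\mathfrak{M}}\absolute{L_v(E,1)}_p,
\]
and to produce, for every prime $p$ of good reduction, a single prime $v\in\mathfrak{M}$ whose local Euler factor is already divisible by $p$. Since each $\absolute{L_v(E,1)}_p$ on the right is a non-negative power of $p$ (by the explicit formulas in \S\ref{S: BSD and akashi}), a single such $v$ suffices to force the entire product, and hence the ratio $\chi_t(G,E[p^\infty])/\chi_t(\Gamma_F,E[p^\infty])$, to be a positive power of $p$.

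First I would fix a prime $\ell\mid m$ at which $E$ has split multiplicative reduction; such $\ell$ exists by hypothesis. Since $E$ has good reduction at $p$, we have $\ell\ne p$, so $\ell$ is unramified in $F=\Q(\mu_p)$. Pick any prime $v$ of $F$ above $\ell$. Because the completion $F_v/\Q_\ell$ is unramified, split multiplicativity is preserved, and $E$ still has split multiplicative reduction at $v$. Thus $v\in\mathcal{P}_1(E,\cF_\infty)\subseteq\mathfrak{M}$ by \eqref{definition of P1 P2} and \eqref{set M for FTC}. The residue field has order $q_v=\ell^{f_v}$, where $f_v$ is the residue degree of $v$ over $\ell$ in $F/\Q$; since $F=\Q(\mu_p)$, this $f_v$ equals the multiplicative order of $\ell$ modulo $p$.

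By the very definition of $f_v$ we have $\ell^{f_v}\equiv 1\pmod{p}$, i.e.\ $p\mid q_v-1$. Lemma~\ref{criterion for p to divide Lv(E,1)}(2) (or a direct computation from $L_v(E,1)=q_v/(q_v-1)$) then yields $\absolute{L_v(E,1)}_p=\absolute{q_v-1}_p^{-1}\ge p$. Plugging this back into the formula of Theorem~\ref{EC formula theorem} shows that $\chi_t(G,E[p^\infty])$ differs from $\chi_t(\Gamma_F,E[p^\infty])$ by a factor that is strictly divisible by $p$, so the $G$-Euler characteristic is non-trivial for every prime $p$ of good reduction.

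I do not anticipate a real obstacle: the whole argument is a direct unpacking of the definitions of $\mathcal{P}_1$, $\mathfrak{M}$ and $L_v(E,1)$ in the split multiplicative case. The only point deserving brief care is that split multiplicative reduction descends/ascends correctly under the unramified extension $F_v/\Q_\ell$, after which the congruence $q_v\equiv 1\pmod p$ is automatic from $F=\Q(\mu_p)$ being the splitting field of $X^p-1$.
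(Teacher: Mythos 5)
Your proof is correct and follows essentially the same route as the paper: identify a prime $v\mid\ell$ of $F=\Q(\mu_p)$ lying in $\mathcal{P}_1\subseteq\mathfrak{M}$, observe that $q_v=\ell^{f_v}\equiv 1\pmod p$ since $f_v$ is by definition the multiplicative order of $\ell$ modulo $p$, and invoke Lemma~\ref{criterion for p to divide Lv(E,1)}(2) to see that the corresponding local factor in Theorem~\ref{EC formula theorem} contributes a positive power of $p$. The only cosmetic difference is that you spell out explicitly why the product structure and the integrality of $\chi_t(\Gamma_F,E[p^\infty])$ make one such $v$ suffice, a point the paper leaves implicit.
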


\begin{proof}
Suppose that $\ell$ is a prime divisor of $m$ and $v|\ell$ is a prime of split multiplicative reduction of $E$.
Since $\ell\neq p$, we have that $v\in\fM$.
Recall that Lemma~\ref{criterion for p to divide Lv(E,1)}(2) asserts that 
\[
\absolute{L_v(E,1)}_p \neq 1 \textrm{ if and only if } q_v \equiv 1 \pmod{p}.
\]
We have $q_v = \absolute{\mathbb{F}_{\ell^f}}$ where $f=f_p$ is the \emph{degree of inertia} of $\ell$ in $F=\Q(\mu_p)$.
Recall from our discussion in \S\ref{sec:8.2} that $f$ is the smallest positive integer such that $\ell^{f} \equiv 1\pmod{p}$.
Therefore, we conclude that varying over all primes $p$, the $G$-Euler characteristic is \emph{always} non-trivial for an elliptic curve $E_{/\Q}$ with split multiplicative reduction at $\ell|m$.
\end{proof}

\begin{remark}
The growth of Mordell--Weil ranks inside $\Fin$ has been studied using Heegner points by H.~Darmon and Y.~Tian in \cite{darmontian} under certain hypotheses (see also \cite{DL2}).
Their results tell us that the Mordell--Weil ranks are expected to be unbounded inside $\Fin$.
\end{remark}

\noindent \emph{Case 3:}
If $\ell$ is a prime divisor of $m$ and $v|\ell$ is a prime of \emph{good reduction} of $E$, we need to analyse two cases.
If $\ell=p$, then $v\not\in \fM$ and there is no contribution to the $G$-Euler characteristic from the local Euler factor.
Otherwise, by definition,
\[
v\in\fM \textrm{ if and only if } E(F_v)[p^\infty]\neq 0.
\]
We would like to evaluate for what proportion of primes $p$ is $E(F_v)[p]\neq 0$ for a fixed $\ell$.
Equivalently (see \cite[Proposition~VII.3.1]{silverman2009}), how often does $p$ divide $\widetilde{E}(\kappa_{v})$?
Recall that 
\[ \#\widetilde{E}(\kappa_v) = \ell^f + 1 - a_v \equiv 2 - a_v \pmod{p},\]
where $f$ is the smallest positive integer such that $\ell^f\equiv1\pmod p$ as in the proof of Proposition~\ref{prop:FT-EC}.
Thus, the condition $\absolute{L_v(E,1)}_p \neq 1$ holds precisely when $a_v \equiv 2 \pmod{p}$.

\begin{proposition}
\label{prop: proportion p varies FTC}
Let $E_{/\Q}$ be a non-CM elliptic curve with good reduction at $\ell\geq 5$.
As $p$ varies over all primes (distinct from $\ell$), consider the number field $F=\Q(\mu_p)$ and the base-change of $E$ to $F$.
Then, for \emph{at least half} of the primes $p$ where $E$ has good \emph{ordinary} reduction, we have
\[a_v \not\equiv 2 \pmod{p}.\]
\end{proposition}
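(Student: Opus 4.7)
The plan is to exploit the structure of the Frobenius eigenvalues $\alpha,\beta$ at $\ell$, which satisfy $\alpha\beta=\ell$ and $\alpha+\beta=a_\ell$, and to decompose primes $p$ according to their splitting behaviour in the imaginary quadratic field $K:=\Q(\alpha)$. First I would verify that $K$ is indeed imaginary quadratic: the Hasse bound gives $a_\ell^2\leq 4\ell$, with equality only if $\ell$ is a perfect square; since $\ell\geq 5$ is prime, $a_\ell^2<4\ell$, so the discriminant of the characteristic polynomial $X^2-a_\ell X+\ell$ is a negative integer. Recall from the preceding paragraph that $a_v=\alpha^{f_p}+\beta^{f_p}$ where $f_p$ is the order of $\ell$ in $\F_p^\times$, and note that $\alpha\beta=\ell$ together with $\ell^{f_p}\equiv 1\pmod p$ will give strong constraints on the reductions of $\alpha,\beta$ whenever $a_v\equiv 2\pmod p$.

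The main step is to show that for every prime $p$ which is inert in $K$ (excluding the finitely many primes ramified in $K$, or dividing the index $[\mathcal O_K:\Z[\alpha]]$), the condition $a_v\not\equiv 2\pmod p$ holds automatically. For such $p$, the unique prime $\mathfrak p\mid p$ of $K$ has residue field $\F_{p^2}$, and $\bar\alpha:=\alpha\bmod\mathfrak p$ lies in $\F_{p^2}\setminus\F_p$. Since the non-trivial element of $\Gal(K/\Q)$ fixes $\mathfrak p$ and induces the Frobenius on $\F_{p^2}/\F_p$, we have $\bar\beta=\bar\alpha^p$, so $\bar\alpha^{p+1}=\bar\alpha\bar\beta=\ell$ in $\F_{p^2}^\times$.

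Setting $\gamma:=\bar\alpha^{f_p}\in\F_{p^2}^\times$, I would then argue by contradiction. If $a_v\equiv 2\pmod p$, then
\[
\gamma^{p+1}=(\bar\alpha^{p+1})^{f_p}=\ell^{f_p}\equiv 1\pmod p, \qquad \gamma+\gamma^p=\bar\alpha^{f_p}+\bar\beta^{f_p}=a_v\equiv 2\pmod p.
\]
Hence $\gamma$ is a root over $\F_p$ of $X^2-2X+1=(X-1)^2$, so $\gamma=1$, i.e.\ $\bar\alpha^{f_p}=1$. But $f_p\mid p-1$ forces the order of $\bar\alpha$ in $\F_{p^2}^\times$ to divide $p-1$, placing $\bar\alpha\in\F_p^\times$ and contradicting $\bar\alpha\notin\F_p$. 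Thus $a_v\not\equiv 2\pmod p$ for every inert prime $p$ outside a finite exceptional set.

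To conclude, Chebotarev's density theorem applied to the quadratic extension $K/\Q$ yields that the set of primes inert in $K$ has Dirichlet density exactly $\tfrac12$; consequently, the density of primes $p$ with $a_v\not\equiv 2\pmod p$ is at least $\tfrac12$. I do not anticipate any significant obstacle: the argument is essentially a short norm--trace computation in $\F_{p^2}$, and its strength lies precisely in the observation that the inert case alone already accounts for half of all primes, independently of whether $\ell$ is ordinary or supersingular for $E$.
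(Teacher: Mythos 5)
Your argument is correct and is essentially the paper's proof in different clothing: the paper works directly with the reductions $\bar\alpha,\bar\beta$ of the roots of $X^2-a_\ell X+\ell$ and observes that $a_v\equiv 2\pmod p$ forces $\bar\alpha^{f}=1$, which is impossible when the discriminant $a_\ell^2-4\ell$ (negative by Hasse) is a non-square mod $p$ — a density-$\tfrac12$ condition by Chebotarev. Your reformulation via inert primes of $K=\Q(\alpha)$ is equivalent to that quadratic-residue condition, and your norm–trace computation showing $\gamma=1$ is the same key step, so there is nothing genuinely different here.
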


\begin{proof}
Since $E$ is assumed to be non-CM, it has been proved by Serre \cite{serre68} that the set of primes where $E$ has good \emph{supersingular} reduction has density zero.
Therefore, it is enough to consider  all primes $p$, not just those where $E$ has good \emph{ordinary} reduction.
Since $E_{/\Q}$ and $\ell$ are fixed, the value of $a_{\ell}$ is determined precisely.
Let $\alpha$ and $\beta$ be the roots of $X^2-a_\ell X+\ell$ in $\overline{\Q}$ and let $\bar\alpha$ and $\bar\beta$ be the roots of $X^2-a_\ell X+ \ell$ in $\overline{\Fp}$.
Then $a_v=\alpha^f+\beta^f\equiv 2\mod p$ if and only if $\bar\alpha^f=1$ since $(\alpha\beta)^f=\ell^f\equiv 1\mod p$.

When $p\neq \ell$, the constant term of the polynomial $X^2-a_\ell X+\ell$ is not zero modulo $p$.
In particular, this tells us that $\bar\alpha\ne 0$.
Thus,  if $\bar\alpha\notin\Fp$, then $\bar\alpha^{p-1}\neq 1$.
As  $f|(p-1)$ by definition, we have furthermore $\bar{\alpha}^f\ne 1$.

Note that $\bar\alpha\notin \Fp$ if and only if the polynomial $X^2-a_\ell X+\ell$ is irreducible over $\Fp$.
When $p$ is odd, this in turn is equivalent to the discriminant $a_\ell^2-4\ell$  not being a square modulo $p$.
By the Hasse--Weil bound, $a_\ell^2-4\ell<0$, so it is not a square in $\Q$.
Therefore, Chebotarev density theorem tells us that for exactly half of the primes $p$,  $a_\ell^2-4\ell$ is not a square modulo $p$. Therefore, the result follows.
\end{proof}

\begin{proposition}
\label{FTC: G EC formula same as Gamma EC formula}
Let $m$ be a fixed prime number and consider the false Tate curve extension $\Fin = \Fin^{(m)}$.
Let $E_{/\Q}$ be a fixed non-CM elliptic curve with good reduction at $m$.
As $p$ varies over all primes of good \emph{ordinary} reduction of $E$, for \emph{at least half} of the primes,
\[
\chi_t(G, E,p) = \chi_t(\Gamma_F, E,p).
\]
\end{proposition}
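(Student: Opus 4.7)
The plan is to reduce the identity $\chi_t(G, E[p^\infty]) = \chi_t(\Gamma_F, E[p^\infty])$ to the statement that $\fM$ is empty (or at least that every local Euler factor is a $p$-adic unit) for at least half of the primes $p$, and then invoke Proposition~\ref{prop: proportion p varies FTC}. The starting point is Theorem~\ref{EC formula theorem}, which under the standing hypotheses gives
\[
\chi_t(G, E[p^\infty]) = \chi_t(\Gamma_F, E[p^\infty]) \times \prod_{v \in \fM}\absolute{L_v(E,1)}_p.
\]
Thus it suffices to exhibit a set of primes $p$ of density at least $1/2$ for which each factor $\absolute{L_v(E,1)}_p$ equals $1$, equivalently, for which $\fM=\emptyset$.

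Next I would unpack $\fM$ via \eqref{set M for FTC}. Since $E$ has good reduction at $m$, the set $\mathcal{P}_1(E,\Fin)$ of primes of split multiplicative reduction dividing $m$ is empty; hence $\fM=\mathcal{P}_2(E,\Fin)$. For a prime $v\mid m$ of $F=\Q(\mu_p)$, the residue field is $\kappa_v=\mathbb{F}_{m^f}$, where $f$ is the multiplicative order of $m$ modulo $p$. By \cite[Proposition~VII.3.1]{silverman2009} the condition $E(F_v)[p^\infty]\neq 0$ defining $\mathcal{P}_2$ is equivalent to $p\mid \#\widetilde{E}(\kappa_v)$, and the Hasse--Weil identity
\[
\#\widetilde{E}(\kappa_v) = m^f + 1 - a_v \equiv 2 - a_v \pmod{p}
\]
reduces this to the congruence $a_v\equiv 2\pmod{p}$, exactly as in Case 3 of \S\ref{false tate gamma-F EC}.

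Finally, the quantity $a_v=\alpha^f+\beta^f$ (with $\alpha,\beta$ the Frobenius eigenvalues of $E$ at $m$) depends only on $m$, $p$, and $f$, and not on the choice of $v\mid m$ in $F$, since all such $v$ have common residue degree $f$. Therefore Proposition~\ref{prop: proportion p varies FTC}, applied with $\ell=m$, directly supplies a set of primes $p\neq m$ of density at least $1/2$ on which $a_v\not\equiv 2\pmod{p}$; on this set $\mathcal{P}_2=\emptyset$, hence $\fM=\emptyset$, and the displayed product collapses to $1$. The only subtlety is to ensure that the hypotheses of Theorem~\ref{EC formula theorem} hold uniformly as $p$ varies; this is part of the running assumptions for the false Tate curve setting discussed in \S\ref{S: examples of extensions}, and in particular the $\mathfrak{M}_H(G)$ and finiteness conditions are granted, so no serious obstacle arises beyond this bookkeeping.
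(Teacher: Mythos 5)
Your proof is correct and takes the same route as the paper: the paper's stated proof is a one-line citation of Theorem~\ref{EC formula theorem} together with Proposition~\ref{prop: proportion p varies FTC}, with the reduction to the congruence $a_v\not\equiv 2\pmod p$ carried out in the ``Case 3'' discussion of \S\ref{false tate gamma-F EC} that immediately precedes it. You have simply made that unpacking explicit (using \eqref{set M for FTC} and Lemma~\ref{criterion for p to divide Lv(E,1)}), which is exactly the content the authors intended to be read off from the surrounding text.
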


\begin{proof}
The proof is immediate from Theorem~\ref{EC formula theorem} and Proposition~\ref{prop: proportion p varies FTC}.
\end{proof}

We consider the following special case.
\begin{proposition}
Suppose that $m=\ell$ is a prime number and that  $E$ is a non-CM curve with good  \emph{supersingular} reduction at $\ell$ with $a_\ell(E)=0$.
Let $v$ denote a prime above $\ell$ in $F$.
For exactly  \emph{two-third} of the primes $p$, we have $a_v\not\equiv 2\pmod{p}$.
\end{proposition}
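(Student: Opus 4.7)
My plan is to reduce the congruence $a_v\equiv 2\pmod p$ to a statement about the $2$-adic valuation of $\ord_p(\ell)$, and then compute the density of such primes via Chebotarev's theorem applied to a Kummer--cyclotomic tower.

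Since $a_\ell(E)=0$, the polynomial $X^2-a_\ell X+\ell=X^2+\ell$ has roots $\alpha=\sqrt{-\ell}$ and $\beta=-\sqrt{-\ell}$, so $a_v=\alpha^f+\beta^f$ with $f=\ord_p(\ell)$. When $f$ is odd, $a_v=0\not\equiv 2\pmod p$. When $f$ is even, $a_v\equiv 2(-\ell)^{f/2}\pmod p$; using $\ell^{f/2}\equiv -1\pmod p$ this reduces to $a_v\equiv 2(-1)^{f/2+1}\pmod p$, so $a_v\equiv 2\pmod p$ precisely when $f\equiv 2\pmod 4$. A direct verification shows $f\equiv 2\pmod 4$ is equivalent to $-\ell$ having odd order in $\F_p^\times$: writing $f=2m'$ with $m'$ odd gives $\ell^{m'}\equiv -1\pmod p$ (as $\ell^{m'}$ has order $2$), hence $(-\ell)^{m'}=(-1)^{m'}\ell^{m'}=1$; conversely, $\ord_p(-\ell)$ odd forces $\ord_p(\ell)\equiv 2\pmod 4$ by the same token. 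So it suffices to compute the density of primes $p$ for which $-\ell$ has odd order in $\F_p^\times$, equivalently $-\ell\in(\F_p^\times)^{2^{v_2(p-1)}}$.

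For each $a\geq 1$, set $M_a:=\Q(\mu_{2^a},(-\ell)^{1/2^a})$ and $M_a':=\Q(\mu_{2^{a+1}},(-\ell)^{1/2^a})$; both are Galois over $\Q$ since the radical extensions are Kummer over the cyclotomic base containing $\mu_{2^a}$. A prime $p$ splits completely in $M_a$ (resp.\ in $M_a'$) precisely when $v_2(p-1)\geq a$ (resp.\ $\geq a+1$) and $-\ell\in(\F_p^\times)^{2^a}$. Chebotarev's density theorem gives densities $1/[M_a:\Q]$ and $1/[M_a':\Q]$, respectively, so the difference isolates the set of primes with $v_2(p-1)=a$ and $-\ell\in(\F_p^\times)^{2^a}$. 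The density of primes with $a_v\equiv 2\pmod p$ is therefore $\sum_{a\geq 1}\bigl(1/[M_a:\Q]-1/[M_a':\Q]\bigr)$.

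The crux is to show $[M_a:\Q]=2^{2a-1}$ and $[M_a':\Q]=2^{2a}$, which by Kummer theory reduces to verifying that $-\ell$ has order exactly $2^a$ in $K^\times/(K^\times)^{2^a}$ for $K=\Q(\mu_{2^a})$ and $K=\Q(\mu_{2^{a+1}})$. Since $\ell$ is an odd prime, it is unramified in $\Q(\mu_{2^k})$, so any prime $\lambda\mid\ell$ of $K$ satisfies $v_\lambda(-\ell)=1$; the relation $(-\ell)^m\in(K^\times)^{2^a}$ then forces $2^a\mid m$ upon taking valuations, while the order trivially divides $2^a$. The level-$a$ contribution is thus $1/2^{2a-1}-1/2^{2a}=1/4^a$, and $\sum_{a\geq 1}4^{-a}=1/3$; the complement then has density $2/3$. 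The main obstacle is exactly this degree calculation: for $\ell=2$ the identity $-2=(i\sqrt{2})^2$ makes $-2$ already a square in $\Q(\mu_8)$, causing the tower of radicals to collapse at finite level and altering the density, so the argument implicitly requires $\ell$ odd, which is consistent with the running hypothesis $\ell\geq 5$ elsewhere in the section.
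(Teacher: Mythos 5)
Your proof is correct, and the second half takes a genuinely different route from the paper. Both arguments agree on the reduction: you compute $a_v=\alpha^f+\beta^f$ with $\alpha,\beta=\pm\sqrt{-\ell}$, which is equivalent to the paper's invocation of Silverman's closed formula for $\#\widetilde{E}(\kappa_v)$ in the supersingular case, and both reach the criterion $a_v\equiv 2\pmod p\Leftrightarrow f\equiv 2\pmod 4$. Where you diverge is the density step. The paper cites Chinen--Murata \cite{CM03} as a black box for the statement that $\ord_p(\ell)\equiv 2\pmod 4$ holds for one-third of primes. You instead reformulate the condition as ``$-\ell$ has odd order in $\F_p^\times$'' (your bidirectional verification of this is sound) and then compute the density yourself by applying Chebotarev to the tower $M_a=\Q(\mu_{2^a},(-\ell)^{1/2^a})\subset M_a'=\Q(\mu_{2^{a+1}},(-\ell)^{1/2^a})$, using the valuation at a prime above $\ell$ to pin down $[M_a:\Q]=2^{2a-1}$ and $[M_a':\Q]=2^{2a}$, and summing $\sum_{a\ge1}4^{-a}=1/3$. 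This buys you a self-contained argument, makes transparent exactly where the genericity is used, and identifies the one degenerate case $\ell=2$ (where $-2$ becomes a square in $\Q(\mu_8)$ and the tower collapses), whereas the paper's cite leaves the source of $1/3$ opaque. The trade-off is that the paper's proof is three lines while yours requires setting up the Kummer--cyclotomic tower and the degree bookkeeping, but the substance is correct throughout.
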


\begin{proof}
As in the proof of Proposition~\ref{prop: proportion p varies FTC}, it is enough to consider all primes $p$.
We have (see \cite[Exercise~5.15]{silverman2009})
\[
\#\widetilde{E}(\kappa_v) = \begin{cases}
\ell^f + 1 & \textrm{ if } f \textrm{ is odd,}\\
\left(\ell^{f/2} - (-1)^{f/2}\right)^2 & \textrm{ if } f \textrm{ is even.}
\end{cases}
\]
In particular, $p| \# \widetilde{E}(\kappa_v)$ if and only if $f \equiv 2 \pmod{4}$.
The latter condition is satisfied precisely \emph{one-third} of the time by \cite[Theorem~1.1 with $l =2$]{CM03}.
Hence the result.
\end{proof}

Similar to Proposition~\ref{FTC: G EC formula same as Gamma EC formula}, this implies the following.

\begin{proposition}\label{prop:FT-ss}
Let $m$ be a fixed prime number and consider the false Tate curve extension $\Fin = \Fin^{(m)}$.
Let $E_{/\Q}$ be a fixed non-CM elliptic curve with good \emph{supersingular} reduction at $m$ and $a_m(E)=0$.
As $p$ varies over all primes of good reduction of $E$, for exactly \emph{two-third} of the primes,
\[
\chi_t(G, E,p) = \chi_t(\Gamma_F, E,p).
\]
\end{proposition}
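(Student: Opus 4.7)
The plan is to combine Theorem~\ref{EC formula theorem} with the preceding proposition on the density of primes satisfying $a_v \not\equiv 2\pmod{p}$. I will first argue that the set $\fM$ appearing in Theorem~\ref{EC formula theorem} consists exactly of the primes $v$ of $F = \Q(\mu_p)$ lying above $m$: since $m$ is the only rational prime distinct from $p$ which ramifies in $\Fin^{(m)}$, only such $v$ can have infinite inertia in $G$. Under the hypothesis that $E$ has good supersingular reduction at $m$, each such $v$ is a prime of good reduction for $E$, and all these $v$ share the common residue field $\F_{m^f}$, where $f$ denotes the multiplicative order of $m$ in $(\Z/p\Z)^\times$. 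In particular $\absolute{L_v(E,1)}_p$ takes the same value at every $v \in \fM$.

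Next, Theorem~\ref{EC formula theorem} yields
\[
\chi_t(G, E[p^\infty]) = \chi_t(\Gamma_F, E[p^\infty]) \times \prod_{v \in \fM} \absolute{L_v(E,1)}_p,
\]
so the desired equality is equivalent to $\absolute{L_v(E,1)}_p = 1$ for some (equivalently every) $v \in \fM$. By Lemma~\ref{criterion for p to divide Lv(E,1)}, in the good-reduction setting this holds if and only if $p \nmid \#\widetilde{E}(\kappa_v)$. Since $q_v = m^f \equiv 1\pmod{p}$ by the definition of $f$, this is in turn equivalent to $a_v \not\equiv 2\pmod{p}$.

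Finally, the preceding proposition, established precisely under the hypothesis that $E$ is supersingular at $m$ with $a_m(E)=0$, asserts that $a_v \not\equiv 2\pmod{p}$ for exactly two-thirds of the primes $p$. Assembling these three observations gives the claim. I do not foresee any genuine obstacle: the real input is already extracted in the preceding proposition, namely the Chandee--Maharaj density statement that $f \equiv 2\pmod{4}$ for exactly one-third of the primes $p$. The remainder of the present argument is a direct application of Theorem~\ref{EC formula theorem} together with Lemma~\ref{criterion for p to divide Lv(E,1)}, essentially mirroring the proof of Proposition~\ref{FTC: G EC formula same as Gamma EC formula} but using the supersingular density computation in place of the Chebotarev input used there.
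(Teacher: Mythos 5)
Your proposal is correct and follows essentially the same route as the paper: the paper proves this proposition by citing Theorem~\ref{EC formula theorem} together with the immediately preceding density computation (which supplies the two-thirds figure via \cite[Theorem~1.1]{CM03}), exactly mirroring the proof of Proposition~\ref{FTC: G EC formula same as Gamma EC formula}. You have merely spelled out the intermediate reductions — identifying $\fM$ with the primes above $m$, noting that all such $v$ share the residue field $\F_{m^f}$ so that $\absolute{L_v(E,1)}_p$ is constant on $\fM$, and translating the triviality of the local Euler factor into $a_v\not\equiv 2\pmod p$ via Lemma~\ref{criterion for p to divide Lv(E,1)} and the congruence $q_v\equiv 1\pmod p$ — which the paper leaves implicit.
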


\begin{remark}
\label{Gamma-Q EC for false tate}
However, when studying the $\mathcal{G}^{(m)}$-Euler characteristic,  $\fM$ is a (finite) set of rational primes.
If $\ell|m$ is a prime of good reduction, 
\[
\absolute{L_v(E,1)}_p \neq 1 \textrm{ if and only if }\# \widetilde{E}(\mathbb{F}_\ell) = q_v + 1 -a_{\ell} = \ell+1 - a_{\ell} \equiv 0 \pmod{p}.
\]
As $p$ varies, we count how often $a_{\ell} \equiv \ell + 1 \pmod{p}$.
By an application of the Hasse-bound $\absolute{a_\ell}\leq 2\sqrt{\ell}$.
Thus, $\absolute{L_v(E,1)}_p$ is a $p$-adic unit for all but finitely many primes $p$.
\end{remark}

The above discussion can be summarized as follows.
\begin{theorem}
\label{theorem for vary p when FTC}
Let $E_{/\Q}$ be a fixed elliptic curve and $m$ be a fixed positive integer.
Let $F= \Q(\mu_p)$ and $\Fin$ be the false Tate curve extension.
As $p$ varies over all primes of good reduction of $E$, the following assertions hold.
\begin{enumerate}
 \item If $E$ has split multiplicative reduction at $\ell|m$, then the $G$-Euler characteristic is \emph{never} a $p$-adic unit.
 \item If $E$ has non-split multiplicative reduction or additive reduction or good reduction with $E(F_v)[p^\infty]=0$ at all primes $\ell|m$, then 
 \[
 \chi_t(G, E,p) = \chi_t(\Gamma_F, E,p).
 \]
 In particular, if $E$ has Mordell--Weil rank 0 over $\Q$ and $F$ then, $\chi(G, E,p)=1$ as $p$ varies over all good \emph{ordinary} primes outside a set of density 0.
 \item If $m=\ell$ is a fixed \emph{prime number}, and $E$ is non-CM with  good \emph{supersingular} reduction at $\ell$ with $a_\ell(E)=0$, then for two-thirds of the primes $p$
 \[
 \chi_t(G, E,p) = \chi_t(\Gamma_F, E,p).
 \]
 In particular, if $E$ has Mordell--Weil rank 0 over $\Q$ and $F$ then, $\chi(G, E,p)=1$ for all such primes $p$.
 \item If $E$ has good reduction reduction at all primes $\ell|m$ then the $\mathcal{G}$-Euler characteristic is given by
 \[
 \chi_t(\mathcal{G}, E,p) = \chi_t(\Gamma_\Q, E,p).
 \]
 for all but finitely many primes $p$.
\end{enumerate}
\end{theorem}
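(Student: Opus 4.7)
The plan is to assemble the theorem from the case analysis carried out in Cases 1--3 immediately preceding the statement, together with Propositions~\ref{prop:FT-EC}, \ref{FTC: G EC formula same as Gamma EC formula}, \ref{prop:FT-ss} and Remark~\ref{Gamma-Q EC for false tate}. In every part, the core input is Theorem~\ref{EC formula theorem}, which expresses $\chi_t(G,E[p^\infty])$ as the product of $\chi_t(\Gamma_F,E[p^\infty])$ with the finitely many local Euler factors $\absolute{L_v(E,1)}_p$ indexed by $v\in\fM$; whether each such factor is a $p$-adic unit is controlled by Lemma~\ref{criterion for p to divide Lv(E,1)}, while the membership of $v$ in $\fM$ is dictated by \eqref{definition of P1 P2}--\eqref{set M for FTC}.

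For part (1), I would invoke Proposition~\ref{prop:FT-EC} verbatim: split multiplicative reduction at $\ell\mid m$ places every $v\mid\ell$ into $\mathcal{P}_1\subset\fM$, and since the residue field of $v$ in $F=\Q(\mu_p)$ is $\F_{\ell^{f_p}}$ with $f_p$ the multiplicative order of $\ell$ modulo $p$, we have $q_v\equiv 1\pmod p$; Lemma~\ref{criterion for p to divide Lv(E,1)}(2) then forces $\absolute{L_v(E,1)}_p<1$, and Theorem~\ref{EC formula theorem} prevents $\chi_t(G,E[p^\infty])$ from being a unit. For part (2), the reduction-type hypotheses force $\fM=\mathcal{P}_1\cup\mathcal{P}_2$ to be empty: non-split multiplicative and additive primes never belong to either set by \eqref{definition of P1 P2}, and the assumption $E(F_v)[p^\infty]=0$ at good reduction primes excludes them from $\mathcal{P}_2$. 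The equality $\chi_t(G,E[p^\infty])=\chi_t(\Gamma_F,E[p^\infty])$ then drops out of Theorem~\ref{EC formula theorem}, and the rank-$0$ refinement follows from the $p$-adic BSD formula \eqref{BSDformula}: with $\mathcal{R}_p(E/F)=1$, each remaining factor is a $p$-adic unit outside a density-zero set of primes, by the same argument as Proposition~\ref{prop: vary p Zp2} using the generalization of \cite[Theorem~5.1.1]{DK_thesis} to the base field $F$.

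For part (3), I would cite Proposition~\ref{prop:FT-ss} directly for the exact density $2/3$, then invoke the same BSD-based argument as in part (2) to obtain the rank-$0$ conclusion. Part (4) is essentially the content of Remark~\ref{Gamma-Q EC for false tate}: passing from $G$ to $\mathcal{G}=\Gal(\Fin/\Q)$ replaces the primes of $F$ in $\fM$ by rational primes dividing $m$, and at a good reduction prime $\ell\mid m$, Lemma~\ref{criterion for p to divide Lv(E,1)}(1) tells us that $\absolute{L_\ell(E,1)}_p\neq 1$ exactly when $p\mid \#\widetilde{E}(\F_\ell)$; since $\#\widetilde{E}(\F_\ell)$ is a fixed nonzero integer (bounded by Hasse), this happens for only finitely many $p$.

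The hard part will not be conceptual but rather verifying the density-one step used in parts (2) and (3): one must ensure that each of the factors $\#\Sha(E/F)[p^\infty]$, $\#E(F)[p^\infty]$, the Tamagawa product $\prod_{v\nmid p}c_v^{(p)}(E/F)$, and the anomalous factor $\prod_{v\mid p}\#\widetilde E(\kappa_v)[p^\infty]$ is a $p$-adic unit for all but a density-zero set of primes $p$, after base change to $F=\Q(\mu_p)$ (rather than over $\Q$ as in \cite[Theorem~5.1.1]{DK_thesis}). In the rank-$0$ setting this is routine since $\mathcal{R}_p(E/F)=1$ and each invariant controls only a thin set of primes, but it is the only place where a genuine argument is required beyond citing the preceding propositions and remarks of this section.
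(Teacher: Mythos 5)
Your proposal matches the paper's treatment exactly: the theorem is introduced there with the phrase ``The above discussion can be summarized as follows,'' so its proof is precisely the assembly of Cases 1--3 with Propositions~\ref{prop:FT-EC}, \ref{FTC: G EC formula same as Gamma EC formula}, \ref{prop:FT-ss} and Remark~\ref{Gamma-Q EC for false tate} that you describe. The density-zero step for the rank-$0$ refinements over $F=\Q(\mu_p)$, which you rightly flag as the only place needing an argument beyond citation, is likewise left implicit in the paper (it is handled only by analogy with Proposition~\ref{prop: vary p Zp2}), so your proposal is, if anything, slightly more careful than the source.
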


We conclude this section with some remarks about the $\Lambda(H)$-ranks of $\cX(E/\Fin)_f$.
We know from \eqref{lambda-H formula for FTC} that
\[
\rank_{\Lambda(H)}\cX(E/\Fin)_f=\lambda_p(E/F_\cyc)+ \sum_{\substack{\ell|N_E, \ \ell|m\\ \ell \textrm{ split multiplicative}}}1+ \sum_{ \substack{v|\ell, \ \ell|m,\\ E(F_v)[p]\neq 0}} 2.
\]
Therefore, if the fixed elliptic curve has a prime of split multiplicative reduction at a prime divisor of $m$, then for all primes $p$,
\[
\rank_{\Lambda(H)}\cX(E/\Fin)_f>\lambda_p(E/F_\cyc).
\]
By the same argument as before, one can also deduce the following.
Suppose that $m$ is a prime number such that $E$ has good reduction at $m$, then for \emph{at most half} of the primes $p$,
\[
\rank_{\Lambda(H)}\cX(E/\Fin)_f>\lambda_p(E/F_\cyc).
\]

\subsection{}
\label{section 10-3}
Fix a pair of elliptic curves $(E,E')$, both defined over $\Q$ and suppose that $E'/\Q$ is a non-CM elliptic curve.
Let $F= \Q(E'[p])$ and consider the $p$-adic Lie extension $\Fin = \Q(E'[p^\infty])$.
By the Weil pairing, we know that $F\supseteq \Q(\mu_p)$.

Let $\mathcal{G}_{E'} := \Gal\left(\Q(E'[p^\infty])/\Q\right)$.
We will study for what proportion of all primes is $\chi_t(\mathcal{G}_{E'}, E,p)$ a $p$-adic unit.
For us, $\fM_{E'}$ is a set of primes over $\Q$.
By definition, this set contains precisely the primes of potentially multiplicative reduction of $E'$.

Suppose that $v\in \fM_{E'}$ is a prime of split (resp. non-split) multiplicative reduction of $E$.
By Lemma~\ref{criterion for p to divide Lv(E,1)}(2) (resp. Lemma~\ref{criterion for p to divide Lv(E,1)}(3)) we know that \begin{align*}
 \absolute{L_v(E,1)}_p \neq 1 &\textrm{ if and only if } q_v = \ell \equiv 1 \pmod{p}\\
 (\textrm{resp. } \absolute{L_v(E,1)}_p \neq 1 & \textrm{ if and only if } q_v =\ell \equiv -1 \pmod{p}).
\end{align*}
In either case, $\ell$ is a prime which is independent of $p$.
As $p$ varies over all primes, either of the congruence conditions can be satisfied by \emph{at most} finitely many primes $p$.

If $v\in \fM_{E'}$ is a prime of good reduction of $E$, then Lemma~\ref{criterion for p to divide Lv(E,1)}(1) asserts that
\[
\absolute{L_v(E,1)}_p \neq 1 \textrm{ if and only if } \# \widetilde{E}(\mathbb{F}_\ell) = \ell + 1 - a_{\ell} \equiv 0 \pmod{p}.
\]
As before, we see that $\absolute{L_v(E,1)}_p$ is a $p$-adic unit for all but finitely many primes $p$.

We have the following theorem.
\begin{theorem}
\label{vary p main thm in triv case}
Let $(E, E')$ be a pair of elliptic curves over $\Q$ where $E'$ is a non-CM curve.
For all but finitely many primes $p$ where $E$ has good \emph{ordinary} reduction,
\[
\chi_{t}\left(\mathcal{G}_{E'}, E,p \right) = \chi_{t}\left(\Gamma_{\Q}, E,p \right).
\]
\end{theorem}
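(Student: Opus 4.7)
The plan is to reduce the claim to a finite case analysis on local Euler factors, mirroring the arguments in Section~\ref{S: Vary prime} for the false Tate curve setting. First I would invoke the analogue of Theorem~\ref{EC formula theorem} with base field $\Q$ to write
\[
\chi_t(\mathcal{G}_{E'}, E[p^\infty]) = \chi_t(\Gamma_{\Q}, E[p^\infty]) \cdot \prod_{v \in \fM_{E'}} \absolute{L_v(E,1)}_p,
\]
valid whenever the standing hypotheses of that theorem (finiteness of $\Sha(E/\Q)[p^\infty]$, the $\fM_H(\mathcal{G}_{E'})$-conjecture, together with $(\Finite_{\glob})$ and $(\Finite_{\local})$) hold. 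As explained in Section~\ref{S: examples of extensions}, the set $\fM_{E'}$ consists precisely of the rational primes of potentially multiplicative reduction of $E'$. Crucially, this is a \emph{finite} set depending only on $E'$, and in particular is independent of $p$, which is what opens the door to a uniform statement.

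The main step is then to show that for each fixed prime $\ell \in \fM_{E'}$, the local factor $\absolute{L_v(E,1)}_p$ equals $1$ for all but finitely many primes $p$. I would do this by invoking Lemma~\ref{criterion for p to divide Lv(E,1)} and treating the reduction type of $E$ at $\ell$ case by case. If $E$ has additive reduction at $\ell$, then $L_v(E,1) = 1$ for every $p$, so this case contributes nothing. If $E$ has good reduction at $\ell$, then $p$ divides $\absolute{L_v(E,1)}_p^{-1}$ precisely when $p$ divides $\#\widetilde{E}(\F_\ell) = \ell + 1 - a_\ell(E)$; by the Hasse bound this is a fixed nonzero integer, so only its finitely many prime divisors can be problematic. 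If $E$ has split (respectively non-split) multiplicative reduction at $\ell$, then by the same lemma, $p$ divides $\absolute{L_v(E,1)}_p^{-1}$ precisely when $\ell \equiv 1$ (respectively $\ell \equiv -1$) modulo $p$, which restricts $p$ to divisors of $\ell - 1$ (respectively $\ell + 1$). Taking the union over the finitely many $\ell \in \fM_{E'}$ gives a finite exceptional set of primes outside of which every local factor is trivial, so that the product collapses to $1$ and the two truncated Euler characteristics coincide.

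I do not anticipate a serious obstacle; the argument is entirely parallel to Remark~\ref{Gamma-Q EC for false tate} and the proof of Theorem~\ref{theorem for vary p when FTC}(4). The one point worth some care is that the hypotheses of Theorem~\ref{EC formula theorem} must be satisfied simultaneously for the cofinite set of $p$ under consideration. Since $E'$ is non-CM, the group $H_{E'} = \Gal(\Fin/F_{\cyc})$ is semi-simple with reductive Lie algebra, so $(\Finite_{\glob})$ holds by Proposition~\ref{prop finglob} and $(\Finite_{\local})$ follows from the review in Section~\ref{S: examples of extensions}; the finiteness of $\Sha(E/\Q)[p^\infty]$ and the $\fM_H(\mathcal{G}_{E'})$-conjecture are implicit in the truncated Euler characteristics being defined in the first place, so they come along for the ride. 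With these in place, the displayed identity holds on the cofinite set of non-exceptional primes, completing the proof.
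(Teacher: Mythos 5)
Your argument is correct and follows essentially the same route as the paper: the paper's proof is precisely the case analysis, carried out in the paragraphs preceding the theorem, of the local factors $\absolute{L_v(E,1)}_p$ at the finitely many primes of potentially multiplicative reduction of $E'$, using Lemma~\ref{criterion for p to divide Lv(E,1)} exactly as you do. Your additional remarks on verifying the hypotheses of Theorem~\ref{EC formula theorem} are consistent with the paper's setup in Section~\ref{S: examples of extensions}.
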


The proportion of primes for which $\chi_t(\Gamma_\Q, E,p)=1$ was studied in detail \cite[Section 3]{KR21}.
It was conjectured in \cite[Conjecture~3.17]{KR21} that $\chi_t(\Gamma_\Q, E,p)=1$ should be true for $100\%$ of the primes of good \emph{ordinary} reduction.

To study the $G_{E'}$-Euler characteristic, we consider the base-change curve ${E}_{/F}$.
Since $F\supseteq \Q(\mu_p)$ the arguments of Section \ref{false tate gamma-F EC} imply that $q_v \equiv 1 \pmod{p}$ for all values of $p$.
Indeed, this is because the inertia degree of $\ell$ for the extension $\Q(\mu_p)/\Q$ divides the of inertia degree of $\ell$ for the extension $F/\Q$.

This means, if $\ell$ is a prime of potentially multiplicative reduction of $E'$ which is also a prime of split multiplicative reduction of $E$, then the $G_{E'}$-Euler characteristic of $E$ is \emph{never} trivial.
This is well-known in the special case $E = E = E'$.
It was shown in \cite[Theorem~1.5]{coateshowson} that the $p$-primary Selmer group is infinite dimensional for all $p\geq 5$.

Suppose that we fix the two distinct elliptic curves $E$ and $E'$ (both defined over $\Q$) such that the following properties hold:
\begin{enumerate}[(a)]
    \item $E'$ is a non-CM elliptic curve.
    \item $E$ has additive reduction or non-split multiplicative reduction at the primes above potentially multiplicative reduction of $E'$.
\end{enumerate}
Then, the contribution of the local Euler factors to the $G_{E'}$-Euler characteristic of $E$ is trivial.
Equivalently,
\[
\chi_{t}\left(G_{E'}, E,p\right) = \chi_{t}\left(\Gamma_{F}, E,p \right).
\]

When $F/\Q$ is a number field and $\rank_{\Z}E(F) \geq 1$, the analysis of the Euler characteristic formula is more subtle.
The difficulty arises from our lack of knowledge regarding the normalized $p$-adic regulator and the $p$-part of the Tate--Shafarevich group over number fields.
Also, when $[F:\Q]$ is ``large'', computations are expensive and it is hard to obtain meaningful heuristics.
However, it is still possible to make some brief remarks about the Mordell--Weil rank growth of $E$ in the extension $\Fin/F$.

\begin{proposition}
Suppose $E, E'$ are two fixed elliptic curves chosen as described above with conductors $N_1, N_2$, respectively.
Then, for all primes $p\nmid N_1 N_2$,
\[
\rank_{\Lambda(H)}\mathcal{X}(E/\Fin)_f = \lambda_p(E/F_{\cyc}).
\]
\end{proposition}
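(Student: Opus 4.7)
The strategy is to apply Proposition \ref{prop:Lambda-H-rank}, which asserts
\[
\rank_{\Lambda(H)}\mathcal{X}(E/\Fin)_f - \lambda_p(E/F_\cyc) = \sum_{\substack{w\in S(F_\cyc),\\ \dim H_w\ge 1}} \corank_{\Zp} Z_v(F_{\cyc,w}),
\]
so the task reduces to showing that every local term on the right vanishes under the standing hypotheses on $(E,E')$ and the condition $p\nmid N_1N_2$.

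First, I would identify which primes $w\in S(F_\cyc)$ satisfy $\dim H_w\ge 1$. At primes $w\mid p$ there is nothing to prove, since Lemma \ref{Zp corank cases} immediately yields $\corank_{\Zp} Z_v(F_{\cyc,w})=0$ regardless of $\dim H_w$. For $w\nmid p$ lying above a prime $v$ of $F$, the discussion of \S\ref{S: examples of extensions} (built on \cite[Lemma~2.8(i)]{Coates_Fragments}) shows that the decomposition group of $v$ in $G_{E'}=\Gal(\Fin/F)$ has dimension $1$ if $v$ is of potentially good reduction for $E'$, and dimension $2$ if $v$ is of potentially multiplicative reduction for $E'$. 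Because $F_\cyc/F$ is unramified outside $p$ and the Frobenius at $v$ has infinite order in $\Gamma_F\cong\Z_p$ (the norm $|\kappa_v|$ is not a Teichm\"uller lift in $\Z_p^\times$), the decomposition subgroup in $\Gamma_F$ is one-dimensional, so the short exact sequence $1\to H_w\to (G_{E'})_w\to \Gamma_{F,v}\to 1$ gives $\dim H_w=\dim (G_{E'})_w-1$; this equals $0$ in the potentially good case and $1$ in the potentially multiplicative case.

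The sum thus reduces to primes $w\nmid p$ lying above primes $v$ of $F$ at which $E'$ is potentially multiplicative. By the standing hypothesis on $(E,E')$, at each such $v$ the curve $E/F_v$ has either additive or non-split multiplicative reduction; and since $F_{\cyc,w}/F_v$ is unramified, the N\'eron model of $E$ base-changes \'etale-locally, so the reduction type of $E$ at $w$ agrees with that at $v$. In particular $E$ is neither of good nor of split multiplicative reduction at $w$, and a final application of Lemma \ref{Zp corank cases} gives $\corank_{\Zp} Z_v(F_{\cyc,w})=0$, so the sum vanishes as required.

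The step demanding most care is the interpretation of the reduction-type hypothesis: it must be read at the level of $F_v$ (equivalently, $F_{\cyc,w}$), since additive reduction over $\Q_\ell$ could in principle improve to good or even split multiplicative reduction after base change to a sufficiently ramified $F_v$. Under this standard reading, however, the argument is entirely local and routine, with the only ingredients being Proposition \ref{prop:Lambda-H-rank}, Lemma \ref{Zp corank cases}, and the dimension calculation for decomposition groups in the trivializing extension of $E'$.
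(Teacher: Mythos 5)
Your proof is correct and follows the same route as the paper's: apply Proposition \ref{prop:Lambda-H-rank} and then kill the local correction terms using Lemma \ref{Zp corank cases}. The paper's own proof is extremely terse — it simply cites \eqref{LambdaH rank formula} and Lemma \ref{Zp corank cases} — so your write-up usefully fills in the details that the paper leaves implicit: the identification of the primes $w$ with $\dim H_w\ge1$ as exactly those lying above primes of potentially multiplicative reduction for $E'$ (via \cite[Lemma~2.8(i)]{Coates_Fragments} and the decomposition-group dimension count through $1\to H_w\to (G_{E'})_w\to\Gamma_{F,v}\to 1$), the invariance of reduction type under the unramified extension $F_{\cyc,w}/F_v$, and the caveat that the hypothesis on $E$ must be read over $F_v$ rather than over $\Q_\ell$. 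Your point about reduction types not changing under unramified base change is worth spelling out slightly further: non-split multiplicative reduction could in principle become split after an unramified extension, but since $p$ is odd, $F_{\cyc,w}/F_v$ has no degree-$2$ subextension, so this cannot happen here; and additive reduction can only improve after a ramified extension, so it too is stable. The paper's opening remark that $E[p^\infty]$ is not rational over $\Q(E'[p^\infty])$ is needed only for the subsequent rank-growth bound via Theorem \ref{Thm: Hung-Lim Theorem}, not for the identity you were asked to prove, so you were right not to address it.
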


\begin{proof}
The hypothesis on $E$ and $E'$ guarantee that $E[p^\infty]$ is not rational over $\Q(E'[p^\infty])$.
As $p$ varies over all odd primes away from the divisors of $N_1 N_2$, the base field $F= \Q(E'[p])$ varies as well.
It follows from \eqref{LambdaH rank formula} and Lemma~\ref{Zp corank cases} that
\[
\rank_{\Lambda(H)}\mathcal{X}(E/\Fin)_f = \lambda_p(E/F_{\cyc}).
\]
\end{proof}

\begin{remark}
In particular, Theorem~\ref{Thm: Hung-Lim Theorem} asserts that
\[
\rank_{\Z}E(F_n) \leq \lambda_p(E/F_{\cyc})p^{3n}.
\]
\end{remark}

\appendix
\section{Classifying conjugacy classes in $\GL_2(\Z/p\Z)$}
\label{appendix}

The results in this appendix are used in obtaining precise estimates of quantities arising in Corollary~\ref{corollary chi=1} and Theorem~\ref{thm: Hep-Gep density}.
 See in particular Remark~\ref{rk: coming from appendix}. Given a prime $p$, write $G_p=\GL_2(\Z/p\Z)$.
Given a matrix $\sigma\in G_p$, denote by $\alpha=\alpha(\sigma)$ and $\beta=\beta(\sigma)$ the two eigenvalues of $\sigma$.
Here, $\alpha$ and $\beta$ are interchangeable, however, assume that each $\sigma$ comes with a choice of $\alpha$ and $\beta$.
As is well known, $\absolute{G_p}=(p^2-1)(p^2-p)=p(p-1)^2(p+1)$.
Given an element $\sigma\in G_p$, write $f(\sigma)$ for the smallest integer $f\in\Z_{\ge1}$ such that $\det(\sigma)^f=1$.
Let $H_p$ be the subset of $G_p$ consisting of elements $\sigma\in G_p$ such that $\alpha^{f(\sigma)}=\beta^{-f(\sigma)}\ne1$, where $\alpha$ and $\beta$ are the eigenvalues of $\sigma$.
We divide $G_p$ into the following conjugacy classes.
(See \cite[Chapter XVIII, Table 12.4]{lang} or \url{http://www-math.mit.edu/~dav/gl2conj.pdf}.)

\begin{itemize}
 \item Let $C_{a,b}$ be the set of diagonalizable matrices with eigenvalues $a,b\in {\Fp}^\times$ with $a\ne b$.
 We have $(p-1)(p-2)/2$ choices of $C_{a,b}$ and for each choice, $\#C_{a,b}=p(p+1)$.
 \item Let $\cC_a$ be the set of non-diagonal matrices with one single eigenvalue $a\in {\Fp}^\times$.
 There are $(p-1)$ choices for $\cC_a$ and for each choice, $\#\cC_a=p^2-1$.
 \item Let $D_a=\left\{\begin{pmatrix}a&0\\0&a\end{pmatrix}\right\}$, $a\in \Fp^\times$.
 Then, there are $(p-1)$ choices for $a$ and for each choice $\#D_a=1$.
 \item Let $E_{\lambda}$ be the set of matrices whose eigenvalues are $\lambda$ and $\lambda'$, where $\lambda\in\mathbb{F}_{p^2}\setminus\Fp$ and $\lambda'$ is the conjugate of $\lambda$.
 There are $p(p-1)/2$ choices for $\lambda$ and for each choice of $\lambda$, $\#E_\lambda=p^2-p$.
\end{itemize}

Given an element $a\in \bar{\F}_p^\times$, we write $o(a)$ for the order of $a$, i.e., the minimal value of $N\in \Z_{\geq 1}$ such that $a^N=1$.
Also, let $\varphi$ denote Euler's totient function, where $\varphi(n)$ is the number of positive integers $m<n$ that are coprime to $n$.
Recall that $\varphi(n)=n\prod_\ell \left(1-\frac{1}{\ell}\right)$, where $\ell$ runs through all prime divisors of $n$ and that $\sum_{d|n}\varphi(d)=n$.

\subsection{Contributions from $E_\lambda$}

Let $\sigma\in E_\lambda $ for some $\lambda\in\mathbb{F}_{p^2}\setminus\Fp $.
Since $\lambda\lambda'\in \Fp^\times$, it follows that $f(\sigma)=o(\lambda\lambda')$ has to be a divisor of $p-1$.
However, the fact that $\lambda\notin\Fp^\times$ implies that $\lambda^{p-1}\ne 1$.
Thus, $\sigma \in H_p$.
In particular, this gives that the number of elements are
\[
\sum_\lambda \# E_\lambda=p^2(p-1)^2/2.
\]

\subsection{Contributions from $D_a$ and $\cC_a$}
Let $\sigma\in D_a$ or $\cC_a$ for some $a\in\Fp^\times$, then note that $f(\sigma)=o(a^2)$.
As we shall see, in this case, $\sigma\in H_p$ if and only if $o(a)$ is even.
First suppose that $o(a)$ is even, then, we have that $o(a)/2=f(\sigma)$.
Therefore, $a^{f(\sigma)}=-1$, and hence, $\sigma\in H_p$, since $a^{f(\sigma)}\neq 1$.
Else, when $o(a)$ is odd, then it is easily checked that $\sigma\notin H_p$.

It remains to determine which $a\in \Fp^\times$ has $o(a)\in 2\Z$.
Suppose that $p-1=2^en$, where $n$ is an odd integer.
Then, $o(a)$ is odd if and only if $o(a)|n$.
Therefore, we have in total \[
p-1-\sum_{r|n}\varphi(r)=p-1-n=(p-1)(1-2^{-e})
\]
choices of $a$.
It follows that the number of elements in $H_p$ is given by
\[
(1+(p^2-1))(p-1)(1-2^{-e})=p^2(p-1)(1-2^{-e}).
\]

\begin{remark}\label{rk:1st-portion}
Since $e\ge1$, on combining the contributions from $E_\lambda$, $D_a$ and $\cC_a$, we have
\[
\frac{\#H_p}{\#G_p}\ge \frac{p^2(p-1)^2/2+p^2(p-1)/2}{p(p-1)^2(p+1)}=\frac{p^2}{2 (p^2-1)}.
\]
\end{remark}
\subsection{Contributions from $C_{a,b}$}
Given $a\in\Fp^\times$, we would like to find $b$ such that $o(ab)$ is not a multiple of $o(a)$.
Recall that for each $d|(p-1)$, there are $\varphi(d)$ elements of order $d$ in $\Fp^\times$.
Therefore the number of choices of $ab$ is
\[
\sum_{o(a)\nmid d|p-1}\varphi(d)=p-1-\sum_{o(a)\absolute{d}p-1}\varphi(d).
\]
But $\{ab:b\in\Fp^\times\}=\Fp^\times$.
Therefore, we have the same number of choices for $b$.
Excluding those choices where $a=b$ (there are $(p-1)(1-2^{-e})$ such possibilities, where $e=\ord_2(p-1)$) and repetitions (swapping $a$ and $b$), there are in total
\begin{align*}
 &\frac{1}{2}\left(\sum_{r|p-1}\varphi(r)\left(p-1-\sum_{r\absolute{d}p-1}\varphi(d)\right)-(p-1)(1-2^{-e})\right)\\
 =\ &\frac{1}{2}\left((p-1)^2-\sum_{r\absolute{d}p-1}\varphi(r)\varphi(d)-(p-1)(1-2^{-e})\right)\\
 =\ &\frac{1}{2}\left((p-1)(p-2+2^{-e})-\sum_{d|p-1}d\varphi(d)\right) =\ \frac{1}{2}\left((p-1)(p-2+2^{-e})-\sum_{d|p-1}\varphi(d^2)\right)
\end{align*}
classes of $C_{a,b}$ belonging to $H_p$.
In total, the number of elements are
\[
\frac{1}{2}\left((p-1)(p-2+2^{-e})-\sum_{d|p-1}\varphi(d^2)\right)p(p+1).
\]

\begin{remark}
\label{rk:2nd-portion}
Writing $p-1=\prod_{i} q_i^{n_i}$, where $q_i$ are distinct primes and $n_i\ge 1$, we have

\begin{align*}
 \sum_{d|p-1}\varphi(d^2)&=\prod_i\left(\sum_{m=0}^{n_i}\varphi(q_i^{2m})\right)=\prod_i\left(\sum_{m=0}^{2n_i}(-1)^mq_i^m\right)\\
 &= \prod_iq_i^{2n_i}\frac{1+q_i^{-2n_i-1}}{1+q_i^{-1}}= (p-1)^2\prod_i\frac{1+q_i^{-2n_i-1}}{1+q_i^{-1}}.
\end{align*}
Let us write $\kappa_p=\prod_i\frac{1+q_i^{-2n_i-1}}{1+q_i^{-1}}$, which is a constant strictly smaller than $1$.
Then the proportion of elements in $H_p$ coming from $C_{a,b}$ is
\[
\frac{\frac{1}{2}\left((p-1)(p-2+2^{-e})-(p-1)^2\kappa_p\right)p(p+1)}{p(p-1)^2(p+1)}=\frac{1}{2}\left(\frac{p-2+2^{-e}}{p-1}-\kappa_p\right).
\]
Observe that
\[
\kappa_p\le\frac{1+2^{-2e-1}}{1+2^{-1}}=\frac{2+2^{-2e}}{3}.\]
Therefore, for $p\ge7$, we have
\begin{align*}
 \frac{1}{2}\left(\frac{p-2+2^{-e}}{p-1}-\kappa_p\right)&\geq\frac{1}{2}\left(\frac{p-2+2^{-e}}{p-1}-\frac{2+2^{-2e}}{3}\right)\\
 &=\frac{1}{2}\left(\frac{p-2}{p-1}-\frac{2}{3}+\frac{2^{-e}}{p-1}-\frac{2^{-2e}}{3}\right)\\
 &\geq 
 \frac{1}{2}\left(\frac{2p-3}{2(p-1)}-\frac{3}{4}\right),
\end{align*}
which tends to $1/8$ as $p\rightarrow\infty$.
The inequality in the second last line follows from the fact that the real function $f(x)=\frac{x}{p-1}-\frac{x^2}{3}$ attains a minimum at $x=1/2$ in the interval $(0,1/2]$ when $p\ge 7$.
\end{remark}
\bibliographystyle{amsalpha}
\bibliography{references}

\end{document}